\newtheorem{lemma}{Lemma}[section]
\newtheorem{theorem}[lemma]{Theorem}
\newtheorem{proposition}[lemma]{Proposition}
\newtheorem{corollary}[lemma]{Corollary}
\theoremstyle{definition}
\newtheorem{definition}[lemma]{Definition}
\newtheorem{example}[lemma]{Example}
\newtheorem{remark}[lemma]{Remark}
\newtheorem*{assumption}{Standing assumption}
\numberwithin{equation}{section}
\newcommand{\comment}[1]{}
\newcommand{\cE}{{\mathcal E}}
\newcommand{\cF}{{\mathcal F}}
\newcommand{\cL}{{\mathcal L}}
\newcommand{\R}{{\mathbb R}}
\newcommand{\IK}{{\mathbb K}}
\newcommand{\IC}{{\mathbb C}}
\newcommand{\N}{{\mathbb N}}
\newcommand{\fA}{{\mathfrak A}}
\newcommand{\eps}{{\varepsilon}}
\newcommand{\vr}{{\varrho}}
\renewcommand{\rho}{\varrho}
\newcommand{\as}[1]{\langle #1\rangle}
\newcommand{\av}[1]{\left\Vert #1\right\Vert}
\newcommand{\Hm}[1]{\leavevmode{\marginpar{\tiny%
$\hbox to 0mm{\hspace*{-0.5mm}$\leftarrow$\hss}%
\vcenter{\vrule depth 0.1mm height 0.1mm width \the\marginparwidth}%
\hbox to 0mm{\hss$\rightarrow$\hspace*{-0.5mm}}$\\\relax\raggedright
#1}}}
\newcommand{\tom}{\overset{\mu}{\to}} 
\DeclareMathOperator*{\esssup}{ess\,sup}
\DeclareMathOperator*{\essinf}{ess\,inf}
\begin{document}

\title[Nonlinear Dirichlet forms]{The extended Dirichlet space and criticality theory for nonlinear Dirichlet forms}

%

\author{Marcel Schmidt}
\address{Mathematisches Institut, Universität Leipzig, 04109 Leipzig, Germany.} \email{marcel.schmidt@math.uni-leipzig.de} 

\author{Ian Zimmermann}
\address{Institut für Mathematik, Friedrich-Schiller-Universität Jena, 07743 Jena, Germany.} \email{ian.zimmermann@uni-jena.de} 

\maketitle

\begin{abstract}
In this paper we establish the existence of the extended Dirichlet space for nonlinear Dirichlet forms under mild conditions. We employ it to introduce and characterize criticality (recurrence) and subcriticality (transience) and establish basics of a potential theory.  
\end{abstract}





\section{Introduction}

Dirichlet forms were introduced by Beurling and Deny \cite{BD58,BD59}.  They (and their generalizations) have since become an important tool in various mathematical areas such as geometric analysis and stochastic analysis but also in mathematical physics. The leitmotif is that an order structure of the underlying Hilbert space can be employed to study properties of the generator and the induced semigroup,   which under additional assumptions can even be related to a Markov process through a Feynman-Kac type formula - see \cite{MR,FOT,CF12} for a modern account. While this theory is linear - the functionals are bilinear forms and the associated operators are linear - key ideas clearly extend to some nonlinear functionals and operators.

In this text we treat nonlinear Dirichlet forms in the sense of Cipriani and Grillo \cite{CG03}. These are lower semicontinuous convex functionals on $L^2$-spaces whose associated nonlinear semigroups are order preserving and $L^\infty$-contractive - in the linear case they correspond to Dirichlet forms in the wide sense (i.e. Dirichlet forms without dense domain). Indeed, this class of functionals has been studied and characterized before by Benilan and Picard \cite{BP79} and, adding to the confusion, Benilan and Picard used the name nonlinear Dirichlet form for a slightly different class of functionals. 

Benilan and Picard and later Cipriani and Grillo provided  characterizations for nonlinear Dirichlet forms that are quite similar to the second Beurling-Deny criterion, which defines Dirichlet forms in the linear case. Nonlinear Dirichlet forms include energies of the $p$-Laplacian in various geometries (see e.g. \cite[Section~4]{CG03}), energies of the variable exponent $p(\cdot)$-Laplacian (see Section~\ref{section:examples}), area and perimeter functionals (see \cite[Section~5]{CG03}) and Cheeger energies on metric measure spaces (see \cite{AGS14}) - just to name a few. 

While certainly a lot of theory has been developed at an individual level for all of the mentioned examples, a general theory making some key results for Dirichlet forms also available for nonlinear Dirichlet forms is missing. Most notably, as of yet there is no way to associate a stochastic process to a suitably regular nonlinear Dirichlet form - first results in this direction have only been obtained for some nonlinear Lévy processes \cite{DKN20} and the $p$-Laplacian on $\mathbb R^d$ \cite{BRR24}.

Recently  (under some regularity conditions) a potential theory for nonlinear Dirichlet forms  without homogeneity assumptions has been developed in \cite{Cla21,Cla23} and the somewhat complicated characterization of nonlinear Dirichlet forms by  Cipriani and Grillo has been simplified \cite{BH24,Puc25}.  With the present article we seek to fill further gaps in the theory of nonlinear Dirichlet forms. More precisely, we show the existence of the extended Dirichlet space (Theorem~\ref{theorem:existence extended Dirichlet space}) and discuss the related criticality theory by introducing and characterizing criticality (recurrence) (Theorem~\ref{theorem:characterization criticality}) and subcriticality (transience) (Theorem~\ref{theorem:characterization subcriticality}). Both concepts are related to the validity of (weak) Hardy- and Poincaré inequalities (Theorem~\ref{theorem:weak hardy} and Theorem~\ref{theorem:weak poincare}), which might be of interest on their own right. Moreover, we show that there is a dichotomy between criticality and subcriticality under an irreducibility condition, see Corollary~\ref{corollary:dichotomy}. In the literature this dichotomy is sometimes called ground state alternative and our article is very much inspired by \cite{PT07,Fis23}, which show that this ground state alternative holds for certain nonlinear situations. As an application, for subcritical nonlinear Dirichlet forms we develop basics  of a potential theory (Corollary~\ref{corollary:existence of equilibrium potentials} and Theorem~\ref{theorem:characterization exceptional sets}),  which in some aspects goes beyond the results of \cite{Cla21}, and includes the situations recently discussed in \cite{BBR24,Kuw24}, but also energies of $p(\cdot)$-Laplacians as in \cite{DHHR11}. 

Extended Dirichlet spaces are an important tool for stochastic analysis of (quasi)regular Dirichlet forms. Among other things they allow the change of the measure of the underlying $L^2$-space, which for the associated process corresponds to a time change. Criticality and subcriticality can be formulated with their help and together with (weak) Hardy- and Poincaré inequalities these global properties determine the long time behavior of the associated semigroup/process (recurrence, transience, convergence rate to equilibrium). The potential theory of a (quasi)regular Dirichlet form plays a key role in the construction of the associated process. Again we refer to \cite{FOT,MR} for the construction of processes, potential theory, recurrence and transience and to \cite{RW01} for the convergence rate to equilibrium. Beyond stochastic analysis subcriticality in terms of weighted Hardy inequalities and the dichotomy between criticality and subcriticality play a role for establishing optimal Hardy inequalities, which are used in Agmon estimates for weak solutions of associated elliptic problems, see e.g. \cite{Pin,DP16}.   

 Our results do not necessarily hold for all nonlinear Dirichlet forms but require certain additional assumptions, which vary slightly from theorem to theorem. Those assumptions are reflexivity, completeness of the modular space, existence of the extended form and the (weak) $\Delta_2$-condition. At some places we do not even need to assume that we are given nonlinear Dirichlet forms, nonlinear order preserving forms are sufficient. We present the theorems with the minimal assumptions required for our proofs even though this makes the statements somewhat incoherent. The reader should note that all results in this text hold for reflexive nonlinear Dirichlet forms satisfying the $\Delta_2$-condition, a class that includes the most prominent examples, cf. Section~\ref{section:examples}.  

Claus noticed in his PhD thesis \cite{Cla21} that the Luxemburg seminorm of a nonlinear Dirichlet form is important for understanding its properties - indeed this is the key insight that made our article possible. In order to employ Luxemburg seminorms for our purposes we have to extend some aspects of the available theory for semimodulars (see e.g. \cite[Chapter~2]{DHHR11})  to our setting. More precisely, we introduce generalized semimodulars and their Luxemburg seminorms and study in quite some detail their lower semicontinuity with respect to different topologies on the underlying space (Theorem~\ref{theorem:lower semicontinuity} and Theorem~\ref{theorem:description extended space}) and  the continuity of the embedding into the underlying space (Theorem~\ref{theorem:two imply the third}). 

The paper is organized as follows: In Section~\ref{section:introduction to nonlinear Dirichlet forms} we review basic properties of nonlinear Dirichlet forms mostly following the existing literature. In Section~\ref{section:modular and extended space} we introduce the modular space and the extended Dirichlet space and give a short account on excessive functions and invariant sets. Section~\ref{section:criticality theory} can be seen as the main part of this article, it discusses characterizations of criticality and subcriticality. In Section~\ref{section:potential theory} we apply subcriticality to establish basics of a potential theory. In Section~\ref{section:examples} we discuss two classes of examples. Here, we do not aim at greatest generality, but try to illustrate how our results fit into the existing literature. We chose to put the discussion of generalized semimodulars into Appendix~\ref{appendix:generalized semimodulars}. It is more or less independent of the rest of the article and might be of use in other contexts as well.

{\bf Acknowledgments:} The authors thank Ralph Chill and Simon Puchert for sharing their knowledge on nonlinear Dirichlet forms with them and René Schilling for pointing out some references. Moreover, M.S. acknowledges financial support of the DFG.

\section{Nonlinear Dirichlet forms, resolvents and the Green operator} \label{section:introduction to nonlinear Dirichlet forms} Let $(X,\mathfrak A,\mu)$ be a $\sigma$-finite measure space. We let $L^2(\mu)$ be the  Hilbert space of real-valued square integrable functions with inner product   $\as{\cdot,\cdot}$ and  induced norm $\av{\cdot}$.

We write $L^0(\mu)$ for the space of $\mu$-a.s.\@ defined real-valued measurable functions. It is equipped with the topology of local convergence in measure. If we fix some integrable   $h \colon X \to (0,\infty)$,  this topology is induced by the $F$-norm 
$$p_h \colon L^0(\mu) \to [0,\infty], \quad p_h(f) = \int_X \min\{|f|,h\} d\mu.$$
If $(f_n)$ converges to $f$ in this topology, we write $f_n \tom f$. This convergence holds if every subsequence of $(f_n)$ has a subsequence converging to $f$ $\mu$-a.s. The cone of $[0,\infty]$-valued $\mu$-a.s.\@ defined measurable functions is denoted by $L^+(\mu)$.

\begin{definition}[The Beurling-Deny criteria and nonlinear Dirichlet forms]
Let $\cE \colon L^p(\mu) \to (-\infty,\infty]$ with $p \in \{0,2\}$. We say that a normal contraction $C \colon \R \to \R$ {\em operates on $\cE$} if for all $f,g \in L^p(\mu)$ the inequality 
$$\cE(f + Cg) + \cE(f - Cg) \leq \cE(f + g) + \cE(f - g)$$
holds. The functional $\cE$ satisfies the {\em first Beurling-Deny criterion} if the normal contraction $\R \to \R, \, x \mapsto |x|$ operates on $\cE$ and it satisfies the {\em second Beurling-Deny criterion} if all normal contractions operate on $\cE$. 

A {\em nonlinear order preserving form} is a proper lower semicontinuous convex functional $\cE \colon L^2(\mu) \to (-\infty,\infty]$ satisfying the first Beurling-Deny criterion and a {\em nonlinear Dirichlet form} is  a nonlinear order preserving form also satisfying the second Beurling-Deny criterion.

\end{definition}
\begin{remark}
\begin{enumerate}[(a)]
 \item If $\cE \colon L^2(\mu) \to [0,\infty]$ is a Dirichlet form in the sense of \cite{FOT} (i.e. a densely defined lower semicontinuous quadratic form such that $\cE( Cf ) \leq \cE(f)$ for all normal contractions $C$ and all $f \in L^2(\mu)$), then it satisfies the second Beurling-Deny criterion in the sense of the previous definition due to the parallelogram identity. To better distinguish them from the more general functionals considered in this text we call them {\em classical Dirichlet forms} or {\em bilinear Dirichlet forms} (because they are related to bilinear forms on their domain via polarization).

 \item Based on results from \cite{BH24}, it was recently established in \cite{Puc25} that our Definition of nonlinear Dirichlet forms yields the same as in \cite{CG03} - see also Proposition~\ref{proposition:contraction properties resolvent} below.  Indeed, the same class of functionals was already introduced and characterized earlier in \cite[Théorème~2.1]{BP79} without giving them a special name. We also note that in \cite[Section~2b]{BP79}  a different class of functionals is called nonlinear Dirichlet form - this seems to have caused some confusion in earlier publications.

What we call nonlinear order preserving form is called nonlinear semi-Dirichlet form in \cite{CG03}. We do not use this terminology to avoid confusion with the concept of semi-Dirichlet forms in the linear case, which is something different, see e.g. \cite{Osh13}. 

\item Below we will also study extensions of nonlinear Dirichlet forms to $L^0(\mu)$, which is why we included functionals on this space in the previous definition. 

\end{enumerate}
\end{remark}

For a convex functional $\cE \colon L^2(\mu) \to (-\infty,\infty]$ its {\em subgradient} is the multi-valued operator
$$\partial \cE = \{(f,g)  \in L^2(\mu) \times L^2(\mu) \mid \as{g,h-f} + \cE(f) \leq \cE(h) \text{ all } h \in L^2(\mu)\}.$$
This is in line with the discussion in Subsection~\ref{subsection:dual spaces and resolvents} after identifying $L^2(\mu)$ and its dual space. If $\cE$ is proper, lower semicontinuous and convex, then by general principles $\partial \cE$ is $m$-accretive. This implies that the resolvent $G_\alpha = (\alpha  + \partial \cE)^{-1}$, $\alpha > 0$, is a well-defined single-valued $\alpha^{-1}$-Lipschitz continuous operator on $L^2(\mu)$. We refer to  \cite[Section~IV.1]{Sho97}, where all of these claims and related things are discussed. The element $G_\alpha f$ is the unique minimizer of the functional 
$$L^2(\mu) \to [0,\infty], \quad g \mapsto \cE(g) + \frac{\alpha}{2} \av{g - \alpha^{-1} f}^2.$$
Hence, it is related to the resolvent family $J_\alpha$, $\alpha > 0$, introduced in Subsection~\ref{subsection:dual spaces and resolvents}, through $G_\alpha f = J_{\alpha^{-1}} (\alpha^{-1} f)$, $\alpha > 0$. It satisfies the resolvent identity 
$$G_\alpha f = G_\beta (f + (\beta - \alpha) G_\alpha f), \quad \alpha,\beta > 0, f \in L^2(\mu).$$
Moreover, for $f \in \overline{D(\cE)}$, we have $\alpha G_\alpha f \to f$, as $\alpha \to \infty$.

The following fundamental observation explains the names 'nonlinear Dirichlet form' and 'order preserving form'.

\begin{proposition}\label{proposition:contraction properties resolvent}
 Let $\cE \colon L^2(\mu) \to (-\infty,\infty]$ be convex, lower semicontinuous and proper. 
 \begin{enumerate}[(a)]
  \item The following assertions are equivalent. \begin{enumerate}[(i)]
                                                    \item $\cE$ is a nonlinear order preserving form. 
                                                    \item For all $f,g \in L^2(\mu)$ we have 
                                                    $$\cE(f \wedge g) + \cE(f \vee g) \leq \cE(f) + \cE(g).$$
                                                    \item The resolvent $G_\alpha$, $\alpha > 0$, is order preserving (i.e. $f \geq g$ implies $G_\alpha f \geq G_\alpha g$ for all $\alpha > 0$). 
                                                 \end{enumerate}
  \item $\cE$ is a nonlinear Dirichlet form if and only if $\alpha G_\alpha$, $\alpha > 0$, is order preserving and $L^\infty$-contractive, i.e.   $$\av{\alpha G_\alpha f  - \alpha G_\alpha g}_\infty \leq \av{f - g}_\infty \text{ for all } f,g \in L^2(\mu) \cap L^\infty(\mu).$$
  In this case, $\alpha G_\alpha$, $\alpha > 0$, is also $L^1$-contractive.
 \end{enumerate}
\end{proposition}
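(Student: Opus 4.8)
The whole argument rests on the variational description of the resolvent: $G_\alpha f$ is the unique minimizer of $\Phi^f_\alpha(g) := \cE(g) + \frac{\alpha}{2}\av{g-\alpha^{-1}f}^2$, uniqueness coming from strict convexity of the quadratic term. For part (a), the equivalence (i) $\Leftrightarrow$ (ii) is a change of variables: writing $f = u+v$, $g = u-v$ gives $f\vee g = u+|v|$ and $f\wedge g = u-|v|$, so the submodularity inequality in (ii) is exactly the first Beurling--Deny inequality for $x\mapsto|x|$ evaluated at $(u,v)$; as $(f,g)\mapsto(u,v)$ is a bijection of $L^2(\mu)^2$, the two statements are identical. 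For (ii) $\Rightarrow$ (iii), suppose $f\ge g$ and set $u=G_\alpha f$, $v=G_\alpha g$. I would test the two minimization problems against $u\vee v$ and $u\wedge v$, add the inequalities $\Phi^f_\alpha(u)\le\Phi^f_\alpha(u\vee v)$ and $\Phi^g_\alpha(v)\le\Phi^g_\alpha(u\wedge v)$, and cancel the energy contributions using (ii). The surviving Hilbert-space terms reduce to $\int_{\{u<v\}}(v-u)(g-f)\,d\mu\le 0$ (here $f\ge g$ is used), so together with minimality every inequality must be an equality; uniqueness of minimizers then yields $u\vee v=u$, i.e. $u\ge v$.

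For (iii) $\Rightarrow$ (ii) I would regularize. The Moreau envelope $\cE_\lambda(f)=\cE(J_\lambda f)+\frac{1}{2\lambda}\av{J_\lambda f-f}^2$, where $J_\lambda=(\mathrm{id}+\lambda\partial\cE)^{-1}$, is convex, Fr\'echet differentiable with $\nabla\cE_\lambda=\frac1\lambda(\mathrm{id}-J_\lambda)$, and increases pointwise to $\cE$ as $\lambda\downarrow0$ (see \cite{Sho97}); order preservation of $G_\alpha$ transfers to $J_\lambda$. Submodularity of the smooth functional $\cE_\lambda$ is equivalent, via the fundamental theorem of calculus along segments (with $h=(f-g)^+$, $k=(g-f)^+$ and $w=f\wedge g$), to the gradient inequality $\as{\nabla\cE_\lambda(b)-\nabla\cE_\lambda(a),h}\le0$ for $a\le b$ and $h\ge0$ with $(b-a)h=0$. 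Since then $\as{b-a,h}=0$ and $J_\lambda a\le J_\lambda b$, this is immediate from $\nabla\cE_\lambda=\frac1\lambda(\mathrm{id}-J_\lambda)$. Letting $\lambda\downarrow0$, the inequality in (ii) survives the monotone limit, giving submodularity of $\cE$.

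For part (b), order preservation of $\alpha G_\alpha$ is the same as that of $G_\alpha$, hence by (a) equivalent to the first criterion; it remains to match the second criterion with $L^\infty$-contractivity. For the forward implication, let $f,g\in L^2(\mu)\cap L^\infty(\mu)$, $c=\alpha^{-1}\av{f-g}_\infty$, $u=G_\alpha f$, $v=G_\alpha g$, and $w=(u-v-c)^+$. Testing the minimization problems against $u-\frac12 w$ and $v+\frac12 w$, I would note that with $D=\frac{u-v}2$ one has $u-\frac12 w=\frac{u+v}2+C(D)$ and $v+\frac12 w=\frac{u+v}2-C(D)$ for the normal contraction $C(s)=s\wedge\frac c2$; the second Beurling--Deny criterion then controls the energy terms, while the Hilbert-space terms contribute at most $-\frac\alpha4\av{w}^2$, so minimality forces $w=0$. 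By symmetry $\av{\alpha G_\alpha f-\alpha G_\alpha g}_\infty\le\av{f-g}_\infty$.

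The converse --- recovering the full second criterion (all normal contractions) from order preservation together with $L^\infty$-contractivity --- is the main obstacle, since the latter only directly reflects the truncation contractions $s\wedge\frac c2$ used above. I would again regularize and reduce to showing that every normal contraction operates on $\cE_\lambda$, exploiting that contractivity of $J_\lambda$ encodes these truncations and that a general normal contraction is a monotone limit of finite combinations of truncations and shifts, before passing to $\lambda\downarrow0$; alternatively one invokes the identification with the Cipriani--Grillo class established in \cite{CG03,BH24,Puc25}. Finally, $L^1$-contractivity follows from the second criterion through the $T$-accretivity $\as{\xi-\eta,\operatorname{sgn}(u-v)}\ge0$ for $\xi\in\partial\cE(u)$, $\eta\in\partial\cE(v)$ (the subgradient form of the second Beurling--Deny criterion, i.e. complete accretivity of $\partial\cE$): inserting $\xi=f-\alpha u$ and $\eta=g-\alpha v$ gives $\alpha\av{u-v}_1\le\as{f-g,\operatorname{sgn}(u-v)}\le\av{f-g}_1$.
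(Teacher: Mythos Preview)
Your proof is correct and considerably more detailed than the paper's, which simply cites the literature: the paper handles (i)~$\Leftrightarrow$~(ii) by the same change of variables you use, defers (ii)~$\Leftrightarrow$~(iii) to \cite[Theorem~3.8]{CG03}, and defers all of (b) to \cite[Theorem~3.6]{CG03} together with \cite{Puc25,BP79}. Your direct arguments for (ii)~$\Rightarrow$~(iii) (test the variational problems against $u\vee v$, $u\wedge v$) and for the forward half of (b) (test against $\frac{u+v}{2}\pm C(D)$ with $C(s)=s\wedge\frac{c}{2}$) are essentially the proofs contained in those references; your Moreau-envelope route for (iii)~$\Rightarrow$~(ii) is a clean alternative to the invariance-of-convex-sets argument \cite[Theorem~3.4]{CG03} that the paper implicitly relies on. For the converse in (b) you rightly flag the gap between truncations and \emph{all} normal contractions and in the end invoke \cite{BH24,Puc25}, which is exactly what the paper does; note that this step is genuinely nontrivial and was only recently settled. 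Your $L^1$-contractivity argument via $T$-accretivity is standard, but be aware that $\operatorname{sgn}(u-v)$ need not lie in $L^2(\mu)$ when $\mu(X)=\infty$; one should either approximate $\operatorname{sgn}$ by bounded Lipschitz functions or first establish the inequality on $L^1\cap L^2$ and extend by density, as is implicit in the paper's citation of \cite{BP79,CG03}.
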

\begin{proof}
(a): The equivalence  (i) $\Leftrightarrow$ (ii) is readily verified by inserting $(f+g)/2$ and $(f-g)/2$ into the defining inequality of the first Beurling-Deny criterion. The equivalence of (ii) and (iii) is the content of   \cite[Theorem~3.8]{CG03}.

(b): This is the content of  \cite[Theorem~3.6]{CG03} together with the characterization of nonlinear Dirichlet forms in \cite{Puc25}, see also \cite[Théorème~2.1]{BP79}.
\end{proof}

We extend the resolvent of a nonlinear order preserving form $\cE$ to an operator $L^+(\mu) \to L^+(\mu)$ as follows: For $f \in L^+(\mu)$ we choose an increasing sequence $(f_n)$ of nonnegative functions in $L^2(\mu)$ with $f_n \nearrow f$ a.s.\@ and set $ G_\alpha f = \lim_{n \to  \infty} G_\alpha f_n$. This limit exists a.s.\@ due to $G_\alpha$ being order preserving and it is independent of the approximating sequence due to the continuity of $G_\alpha$ on $L^2(\mu)$. For $\beta \geq \alpha$ the resolvent identity $G_\alpha = G_\beta({\rm id} + (\beta - \alpha) G_\alpha )$ extends to $L^+(\mu)$. Together with the order preservation, it implies that for $f \in L^+(\mu)$ the map $(0,\infty) \to L^+(\mu)$, $\alpha \mapsto G_\alpha f$ is decreasing. Hence, the operator
$$G \colon L^+(\mu) \to L^+(\mu), \quad Gf = \lim_{\alpha \to 0+} G_\alpha f$$
is well-defined. It is called {\em Green operator} or {\em potential operator} of $\cE$.

If $\cE$ is a nonlinear Dirichlet form, for $\alpha > 0$ we extend $\alpha G_\alpha$ to a contraction on $L^1(\mu)$ by using the density of $L^1(\mu) \cap L^2(\mu)$ in  $L^1(\mu)$ and the $L^1$-contractivity of $\alpha G_\alpha$. By the monotone convergence theorem, this extension to $L^1(\mu)$ by continuity is compatible with the extension to $L^+(\mu)$ discussed above. Moreover, we obtain an extension of $G_\alpha$, $\alpha > 0$, to $L_+^\infty(\mu)$ by simply restricting its extension to $L^+(\mu)$ to the subset $L_+^\infty(\mu) = L^\infty(\mu)\cap L^+(\mu)$. Using the $L^\infty$-contractivity, it is readily verified that $\alpha G_\alpha$, $\alpha > 0$, are contractions on $L^\infty_+(\mu)$.

%
%

\section{The modular space and the extended Dirichlet space} \label{section:modular and extended space}

In this section we apply the abstract theory outlined in Appendix~\ref{appendix:generalized semimodulars} to convex functionals on the spaces $L^2(\mu)$ and $L^0(\mu)$. While we repeat the most important definitions in the main text, we often refer to the appendix for details. 

\subsection{The modular space} We say that a functional $\cE \colon L^2(\mu) \to (-\infty,\infty]$ is  {\em symmetric} if $\cE(f) = \cE(-f)$ for all $f \in L^2(\mu)$. In this case, properness and convexity imply that $0 \in D(\cE)$ is a global minimizer of $\cE$. Moreover, it is readily verified that if a normal contraction operates on $\cE$, it also operates on the functional $\cE - \cE(0)$.  Based on these observations we make the following standing assumptions.

\begin{assumption}
 From now on all  convex functionals $\cE$ in this text are assumed to be symmetric and satisfy $\cE(0) = 0$. In particular, they are proper and map to $[0,\infty]$.
\end{assumption}

\begin{remark}
 \begin{enumerate}[(a)]
  \item Even if $\cE$ were not symmetric, many of our results would still hold true. In this case, one has to use versions of the abstract results from Appendix~\ref{appendix:generalized semimodulars} on cones instead of vector spaces. We refrain from giving details.
 
 Alternatively, if $\cE$ is not symmetric, one can obtain a symmetric functional by considering $\cE^{sym} \colon L^2(\mu) \to [0,\infty],\, f\mapsto  \frac{1}{2} \left(\cE(f) + \cE(-f)\right)$. If a normal contraction operates on $\cE$, it also operates on $\cE^{sym}$. 
\item If a symmetric convex functional $\cE$ with $\cE(0) = 0$ is compatible with the normal contraction $C \colon \R \to \R$, then it satisfies $\cE(Cf) \leq \cE(f)$ for all $f \in L^2(\mu)$.   
 \end{enumerate}
\end{remark}

A lower semicontinuous proper convex functional $\cE \colon L^2(\mu) \to [0,\infty]$ satisfying our standing assumption is a generalized semimodular on $L^2(\mu)$ in the sense of Definition~\ref{definition:generalized semimodular}. We write 
$$M(\cE) = \{f \in L^2(\mu) \mid \lim_{\lambda \to 0+} \cE(\lambda f) = 0\} = {\rm lin}\, D(\cE)$$
for the corresponding {\em modular space} and 
$$\av{\cdot}_L = \av{\cdot}_{L,\cE} \colon M(\cE) \to [0,\infty), \quad \av{f}_L = \inf\{\lambda > 0 \mid \cE(\lambda^{-1} f) \leq 1\}$$
for the induced {\em Luxemburg seminorm}.

\begin{lemma}[Luxemburg seminorms are compatible with contractions]\label{lemma:contraction luxemburg seminorms}
Let $\cE$ be a nonlinear order preserving form. Then $\av{|f|}_L \leq \av{f}_L$ for all $f \in M(\cE)$. If $\cE$ is a nonlinear Dirichlet form, then $\av{Cf}_L \leq \av{f}_L$ for all normal contractions $C \colon \R \to \R$ and all $f \in M(\cE)$. 
\end{lemma}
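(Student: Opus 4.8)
The plan is to exploit the elementary but crucial fact that the class of normal contractions is invariant under the rescaling $C \mapsto C_t$ with $C_t(x) := t\,C(t^{-1}x)$, and to transfer the modular contraction inequality $\cE(Cg) \le \cE(g)$ to the level of the Luxemburg seminorm. Recall that $\av{f}_L$ is the infimum of all $\lambda > 0$ with $\cE(\lambda^{-1} f) \le 1$, so it suffices to prove, for the relevant map, the scaled inequality $\cE(t\,Cf) \le \cE(tf)$ for every $t > 0$: letting $t \to 0+$ then shows that $Cf$ lies in $M(\cE)$, while setting $t = \lambda^{-1}$ shows that every $\lambda$ admissible for $f$ is admissible for $Cf$, whence the seminorm inequality follows by passing to the infimum. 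Throughout I use that, by the remark following the standing assumption, a normal contraction operating on the symmetric functional $\cE$ with $\cE(0)=0$ satisfies $\cE(Cg) \le \cE(g)$ for all $g \in L^2(\mu)$.

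For the first statement, let $\cE$ be a nonlinear order preserving form and $f \in M(\cE)$. Since $x \mapsto |x|$ is positively homogeneous and operates on $\cE$ by the first Beurling-Deny criterion,
\[
\cE(t|f|) = \cE(|tf|) \le \cE(tf) \qquad \text{for all } t > 0,
\]
which is the required scaled inequality with $C = |\cdot|$. As explained above, this gives $|f| \in M(\cE)$ and $\av{|f|}_L \le \av{f}_L$.

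For the second statement, let $\cE$ be a nonlinear Dirichlet form, $C$ a normal contraction and $f \in M(\cE)$. A direct check shows that $C_t(x) = t\,C(t^{-1}x)$ fixes the origin and is $1$-Lipschitz, hence is again a normal contraction, and that $C_t(tf) = t\,Cf$. Since every normal contraction operates on $\cE$ by the second Beurling-Deny criterion, so does $C_t$, and therefore
\[
\cE(t\,Cf) = \cE\bigl(C_t(tf)\bigr) \le \cE(tf) \qquad \text{for all } t > 0.
\]
Again this yields $Cf \in M(\cE)$ and $\av{Cf}_L \le \av{f}_L$.

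I do not expect a serious obstacle. The one point that genuinely matters is the verification that the rescaled map $C_t$ remains a normal contraction while reproducing $t\,Cf$ on the argument $tf$; this is precisely what couples the pointwise contraction property of $\cE$ to the one-homogeneity of the Luxemburg seminorm, and without this scaling invariance the argument would not close. Everything else, including the nonemptiness of the relevant infimum sets (equivalently $|f|, Cf \in M(\cE)$), is routine and handled by the same scaled inequality.
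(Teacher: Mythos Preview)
Your proof is correct and follows essentially the same approach as the paper: both arguments reduce the seminorm inequality to the scaled modular inequality $\cE(t\,Cf)\le\cE(tf)$ via the rescaled contraction $C_t(x)=t\,C(t^{-1}x)$ (the paper writes this as $C_\lambda(x)=\lambda^{-1}C(\lambda x)$ with $\lambda=t^{-1}$). Your version is slightly more explicit in noting that the same scaled inequality also yields $Cf\in M(\cE)$ by letting $t\to 0+$, which the paper leaves implicit.
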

\begin{proof}
The symmetry and the first Beurling-Deny criterion imply $\cE(|f|) \leq \cE(f)$. Hence, for $\lambda > 0$ we infer  
$$\cE(\lambda^{-1} |f|) = \cE(|\lambda^{-1}f|) \leq \cE(\lambda^{-1} f).$$
This implies $\av{|f|}_L \leq \av{f}_L$. 

Assume that $\cE$ is a nonlinear Dirichlet form and let $C \colon \R \to \R$ be a normal contraction. For $\lambda > 0$ let $C_\lambda \colon \R \to \R$, $C_\lambda(t)  = \lambda^{-1}C(\lambda x)$, which is also a normal contraction. Using that $\cE$ satisfies the second Beurling-Deny criterion and is symmetric, we obtain 
$$\cE(\lambda^{-1} Cf) = \cE(C_\lambda( \lambda^{-1}f)) \leq \cE(\lambda^{-1}f),$$
showing $\av{Cf}_L \leq \av{f}_L$.
\end{proof}

We say that $\cE$ is {\em reflexive} if the image of $M(\cE)$ in the bidual $(M(\cE),\av{\cdot}_L)''$  (under the canonical isometry) is norm dense. For more context on this property we refer to Appendix~\ref{appendix:generalized semimodulars}.

\begin{lemma}[Continuity of certain contractions]\label{lemma:continuity of normal contractions}
Let $\cE$ be a reflexive nonlinear Dirichlet form and for $r > 0$ let $C_r \colon \R \to \R$, $C_r(x) = (x \wedge r) \vee(-r)$. For all $f \in M(\cE)$ we have 
$$\lim_{\varepsilon \to 0+} C_\varepsilon(f)  = 0  \text{ and } \lim_{R \to \infty} C_R(f)  = f $$
with respect to $\av{\cdot}_L$. In particular, $L^1(\mu) \cap L^\infty(\mu) \cap M(\cE)$ is dense in $(M(\cE),\av{\cdot}_L)$. 
\end{lemma}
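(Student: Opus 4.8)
The plan is to reduce the statement to the two seminorm limits $\av{C_\varepsilon(f)}_L \to 0$ as $\varepsilon \to 0+$ and $\av{f - C_R(f)}_L \to 0$ as $R \to \infty$, since $\av{\cdot}_L$-convergence is by definition convergence in the seminorm. Writing $h_R = f - C_R(f) = \operatorname{sign}(f)\,(|f| - R)_+$, one checks directly that $C_\varepsilon(f)$, $C_R(f)$ and $h_R$ are obtained from $f$ by normal contractions, so they all lie in $M(\cE)$ and, by Lemma~\ref{lemma:contraction luxemburg seminorms}, satisfy $\av{\cdot}_L \le \av{f}_L$. The same lemma yields monotonicity: for $\varepsilon' < \varepsilon$ one has $C_{\varepsilon'}(f) = C_{\varepsilon'}(C_\varepsilon(f))$, and for $R' > R$ one has $h_{R'} = \operatorname{sign}(f)(|h_R| - (R'-R))_+$, both again normal-contraction images, so $\varepsilon \mapsto \av{C_\varepsilon(f)}_L$ is nondecreasing and $R \mapsto \av{h_R}_L$ is nonincreasing. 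Hence both limits exist and it suffices to produce one subsequence tending to $0$ in each case. Granting this, the density assertion is immediate: for $\varepsilon < R$ the function $(C_R - C_\varepsilon)(f)$ is bounded by $R$ and supported in $\{|f| > \varepsilon\}$ (of finite measure, as $f \in L^2(\mu)$), hence lies in $L^1(\mu) \cap L^\infty(\mu) \cap M(\cE)$, and $\av{f - (C_R - C_\varepsilon)(f)}_L \le \av{h_R}_L + \av{C_\varepsilon(f)}_L$ can be made arbitrarily small.

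The heart of the argument is a compactness step combined with a contraction trick. I fix $R_n \to \infty$ (resp. $\varepsilon_n \to 0+$). The sequence $h_{R_n}$ is bounded in the normed space associated to $(M(\cE), \av{\cdot}_L)$, so since $\cE$ is reflexive this space has reflexive completion and $h_{R_n}$ admits a weakly convergent subsequence, with limit $\xi$. By Mazur's lemma there are finite convex combinations $z_j = \sum_{i \ge j} t_i^{(j)} h_{R_{n_i}}$ with $\av{z_j - \xi}_L \to 0$. The key observation is that, because all $h_{R_n}$ share the sign of $f$ and $|h_{R_n}| = (|f|-R_n)_+$ is decreasing in $n$, the combination $z_j$ is again a function of $|f|$ with an increasing, $1$-Lipschitz profile, and the extreme truncation $h_{\bar R_j}$ with $\bar R_j = \max_{i} R_{n_i}$ is recovered from $z_j$ by a single odd soft-threshold normal contraction $u \mapsto \operatorname{sign}(u)(|u| - c_j)_+$. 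Lemma~\ref{lemma:contraction luxemburg seminorms} then gives $\av{h_{\bar R_j}}_L \le \av{z_j}_L \to \av{\xi}_L$. The parallel computation for truncation at $0$ expresses $C_{\hat\varepsilon_j}(f)$, with $\hat\varepsilon_j = \min_i \varepsilon_{n_i}$, as $C_{\hat\varepsilon_j}$ applied to the corresponding convex combination $w_j$, giving $\av{C_{\hat\varepsilon_j}(f)}_L \le \av{w_j}_L \to \av{\eta}_L$ for the associated weak limit $\eta$.

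It remains to identify the weak limits as $0$. Here I use that the same convex combinations vanish in measure: by domination $|z_j| \le |h_{R_{n_j}}| = (|f|-R_{n_j})_+ \to 0$ and $|w_j| \le |f| \wedge \varepsilon_{n_j} \to 0$ $\mu$-a.e., so $z_j \tom 0$ and $w_j \tom 0$. Invoking the description of the extended space together with the continuity of its embedding into $L^0(\mu)$ from Theorem~\ref{theorem:description extended space} (equivalently the lower semicontinuity of Theorem~\ref{theorem:lower semicontinuity}), $\av{\cdot}_L$-convergence forces convergence in measure to the same element, whence $\xi = 0$ and $\eta = 0$. Consequently $\av{h_{\bar R_j}}_L \to 0$ and $\av{C_{\hat\varepsilon_j}(f)}_L \to 0$ along $\bar R_j \to \infty$ and $\hat\varepsilon_j \to 0+$, and the monotonicity from the first paragraph promotes these subsequential limits to the full limits. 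I expect the main obstacle to be exactly this last identification: reflexivity only supplies weak subsequential limits and lower semicontinuity gives estimates in the wrong direction, so the scheme genuinely needs both the sign/order structure of the truncations (to force the Mazur combinations to vanish in measure) and the contraction trick (to transfer smallness of the convex combination back to a single truncation). Securing the clean implication that $\av{\cdot}_L$-convergence entails convergence in measure is the technical linchpin on which everything rests.
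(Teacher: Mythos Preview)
Your overall strategy — reflexivity $\Rightarrow$ weak subsequential limit, Mazur to upgrade to norm convergence of convex combinations, then a normal-contraction trick to pass from the combination back to a single truncation — is exactly the paper's approach. The monotonicity observations and the contraction identities $C_{\varepsilon}(f) = C_{\varepsilon}\bigl(\sum_k \lambda_k C_{\varepsilon_k}(f)\bigr)$ and $h_{R} = D_{R'}\bigl(\sum_k \lambda_k h_{R_k}\bigr)$ are precisely what the paper uses.

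The gap is in your identification of the weak limits $\xi,\eta$ as $0$. You assert that ``$\av{\cdot}_L$-convergence forces convergence in measure to the same element'' and attribute this to Theorem~\ref{theorem:description extended space}/Theorem~\ref{theorem:lower semicontinuity}. Those theorems do \emph{not} give a continuous embedding $(M(\cE),\av{\cdot}_L)\hookrightarrow L^0(\mu)$; that embedding is continuous only under subcriticality (this is exactly Theorem~\ref{theorem:weak hardy}), which is not assumed here. If $\cE$ is critical one has $\av{1}_{L_e}=0$, so nontrivial $\av{\cdot}_L$-null sequences exist and the implication fails.

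The correct fix is to work in $L^2(\mu)$, not $L^0(\mu)$. Your Mazur combinations satisfy $|z_j|\le (|f|-R_{n_j})_+\le |f|$ and $|w_j|\le |f|\wedge \varepsilon_{n_j}\le |f|$, so dominated convergence gives $z_j,w_j\to 0$ in $L^2(\mu)$. Now Theorem~\ref{theorem:description extended space}(c)(iv) applied with $E=L^2(\mu)$ and $p=\av{\cdot}_2$ (this is where $\cE$ being $L^2$-lower semicontinuous enters) yields $\av{z_j}_L,\av{w_j}_L\to 0$, hence $\av{\xi}_L=\av{\eta}_L=0$. The paper short-circuits all of this by quoting Lemma~\ref{lemma:weak convergence}: since $C_\varepsilon(f)\to 0$ and $C_R(f)\to f$ in $L^2(\mu)$ and both families are $\av{\cdot}_L$-bounded, they converge \emph{weakly} in $(M(\cE),\av{\cdot}_L)$ to $0$ and $f$ respectively — the weak limit is identified from the outset and no abstract completion or subsequence is needed.
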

\begin{proof}
Since $C_r$ is a normal contraction, it follows from Lemma~\ref{lemma:contraction luxemburg seminorms} that  $ \av{C_r (f)}_L \leq \av{f}_L$. Since $ \lim_{R \to \infty} C_R(f)  = f$ and  $\lim_{\varepsilon \to 0+} C_\varepsilon(f)  = 0$ in $L^2(\mu)$, Lemma~\ref{lemma:weak convergence} shows that the claimed convergence holds weakly in $(M(\cE),\av{\cdot}_L)$. By Mazur's lemma we can upgrade it to $\av{\cdot}_L$-convergence by taking suitable convex combinations.

For a convex combination $\sum_{k = 1}^n \lambda_k C_{\varepsilon_k}(f)$ and $\varepsilon \leq \min\{\varepsilon_1,\ldots,\varepsilon_n\}$ we have 
$$C_\varepsilon(f) = C_\varepsilon(\sum_{k = 1}^n \lambda_k C_{\varepsilon_k}(f)).$$
With this equality Lemma~\ref{lemma:contraction luxemburg seminorms} implies
$$\av{C_\varepsilon(f)}_L \leq \av{\sum_{k = 1}^n \lambda_k C_{\varepsilon_k}(f)}_L.$$
Since suitable convex combinations converge to $0$, this yields the statement for the first limit. 

Let $D_r \colon \R \to \R, \quad D_r(x) = (x - r)_+ - (x + r)_-$. Then $f - C_r(f) = D_r(f)$ and for a convex combination we have  
$$f - \sum_{k=1}^n \lambda_k C_{R_k}(f) = \sum_{k = 1}^n \lambda_k D_{R_k}(f).$$
Let $R \geq \max\{R_1,\ldots,R_n\}$. It is readily verified that there exists a normal contraction $D$ such that 
$$D\left(\sum_{k = 1}^n \lambda_k D_{R_k}(f)\right) = D_R(f) = f - C_R(f).$$
Indeed, $D = D_{R'}$ with $R'  = R - \sum_{k=1}^n \lambda_k R_k$ is a possible choice. This equality and Lemma~\ref{lemma:contraction luxemburg seminorms} yield 
$$\av{f - C_R(f)} \leq \av{f - \sum_{k=1}^n \lambda_k C_{R_k}(f)}_L.$$
Since suitable convex combinations converge to $0$, this yields the statement for the second limit. 

The density statement follows from these convergences and that for $f \in L^2(\mu)$ we have $f - C_\varepsilon(f) \in L^1(\mu)$ and $C_R(f) \in L^\infty(\mu)$.
\end{proof}

\begin{remark}
 This lemma was established in \cite[Theorem~3.55]{Cla21} for the nonlinear Dirichlet form $\cE + \av{\cdot}^2$ under the quasilinearity assumption $D(\cE) = M(\cE)$. We can drop quasilinearity and treat $\cE$ directly but have to assume reflexivity. 
\end{remark}

%
%
%
%
%
%
%
%
%
%

\subsection{The extended Dirichlet space} We extend every convex functional  $\cE \colon L^2(\mu) \to [0,\infty]$ to a convex functional on $L^0(\mu)$ by simply letting $\cE(f) =  \infty$ for $f \in L^0(\mu) \setminus L^2(\mu)$. If this extended functional is lower semicontinuous with respect to local convergence in measure on $D(\cE)$ (i.e. for $(f_n)$, $f$ in $D(\cE)$ the convergence $f_n \tom f$ implies $\cE(f) \leq \liminf_{n \to \infty} \cE(f_n)$), then the lower semicontinuous relaxation of $\cE$ on $L^0(\mu)$ is an extension of $\cE$ (cf. Proposition~\ref{proposition:lsc relaxation}), which we call $\cE_e$. First we address the existence of this extension.

\begin{theorem}[Existence of the extended Dirichlet space]\label{theorem:existence extended Dirichlet space}
Let $\cE \colon L^2(\mu) \to [0,\infty]$ be convex (with our standing assumptions). Assume one of the following two conditions: 
\begin{enumerate}[(A)]
 \item $(M(\cE),\av{\cdot}_L)$ is a Banach space. 
 \item $\cE$ is a nonlinear Dirichlet form. 
 \end{enumerate}
Then $\cE_e$ exists. If (A) holds, then $\cE = \cE_e$ (when $\cE$ is interpreted as a functional on $L^0(\mu)$). 

\end{theorem}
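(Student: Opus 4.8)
The plan is to reduce both cases to the single statement that $\cE$, regarded as a functional on $L^0(\mu)$ via $\cE\equiv\infty$ off $L^2(\mu)$, is lower semicontinuous with respect to $\tom$ on $D(\cE)$; by Proposition~\ref{proposition:lsc relaxation} this is exactly what guarantees that the lsc relaxation $\cE_e$ is an extension of $\cE$. Thus in either case the whole problem becomes: verify $\tom$-lower semicontinuity of $\cE$ on its domain.

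For case (A) I would argue that completeness forces continuity of the embedding and then invoke the abstract "two imply the third" principle. Concretely, consider the inclusion $\iota\colon(M(\cE),\av{\cdot}_L)\to L^2(\mu)$. Both spaces are Banach (the first by (A), which in particular makes $\av{\cdot}_L$ a genuine norm), so I would apply the closed graph theorem. To check the graph is closed, suppose $\av{f_n}_L\to 0$ and $f_n\to h$ in $L^2(\mu)$; since $\av{\lambda f_n}_L=\lambda\av{f_n}_L\to 0$ gives $\cE(\lambda f_n)\to 0$ for every $\lambda>0$, the $L^2$-lower semicontinuity of $\cE$ yields $\cE(\lambda h)\le\liminf_n\cE(\lambda f_n)=0$, whence $\av{h}_L=0$ and so $h=0$. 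Hence $\iota$ is bounded, $\av{f}\le K\av{f}_L$ for some $K$, and the embedding $(M(\cE),\av{\cdot}_L)\hookrightarrow L^0(\mu)$ is continuous because $L^2$-convergence implies $\tom$-convergence. Completeness together with this continuity is two of the three properties in Theorem~\ref{theorem:two imply the third}, so the third, namely $\tom$-lower semicontinuity of $\cE$, follows; this produces $\cE_e$, and since completeness identifies the extended space with $M(\cE)$ itself (Theorem~\ref{theorem:description extended space}), also $\cE=\cE_e$.

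For case (B) the difficulty is that a nonlinear Dirichlet form need not have complete modular space nor be reflexive, so neither completeness nor the closed graph argument is available, and, crucially, $\cE(f_n)\le c$ controls only $\av{f_n}_L$ and gives no $L^2$-bound, so weak $L^2$-compactness is out of reach. My plan is to exploit the second Beurling--Deny criterion to reduce to bounded data. Given $f_n\tom f$ in $D(\cE)$ with $\liminf_n\cE(f_n)=c<\infty$, pass to a subsequence with $f_n\to f$ a.e.\ and $\cE(f_n)\le c+1$. Using the contraction property $\cE(C_R g)\le\cE(g)$ (a direct consequence of the second Beurling--Deny criterion, cf.\ the discussion following the standing assumption and Lemma~\ref{lemma:contraction luxemburg seminorms}) one has $C_R f_n\to C_R f$ a.e.\ with $\cE(C_R f_n)\le\cE(f_n)$, so it suffices to prove $\cE(C_R f)\le c$ for each $R$ and then let $R\to\infty$, the latter step being legitimate because $C_R f\to f$ in $L^2(\mu)$ and $\cE$ is $L^2$-lsc. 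This reduces the claim to $\tom$-lower semicontinuity along uniformly $L^\infty$-bounded, a.e.-convergent sequences, and for that reduced statement I would feed the contraction-compatibility of $\cE$ (invariance of its sublevel sets under all normal contractions, convexity, and $L^2$-lsc) into the abstract results Theorem~\ref{theorem:lower semicontinuity} and Theorem~\ref{theorem:description extended space}, convexity entering through Cesàro means since $f_n\to f$ a.e.\ forces $N^{-1}\sum_{n\le N}f_n\to f$ a.e.\ with energy still below $c$.

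The main obstacle is exactly this reduced statement in case (B): establishing $\tom$-lower semicontinuity of a nonlinear Dirichlet form without reflexivity or completeness, where the usual route via weak $L^2$-compactness fails because bounded energy does not bound the $L^2$-norm (mass may escape to infinity). I expect the resolution to live in the appendix, where the interplay of the Luxemburg seminorm, the invariance of $\{\cE\le t\}$ under normal contractions, and the $L^1$-$L^\infty$ contractivity of $\alpha G_\alpha$ from Proposition~\ref{proposition:contraction properties resolvent} is used to show that these contraction-invariant convex sublevel sets are closed under a.e.\ convergence; all other steps above are routine once that closedness is in hand.
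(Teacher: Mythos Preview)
Your case (A) argument has a gap at the final step. You invoke Theorem~\ref{theorem:two imply the third} in the direction (ii)\,\&\,(iii) $\Rightarrow$ (i), but by Remark~\ref{remark:two imply the third} this is precisely the implication that \emph{requires} reflexivity of $\cE$, which is not assumed under (A). Your closed-graph argument for the continuous embedding $(M(\cE),\av{\cdot}_L)\hookrightarrow L^2(\mu)$ is fine and matches the paper's first move, but the paper then proceeds differently: it shows directly that each $\av{\cdot}_{L,r}$ is $\tom$-lower semicontinuous (which by Theorem~\ref{theorem:lower semicontinuity}(a) gives lsc of $\cE$) by exploiting the reflexivity of $L^2(\mu)$ rather than of $M(\cE)$. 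Concretely, given $f_n\tom f$ with $(\av{f_n}_{L,r})$ bounded, pass to an a.e.-convergent subsequence; boundedness in $L^2$ (via the embedding) plus Banach--Saks produces Ces\`aro means converging in $L^2$, necessarily to $f$ by the a.e.\ convergence, and then $L^2$-lower semicontinuity of $\av{\cdot}_{L,r}$ (inherited from $L^2$-lsc of $\cE$) finishes. The idea you are missing is to use Banach--Saks in the Hilbert space $L^2(\mu)$ as a substitute for reflexivity of the modular space.

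Your case (B) is incomplete, as you acknowledge, and the paper's route is entirely different from the direction you sketch: it does not argue via closure of contraction-invariant sublevel sets. Instead it uses Fenchel--Moreau duality together with the resolvent. For $f\in L^1(\mu)\cap D(\cE)$ and $(f_n)$ uniformly bounded, one writes $\cE(f)=\lim_{\lambda\to 0+}\bigl(\as{g_\lambda,f}-\cE^*(g_\lambda)\bigr)$ with $g_\lambda=\lambda^{-1}(f-J_\lambda f)$ (Proposition~\ref{lemma:alternative formula for vr}); the $L^1$-contractivity of the resolvent (Proposition~\ref{proposition:contraction properties resolvent}) forces $g_\lambda\in L^1\cap L^2$, so dominated convergence gives $\as{g_\lambda,f_n}\to\as{g_\lambda,f}$, while Fenchel--Moreau bounds $\as{g_\lambda,f_n}-\cE^*(g_\lambda)\le\cE(f_n)$. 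The reduction to this situation is not via $C_R$ alone---that does not land the limit in $L^1$---but via the double truncation $f^{(M)}=C_M(D_{1/M}f)$, which yields $f^{(M)}\in L^1$ because $D_\varepsilon g\in L^1$ whenever $g\in L^2$. Your proposal did not identify the duality/resolvent mechanism, nor the need for the limit to lie in $L^1(\mu)$ so that dominated convergence becomes available.
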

\begin{proof}
 (A): According to Theorem~\ref{theorem:lower semicontinuity} we need to show that for each $r > 0$ the Luxemburg seminorm 
 $$\av{f}_{L,r} = \inf \{\lambda > 0 \mid \cE(\lambda^{-1} f) \leq r\}$$
 is lower semicontinuous on $L^0(\mu)$ with respect to local convergence in measure. Since Luxemburg seminorms with different parameters are equivalent (Lemma~\ref{lemma:equivalent luxemburg norms}),  $(M(\cE),\av{\cdot}_{L,r})$ is a Banach space. By Theorem~\ref{theorem:two imply the third} and Remark~\ref{remark:two imply the third} (applied to the generalized semimodular $r^{-1} \cE$) the embedding 
 $$(M(\cE),\av{\cdot}_{L,r}) \to L^2(\mu),\quad  f \mapsto f$$
 is continuous. Now assume that $f \in L^0(\mu)$ and $(f_n)$ is a sequence in $M(\cE)$ with $f_n \tom f$. We assume that $(\av{f_n}_{L,r})$ is bounded (else there is nothing to show). Without loss of generality we further assume $f_n \to f$ almost surely  and 
 $$\lim_{n \to \infty} \av{f_n}_{L,r}  = \liminf_{n \to \infty} \av{f_n}_{L,r} $$
 (else pass to a suitable subsequence). By the continuity of the embedding the sequence $(f_n)$ is bounded in $L^2(\mu)$. By the Banach-Saks theorem it has a subsequence $(f_{n_k})$ and there exists $g \in L^2(\mu)$ such that 
 $$\frac{1}{N} \sum_{k=1}^N f_{n_k} \to g \text{ in }L^2(\mu) \text{, as } N \to \infty.  $$
 From the almost sure convergence, which passes to the Césaro averages, we infer $f = g$. Hence, the $L^2$-lower semicontinuity of $\av{\cdot}_{L,r}$ implies
 $$\av{f}_{L,r} \leq \liminf_{N \to \infty} \lVert \frac{1}{N}\sum_{k=1}^N f_{n_k}\rVert_{L,r}  \leq  \liminf_{N \to \infty} \frac{1}{N}\sum_{k=1}^N\av{ f_{n_k}}_{L,r}  = \lim_{n \to \infty} \av{f_n}_{L,r}.$$

 (B): Let $f \in D(\cE)$ and let $(f_n)$ in $L^2(\mu)$ with $f_n \tom f$. It suffices to show $\cE(f) \leq \liminf_{n \to \infty} \cE(f_n)$.

 1. Step: Assume that $f \in L^1(\mu) \cap D(\cE)$ and that there exists $M \geq 0$ such that $|f_n| \leq M$, $n \in \N$. We use the resolvent $J_\lambda = G_{1/\lambda}(\lambda^{-1} \cdot)$, $\lambda > 0$, discussed in Subsection~\ref{subsection:dual spaces and resolvents}. 
 
 For $\lambda > 0$ we have $J_\lambda f   \in L^1(\mu) \cap L^2(\mu)$, see Proposition~\ref{proposition:contraction properties resolvent} and hence also $g_\lambda := \lambda^{-1}(f - J_\lambda f) \in L^1(\mu) \cap L^2(\mu)$.
 Lebesgue's dominated convergence theorem and the Fenchel-Moreau theorem yield 
 $$ \as{g_\lambda , f} - \cE^*(g_\lambda) = \lim_{n \to \infty} \left(\as{g_\lambda, f_n} - \cE^*(g_\lambda) \right) \leq \liminf_{n \to \infty} \cE(f_n). $$
 Moreover, Proposition~\ref{lemma:alternative formula for vr} (and $\varphi_\lambda = \as{g_\lambda,\cdot}$ in the notation used there) yields 
 $$\cE(f) = \lim_{\lambda \to 0+} \left(\as{g_\lambda , f} - \cE^*(g_\lambda)\right).  $$
 Combining both (in)equalities shows $\cE(f) \leq \liminf_{n \to \infty} \cE(f_n)$.

%

 2. Step:  For $\varepsilon,R > 0$ we consider the normal contractions $C_R\colon \R \to \R$, $C_R(x) = (x \wedge R) \vee (-R)$ and $D_\varepsilon \colon \R \to \R$, $D_\varepsilon(x) = x - C_\varepsilon(x)$. Then for any $g \in L^2(\mu)$ and $R,\varepsilon > 0$ we have $|C_R g| \leq R$ and  $D_\varepsilon g \in L^1(\mu)$.
 %
 %
 For $M > 0$ we now consider 
 \[ f^{(M)} = C_M(D_{1/M} f) \quad \text{and} \quad f^{(M)}_n = C_M(D_{1/M} f_n) . \]
 Then $f^{(M)} \in L^1(\mu) \cap D(\cE)$, $|f^{(M)}_n| \leq M$,  $f^{(M)}_n\tom f^{(M)}$, as $n \to \infty$,  and $f^{(M)} \to f$ in $L^2(\mu)$, as $M \to \infty$. Moreover, the compatibility with contractions implies $\cE(f^{(M)}_n) \leq \cE(f_n)$ for all $M > 0$. Using Step~1 and the $L^2$-lower semicontinuity of $\cE$, we infer 
 \begin{align*}
  \cE(f) &\leq \liminf_{M \to \infty} \cE(f^{(M)}) \leq \liminf_{M \to \infty}\liminf_{n \to \infty}  \cE(f^{(M)}_n) \leq \liminf_{n \to \infty} \cE(f_n). \hfill \qedhere
 \end{align*}
\end{proof}

If $\cE$ is proper and $\cE_e$ exists, it is a generalized semimodular on $L^0(\mu)$ and we write $M(\cE_e) = {\rm lin} D(\cE_e) \subseteq L^0(\mu)$ for the modular space of $\cE_e$ and denote by $\av{\cdot}_{L_e} = \av{\cdot}_{L,\cE_e}$ the induced Luxemburg seminorm. As discussed in Proposition~\ref{lemma:extended luxemburg norm}, $\av{\cdot}_{L_e}$ is the lower semicontinuous relaxation of $\av{\cdot}_L$ on $L^0(\mu)$ (when both seminorms are extended by $\infty$ outside the corresponding modular spaces). In particular, $\av{\cdot}_{L_e}$ is also lower semicontinuous with respect to local convergence in measure.  

\begin{definition}[Extended Dirichlet form]
 If $\cE$ is a nonlinear Dirichlet form (resp. a nonlinear order preserving form and $\cE_e$ exists), we call $\cE_e$ the {\em extended nonlinear Dirichlet form} (resp. the {\em extended nonlinear order preserving form}) and $(M(\cE_e),\av{\cdot}_{L_e})$  the {\em extended Dirichlet space} (resp. the {\em extended modular space}) of $\cE$. 
\end{definition}

\begin{remark}\label{remark:existence extended form}
\begin{enumerate}[(a)]
 \item The existence of the extended Dirichlet space for regular bilinear Dirichlet forms was first established in \cite{Sil}, without the regularity assumption it is contained in \cite{Schmu2}. In both cases the proofs depend on some representation theorem for the forms and cannot be transferred to the nonlinear case. The proof given here is based on a simplified proof contained in \cite{Sch22}, which in some disguise already uses the formula for $\vr$ contained in  Proposition~\ref{lemma:alternative formula for vr}. 
 
 \item Instead of assuming that $\cE$ is a nonlinear Dirichlet form, the following is sufficient: $J_\lambda f \in L^1(\mu)$ for all $f \in L^1(\mu) \cap L^2(\mu)$, $\lambda > 0$, and $\cE(C_\varepsilon D_R f) \leq \cE(f)$ for all $\varepsilon > 0$ and $R > 0$. Indeed,  by \cite[Théorème~2.2]{BP79} both are satisfied if $\cE(Cf) \leq \cE(f)$ for all normal contractions and all $f \in L^2(\mu)$. 
 
 \item Using excessive functions and $h$-transforms, in the bilinear case it is possible to show the existence of $\cE_e$ also for order preserving forms, see \cite{Schmu2,Sch22}. In view of the previous remark, we would need a measurable function $h \colon X \to (0,\infty)$ such that the functional 
 $$\cE^h \colon L^2(h^2 \mu) \to [0,\infty], \quad \cE^h(f) = \cE(h f)$$ 
 satisfies $\cE^h(Cf) \leq \cE^h(f)$ for all normal contractions and all $f \in L^2(h^2 \mu)$. In the bilinear case it is easy to always construct such a function (for a slightly perturbed form - to be precise). We were not able to establish a similar result for general nonlinear order preserving forms, see also the discussion in Subsection~\ref{subsection:excessive}. Hence, we formulated (B) of the theorem as it is for nonlinear Dirichlet forms only.

 \item For linear Markovian semigroups on $L^p$-spaces associated nonlinear (i.e. non-quadratic) Dirichlet forms on $L^p$ were introduced in \cite{HJ04} and  extended Dirichlet spaces were constructed in \cite{JS10}. While some of the theory developed in this text also applies to functionals on $L^p$, to us the connection of the nonlinear Dirichlet forms on $L^2$ considered here and the ones constructed in \cite{HJ04} is unclear.  The same holds true for the different extended Dirichlet spaces. It may well be that the extended Dirichlet spaces constructed in \cite{JS10} also appear as extended Dirichlet spaces in our setting.

\end{enumerate}

\end{remark}


The following proposition provides basic properties of the extended functional $\cE_e$. To better state it we say that $(f_n)$ in $D(\cE)$ is an {\em approximating sequence} for $f \in L^0(\mu)$ (cf. Definition~\ref{definition:approximating sequence}) if $(f_n)$ is $\av{\cdot}_L$-Cauchy and satisfies $f_n \tom f$. Moreover, the functional $\cE$ is said to satisfy the {\em $\Delta_2$-condition} (cf. Definition~\ref{definition:delta 2 condition}) if there exists a constant $K \geq 0$ such that $\cE(2f) \leq K\cE(f)$ for all $f \in L^2(\mu)$.

\begin{proposition}[Basic properties of extended functionals]\label{proposition:properties of extended forms}
 Let $\cE \colon L^2(\mu) \to [0,\infty]$ be a proper lower semicontinuous convex functional such that $\cE_e$ exists. If $\cE$ is reflexive, then for $f \in L^0(\mu)$ the following holds. 
 \begin{enumerate}[(a)]
  \item There exists an approximating sequence $(f_n)$ for $f$ if and only if $f \in M(\cE_e)$. In this case, $\av{f}_{L_e} = \lim_{n\to \infty} \av{f_n}_L$. 
  \item If $\cE$ satisfies the $\Delta_2$-condition, then $M(\cE_e) = D(\cE_e)$ and $(f_n)$ is an approximating sequence for $f$ if and only if $(f_n)$ is $\cE$-Cauchy and $f_n \tom f$. In this case, 
 $$\cE_e(f) = \lim_{n \to \infty} \cE(f_n).$$
  \item If $w \colon X \to (0,\infty)$ is measurable and $(f_n)$ in $L^2(w\mu)$ such that $f_n \to  f$ weakly in $L^2(w \mu)$, then 
  $$\cE_e(f) \leq \liminf_{n \to \infty} \cE_e(f_n) \text{ and }\av{f}_{L_e} \leq \liminf_{n \to \infty} \av{f_n}_{L_e}.$$
  %
 
%
%
 \end{enumerate}
\end{proposition}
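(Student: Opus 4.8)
The proof rests on the description of $\av{\cdot}_{L_e}$ as the lower semicontinuous relaxation of $\av{\cdot}_L$ along $\tom$ (Proposition~\ref{lemma:extended luxemburg norm}), which gives
$$\av{f}_{L_e} = \inf\Bigl\{\liminf_{n\to\infty}\av{f_n}_L : (f_n)\subseteq M(\cE),\ f_n\tom f\Bigr\},$$
together with the fact that $\cE_e$ and $\av{\cdot}_{L_e}$ are lower semicontinuous with respect to $\tom$. I would begin with part (c), which is a pure weak lower semicontinuity statement and needs no reflexivity. Assuming $\liminf_n\cE_e(f_n)<\infty$, I pass to a subsequence realizing it; since $L^2(w\mu)$ is a Hilbert space the weakly convergent sequence is bounded, so by Mazur's lemma I may choose tail convex combinations $g_N\in\conv\{f_n : n\ge N\}$ with $g_N\to f$ in $L^2(w\mu)$, and after a further subsequence $g_N\to f$ $\mu$-a.e. (using $w>0$), hence $g_N\tom f$. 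Convexity of $\cE_e$ gives $\cE_e(g_N)\le\sup_{n\ge N}\cE_e(f_n)$, whose right-hand side tends to $\liminf_n\cE_e(f_n)$ as $N\to\infty$; the claim follows from lower semicontinuity of $\cE_e$ along $\tom$. The estimate for $\av{\cdot}_{L_e}$ is identical, using convexity of the seminorm in place of that of $\cE_e$.

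For part (a), the implication from an approximating sequence to membership in $M(\cE_e)$ is immediate: if $(f_n)$ is $\av{\cdot}_L$-Cauchy with $f_n\tom f$, then $\lim_n\av{f_n}_L$ exists and the relaxation formula gives $\av{f}_{L_e}\le\lim_n\av{f_n}_L<\infty$, whence $f\in M(\cE_e)$. The substance is the converse together with the equality. Here I would use reflexivity: given $f\in M(\cE_e)$ I take $g_n\in M(\cE)$ with $g_n\tom f$ and $\av{g_n}_L\to\av{f}_{L_e}$, pass to a subsequence converging $\mu$-a.e., and use that the bounded sequence $(g_n)$ has a weakly convergent subsequence in the completion of $(M(\cE),\av{\cdot}_L)$. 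Mazur's lemma then produces tail convex combinations $h_k$ that are $\av{\cdot}_L$-Cauchy with $\limsup_k\av{h_k}_L\le\av{f}_{L_e}$ and, being convex combinations of an a.e.-convergent tail, satisfy $h_k\tom f$ by a Toeplitz-type argument; thus $(h_k)$ is an approximating sequence. To see that \emph{every} approximating sequence $(f_n)$ satisfies $\lim_n\av{f_n}_L=\av{f}_{L_e}$ and not merely $\ge$, I invoke continuity of the embedding $(M(\cE),\av{\cdot}_L)\to L^0(\mu)$ and completeness of $L^0(\mu)$ (Theorem~\ref{theorem:two imply the third} applied to $\cE_e$): these ensure that the completion of $M(\cE)$ embeds injectively into $L^0(\mu)$, so the $\tom$-limit $f$ determines the Cauchy class of $(f_n)$ and hence the value $\lim_n\av{f_n}_L$, which must coincide with $\av{f}_{L_e}$.

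Part (b) is then a consequence of (a) and the standard theory of the $\Delta_2$-condition for semimodulars (cf. Appendix~\ref{appendix:generalized semimodulars}). Under $\Delta_2$ the domain $D(\cE)$ is already a vector space, so $M(\cE)=\lin D(\cE)=D(\cE)$, and the same applies to $\cE_e$, giving $M(\cE_e)=D(\cE_e)$; moreover $\av{\cdot}_L$-Cauchyness is equivalent to $\cE$-Cauchyness, so the characterization of approximating sequences follows from (a). For the formula $\cE_e(f)=\lim_n\cE(f_n)$ I would combine the lower semicontinuity of $\cE_e$ along $\tom$, which yields $\cE_e(f)\le\liminf_n\cE(f_n)$, with the fact that the $\av{\cdot}_{L_e}$-convergence $f_n\to f$ provided by (a) upgrades under $\Delta_2$ to modular convergence $\cE_e(f_n-f)\to 0$ and hence to $\cE_e(f_n)\to\cE_e(f)$.

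I expect the main obstacle to be the reverse inequality in (a): producing from an arbitrary $f\in M(\cE_e)$ a genuinely $\av{\cdot}_L$-Cauchy (not merely $\liminf$-minimizing) approximating sequence, and showing that all such sequences share the limit $\av{f}_{L_e}$. This is exactly where reflexivity enters, through weak compactness and Mazur's lemma, in the same spirit as the Banach--Saks argument in the proof of Theorem~\ref{theorem:existence extended Dirichlet space}.
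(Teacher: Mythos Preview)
Your arguments for (b) and (c) are essentially the paper's: (b) is deferred to the appendix (Theorem~\ref{theorem:description extended space}), and for (c) the paper uses the Banach--Saks theorem in $L^2(w\mu)$ where you use Mazur's lemma, which is an immaterial variation. Your observation that (c) needs no reflexivity is correct.

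There is, however, a genuine gap in your argument for (a). You claim the embedding $(M(\cE),\av{\cdot}_L)\to L^0(\mu)$ is continuous, invoking Theorem~\ref{theorem:two imply the third} applied to $\cE_e$. But that theorem requires \emph{two} of its three hypotheses to conclude the third, and here you only have (i), lower semicontinuity of $\cE_e$. In fact the embedding is \emph{not} continuous in general: for a recurrent (critical) Dirichlet form one has $1\in\ker\av{\cdot}_{L_e}$, so there is a sequence $(e_n)$ in $M(\cE)$ with $\av{e_n}_L\to 0$ and $e_n\tom 1\neq 0$. Thus your injectivity claim for the completion fails exactly when $\ker\av{\cdot}_{L_e}\neq\{0\}$, and this is precisely the nontrivial case.

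The correct argument for uniqueness is simpler and uses only the lower semicontinuity of $\av{\cdot}_{L_e}$ with respect to $\tom$ (which you already have from Proposition~\ref{lemma:extended luxemburg norm}). If $(f_n)$ is any approximating sequence for $f$, then for each fixed $m$ one has $f_n-f_m\tom f-f_m$ as $n\to\infty$, hence
\[
\av{f-f_m}_{L_e}\le\liminf_{n\to\infty}\av{f_n-f_m}_{L_e}=\liminf_{n\to\infty}\av{f_n-f_m}_L,
\]
and the right side tends to $0$ as $m\to\infty$ by the Cauchy property. Thus $\av{f_m}_L=\av{f_m}_{L_e}\to\av{f}_{L_e}$. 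This is how the paper handles it (inside the proof of Theorem~\ref{theorem:description extended space}, via the equivalent statement that $(M(\cE_e),p_h+\av{\cdot}_{L_e})$ is complete).
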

\begin{proof}
 (a) + (b): This is contained in Theorem~\ref{theorem:description extended space}.
 
 (c): We only show the statement for $\cE_e$, the proof for $\av{\cdot}_{L_e}$ follows along the same lines. Without loss of generality we assume $\liminf_{n \to \infty} \cE_e(f_n) = \lim_{n \to \infty} \cE_e(f_n) < \infty$ (else pass to a suitable subsequence). By the Banach-Saks theorem there exists a subsequence $(f_{n_k})$ such that the Césaro-means $g_N = \frac 1 N \sum_{k=1}^N f_{n_k} \to f$ in $L^2(w\mu)$. Since $L^2(w\mu)$ continuously embeds into $L^0(\mu)$, we infer from the lower semicontinuity of $\cE_e$ and its convexity
 \begin{align*}
 \cE_e(f) &\leq \liminf_{N \to \infty} \cE_e(g_N) \leq \liminf_{N \to \infty} \frac 1 N \sum_{k=1}^N \cE_e(f_{n_k}) = \lim_{n \to \infty} \cE_e(f_n). \hfill \qedhere  
 \end{align*}
%
%
%
 %
\end{proof}

\begin{proposition}[Extended forms are compatible with contractions]\label{proposition:compatibility with contractions extended forms}
 Let $\cE \colon L^2(\mu) \to [0,\infty]$ be a proper lower semicontinuous convex functional such that $\cE_e$ exists. 
 \begin{enumerate}[(a)]
%
  \item If $\cE$ is a nonlinear order preserving form, then $D(\cE_e)  \cap L^2(\mu)  = D(\cE)$ and $M(\cE_e) \cap L^2(\mu) = M(\cE)$.
 
  \item If $\cE$ is a nonlinear order preserving form,  then $\cE_e$ satisfies the first Beurling-Deny criterion and $\av{|f|}_{L_e} \leq \av{f}_{L_e}$ for all $f \in M(\cE_e)$.
  \item If $\cE$ is a nonlinear Dirichlet form, then $\cE_e$ satisfies the second Beurling-Deny criterion and $\av{Cf}_{L_e} \leq \av{f}_{L_e}$ for all $f \in M(\cE_e)$ and all normal contractions $C \colon \R \to \R$.
 \end{enumerate}
\end{proposition}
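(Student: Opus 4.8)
The plan is to handle (b) and (c) by transporting the Beurling--Deny criteria from $\cE$ to $\cE_e$ along approximating sequences, and to handle (a) by a two-stage truncation that keeps the approximants inside $D(\cE)$. Throughout I will use that, as a lower semicontinuous relaxation, $\cE_e \le \cE$ on $L^2(\mu)$ and that $\cE_e = \cE$ on $D(\cE)$; in particular $\cE_e$ is again symmetric with $\cE_e(0)=0$, i.e. a generalized semimodular on $L^0(\mu)$. The inclusion $D(\cE)\subseteq D(\cE_e)\cap L^2(\mu)$ is then immediate, and for the modular identity in (a) I would reduce it to the domain identity: if $D(\cE_e)\cap L^2(\mu)=D(\cE)$, then for $f\in L^2(\mu)$ the finiteness $\cE_e(\lambda f)<\infty$ for small $\lambda$ forces $\lambda f\in D(\cE)$, hence $\cE(\lambda f)=\cE_e(\lambda f)\to 0$, giving $M(\cE_e)\cap L^2(\mu)\subseteq M(\cE)$; the reverse inclusion follows at once from $\cE_e\le\cE$.

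For (b) and (c) I would argue identically, once for $x\mapsto|x|$ and once for an arbitrary normal contraction $C$. Given $F,G\in L^0(\mu)$ with $\cE_e(F+G),\cE_e(F-G)<\infty$, pick $u_n\tom F+G$ and $v_n\tom F-G$ in $L^2(\mu)$ with $\cE(u_n)\to\cE_e(F+G)$, $\cE(v_n)\to\cE_e(F-G)$, and set $F_n=\tfrac12(u_n+v_n)$, $G_n=\tfrac12(u_n-v_n)$. By the $1$-Lipschitz continuity of $C$ and of the linear and lattice operations for $\tom$ we have $F_n\pm CG_n\tom F\pm CG$, so lower semicontinuity of $\cE_e$, the bound $\cE_e\le\cE$, and the criterion for $\cE$ on $L^2(\mu)$ give
\[
\cE_e(F+CG)+\cE_e(F-CG)\le\liminf_n\big(\cE(F_n+CG_n)+\cE(F_n-CG_n)\big)\le\cE_e(F+G)+\cE_e(F-G),
\]
which is the first (resp. second) Beurling--Deny criterion for $\cE_e$. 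The seminorm inequalities $\av{|f|}_{L_e}\le\av{f}_{L_e}$ and $\av{Cf}_{L_e}\le\av{f}_{L_e}$ then follow verbatim from the computation in Lemma~\ref{lemma:contraction luxemburg seminorms}, now applied to the symmetric generalized semimodular $\cE_e$: the criterion together with symmetry yields $\cE_e(C_\lambda g)\le\cE_e(g)$ for the rescaled contractions $C_\lambda(x)=\lambda^{-1}C(\lambda x)$, which is exactly what the Luxemburg infimum needs.

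The substantive point is the inclusion $D(\cE_e)\cap L^2(\mu)\subseteq D(\cE)$ in (a). Let $f\in L^2(\mu)$ with $\cE_e(f)<\infty$, and choose $f_n\in D(\cE)$ with $f_n\tom f$, $f_n\to f$ a.e., and $\sup_n\cE(f_n)=:c<\infty$. Truncating $f_n$ directly into the band $[-|f|,|f|]$ produces an $L^2$-convergent sequence but offers no energy control, since submodularity would charge the a priori unknown quantity $\cE(|f|)$. Instead I would truncate into the bands of the approximants themselves: for fixed $k$ put $h^{(k)}_n=(f_n\wedge|f_k|)\vee(-|f_k|)$. Since $\cE(|f_k|)\le\cE(f_k)\le c$ by the first criterion, two applications of submodularity give $\cE(h^{(k)}_n)\le\cE(f_n)+2\cE(|f_k|)\le 3c$, while $|h^{(k)}_n|\le|f_k|\in L^2(\mu)$. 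Letting $n\to\infty$ (dominated convergence plus $L^2$-lower semicontinuity of $\cE$) yields $h_k:=(f\wedge|f_k|)\vee(-|f_k|)\in D(\cE)$ with $\cE(h_k)\le 3c$. Finally $|h_k|\le|f|$ and $h_k\to f$ a.e. as $k\to\infty$, so $h_k\to f$ in $L^2(\mu)$ and lower semicontinuity gives $\cE(f)\le\liminf_k\cE(h_k)\le 3c<\infty$.

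I expect the main obstacle to be precisely this energy control in (a). For a mere order preserving form one has only submodularity at one's disposal, so the truncation cannot be performed at the level $|f|$ (whose energy may be infinite) and must be routed through the domain elements $|f_k|$, with a double passage to the limit in $n$ and then in $k$. For a genuine nonlinear Dirichlet form the same inclusion would be much softer, since $(x\wedge|f|)\vee(-|f|)$ is then a normal contraction and the truncation never raises the energy; the difficulty is exactly that (a) is asserted under the weaker hypothesis. The remaining ingredients --- continuity of contractions for $\tom$ and the reuse of the proof of Lemma~\ref{lemma:contraction luxemburg seminorms} --- are routine.
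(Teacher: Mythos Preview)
Your proof is correct and follows essentially the same route as the paper. In (a) you use the identical key idea of truncating the approximants $f_n$ into the band $[-|f_k|,|f_k|]$ determined by another approximant (so that submodularity only charges $\cE(|f_k|)\le\cE(f_k)$), followed by a double passage to the limit; the paper merely swaps the order of the two limits. For (b) and (c) your argument coincides with the paper's approximating-sequence transport, and your use of $\cE_e\le\cE$ in place of the paper's $\cE_e=\cE$ on $L^2(\mu)$ is a minor streamlining that makes (b)+(c) independent of (a).
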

\begin{proof}
 (a): It suffices to show  $D(\cE) \supseteq D(\cE_e) \cap L^2(\mu)$. For $f \in D(\cE_e) \cap L^2(\mu)$ we choose a sequence $(f_n)$ in $D(\cE)$ with $f_n \tom f$ and $\cE(f_n) \to \cE_e(f)$. By the dominated convergence theorem the limits
 $$f = \lim_{n \to \infty} (f_n \wedge |f|) \vee(-|f|)  =\lim_{n \to \infty} \lim_{m \to \infty} (f_n \wedge |f_m|) \vee(-|f_m|) $$
 hold in $L^2(\mu)$. Hence, the $L^2$-lower semicontinuity of $\cE$ and Proposition~\ref{proposition:contraction properties resolvent} yield 
 \begin{align*}
 \cE(f) &\leq \liminf_{n\to \infty}\liminf_{m\to \infty}\cE((f_n \wedge |f_m|) \vee(-|f_m|))\\
  &\leq \liminf_{n\to \infty}\liminf_{m\to \infty} (\cE(f_n) + 2\cE(f_m))\\
  &= 3 \cE_e(f) < \infty,
 \end{align*}
 showing $f \in D(\cE)$. 
 
 (b) + (c): We first show the statements for $\cE_e$. Let $C$ be a normal contraction such that $\cE(f + Cg) + \cE(f - Cg) \leq \cE(f+g) + \cE(f-g)$ for all $f,g \in L^2(\mu)$. If $f + g,f-g \in D(\cE_e)$ (else there is nothing to show), there exist sequences $(h_n), (\tilde h_n)$ in $D(\cE)$ with $h_n \tom f + g$, $\tilde h_n \tom f - g$, $\cE(h_n) \to \cE_e(f + g)$ and $\cE(\tilde h_n) \to \cE_e(f - g)$, see Proposition~\ref{proposition:lsc relaxation}. Then $f_n = (h_n + \tilde h_n)/2 \tom f$ and $g_n = (h_n - \tilde h_n)/2 \tom g$.  Using $Cg_n \tom Cg$, the lower semicontinuity of $\cE_e$ and $\cE = \cE_e$ on $L^2(\mu)$, we infer 
 \begin{align*}
  \cE_e(f + Cg) + \cE_e(f - Cg) &\leq \liminf_{n \to \infty} (\cE_e(f_n + C g_n)  + \cE_e(f_n - C g_n))\\
  &=    \liminf_{n \to \infty} (\cE(f_n + C g_n)  + \cE(f_n - C g_n))\\
  &\leq \liminf_{n \to \infty} (\cE(f_n + g_n) + \cE(f_n - g_n))\\
  &= \cE_e(f + g) + \cE_e(f- g).
 \end{align*}
  Since $\av{\cdot}_{L_e}$ is the Luxemburg seminorm of $\cE_e$, the statement on $\av{\cdot}_{L_e}$ can be inferred from the Beurling-Deny criteria as in the proof of Lemma~\ref{lemma:contraction luxemburg seminorms}.
\end{proof}

%

\subsection{Excessive functions}\label{subsection:excessive}
In this subsection we briefly discuss excessive functions, which will feature at various places throughout the text.

Let $\cE \colon L^2(\mu) \to [0,\infty]$ be a nonlinear order preserving form. As in the bilinear case we say that a measurable function $h \colon X \to [0,\infty)$ is {\em $\cE$-excessive}, if $G_\alpha (\alpha h) \leq h$ for all $\alpha > 0$. In the following proposition we use the lower derivative $d^+ \cE_e$, see Lemma~\ref{lemma:lower derivative}.

\begin{proposition}[Characterization of excessive functions]\label{proposition:characterization of excessive functions}
Let $\cE \colon L^2(\mu) \to [0,\infty]$ be a nonlinear order preserving form and let $h \colon X \to [0,\infty)$ measurable. 
\begin{enumerate}[(a)]
 \item $h$ is $\cE$-excessive if and only if $\cE(f \wedge h) \leq \cE(f)$ for all $f \in L^2(\mu)$. 
 \item If $\cE_e$ exists, then $h$ is $\cE$-excessive if and only if $\cE_e(f \wedge h) \leq \cE_e(f)$ for all $f \in L^0(\mu)$. 
 \item If $\cE_e$ exists and $h \in D(\cE_e)$, then the following assertions are equivalent: 
 \begin{enumerate}[(i)]
  \item $h$ is $\cE$-excessive.
  \item $\cE_e(h) = \min \{\cE_e(f) \mid f \in L^0(\mu) \text{ with } f \geq h\}$
  \item $d^+\cE_e(h,\varphi) \geq 0$ for all nonnegative $\varphi \in M(\cE_e)$. 
 \end{enumerate}
  If $h \in D(\cE)$, then in (ii) and (iii) $\cE_e$ can be replaced by $\cE$. 
%
%
\end{enumerate} 
\end{proposition}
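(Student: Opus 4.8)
The plan is to prove (a) at the level of $L^2(\mu)$, to bootstrap (a) into (b) via the lower semicontinuity of $\cE_e$, and to deduce (c) formally from (b) together with the submodularity of $\cE_e$. Throughout set $u_\alpha = G_\alpha(\alpha h)$; recall that for $w\in L^2(\mu)$ the element $G_\alpha(\alpha w)$ is the unique minimizer of $g\mapsto\cE(g)+\frac\alpha2\av{g-w}^2$ and that $G_\alpha$ is order preserving (Proposition~\ref{proposition:contraction properties resolvent}).

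For the implication ``$\Leftarrow$'' in (a), fix $\alpha>0$ and approximate $h$ from below by $0\le h_n\nearrow h$ with $h_n\in L^2(\mu)$. For $u := G_\alpha(\alpha h_n)$ the hypothesis applied to $f=u$ gives $\cE(u\wedge h)\le\cE(u)$, while pointwise $\lvert u\wedge h - h_n\rvert\le\lvert u-h_n\rvert$ because $h_n\le h$. Hence $u\wedge h$ is also a minimizer of $g\mapsto\cE(g)+\frac\alpha2\av{g-h_n}^2$ and, by uniqueness, $u\wedge h=u$, i.e.\ $G_\alpha(\alpha h_n)\le h$; letting $n\to\infty$ yields $u_\alpha\le h$, so $h$ is excessive.

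The implication ``$\Rightarrow$'' of (a) is the heart of the matter; I first treat $h\in L^2(\mu)$, so that $u_\alpha\in D(\cE)$ and $g_\alpha := \alpha(h-u_\alpha)\in\partial\cE(u_\alpha)$ is a \emph{nonnegative} subgradient. Fix $f\in L^2(\mu)$ with $\cE(f)<\infty$. Testing the subgradient inequality at $v=f\vee u_\alpha$ gives $\cE(f\vee u_\alpha)\ge\cE(u_\alpha)+\as{g_\alpha,(f-u_\alpha)^+}\ge\cE(u_\alpha)$, and together with the submodularity $\cE(f\wedge u_\alpha)+\cE(f\vee u_\alpha)\le\cE(f)+\cE(u_\alpha)$ this yields $\cE(f\wedge u_\alpha)\le\cE(f)$. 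Since $u_\alpha$ increases in $\alpha$ (resolvent identity and $u_\alpha\le h$), its limit $\hat h := \lim_{\alpha\to\infty}u_\alpha\le h$ exists, and lower semicontinuity gives $\cE(f\wedge\hat h)\le\cE(f)$. To replace $\hat h$ by $h$, bound $\cE(f\vee u_\alpha)-\cE(u_\alpha)\le\cE(f)$ by submodularity; the subgradient inequality then gives $\as{h-u_\alpha,(f-u_\alpha)^+}\le\cE(f)/\alpha$, and since $0\le h-\hat h\le h-u_\alpha$ and $0\le(f-\hat h)^+\le(f-u_\alpha)^+$ we get $\as{h-\hat h,(f-\hat h)^+}\le\cE(f)/\alpha\to 0$. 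Thus $(h-\hat h)(f-\hat h)^+=0$ a.e., i.e.\ $f\le\hat h$ on $\{\hat h<h\}$, so $f\wedge h=f\wedge\hat h$ and therefore $\cE(f\wedge h)\le\cE(f)$. For general measurable $h$ one runs the same scheme through the square-integrable approximants $G_\alpha(\alpha h_n)$ and passes to the limit.

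Given (a), the implication ``$\Rightarrow$'' in (b) follows by relaxation: for $f\in L^0(\mu)$ with $\cE_e(f)<\infty$ pick $f_n\in D(\cE)$ with $f_n\tom f$ and $\cE(f_n)\to\cE_e(f)$; then $f_n\wedge h\in L^2(\mu)$, $f_n\wedge h\tom f\wedge h$, and $\cE(f_n\wedge h)\le\cE(f_n)$ by (a), so the lower semicontinuity of $\cE_e$ gives $\cE_e(f\wedge h)\le\cE_e(f)$. The converse ``$\Leftarrow$'' is immediate by restricting to $f\in L^2(\mu)$ and applying (a). For (c): (i)$\Rightarrow$(ii) is (b) read at $f\ge h$, where $f\wedge h=h$; (ii)$\Rightarrow$(iii) is the nonnegativity of the difference quotients of the convex function $t\mapsto\cE_e(h+t\varphi)$; and for (iii)$\Rightarrow$(i), convexity upgrades the sign of the derivative to $\cE_e(f)\ge\cE_e(h)$ for every $f\ge h$ (taking $\varphi=f-h\in M(\cE_e)$ when $\cE_e(f)<\infty$), whereupon $\cE_e(g\vee h)\ge\cE_e(h)$ and the submodularity of $\cE_e$ (Proposition~\ref{proposition:compatibility with contractions extended forms}) give $\cE_e(g\wedge h)\le\cE_e(g)$ for all $g$, so $h$ is excessive by (b); the replacement of $\cE_e$ by $\cE$ for $h\in D(\cE)$ is immediate since $\cE_e$ extends $\cE$. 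I expect the genuine difficulty to be exactly the limit $\alpha\to\infty$ in ``$\Rightarrow$'' of (a) for non-square-integrable excessive $h$: then $G_\alpha(\alpha h)$ lives only in $L^+(\mu)$, the limit $\hat h$ can truly differ from $h$, and both the subgradient estimates and the identity $f\wedge h=f\wedge\hat h$ have to be secured through the $L^+(\mu)$-approximation rather than directly.
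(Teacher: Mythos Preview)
Your arguments for (b) and (c) agree with the paper's, and your direct proof of the implication ``$\Leftarrow$'' in (a) is correct.  The divergence is in the forward direction of (a), where you work considerably harder than the paper and still leave the gap you yourself flag at the end.

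The paper's proof of (a) is a one-line reduction to the standard characterization of resolvent-invariant closed convex sets.  Since $G_\alpha$ is order preserving, the inequality $G_\alpha(\alpha h)\le h$ is equivalent to $G_\alpha(\alpha K_h)\subseteq K_h$ for the closed convex set $K_h=\{f\in L^2(\mu)\mid f\le h\}$, whose Hilbert-space projection is $P_h f=f\wedge h$.  Then \cite[Theorem~3.4]{CG03} (also used in Proposition~\ref{proposition:contraction properties resolvent}) says that $K_h$ is resolvent-invariant if and only if $\cE(P_h f)\le\cE(f)$ for all $f$.  No case distinction on whether $h\in L^2(\mu)$ is needed, and no limit $\alpha\to\infty$ appears.

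Your subgradient route for $h\in L^2(\mu)$ is correct and rather nice in its own right: you are essentially re-proving for $u_\alpha$ the chain (iii)$\Rightarrow$(ii)$\Rightarrow$(i) of part (c), and the orthogonality estimate $\as{h-\hat h,(f-\hat h)^+}=0$ identifying $f\wedge h$ with $f\wedge\hat h$ is clean.  But for $h\notin L^2(\mu)$ the scheme does not go through as stated.  The approximants $G_\alpha(\alpha h_n)$ carry the subgradient $\alpha(h_n-G_\alpha(\alpha h_n))$, which is \emph{not} known to be nonnegative because the $h_n$ are not themselves excessive; hence the crucial step $\cE(f\vee u_\alpha)\ge\cE(u_\alpha)$ is not available at the approximate level, and neither the inequality $\cE(f\wedge u_\alpha)\le\cE(f)$ nor the orthogonality argument can be secured by ``running the same scheme''.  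This is the real obstacle you anticipate, and it is precisely what the projection/invariance viewpoint of the paper sidesteps.
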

\begin{proof}
 (a): Since $G_\alpha$ is order preserving, $  G_\alpha (\alpha h) \leq h$ is equivalent to the inclusion $G_\alpha (\alpha  K_h) \subseteq K_h$, where $K_h$ is the closed convex cone 
 $$K_h = \{f \in L^2(\mu) \mid f \leq h\}.$$
 Moreover, the Hilbert space projection $P_h$ onto $K_h$ is given by $P_h f = f \wedge h$. With this at hand the statement follows from the standard characterization of the invariance of closed convex sets under the resolvent,  see e.g. \cite[Theorem~3.4]{CG03}. It states that $K_h$ is invariant under the resolvent, i.e., $G_\alpha (\alpha K_h) \subseteq K_h$ for all $\alpha > 0$, if and only if $\cE(P_h f) \leq \cE(f)$, $f \in L^2(\mu)$. This is precisely the statement of (a). 
 
 (b): If $h$ is $\cE$-excessive, the inequality $\cE_e(f \wedge h) \leq \cE_e(f)$ follows from (a) and the lower semicontinuity of $\cE_e$, cf. proof of Proposition~\ref{proposition:compatibility with contractions extended forms}. Now assume that $\cE_e(f \wedge h) \leq \cE_e(f)$ for $f \in L^0(\mu)$. Let $f \in L^2(\mu)$. Without loss of generality we assume $f \in D(\cE)$ (else there is nothing to show). Using $\cE_e(f \wedge h) \leq \cE_e(f) = \cE(f) < \infty$ and $h \geq 0$, we infer $f \wedge h \in D(\cE_e) \cap L^2(\mu)$, which by Proposition~\ref{proposition:compatibility with contractions extended forms} implies $f \wedge h \in D(\cE)$. With this at hand the desired inequality follows from the inequality for $\cE_e$ and the fact that $\cE_e$ and $\cE$ agree on $D(\cE)$. 
 
 (c): (i) $\Rightarrow$ (ii):  Assume that $h$ is excessive and $f \geq h$. Then $h = f \wedge h$ and from (b) we infer $\cE_e(h) = \cE_e(f \wedge h) \leq \cE_e(f)$, showing that $h$ is a minimizer.
 
 (ii) $\Rightarrow$ (i): Assume that $\cE_e(h) = \min \{\cE_e(f) \mid f \in L^0(\mu) \text{ with } f \geq h\}$ holds. The first Beurling-Deny criterion implies 
 $$\cE_e(f \wedge h) + \cE_e(f \vee h) \leq \cE_e(f) + \cE_e(h).$$
 For $f \in D(\cE_e)$ we have $f \vee h \in D(\cE_e)$ and $f \vee h \geq h$. Hence, $\cE_e(f \vee h) \geq \cE_e(h)$ and substracting $\cE_e(f \vee h)$ on both sides of the previous inequality yields $\cE_e(f \wedge h) \leq \cE_e(f)$. 
 
 (ii) $\Rightarrow$ (iii): This is obvious. 
 
 (iii) $\Rightarrow$ (ii): Let $f \in D(\cE_e)$ with $f \geq h$. Then $f-h \geq 0$ with $f-h \in M(\cE_e)$ and (iii) imply $d^+ \cE_e(f,f-h) \geq 0$. According to Lemma~\ref{lemma:lower derivative} we have 
 $$\cE_e(h) + d^+ \cE_e(h,f-h) \leq \cE_e(f)$$
 and arrive at $\cE_e(h) \leq \cE_e(f)$.  
\end{proof}

\begin{remark}\label{remark:excessive functions 1}
 \begin{enumerate}[(a)]
  \item If $\cE$ (and hence also $\cE_e$) is a quadratic form, then $D(\cE_e)$ is a vector space and for $f,g \in D(\cE_e)$ we have
  $$d^+ \cE_e(f,g) = \frac{1}{2} \left(\cE_e(f + g) - \cE_e(f-g) \right).$$
  Hence, $d^+ \cE_e$ equals two times the bilinear form induced by $\cE_e$ on $D(\cE_e) \times D(\cE_e)$ via polarization. In particular, our characterization of excessive functions in $D(\cE_e)$  equals the known one via weakly superharmonic functions, see e.g. \cite[Theorem 2.52]{Schmi3}.
  \item If $\cE$ is a nonlinear Dirichlet form, the compatibility with normal contractions implies that nonnegative constant functions are excessive. The existence of further excessive functions, in particular of excessive functions in $D(\cE_e)$, is discussed in Section~\ref{section:potential theory}.
  \item We consider the functional $\cE_1 \colon L^2(\mu) \to [0,\infty]$, $\cE_1(f) = \cE(f) + \frac{1}{2}\av{f}^2$. If $G_\alpha$, $\alpha > 0$, denotes the resolvent of $\cE$, then it is readily verified that $G_{1 + \alpha}$, $\alpha > 0$, is the resolvent of $\cE_1$, (cf. Lemma~\ref{lemma:perturbed form} below). If $\psi \in L^2(\mu)$ is nonnegative, then $h = G_1 \psi \geq 0$ and the resolvent identity implies $G_{1 + \alpha} (\alpha h) \leq h$, $\alpha > 0$. Hence, $h$ is $\cE_1$-excessive. 
 \end{enumerate}
\end{remark}

Lemma~\ref{lemma:contraction luxemburg seminorms} shows that contractions operate on the Luxemburg seminorm if they operate on $\cE$. However, due to the nonlinearity, a stronger property than excessivity has to be assumed to transfer the inequality $\cE(f \wedge h) \leq \cE(f)$ to the Luxemburg seminorm.

\begin{proposition}
 Let $\cE \colon L^2(\mu) \to [0,\infty]$ be a nonlinear order preserving form and let $h \colon X \to [0,\infty)$ measurable such that for all $\lambda > 0$ the function $\lambda h$ is $\cE$-excessive. Then $\av{f \wedge h}_L \leq \av{f}_L$ for all $f \in M(\cE)$. 
\end{proposition}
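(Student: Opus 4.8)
The plan is to argue directly from the definition of the Luxemburg seminorm, converting the excessivity hypothesis into a statement about $\cE$ via Proposition~\ref{proposition:characterization of excessive functions}(a). The one algebraic identity that drives everything is that multiplication by a positive scalar commutes with the pointwise minimum: for every $\lambda > 0$ we have $\lambda^{-1}(f \wedge h) = (\lambda^{-1}f) \wedge (\lambda^{-1}h)$. This is precisely why the hypothesis asks for $\lambda h$ to be excessive for \emph{all} $\lambda > 0$ rather than just for $h$ itself: rescaling $f$ inside the seminorm forces the truncation level to rescale by the same factor, and in the nonlinear setting excessivity of $h$ does not automatically propagate to its positive multiples.

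First I would fix $f \in M(\cE)$; since $M(\cE) \subseteq L^2(\mu)$ we have $\av{f}_L < \infty$. Let $\lambda > \av{f}_L$ be arbitrary. By the definition of the Luxemburg seminorm, together with convexity and $\cE(0) = 0$ (which make $\{\lambda > 0 \mid \cE(\lambda^{-1}f) \leq 1\}$ upward closed), we have $\cE(\lambda^{-1}f) \leq 1$. Because $\lambda^{-1} > 0$, the function $\lambda^{-1}h$ is $\cE$-excessive by hypothesis, so Proposition~\ref{proposition:characterization of excessive functions}(a) gives $\cE(g \wedge (\lambda^{-1}h)) \leq \cE(g)$ for all $g \in L^2(\mu)$.

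Next I would apply this inequality with $g = \lambda^{-1}f$ and invoke the scaling identity to obtain
$$\cE(\lambda^{-1}(f \wedge h)) = \cE\bigl((\lambda^{-1}f) \wedge (\lambda^{-1}h)\bigr) \leq \cE(\lambda^{-1}f) \leq 1.$$
By the definition of the Luxemburg seminorm this shows $\av{f \wedge h}_L \leq \lambda$ (in particular $f \wedge h \in M(\cE)$, so the left-hand side is meaningful). Finally I would let $\lambda$ decrease to $\av{f}_L$, which yields $\av{f \wedge h}_L \leq \av{f}_L$, as claimed.

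There is no serious technical obstacle in the argument itself; the only point that requires care is the conceptual one flagged in the preceding remark, namely that the Luxemburg seminorm evaluates $\cE$ on rescaled arguments, so one genuinely needs excessivity available at every scale $\lambda^{-1}h$. Once the scaling identity is in place, the statement reduces immediately to the characterization of excessivity in part~(a), and the nonlinearity causes no further difficulty.
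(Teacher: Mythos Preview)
Your proof is correct and follows essentially the same approach as the paper's: both use the scaling identity $\lambda^{-1}(f \wedge h) = (\lambda^{-1}f) \wedge (\lambda^{-1}h)$ together with the excessivity of $\lambda^{-1}h$ (via Proposition~\ref{proposition:characterization of excessive functions}(a)) to show that every $\lambda$ admissible for $f$ in the Luxemburg infimum is also admissible for $f \wedge h$. The only cosmetic difference is that the paper works directly with any $\lambda$ satisfying $\cE(\lambda^{-1}f) \leq 1$, while you take $\lambda > \av{f}_L$ and pass to the limit; these are equivalent.
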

\begin{proof}
 Let $\lambda > 0$ such that $\cE(\lambda^{-1} f) \leq 1$. Using Proposition~\ref{proposition:characterization of excessive functions} and that $\lambda^{-1}h$ is $\cE$-excessive, we arrive at 
 $$\cE(\lambda^{-1}(f \wedge h)) = \cE((\lambda^{-1}f) \wedge (\lambda^{-1}h)) \leq \cE(\lambda^{-1} f) \leq 1.$$
 This shows $\av{f \wedge h}_L \leq \av{f}_L$.
\end{proof}

\begin{remark}\label{remark:excessive functions 2}
\begin{enumerate}[(a)]
  \item Assume that for some $1 \leq p < \infty$ the functional $\cE$ is $p$-homogeneous, i.e., $\cE(\lambda f) = |\lambda|^p \cE(f)$ for all $\lambda  \in \R$ and $f \in L^2(\mu)$.  In this case, $h$ is $\cE$-excessive if and only if $\lambda h$ is $\cE$-excessive for one/any $\lambda > 0$ and the statement of the previous proposition holds for all $\cE$-excessive functions. Indeed, in the $p$-homogeneous case we have $\av{f}_L = \cE(f)^{1/p}$ and so the inequality for the Luxemburg seminorm is trivial. 
 \item Functions $h \colon X \to [0,\infty)$ with $\av{f \wedge h}_L \leq \av{f}_L$, $f \in M(\cE)$, play a role below, see Remark~\ref{remark:weak hardy and poincare}, but we do not have any good existence result for them (except in the homogeneous case).   For proving existence of excessive functions in Section~\ref{section:potential theory} we use their characterization as minimizers given in Proposition~\ref{proposition:characterization of excessive functions}~(c). We do not know whether a similar characterization holds for functions $h$ with $\av{f \wedge h}_L \leq \av{f}_L$, $f \in M(\cE)$. The reason is that the crucial inequality $\cE(f\wedge h) + \cE(f \vee h) \leq \cE(f)  + \cE(h)$ that is used in the proof of  Proposition~\ref{proposition:characterization of excessive functions}~(c) need not hold for the Luxemburg seminorm - indeed it even fails for most quadratic forms.
 
 \item For bilinear order preserving forms excessive functions play a major role. As discussed already in Remark~\ref{remark:existence extended form}, if $\cE$ is an order preserving quadratic form and $h$ is $\cE$-excessive, then the $h$-transform $\cE^h \colon L^2(h^2\mu) \to [0,\infty]$, $\cE^h(f) = \cE(hf)$ is a bilinear Dirichlet form. Hence, with this transformation, many results for bilinear Dirichlet forms can be transferred to bilinear order preserving forms. In the nonlinear case this does not seem to hold. Excessivity of $h$ is not enough to ensure that $\cE^h$ is a nonlinear Dirichlet form and,  similar to the previous remark, general existence results for functions $h$ for which $\cE^h$ is a nonlinear Dirichlet form remain unclear. This is the main reason why below we state some results for nonlinear order preserving forms, while others are only stated for nonlinear Dirichlet forms.  
\end{enumerate}

\end{remark}

\subsection{Invariant sets and irreducibility} In this subsection we study the kernel of the Luxemburg seminorm of extended Dirichlet forms with the help of invariant sets. 

Let $\cE \colon L^2(\mu) \to [0,\infty]$ be proper, lower semicontinuous and convex. A measurable set $A \subseteq X$ is called {\em $\cE$-invariant} if 
$$\cE(1_A f) \leq \cE(f), \quad f \in  L^2(\mu).$$
We call $\cE$ {\em irreducible} if every $\cE$-invariant set is null or co-null.

The collection of all $\cE$-invariant sets is denoted by $\mathfrak B_{\rm inv}$ and we call 
$$\fA_{\rm inv} = \{A \subseteq X \mid A \in \mathfrak B_{\rm inv} \text{ or } X \setminus A \in \mathfrak B_{\rm inv}\}$$
the {\em invariant $\sigma$-algebra} of $\cE$, which is indeed a $\sigma$-algebra by the following lemma.

\begin{lemma}\label{lemma:invariant sigma algebra}
 $\fA_{\rm inv}$ is the $\sigma$-algebra generated by $\mathfrak B_{\rm inv}$.  
\end{lemma}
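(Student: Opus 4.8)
The plan is to prove the two inclusions $\fA_{\rm inv}\subseteq\sigma(\mathfrak B_{\rm inv})$ and $\sigma(\mathfrak B_{\rm inv})\subseteq\fA_{\rm inv}$ separately. The first is immediate: if $A\in\fA_{\rm inv}$ then $A\in\mathfrak B_{\rm inv}$ or $X\setminus A\in\mathfrak B_{\rm inv}$, and since $\sigma$-algebras are closed under complements, $A$ lies in $\sigma(\mathfrak B_{\rm inv})$ in either case. For the reverse inclusion it suffices to verify that $\fA_{\rm inv}$ is itself a $\sigma$-algebra containing $\mathfrak B_{\rm inv}$, because $\sigma(\mathfrak B_{\rm inv})$ is the smallest such collection. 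The inclusion $\mathfrak B_{\rm inv}\subseteq\fA_{\rm inv}$ and the membership $X\in\mathfrak B_{\rm inv}$ (since $\cE(1_X f)=\cE(f)$) are clear, and $\fA_{\rm inv}$ is closed under complements by the very shape of its definition. Everything therefore reduces to stability of $\fA_{\rm inv}$ under countable unions.

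The first genuine ingredient is the lattice behaviour of $\mathfrak B_{\rm inv}$. Finite intersections are handled by the factorization $1_{A\cap B}f=1_A(1_B f)$: for $A,B\in\mathfrak B_{\rm inv}$ and $f\in L^2(\mu)$ one gets $\cE(1_{A\cap B}f)=\cE(1_A(1_B f))\le\cE(1_B f)\le\cE(f)$, so $A\cap B\in\mathfrak B_{\rm inv}$. Countable intersections then follow from lower semicontinuity: with $B_N=\bigcap_{n\le N}A_n$ one has $1_{B_N}f\to 1_{\bigcap_n A_n}f$ in $L^2(\mu)$ (the integrands are dominated by $|f|$), hence $\cE(1_{\bigcap_n A_n}f)\le\liminf_N\cE(1_{B_N}f)\le\cE(f)$. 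Thus $\mathfrak B_{\rm inv}$ is a $\cap$-stable system, closed under countable intersections and containing $\emptyset$ and $X$, and dually $\{A : X\setminus A\in\mathfrak B_{\rm inv}\}$ is closed under countable unions.

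It is convenient to organize this through the set of admissible multipliers $\mathcal G=\{m\in L^\infty(\mu) : \cE(mf)\le\cE(f)\text{ for all }f\in L^2(\mu)\}$, for which $A\in\mathfrak B_{\rm inv}$ precisely when $1_A\in\mathcal G$. One checks directly that $\mathcal G$ is convex and symmetric, contains $0$ and $1$, and is closed under pointwise multiplication (as $\cE(m_1m_2f)\le\cE(m_2f)\le\cE(f)$); closure under multiplication re-proves intersection stability, while convexity and symmetry show, for $A,B\in\mathfrak B_{\rm inv}$, that $\tfrac13 1_{A\cup B}=\tfrac13(1_A+1_B-1_{A\cap B})$ is a convex combination of $1_A$, $1_B$ and $-1_{A\cap B}$ and hence again belongs to $\mathcal G$.

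The main obstacle is precisely the passage from this to union stability of $\fA_{\rm inv}$. Writing $\bigcup_n A_n=\bigl(\bigcap_n(X\setminus A_n)\bigr)^c$ and splitting the indices according to whether $A_n$ or $X\setminus A_n$ is invariant, the indices of the second kind are harmless, since their complements are invariant and intersection stability applies; but the mixed contribution forces one to control the union of an invariant set with the complement of an invariant set, and ultimately to decide, for $A,B\in\mathfrak B_{\rm inv}$, whether $A\cup B$ or its complement $X\setminus(A\cup B)$ is invariant. Here the bare convexity estimate only places $\tfrac13 1_{A\cup B}$ in $\mathcal G$ rather than $1_{A\cup B}$ itself, so the nonlinearity genuinely enters, and I expect this to be the crux of the proof. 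Resolving it should use the order and contraction structure of $\cE$ beyond convexity — for instance the resolvent characterization of invariance, by which $A\in\mathfrak B_{\rm inv}$ corresponds to the subspace $\{f : 1_{X\setminus A}f=0\}$ being left invariant by every $G_\alpha$, together with the behaviour of $G_\alpha$ on functions supported on the disjoint pieces of $A$ and $B$. Once union stability of $\fA_{\rm inv}$ is secured, closure under countable unions follows by approximating $\bigcup_n A_n$ through the finite unions $\bigcup_{n\le N}A_n$ and invoking lower semicontinuity exactly as in the intersection step, which finishes the verification that $\fA_{\rm inv}$ is a $\sigma$-algebra.
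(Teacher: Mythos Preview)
Your proposal is incomplete, and you are explicit about this: you reduce the problem to showing that $\fA_{\rm inv}$ is closed under countable intersections (equivalently unions), you correctly establish that $\mathfrak B_{\rm inv}$ is closed under countable intersections via $1_{\bigcap_n A_n}f=\lim_N\prod_{n\le N}1_{A_n}f$ and lower semicontinuity of $\cE$, and then you stop short of the mixed case where some $A_n$ lie in $\mathfrak B_{\rm inv}$ and others only satisfy $X\setminus A_n\in\mathfrak B_{\rm inv}$. The detour through the multiplier set $\mathcal G$ and the convexity estimate $\tfrac13 1_{A\cup B}\in\mathcal G$ does not advance this; as you note, it only places a scaled indicator in $\mathcal G$, which is useless in the absence of homogeneity. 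The resolvent idea you sketch at the end is not carried out.

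What is worth pointing out is that the paper's own proof consists of precisely the step you already have: after observing that $\fA_{\rm inv}$ contains $\emptyset,X$ and is closed under complements, it asserts that closure under countable intersections ``follows from $1_{\bigcap_{n}A_n}f=\lim_N\prod_{n\le N}1_{A_n}f$ and the lower semicontinuity of $\cE$.'' Read literally, this argument only shows that $\mathfrak B_{\rm inv}$ is intersection-stable, not $\fA_{\rm inv}$ --- exactly the gap you flagged. So you have not taken a wrong turn away from the paper's route; you have put your finger on a point the paper passes over in one line. Either the authors are tacitly assuming (or regard as obvious) that $\mathfrak B_{\rm inv}$ is already closed under complements under the standing hypotheses --- in which case $\fA_{\rm inv}=\mathfrak B_{\rm inv}$ and everything collapses to the one-line argument --- or the paper's proof shares the lacuna you identified. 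In either case, the intended argument is not the elaborate resolvent analysis you anticipate; if the lemma is to be saved at this level of generality, the missing ingredient is the complement-closure of $\mathfrak B_{\rm inv}$, and that is where you should focus.
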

\begin{proof}
 Clearly, $\emptyset, X \in \fA_{\rm inv}$ and $\fA_{\rm inv}$ is stable under taking complements. Hence, it remains to prove that $\fA_{\rm inv}$ is stable under taking countable intersections. This however follows from $1_{\bigcap_{n =1}^\infty A_n} f = \lim_{N \to \infty} \prod_{n = 1}^N 1_{A_n} f$ and the lower semicontinuity of $\cE$. 
\end{proof}

The operator of multiplication by $1_A$ is the orthogonal projection to the closed subspace 
$$I_A = \{f \in L^2(\mu) \mid f = 0 \text{ a.s.\@ on } X \setminus A\}.$$
It is known that the projection onto $I_A$ reduces the value of $\cE$ if and only if $G_\alpha (\alpha \cdot)$, $\alpha > 0$, leaves $I_A$ invariant, see e.g. \cite[Theorem~3.4]{CG03}. Hence, $A$ is $\cE$-invariant if and only if $G_\alpha (\alpha I_A) \subseteq I_A$, $\alpha >0$, which is equivalent to 
$$G_\alpha(1_A f) = 1_A G_\alpha (1_A f), \quad\alpha > 0,\, f \in L^2(\mu).$$

\begin{theorem}\label{theorem:kernel irreducibility}
 Let $\cE$ be a nonlinear Dirichlet form. 
 \begin{enumerate}[(a)]
  \item For every $h \in \ker \av{\cdot}_{L_e}$ and $\alpha \geq 0$ the set $\{h > \alpha\}$ is $\cE$-invariant.
  \item If $\cE$ is irreducible, then $\ker \av{\cdot}_{L_e} \subseteq \R \cdot 1$. 
 \end{enumerate}
\end{theorem}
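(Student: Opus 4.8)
The plan is to prove (a) in two stages — first show that the indicator $1_{\{h > \alpha\}}$ itself lies in $\ker \av{\cdot}_{L_e}$, then deduce genuine $\cE$-invariance from this — and to obtain (b) by a measure-theoretic threshold argument combining the level sets of $h$ and of $-h$.

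For the first stage of (a), fix $h \in \ker \av{\cdot}_{L_e}$ and $\alpha \geq 0$, write $A = \{h > \alpha\}$, and for $t > 0$ consider $C_t \colon \R \to \R$, $C_t(x) = \min\{1,(x - t\alpha)^+\}$. Since $\alpha \geq 0$ we have $t\alpha \geq 0$ and hence $C_t(0) = 0$, so $C_t$ is a normal contraction. By Proposition~\ref{proposition:compatibility with contractions extended forms}~(c), $\av{C_t(th)}_{L_e} \leq \av{th}_{L_e} = t\av{h}_{L_e} = 0$. As $t \to \infty$ the functions $C_t(th)$ converge pointwise, and (being bounded by $1$) also with respect to local convergence in measure, to $1_A$. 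Since $\av{\cdot}_{L_e}$ is lower semicontinuous for $\tom$, this yields $\av{1_A}_{L_e} = 0$, equivalently $\cE_e(t 1_A) = 0$ for all $t \in \R$.

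For the second stage I would use the first Beurling–Deny criterion for $\cE_e$ (Proposition~\ref{proposition:compatibility with contractions extended forms}~(b)), which, upon inserting $(u \pm v)/2$, is exactly the lattice inequality $\cE_e(u \wedge v) + \cE_e(u \vee v) \leq \cE_e(u) + \cE_e(v)$. Applying it twice, first with $v = N 1_A$ and then with $v = -N 1_A$ (using $\cE_e(\pm N 1_A) = 0$), gives $\cE_e(f_N) \leq \cE_e(f)$ for $f_N := (f \wedge N 1_A) \vee (-N 1_A)$ and every $N \in \N$. One checks that $f_N = 1_A\, ((f \wedge N) \vee (-N))$, so $f_N \to 1_A f$ pointwise with $|f_N| \leq |f|$; dominated convergence and the lower semicontinuity of $\cE_e$ then give $\cE_e(1_A f) \leq \cE_e(f)$. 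For $f \in L^2(\mu)$ both sides coincide with $\cE$, so $\cE(1_A f) \leq \cE(f)$, i.e.\@ $A$ is $\cE$-invariant.

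For (b), note first that $-h \in \ker \av{\cdot}_{L_e}$ by symmetry of the seminorm, so by (a) every set $\{h > \alpha\}$ with $\alpha \geq 0$ and every set $\{h < \beta\}$ with $\beta \leq 0$ is $\cE$-invariant, hence null or co-null by irreducibility. Because the family $(\{h > \alpha\})_{\alpha \geq 0}$ is nested and $h$ is finite a.s., there is a threshold $s \geq 0$ such that $\{h > \alpha\}$ is co-null for $\alpha < s$ and null for $\alpha > s$; letting $\alpha \nearrow s$ and $\alpha \searrow s$ forces $h = s$ a.s.\@ whenever $s > 0$. If the threshold is $0$ one only obtains $h \leq 0$ a.s., and the analogous argument applied to $-h$ (whose level sets $\{h < \beta\}$, $\beta \leq 0$, are controlled) pins $h$ down from below, giving $h \in \R \cdot 1$ in all cases. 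The main obstacle is the second stage of (a): the kernel condition $1_A \in \ker \av{\cdot}_{L_e}$ only controls $\cE_e$ on multiples of $1_A$, and converting it into the full invariance inequality $\cE(1_A f) \leq \cE(f)$ for arbitrary signed $f$ requires the two-sided truncation $f_N$ together with the lattice inequality; relatedly, since $C_t$ is a normal contraction only for $\alpha \geq 0$, part (b) genuinely needs the level sets of both $h$ and $-h$.
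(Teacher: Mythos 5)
Your proof is correct and follows essentially the same route as the paper: in (a) the paper likewise feeds normal contractions of multiples of $h$ (namely $h_n = (n(h-\alpha)_+)\wedge 1$, which satisfy $\cE_e(h_n)=0$) into the lattice inequality coming from the first Beurling--Deny criterion for $\cE_e$, truncates $f$ accordingly, and concludes via lower semicontinuity and $\cE = \cE_e$ on $L^2(\mu)$, while in (b) it derives constancy from the same null/co-null dichotomy of the level sets (reducing to $h \geq 0$ via $h_+, h_-$ and the contraction property of $\av{\cdot}_{L_e}$, rather than invoking the level sets of $-h$ as you do). Your only reorganization --- isolating the intermediate claim $\av{1_{\{h>\alpha\}}}_{L_e}=0$ and then sending the truncation height $N \to \infty$ --- is sound and, if anything, handles unbounded $f \in L^2(\mu)$ slightly more transparently than the paper's truncation, whose cap at height $1$ would otherwise have to be removed by an additional limit.
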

\begin{proof}
(a): The assumption $\av{h}_{L_e} = 0$ implies $\cE_e(nh) = 0$ for all $n \geq 1$, see Lemma~\ref{lemma:equivalent luxemburg norms}. Now consider the functions $h_n = (n(h-\alpha)_+) \wedge 1$. These are normal contractions of $nh$ and hence satisfy $\cE_e(h_n) \leq \cE_e(nh)=0$. For $f \in L^2(\mu)$ we have $f 1_{\{h > \alpha\}} = \lim_{n \to \infty} (f \wedge h_n) \vee (-h_n)$ in $L^2(\mu)$. Moreover, 
since  $D(\cE_e) \cap L^2(\mu) = D(\cE)$, see Proposition~\ref{proposition:compatibility with contractions extended forms}, we have $(f \wedge h_n)\vee(-h_n) \in D(\cE)$. Using the $L^2$-lower semicontinuity of $\cE$ and that $\cE_e$ satisfies the first Beurling-Deny criterion, we infer
\begin{align*}
 \cE(f 1_{\{h>\alpha\}}) &\leq \liminf_{n\to \infty} \cE((f \wedge h_n)\vee(-h_n)) \\
 &=  \liminf_{n\to \infty} \cE_e((f \wedge h_n)\vee(-h_n)) \\
 &\leq \liminf_{n\to \infty} (\cE_e(f) + 2 \cE_e(h_n))\\
 &= \cE(f). 
\end{align*}
For the last equality we again used $\cE = \cE_e$ on $L^2(\mu)$. 

(b): Let $\cE$ be irreducible and let $\av{h}_{L_e} = 0$. The compatibility of $\av{\cdot}_{L_e}$ with normal contractions discussed in Proposition~\ref{proposition:compatibility with contractions extended forms} shows $\av{h_+}_{L_e} = \av{h_-}_{L_e} = 0$, and so we can assume $h \geq 0$. For any $\alpha \geq 0$ the invariance of $\{h > \alpha\}$ and the irreducibility of $\cE$ yield $\mu(\{h > \alpha\}) = 0$ or $\mu(\{h \leq \alpha\}) = 0$. This can only hold for constant functions.  
\end{proof}

\begin{lemma}\label{lemma:invariance and extended form}
 Let $\cE \colon L^2(\mu) \to [0,\infty]$ be a nonlinear order preserving form for which $\cE_e$ exists. Then $A \subseteq X$ is $\cE$-invariant if and only if $\cE_e(1_A f) \leq \cE_e(f)$ for all $f \in L^0(\mu)$.  
\end{lemma}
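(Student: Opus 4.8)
The plan is to prove the two implications separately, following the template of Proposition~\ref{proposition:characterization of excessive functions}~(b): the direction from $L^0(\mu)$ back to $\cE$-invariance is immediate by restriction, while the substantial direction combines an approximation by elements of $D(\cE)$ with the lower semicontinuity of $\cE_e$.

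First I would dispose of the easy implication. Suppose $\cE_e(1_A f) \leq \cE_e(f)$ for every $f \in L^0(\mu)$. Since $\cE_e$ is an extension of $\cE$, the two functionals agree on $L^2(\mu)$, and $1_A f \in L^2(\mu)$ whenever $f \in L^2(\mu)$. Restricting the assumed inequality to such $f$ then reads $\cE(1_A f) \leq \cE(f)$, which is precisely $\cE$-invariance of $A$.

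For the converse I would assume $A$ is $\cE$-invariant and fix $f \in L^0(\mu)$, where I may take $\cE_e(f) < \infty$ (otherwise nothing is to be shown). By the defining property of the lower semicontinuous relaxation (Proposition~\ref{proposition:lsc relaxation}) I would pick a sequence $(f_n)$ in $D(\cE)$ with $f_n \tom f$ and $\cE(f_n) \to \cE_e(f)$. Because multiplication by $1_A$ preserves almost sure convergence along subsequences, one gets $1_A f_n \tom 1_A f$; and since $1_A f_n \in L^2(\mu)$, invariance together with $\cE_e = \cE$ on $L^2(\mu)$ gives $\cE_e(1_A f_n) = \cE(1_A f_n) \leq \cE(f_n)$. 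Invoking the lower semicontinuity of $\cE_e$ with respect to local convergence in measure then yields
$$\cE_e(1_A f) \leq \liminf_{n \to \infty} \cE_e(1_A f_n) \leq \liminf_{n \to \infty} \cE(f_n) = \cE_e(f),$$
as required.

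I do not anticipate a genuine obstacle here: the argument is essentially the same as the one used for excessive functions and for the Beurling-Deny criteria of $\cE_e$. The only two points I would check carefully are that the recovery sequence can be taken inside $D(\cE)$ — so that the invariance inequality, which is a statement on $L^2(\mu)$, actually applies to $1_A f_n$ — and that multiplication by the indicator $1_A$ is continuous for local convergence in measure. Both are routine, and it is worth noting that this proof uses neither Beurling-Deny criterion directly, only the invariance inequality on $L^2(\mu)$ and the lower semicontinuity of the extension.
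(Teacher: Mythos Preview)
Your forward direction (from $\cE$-invariance to the $\cE_e$-inequality) matches the paper's argument exactly.

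The backward direction has a gap precisely at the point you dismiss as routine. You write that since $\cE_e$ is an extension of $\cE$, the two agree on $L^2(\mu)$. But the paper's notion of extension only guarantees $\cE_e = \cE$ on $D(\cE)$, not on all of $L^2(\mu)$; in general the lower semicontinuous relaxation with respect to local convergence in measure can enlarge the effective domain even inside $L^2(\mu)$. Concretely, for $f \in D(\cE)$ the hypothesis gives $\cE_e(1_A f) \leq \cE_e(f) = \cE(f) < \infty$, so $1_A f \in D(\cE_e) \cap L^2(\mu)$; but to pass from $\cE_e(1_A f)$ to $\cE(1_A f)$ you need $1_A f \in D(\cE)$, and that is \emph{not} automatic. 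The paper closes this by invoking Proposition~\ref{proposition:compatibility with contractions extended forms}(a), which says $D(\cE_e) \cap L^2(\mu) = D(\cE)$ for order-preserving forms --- and the proof of that proposition uses the first Beurling-Deny criterion (sandwiching $f_n$ between $-|f|$ and $|f|$ to recover $L^2$-convergence, then $L^2$-lower semicontinuity). So your closing remark that the argument avoids both Beurling-Deny criteria is not quite right: the backward implication does rely on order preservation, through this membership step.
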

\begin{proof}
 First assume that $A$ is $\cE$-invariant. For $f \in D(\cE_e)$ we choose a sequence $(f_n)$ in $D(\cE)$ with $f_n  \tom f$ and $\cE(f_n) \to \cE_e(f)$. The lower semicontinuity of $\cE_e$  and $1_A f_n \in D(\cE)$, which follows from $\cE(1_A f_n) \leq \cE(f_n)$, imply 
 $$\cE_e(1_A f) \leq \liminf_{n \to \infty} \cE(1_A f_n) \leq \liminf_{n \to \infty} \cE(f_n) = \cE_e(f).$$

 Now assume $\cE_e(1_A f) \leq \cE_e(f)$ for all $f \in L^0(\mu)$.
 It suffices to show $\cE(1_A f) \leq \cE(f)$ for $f \in D(\cE)$.
 Since $D(\cE) = D(\cE_e) \cap L^2(\mu)$ by Proposition~\ref{proposition:compatibility with contractions extended forms}, the inequality $\cE_e(1_A f) \leq \cE_e(f)$ implies $1_A f \in D(\cE)$ for such $f$.
 Since $\cE$ and $\cE_e$ agree on $D(\cE)$, we infer $\cE(1_A f) \leq \cE(f)$.
\end{proof}

\begin{corollary}
Let $\cE$ be a nonlinear Dirichlet form. If $\av{1}_{L_e} = 0$, then $\ker \av{\cdot}_{L_e} = L^0(X,\fA_{\rm inv},\mu)$. In particular, $\ker \av{\cdot}_{L_e} = \R \cdot 1$ implies that $\cE$ is irreducible.  
\end{corollary}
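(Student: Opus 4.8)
The plan is to prove the two inclusions of the identity $\ker \av{\cdot}_{L_e} = L^0(X,\fA_{\rm inv},\mu)$ separately and then read off the irreducibility statement as a corollary. Note first that the hypothesis $\av{1}_{L_e} = 0$ is forced in the ``in particular'' part (since $\R \cdot 1 \subseteq \ker \av{\cdot}_{L_e}$ means $1$ lies in the kernel), so it suffices to establish the set identity under this assumption and then specialize.

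For the inclusion $\ker \av{\cdot}_{L_e} \subseteq L^0(X,\fA_{\rm inv},\mu)$ I would start from $h \in \ker \av{\cdot}_{L_e}$. By the compatibility of $\av{\cdot}_{L_e}$ with normal contractions (Proposition~\ref{proposition:compatibility with contractions extended forms}) the parts $h_+$ and $h_-$ again lie in the kernel. Applying Theorem~\ref{theorem:kernel irreducibility}(a) to $h_+$ shows that $\{h_+ > \alpha\}$ is $\cE$-invariant for every $\alpha \geq 0$, while for $\alpha < 0$ this superlevel set equals $X$; hence every superlevel set of $h_+$ lies in $\fA_{\rm inv}$ and $h_+$ is $\fA_{\rm inv}$-measurable. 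The same reasoning applies to $h_-$, so $h = h_+ - h_-$ is $\fA_{\rm inv}$-measurable.

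For the reverse inclusion I would build up from indicators. If $A \in \mathfrak B_{\rm inv}$, then Lemma~\ref{lemma:invariance and extended form} applied with $f = t\cdot 1$ gives $\cE_e(t\, 1_A) \leq \cE_e(t\cdot 1) = 0$ for all $t > 0$ (using $\av{1}_{L_e} = 0$ together with Lemma~\ref{lemma:equivalent luxemburg norms}), whence $\av{1_A}_{L_e} = 0$. For a general $A \in \fA_{\rm inv}$ either $A$ or $X \setminus A$ is invariant, and in the second case $\av{1_A}_{L_e} \leq \av{1}_{L_e} + \av{1_{X \setminus A}}_{L_e} = 0$ by the triangle inequality; thus every $\fA_{\rm inv}$-indicator, and by the triangle inequality every simple $\fA_{\rm inv}$-measurable function, lies in the kernel. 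Given a nonnegative $\fA_{\rm inv}$-measurable $h$, I would pick simple $\fA_{\rm inv}$-measurable functions $s_n \nearrow h$; then $s_n \tom h$, and the lower semicontinuity of $\av{\cdot}_{L_e}$ with respect to local convergence in measure forces $\av{h}_{L_e} \leq \liminf_{n} \av{s_n}_{L_e} = 0$. The general case follows by splitting $h = h_+ - h_-$. This passage from indicators to arbitrary measurable functions is the delicate step, and it is exactly here that the lower semicontinuity of the extended Luxemburg seminorm under local convergence in measure is indispensable.

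Finally, for the consequence, assume $\ker \av{\cdot}_{L_e} = \R \cdot 1$. Then $\av{1}_{L_e} = 0$, so the identity just proved yields $L^0(X,\fA_{\rm inv},\mu) = \R \cdot 1$. Any $\cE$-invariant set $A$ satisfies $1_A \in L^0(X,\fA_{\rm inv},\mu) = \R \cdot 1$, so $1_A$ is $\mu$-a.s.\ constant; since it takes values in $\{0,1\}$, the set $A$ is null or co-null, and hence $\cE$ is irreducible.
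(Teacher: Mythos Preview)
Your proof is correct and follows essentially the same route as the paper: both directions use Theorem~\ref{theorem:kernel irreducibility}(a) and Lemma~\ref{lemma:invariance and extended form} in the same way, and the passage to general $\fA_{\rm inv}$-measurable functions via simple approximations and lower semicontinuity of $\av{\cdot}_{L_e}$ matches the paper's density argument. The only cosmetic differences are that the paper reduces to $h \geq 0$ before invoking Theorem~\ref{theorem:kernel irreducibility}, and deduces $1_{X\setminus A} \in \ker\av{\cdot}_{L_e}$ from linearity of the kernel rather than the case distinction on $\fA_{\rm inv}$.
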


\begin{proof}
By definition the identity $\av{1}_{L_e} = 0$ implies $\cE_e(n) = 0$ for all $n \in \N$.  Assume that $A \subseteq X$ is $\cE$-invariant. From Lemma~\ref{lemma:invariance and extended form} we infer $\cE_e(n 1_A) \leq \cE_e(n) = 0$ for $n \in \N$, showing $\av{1_A}_{L_e} = 0$. Since the kernel of the extended Luxemburg seminorm is a vector space and $1_A, 1 \in \ker \av{\cdot}_{L_e}$, we infer $1_{X \setminus A}\in \ker \av{\cdot}_{L_e}$. This implies that simple functions of the form $\sum_{k = 1}^N \alpha_k 1_{A_k}$ with $\alpha_k \in \R$, $A_k \in \fA_{\rm inv}$, belong to $\ker \av{\cdot}_{L_e}$. Such functions are dense in $L^0(X,\fA_{\rm inv},\mu)$ with respect to local convergence in measure. Hence, the lower semicontinuity of $\av{\cdot}_{L_e}$ implies 
$\ker \av{\cdot}_{L_e} \supseteq L^0(X,\fA_{\rm inv},\mu).$
 
Now assume that $\av{h}_{L_e} = 0$. The compatibility with normal contractions implies $\av{h_+}_{L_e}  = \av{h_-}_{L_e} = 0$ and so we can assume $h \geq 0$. Theorem~\ref{theorem:kernel irreducibility} implies that for $\alpha \geq 0$ the set $\{h > \alpha\}$ is $\cE$-invariant. This implies $h \in L^0(X,\fA_{\rm inv},\mu)$.

Now assume that $\ker \av{\cdot}_{L_e} = \R \cdot 1$. What we already proved yields the identity $L^0(X,\fA_{\rm inv},\mu) = \R \cdot 1$. This implies that each $A \in \fA_{\rm inv}$ is null or co-null, showing the irreducibility of $\cE$.
\end{proof}

%


\section{Criticality theory}   \label{section:criticality theory}

In this section we prove the main results of this paper, namely characterizations of criticality and subcriticality. Roughly speaking, criticality means that the potential operator $G$ is infinite on a large class of functions and subcriticality is the property that the potential operator is finite for one strictly positive function. We show how these concepts are related to the validity of weighted Hardy inequalities and  completeness of the extended Dirichlet space. Some results of this section do not only hold for nonlinear Dirichlet forms but also for nonlinear order preserving forms. Since this might be of interest, we note where this is the case and formulate the theorems accordingly.

\subsection{The Green operator and Hardy inequalities}\label{subsection:green and hardy}

In this subsection we characterize finiteness of the Green operator in terms of a Hardy inequality. Only order preservation is required here. 

Let $\cE$ be a nonlinear order preserving form and let $w \in L^\infty_+(\mu)$. We define the functional $\cE_w \colon L^2(\mu) \to [0,\infty]$ by
$$\cE_w(f) = \cE(f) + \frac{1}{2} \int_X |f|^2 w d\mu.$$
It is readily verified that $\cE_w$ is a nonlinear order preserving form with $D(\cE_w) = D(\cE)$. The resolvent of $\cE_w$ is denoted by $(G^w_\alpha)$, $\alpha > 0$, and we write $G^w$ for its Green operator. 

\begin{lemma} \label{lemma:perturbed form}
 Assume the situation just described. 
 \begin{enumerate}[(a)]
  \item $\partial \cE_w = \partial \cE + w$ (here $w$ denotes the operator of multiplication by $w$).
  \item $G^w_\alpha f = G_\alpha (f - w G^w_\alpha f)$, $f \in L^2(\mu)$, $\alpha > 0$.
  \item If $\tilde w \in L^\infty_+(\mu)$, then $G^w_\alpha f =  G^{\tilde w}_\alpha (f + (\tilde w - w) G^w_\alpha f)$,  $f \in L^2(\mu)$, $\alpha > 0$.  
 \end{enumerate}
\end{lemma}
\begin{proof}
(a): The subgradient of the continuous functional $f \mapsto \frac{1}{2} \int_X |f|^2 w d\mu$ is the operator of multiplication by $w$. With this observation the statement is straightforward. 

(b) + (c): Using (a) these can be proven exactly as the resolvent formula. 
\end{proof}

\begin{proposition}\label{proposition:hardy}
 Let $\cE$ be a nonlinear order preserving form. For  $w \in L^+(\mu)$ let $K(w) = \int_X w Gw d\mu \in [0,\infty]$ (in the integral we use the convention $0 \cdot  \infty = 0$).
 \begin{enumerate}[(a)]
  \item  For all $f \in M(\cE)$ we have
 $$\int_X |f| w d\mu \leq (1 + K(w))\av{f}_L.$$
 \item If for some $C \geq 0$ and all $f \in M(\cE)$ the inequality
  $$\int_X |f| w d\mu \leq C \av{f}_L$$
  holds, then $K(w/C) \leq 1$.
 \end{enumerate}
\end{proposition}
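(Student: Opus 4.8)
The plan is to test both inequalities against the resolvent approximations $u_\alpha := G_\alpha w$, using the subgradient relation $w - \alpha u_\alpha \in \partial\cE(u_\alpha)$, and to reduce the general case $w \in L^+(\mu)$ to $w \in L^2(\mu) \cap L^+(\mu)$ by monotone approximation. Concretely, I would pick $w_k \in L^2(\mu) \cap L^+(\mu)$ with $w_k \nearrow w$; since the Green operator is continuous along increasing sequences (immediate from its construction as a monotone limit), monotone convergence gives $\int_X f w_k\,d\mu \nearrow \int_X f w\,d\mu$ for $f \ge 0$ and $K(w_k) \nearrow K(w)$, while the hypothesis of (b) passes to each $w_k$ because $w_k \le w$. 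Hence it suffices to treat $w \in L^2(\mu) \cap L^+(\mu)$, in which case $u_\alpha = G_\alpha w$ is the genuine $L^2$-resolvent, lies in $D(\cE) \subseteq M(\cE)$, and satisfies $u_\alpha \ge 0$.

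For (a) I would first use Lemma~\ref{lemma:contraction luxemburg seminorms} to reduce to $f \ge 0$, and then, by positive homogeneity of both sides, to the normalized case $\av{f}_L \le 1$, equivalently $\cE(f) \le 1$. The decisive step is the subgradient inequality for $u_\alpha$ evaluated at $h = f$; expanding $\as{w - \alpha u_\alpha, f - u_\alpha}$ and discarding the nonnegative term $\cE(u_\alpha)$ leads to
\[ \as{w,f} \le \cE(f) + \as{w,u_\alpha} + \alpha\as{u_\alpha,f} - \alpha\av{u_\alpha}^2. \]
Because $f \in M(\cE) \subseteq L^2(\mu)$, completing the square via Cauchy--Schwarz gives $\alpha\as{u_\alpha,f} - \alpha\av{u_\alpha}^2 \le \tfrac{\alpha}{4}\av{f}^2 \to 0$ as $\alpha \to 0+$, while $\as{w,u_\alpha} = \int_X w\,G_\alpha w\,d\mu \nearrow K(w)$ by monotone convergence. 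With $\cE(f) \le 1$ this yields $\int_X f w\,d\mu \le 1 + K(w)$; undoing the normalization and passing $w_k \nearrow w$ finishes (a).

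For (b) I would normalize by passing from $w$ to $w/C$, so that the assumption becomes $\int_X |f| w\,d\mu \le \av{f}_L$ and the claim becomes $K(w) \le 1$. Testing the Hardy inequality at $f = u_\alpha \ge 0$ gives $\int_X u_\alpha w\,d\mu \le \av{u_\alpha}_L$, and the subgradient inequality at $h = 0$ gives $\cE(u_\alpha) + \alpha\av{u_\alpha}^2 \le \as{w,u_\alpha} = \int_X u_\alpha w\,d\mu$. I would combine these with the elementary convexity bound $\av{u_\alpha}_L \le \max\{1,\cE(u_\alpha)\}$ (since $\cE(\lambda^{-1}u_\alpha) \le \lambda^{-1}\cE(u_\alpha) \le 1$ for $\lambda = \max\{1,\cE(u_\alpha)\}$) to exclude $\cE(u_\alpha) > 1$: in that case the chain $\cE(u_\alpha) \le \int_X u_\alpha w\,d\mu \le \av{u_\alpha}_L \le \cE(u_\alpha)$ collapses to a string of equalities, whence $\cE(u_\alpha) + \alpha\av{u_\alpha}^2 \le \cE(u_\alpha)$ forces $u_\alpha = 0$ and $\cE(u_\alpha) = 0$, a contradiction. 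Thus $\cE(u_\alpha) \le 1$, so $\av{u_\alpha}_L \le 1$ and $\int_X u_\alpha w\,d\mu \le 1$; letting $\alpha \to 0+$ (the integrals increase to $K(w)$) gives $K(w) \le 1$, that is, $K(w/C) \le 1$.

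I expect the main obstacle to be twofold. The reduction from $w \in L^2(\mu)\cap L^+(\mu)$ to general $w \in L^+(\mu)$ requires carefully justifying the monotone continuity of $G$ and the stability of both the hypothesis and the conclusion under $w_k \nearrow w$. More delicately, in (b) the term $\alpha\av{u_\alpha}^2$ produced by the subgradient inequality at $h = 0$ is precisely what rules out the degenerate possibility $\cE(u_\alpha) > 1$; the argument is deliberately arranged to use only convexity and $\cE(0) = 0$, thereby avoiding any appeal to the generally unavailable attainment $\cE(u_\alpha/\av{u_\alpha}_L) = 1$.
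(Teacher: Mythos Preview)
Your proof is correct and follows essentially the same route as the paper: both reduce to $w \in L^2_+(\mu)$ by monotone approximation, use the subgradient relation $w - \alpha G_\alpha w \in \partial\cE(G_\alpha w)$ tested at $h = f$ for (a) and $h = 0$ for (b), and conclude (b) via the contradiction that $\|G_\alpha w\|_L > 1$ (equivalently $\cE(G_\alpha w) > 1$) would force $G_\alpha w = 0$. The only cosmetic difference is that the paper absorbs the $\alpha$-terms in (a) into the perturbed form $\cE_\alpha = \cE + \tfrac{\alpha}{2}\|\cdot\|^2$ (noting $w \in \partial\cE_\alpha(G_\alpha w)$), whereas you handle them directly by completing the square; the two computations are line-for-line equivalent.
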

\begin{proof}
 (a): It suffices to consider the case $K(w) < \infty$ and, using the monotone convergence theorem, it suffices to consider $w \in L^2_+(\mu)$. Since $w - \alpha G_\alpha w \in \partial \cE(G_\alpha w)$, Lemma~\ref{lemma:perturbed form} applied to the constant function $\alpha$ yields $w \in \partial \cE_\alpha(G_\alpha w).$ We infer 
 $$\as{w,f - G_\alpha w} \leq \cE_\alpha(f) - \cE_\alpha(G_\alpha w)$$
 for all $f \in L^2(\mu)$. Rearranging this inequality and letting $\alpha \to 0+$ leads to 
 $$\as{w,f} \leq \cE(f) + K(w)$$
 for all $f \in L^2(\mu)$. Since $\cE$ satisfies the first Beurling-Deny criterion and is symmetric, this implies $\as{w,|f|} \leq \cE(f) + K(w)$ for all $f \in L^2(\mu)$. The previous  inequality and $\cE(f) \leq 1$ if and only if $\av{f}_L \leq 1$ yield the claim. 
 %
 
%
%
%
%
 %
 (b): It suffices to treat the case $C = 1$. Without loss of generality we assume $w \in L^2_+(\mu)$. It suffices to show $\as{w,G_\alpha w} \leq 1$ for all $\alpha > 0$. Since $w - \alpha G_\alpha w \in \partial \cE(G_\alpha f)$, we have 
 $$\as{w - \alpha G_\alpha w, 0 - G_\alpha w} \leq \cE(0) - \cE(G_\alpha w),$$
 which implies $\cE(G_\alpha w) \leq \as{w,G_\alpha w} - \alpha \av{G_\alpha w}^2$.
 In particular, $\cE(G_\alpha w) < \infty$ so that $G_\alpha w \in M(\cE)$.
 Therefore, the assumed inequality yields $\as{w,G_\alpha w} \leq \av{G_\alpha w}_L$.
 We show $\av{G_\alpha w}_L \leq 1$. 
 
 Assume this were not the case, i.e., $\av{G_\alpha w}_L > 1$. Lemma~\ref{lemma:equivalent luxemburg norms} then shows $\av{G_\alpha w}_L \leq \cE(G_\alpha w)$. Using all the inequalities, we infer 
 $$\av{G_\alpha w}_L  \leq \cE(G_\alpha w) \leq \as{w,G_\alpha w} - \alpha \av{G_\alpha w}^2 \leq  \av{G_\alpha w}_L - \alpha \av{G_\alpha w}^2. $$
 Since  $\alpha > 0$ and $\av{\alpha G_\alpha}_L > 1$ implies $G_\alpha w \neq 0$, this inequality can never hold and so we obtain $\av{G_\alpha w}_L \leq 1$.
\end{proof}

An inequality of the type $\int_X |f| w d\mu \leq C \av{f}_L$, $f \in M(\cE)$, as in the previous proposition, is called {\em   Hardy inequality} with weight $w$. We can improve the constants given in the previous proposition. 

\begin{corollary}[Optimal constants]
 Assume the situation of the previous proposition. For $w \in L^+(\mu)$ we let
$$\tilde K(w) = \inf\{ C > 0 \mid K(w/C) \leq 1\}$$
and  denote by 
$$\mu(w) = \sup \{\av{f w}_1 / \av{f}_L \mid f \in M(\cE)\}$$
the optimal constant in the Hardy inequality with weight $w$. Then $\tilde K(w) \leq \mu(w) \leq 2 \tilde K(w)$. 

\end{corollary}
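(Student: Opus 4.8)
The plan is to read off both inequalities from Proposition~\ref{proposition:hardy} once we understand how $K$ behaves under rescaling of the weight. The crucial preliminary observation is that $C \mapsto K(w/C)$ is non-increasing: for $0 < C_1 \le C_2$ we have $w/C_2 \le w/C_1$ pointwise, and since the Green operator $G$ is order preserving on $L^+(\mu)$, also $G(w/C_2) \le G(w/C_1)$. Multiplying these two pointwise inequalities between nonnegative functions and integrating yields $K(w/C_2) \le K(w/C_1)$. Consequently the set $S = \{C > 0 \mid K(w/C) \le 1\}$ is upward closed, so that $(\tilde K(w), \infty) \subseteq S$, and in particular $\tilde K(w) = \inf S$ can be approached from within $S$.

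For the lower bound $\tilde K(w) \le \mu(w)$ I would argue as follows. If $\mu(w) = \infty$ there is nothing to prove, so assume $\mu(w) < \infty$. By definition of $\mu(w)$ as the supremum of the ratios $\av{fw}_1/\av{f}_L$, the Hardy inequality $\av{fw}_1 \le \mu(w)\av{f}_L$ holds for all $f \in M(\cE)$. Applying Proposition~\ref{proposition:hardy}~(b) with $C = \mu(w)$ then gives $K(w/\mu(w)) \le 1$, that is $\mu(w) \in S$, whence $\tilde K(w) \le \mu(w)$.

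For the upper bound $\mu(w) \le 2\tilde K(w)$ I may assume $\tilde K(w) < \infty$. Fix any $C > \tilde K(w)$; by the monotonicity step $K(w/C) \le 1$. Applying Proposition~\ref{proposition:hardy}~(a) to the weight $w/C \in L^+(\mu)$ gives, for every $f \in M(\cE)$,
$$\int_X |f| (w/C) \, d\mu \le (1 + K(w/C)) \av{f}_L \le 2 \av{f}_L,$$
so that $\av{fw}_1 \le 2C \av{f}_L$ and therefore $\mu(w) \le 2C$. Letting $C \downarrow \tilde K(w)$ yields $\mu(w) \le 2\tilde K(w)$.

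The only genuinely delicate point is the monotonicity of $K(w/\cdot)$, which rests on the order preservation of the \emph{nonlinear} Green operator $G$: because $G$ need not be homogeneous one cannot simply factor a scalar out of $K$, and this qualitative monotonicity is exactly what replaces the homogeneity argument available in the $p$-homogeneous case (cf.\ Remark~\ref{remark:excessive functions 2}). Everything else is a direct substitution into the two parts of Proposition~\ref{proposition:hardy}. I would also note in passing that the consistency of the endpoint cases, namely that $\tilde K(w) = \infty$ forces $\mu(w) = \infty$ and conversely, follows from these same two implications, so no separate treatment of the degenerate cases is required.
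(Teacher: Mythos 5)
Your proof is correct and follows essentially the same route as the paper: Proposition~\ref{proposition:hardy}~(b) applied with $C = \mu(w)$ gives $\tilde K(w) \leq \mu(w)$, and for $C > \tilde K(w)$ the monotonicity of $K$ together with Proposition~\ref{proposition:hardy}~(a) applied to $w/C$ gives $\mu(w) \leq 2C$. The only difference is that you spell out the monotonicity of $C \mapsto K(w/C)$ via the order preservation of $G$ on $L^+(\mu)$, which the paper asserts without proof --- a worthwhile addition, since the lack of homogeneity indeed prevents factoring the scalar out of $K$.
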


\begin{proof}
For $C > \tilde K(w)$ the monotonicity of $K$ in the argument implies $K(w/C) \leq 1$. Hence, (a) of the previous proposition applied to $w/C$ yields $\mu(w) \leq 2C$.

Part (b) of the previous proposition implies $K(w/ \mu(w)) \leq 1$. Hence, $\tilde K(w) \leq \mu(w)$. 
\end{proof}

\begin{remark} Due to the lack of homogeneity also the constant $\tilde K(w)$ is not optimal. In the bilinear case (i.e. when $\cE$ is a quadratic form and the resolvent is linear) it is knwon that 
$$\mu(w) =  \sqrt{2K(w)} = \sqrt{2} \tilde K(w)$$
is the best constant, see e.g. \cite[Lemma~1.5.3]{FOT} for the case of bilinear Dirichlet forms. Note that at a first glance the factor $\sqrt{2}$ is missing in the statement of \cite[Lemma~1.5.3]{FOT}. However, the Green operator we use here is $1/2$-times the Green operator considered in \cite{FOT}, because for quadratic forms $\partial \cE$ is two times the self-adjoint generator of $\cE$.

\end{remark}

\begin{example}\label{example:completeness implies subcriticality I}
 Let $\cE \colon L^2(\mu) \to [0,\infty]$ be a nonlinear order preserving form for which $(M(\cE),\av{\cdot}_L)$ is a Banach space. Then there exists $C > 0$ such that $K(|f|/(C\av{f}_2)) \leq 1$ for all $f \in L^2(\mu)$. In particular, $G(f/(C \av{f}_2)) < \infty$ a.s.\@ if $f > 0$ a.s.\@ 
\end{example}
\begin{proof}
  $(M(\cE),\av{\cdot}_L)$ being a Banach space implies that $(M(\cE),\av{\cdot}_L)$ continuously embeds into $L^2(\mu)$, see the proof of Theorem~\ref{theorem:existence extended Dirichlet space} or Theorem~\ref{theorem:two imply the third} and Remark~\ref{remark:two imply the third}. Hence, there exists $C > 0$ such that 
  $$\left(\int_X |g|^2d\mu \right)^{1/2} \leq C \av{g}_L $$
  for all $g \in M(\cE)$.
  With this at hand the statement follows from the Cauchy-Schwarz inequality and the previous proposition. 
\end{proof}

\begin{theorem}[Finite Green operator vs. Hardy inequality]\label{theorem:existence of hardy weight}
 Let $\cE$ be a nonlinear order preserving form. The following assertions are equivalent.
 \begin{enumerate}[(i)]
  \item There exists a measurable $g \colon X \to (0,\infty)$ with $Gg < \infty$ a.s.\@  
  \item There exists a measurable $w \colon X \to (0,\infty)$ such that 
  $$\int_X |f| w d\mu \leq \av{f}_L, \quad f \in M(\cE).$$
  \end{enumerate}
   If $\cE_e$ exists, then the inequality in (ii) extends to $M(\cE_e)$.
  \end{theorem}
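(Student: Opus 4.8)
The plan is to deduce the equivalence from Proposition~\ref{proposition:hardy}, which ties the Hardy inequality to the finiteness of the quantity $K(w) = \int_X w\, Gw\, d\mu$. The implication (ii) $\Rightarrow$ (i) is immediate from part~(b): if $\int_X |f| w\, d\mu \leq \av{f}_L$ for all $f \in M(\cE)$, then Proposition~\ref{proposition:hardy}~(b) applied with $C = 1$ gives $K(w) = \int_X w\, Gw\, d\mu \leq 1 < \infty$. Hence $w\, Gw$ is integrable and therefore finite $\mu$-a.s.; since $w > 0$ a.s., this forces $Gw < \infty$ a.s., so $g = w$ does the job.

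The substance lies in (i) $\Rightarrow$ (ii). First I would produce a strictly positive weight $w_0$ with $K(w_0) < \infty$, the idea being to shrink $g$ drastically enough that its potential becomes integrable against it. Concretely, I would look for $w_0$ with $0 < w_0 \leq g$ and $\int_X w_0\, Gg\, d\mu < \infty$: using $\sigma$-finiteness, decompose $X = \bigsqcup_n X_n$ with $\mu(X_n) < \infty$, and on each $X_n$ multiply the a.s.-finite integrand $(g \wedge 1)\, Gg$ by a sufficiently small strictly positive factor so that $\int_{X_n} w_0\, Gg\, d\mu \leq 2^{-n}$; summing yields $\int_X w_0\, Gg\, d\mu < \infty$. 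Since $w_0 \leq g$, the monotonicity of $G$ (which is where order preservation enters) gives $Gw_0 \leq Gg$, whence $K(w_0) = \int_X w_0\, Gw_0\, d\mu \leq \int_X w_0\, Gg\, d\mu < \infty$.

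With $K(w_0) < \infty$ in hand, Proposition~\ref{proposition:hardy}~(a) provides $\int_X |f| w_0\, d\mu \leq (1 + K(w_0)) \av{f}_L$ for all $f \in M(\cE)$. The key observation is that the left-hand side is linear in the weight while the right-hand side is not, so the multiplicative constant can simply be absorbed: setting $w = w_0/(1 + K(w_0))$, which is again strictly positive, yields the desired inequality $\int_X |f| w\, d\mu \leq \av{f}_L$ with constant exactly $1$.

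Finally, for the extension to $M(\cE_e)$ I would exploit that $\av{\cdot}_{L_e}$ is the lower semicontinuous relaxation of $\av{\cdot}_L$ with respect to local convergence in measure (Proposition~\ref{lemma:extended luxemburg norm}). Given $f \in M(\cE_e)$, choose a recovery sequence $(f_n)$ with $f_n \tom f$ and $\av{f_n}_L \to \av{f}_{L_e} < \infty$; in particular $f_n \in M(\cE)$ eventually, so the already-established Hardy inequality gives $\int_X |f_n| w\, d\mu \leq \av{f_n}_L$. Passing to a subsequence with $f_{n_k} \to f$ a.s. and applying Fatou's lemma to $|f_{n_k}| w$ then yields $\int_X |f| w\, d\mu \leq \liminf_k \int_X |f_{n_k}| w\, d\mu \leq \av{f}_{L_e}$. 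I expect the construction of $w_0$ via the $\sigma$-finiteness decomposition to be the main technical point, with the constant-absorption trick being the conceptual crux that delivers the sharp constant $1$; the extension step is then a routine lower-semicontinuity argument requiring no reflexivity.
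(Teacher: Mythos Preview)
Your proof is correct and follows the same overall route as the paper: both directions are reduced to Proposition~\ref{proposition:hardy}, and the extension to $M(\cE_e)$ is done via Fatou's lemma along a recovery sequence. The only difference is the construction of the Hardy weight in (i) $\Rightarrow$ (ii). You build $w_0$ by a $\sigma$-finiteness decomposition and a piecewise shrinking of $g$; the paper instead first passes to a strictly positive $L^1$-minorant of $g$ (implicitly using $\sigma$-finiteness) and then takes the single explicit formula $w = g/(Gg \vee 1)$, which immediately gives $\int_X w\, Gw\, d\mu \leq \int_X g\, \tfrac{Gg}{Gg \vee 1}\, d\mu \leq \av{g}_1 < \infty$. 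One caution: in your construction the ``sufficiently small strictly positive factor'' must be a \emph{function}, not a constant, since $(g \wedge 1)\, Gg$ need not be integrable on $X_n$ even though $\mu(X_n) < \infty$; any precise realization of this (e.g.\ dividing by $1 + Gg$) essentially recovers the paper's formula on each piece. So the two arguments are really the same, with the paper's choice being more direct.
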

 
\begin{proof}
(i) $\Rightarrow$ (ii): Let $g \colon X \to (0,\infty)$ with $Gg < \infty$. Without loss of generality we assume $g \in L^1(\mu)$ and consider $w = g /(Gg \vee 1)$. Using that $G$ is order preserving, we obtain  
$$ \int_{X} w G w d\mu \leq \int_X \frac{g}{Gg \vee 1} Gg d\mu \leq \int_X g d\mu < \infty.  $$
Hence, Proposition~\ref{proposition:hardy} shows the claim. 

(ii) $\Rightarrow$ (i): Proposition~\ref{proposition:hardy} shows $\int_X w Gw d\mu \leq 1$. Since $w > 0$ a.s.\@, this implies $Gw < \infty$ a.s.\@

That the inequality in (ii) extends to $M(\cE_e)$ follows from Fatou's lemma. 
\end{proof}

\begin{remark}
 The novelty here is that finiteness of $G$ is related to weighted Hardy inequalities with respect to the Luxemburg seminorm. Once this is quantified in Proposition~\ref{proposition:hardy}, the statement of Theorem~\ref{theorem:existence of hardy weight} follows as in the bilinear case (cf. the discussion preceding \cite[Theorem~1.5.1]{FOT}).
\end{remark}

\subsection{Weak Hardy and Poincaré inequalities}\label{subsection:weak hardy and poincare}

In this subsection we study the completeness of the extended Dirichlet space $(M(\cE_e),\av{\cdot}_{L_e})$ and relate it to weak Hardy and Poincaré inequalities.

\begin{theorem}\label{theorem:weak hardy}
Let  $\cE$ be a nonlinear Dirichlet form. The following assertions are equivalent. 
\begin{enumerate}[(i)]
 \item $\ker \av{\cdot}_{L_e} = \{0\}$.
 \item $(M(\cE_e),\av{\cdot}_{L_e})$ is a Banach space. 
 \item The embedding $ (M(\cE_e),\av{\cdot}_{L_e}) \to L^0(\mu)$, $f \mapsto f$, is continuous.
 \item For one/all integrable $w \colon X \to (0,\infty)$ and one/all $1 \leq p < \infty$ there exists a decreasing function $\alpha \colon (0,\infty) \to (0,\infty)$ such that for all $r >0$
 $$\left(\int_X |f|^p w d\mu\right)^{1/p} \leq \alpha(r) \av{f}_{L_e} + r \av{f}_\infty, \quad   f \in M(\cE_e) \cap L^\infty(\mu).$$
\end{enumerate}
\end{theorem}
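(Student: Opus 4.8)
The plan is to close the cycle $(\mathrm{ii})\Leftrightarrow(\mathrm{iii})\Rightarrow(\mathrm{i})\Rightarrow(\mathrm{iv})\Rightarrow(\mathrm{iii})$, treating $(\mathrm{ii})\Leftrightarrow(\mathrm{iii})$ as essentially abstract and concentrating the real work on $(\mathrm{i})\Rightarrow(\mathrm{iv})$. Throughout I would fix an integrable $w\colon X\to(0,\infty)$ and set $\nu=w\mu$, a finite measure equivalent to $\mu$, so that local convergence in measure coincides with convergence in $\nu$-measure and $L^2(w\mu)=L^2(\nu)$ is a Hilbert space containing the constant function $1$.

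For the abstract equivalence I would use that $\av{\cdot}_{L_e}$ is always lower semicontinuous with respect to local convergence in measure (as recorded after Theorem~\ref{theorem:existence extended Dirichlet space}). Feeding this together with completeness, respectively with continuity of the embedding, into Theorem~\ref{theorem:two imply the third} and Remark~\ref{remark:two imply the third} yields $(\mathrm{ii})\Rightarrow(\mathrm{iii})$ and the completeness half of $(\mathrm{iii})\Rightarrow(\mathrm{ii})$; the norm property needed to call the latter a Banach space comes from $(\mathrm{iii})\Rightarrow(\mathrm{i})$, which is immediate, since $\av{h}_{L_e}=0$ forces $\av{kh}_{L_e}=0$ for all $k$, whence continuity gives $kh\to 0$ in $L^0(\mu)$ as $k\to\infty$ and thus $h=0$. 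The two remaining glue implications are equally soft. For $(\mathrm{iv})\Rightarrow(\mathrm{i})$, an $h\in\ker\av{\cdot}_{L_e}$ may first be assumed bounded (apply $C_R$, which does not increase $\av{\cdot}_{L_e}$ by Proposition~\ref{proposition:compatibility with contractions extended forms}), and then $(\mathrm{iv})$ gives $\left(\int_X|h|^pw\,d\mu\right)^{1/p}\le r\av{h}_\infty$ for every $r>0$, hence $h=0$ since $w>0$. For $(\mathrm{iv})\Rightarrow(\mathrm{iii})$, given $\av{f_k}_{L_e}\to 0$ I would truncate: applying $(\mathrm{iv})$ to $C_sf_k$ yields $\limsup_k\left(\int_X|C_sf_k|^pw\,d\mu\right)^{1/p}\le rs$ for each $r>0$, so $C_sf_k\to 0$ in $\nu$-measure, and since $|C_sf_k|=s$ on $\{|f_k|>s\}$ this forces $\nu(\{|f_k|>s\})\to 0$ for every $s>0$, i.e.\ $f_k\to 0$ in $L^0(\mu)$.

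The heart of the matter is $(\mathrm{i})\Rightarrow(\mathrm{iv})$, which I would prove by contradiction for an arbitrary admissible pair $(w,p)$ (integrable $w>0$ and $1\le p<\infty$). If $(\mathrm{iv})$ failed, there would be $r>0$ and $f_n\in M(\cE_e)\cap L^\infty(\mu)$, normalized to $\int_X|f_n|^pw\,d\mu=1$, with $\av{f_n}_{L_e}<1/n$ and $\av{f_n}_\infty<1/r$. Splitting the identity $1=\int_X|f_n|^p\,d\nu$ at a suitable level $t_0$ produces $t_0>0$ and $\delta>0$, depending only on $r$, $p$ and $\nu(X)$, with $\nu(\{|f_n|>t_0\})\ge\delta$ for all $n$. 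The decisive step is then to pass to the nonnegative truncations $v_n=\min\{|f_n|,t_0\}$: these are normal-contraction images of $f_n$, so $\av{v_n}_{L_e}\le\av{f_n}_{L_e}\to 0$ by Proposition~\ref{proposition:compatibility with contractions extended forms}, they satisfy $0\le v_n\le t_0$, and crucially $\int_X v_n\,d\nu\ge t_0\,\nu(\{|f_n|>t_0\})\ge t_0\delta>0$. Being bounded, $(v_n)$ is bounded in $L^2(\nu)$, so a subsequence converges weakly in $L^2(w\mu)$ to some $v$; by the Banach--Saks theorem its Cesàro means converge to $v$ in $L^2(w\mu)$, hence in $L^0(\mu)$, and the lower semicontinuity and convexity of $\av{\cdot}_{L_e}$ give $\av{v}_{L_e}\le\liminf_n\av{v_n}_{L_e}=0$. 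By $(\mathrm{i})$ we conclude $v=0$, whereas testing the weak convergence against $1\in L^2(\nu)$ yields $\int_X v\,d\nu=\lim_n\int_X v_n\,d\nu\ge t_0\delta>0$, the desired contradiction.

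I expect the main obstacle to be exactly this implication $(\mathrm{i})\Rightarrow(\mathrm{iv})$: the hypothesis $\ker\av{\cdot}_{L_e}=\{0\}$ only concerns functions of vanishing Luxemburg seminorm, and a naive attempt to control the $L^p(w)$-mass of a sequence with $\av{f_n}_{L_e}\to 0$ fails, because oscillation allows $\int_X|f_n|^pw\,d\mu$ to stay bounded below while $f_n$ tends weakly to $0$. The device that removes the oscillation is the reduction to the nonnegative truncations $v_n$ together with the uniform lower bound on $\int_X v_n\,d\nu$ (rather than merely on $\int_X v_n^p\,d\nu$): nonnegativity turns weak convergence to $0$ into genuine decay of the integral and thereby converts the kernel hypothesis into a contradiction. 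The only further point requiring care is that the weak lower semicontinuity of $\av{\cdot}_{L_e}$ along $L^2(w\mu)$-weakly convergent sequences, which is the statement of Proposition~\ref{proposition:properties of extended forms}(c), should be available here; since its proof invokes only the lower semicontinuity and convexity of $\av{\cdot}_{L_e}$ and the Banach--Saks property of the Hilbert space $L^2(w\mu)$, it does not in fact require the reflexivity of $\cE$, which is why I would reprove it inline as above rather than cite it verbatim.
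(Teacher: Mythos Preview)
Your argument is correct and follows essentially the same route as the paper: the abstract equivalences via Theorem~\ref{theorem:two imply the third}/Remark~\ref{remark:two imply the third}, the truncation argument for $(\mathrm{iv})\Rightarrow(\mathrm{iii})$, and the weak-compactness contradiction for $(\mathrm{i})\Rightarrow(\mathrm{iv})$ are all as in the paper. The only visible difference is in the endgame of $(\mathrm{i})\Rightarrow(\mathrm{iv})$: the paper takes the nonnegative $f_n$ themselves, shows the weak $L^2(w\mu)$-limit is $0$, then uses $\int f_n\,w\,d\mu\to 0$ to extract an a.s.\ convergent subsequence and finishes with dominated convergence to contradict the normalization $\int|f_n|^p\,w\,d\mu=1$; you instead secure a uniform lower bound $\int v_n\,d\nu\ge t_0\delta$ on the truncations and contradict $v=0$ directly, which is a slight streamlining. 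Your remark that Proposition~\ref{proposition:properties of extended forms}(c) does not actually need reflexivity (only Banach--Saks in $L^2(w\mu)$, convexity, and $L^0$-lower semicontinuity) is well taken and is exactly what makes the paper's citation of it legitimate in this non-reflexive setting.
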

\begin{proof}
(iii) $\Rightarrow$ (ii): This follows from Theorem~\ref{theorem:two imply the third} and Remark~\ref{remark:two imply the third}, which states that the required implication from Theorem~\ref{theorem:two imply the third}  does not need reflexivity of $\cE$. 
 
 (ii) $\Rightarrow$ (i): $ \av{\cdot}_{L_e}$ being a norm implies $\ker \av{\cdot}_{L_e} = \{0\}$.

 (iv) $\Rightarrow$ (iii): Assume the inequality in (iv) is satisfied for $p \geq 1$, the integrable function $w \colon X \to (0,\infty)$ and $\alpha \colon (0,\infty) \to (0,\infty)$.  Let $(f_n)$ in $M(\cE_e)$ with $\av{f_n}_{L_e} \to 0$. We need to show $f_n \tom 0$.  
 
 Using (iv) and the compatibility of $\av{\cdot}_{L_e}$ with normal contractions, see Proposition~\ref{proposition:compatibility with contractions extended forms}, we obtain 
 \begin{align*}
  \left(\int_X (|f_n| \wedge 1)^p w d\mu\right)^{1/p} &\leq \alpha(r) \av{|f_n|\wedge 1}_{L_e} + r \av{|f_n| \wedge 1}_\infty\\
  &\leq \alpha(r) \av{f_n}_{L_e} + r
 \end{align*}
 for all $r > 0$. This shows $\int_X (|f_n| \wedge 1)^p w d\mu \to 0$, which implies $f_n \tom 0$.  
 
 (i) $\Rightarrow$ (iv): Let $w \colon X \to (0,\infty)$ integrable and let $p \geq 1$. Assume that (iv) does not hold for $w$ and $p$. Then there exists $R > 0$ and a sequence $(f_n)$ such that
 $$\left(\int_X |f_n|^p w d\mu\right)^{1/p} \geq n \av{f_n}_{L_e} + R \av{f_n}_\infty, \quad n \geq 1.$$
 By Proposition~\ref{proposition:compatibility with contractions extended forms} we can assume $f_n \geq 0$ (else consider $|f_n|$) and by rescaling we can assume  $\int_X |f_n|^p w d\mu = 1$. This implies $\av{f_n}_{L_e} \to 0$ and $\av{f_n}_\infty \leq 1/R$. Since $w\mu$ is a finite measure, by the uniform boundedness we can assume $f_{n} \to f$ weakly in $L^2(w\mu)$  (after passing to a subsequence if necessary). Using Proposition~\ref{proposition:properties of extended forms}, we infer 
$$\av{f}_{L_e} \leq \liminf_{n \to \infty}\av{f_n}_{L_e} =  0.$$
Our assumption (i) implies $f = 0$. Since $1 \in L^2(w\mu)$ and $f_n \geq 0$, the weak convergence in $L^2(w\mu)$ yields 
$\int_X |f_n| w d\mu = \int_X f_n w d\mu \to 0.$
Without loss of generality we can therefore assume $f_n \to  0$ $\mu$-a.s.\@ (else pass to a suitable subsequence). Using this convergence, the uniform boundedness of $(f_n)$ and the finiteness of $w \mu$,  Lebesgue's dominated convergence theorem implies 
$\int_X |f_n|^p w d\mu \to 0,$
which contradicts our assumption $\int_X |f_n|^p w d\mu = 1$.
 \end{proof}

\begin{theorem}\label{theorem:weak poincare}
Let  $\cE$ be a nonlinear Dirichlet form. If $\ker \av{\cdot}_{L_e} = \R \cdot 1$, then the following assertions are equivalent.
\begin{enumerate}[(i)]
 \item $(M(\cE_e)/\R\cdot 1,\av{\cdot}_{L_e})$ is a Banach space. 
 \item The embedding
 $$ (M(\cE_e)/\R \cdot 1,\av{\cdot}_{L_e}) \to L^0(\mu)/\R \cdot 1, \quad f + \R \cdot 1 \mapsto f + \R \cdot 1,$$
 is continuous, where $L^0(\mu)/\R \cdot 1$ is equipped with the quotient topology. 
 \item For one/all integrable $w \colon X \to (0,\infty)$ and one/all $1 \leq p < \infty$ there exists a decreasing function $\alpha \colon (0,\infty) \to (0,\infty)$ such that for all $r >0$
 $$\left(\int_X |f - \overline f|^p w d\mu\right)^{1/p} \leq \alpha(r) \av{f}_{L_e} + r \delta(f), \quad   f \in M(\cE_e) \cap L^\infty(\mu).$$
 Here, $\overline f = \frac{1}{\av{w}_1} \int_X f w d\mu$ and $\delta(f)  = \esssup f - \essinf f$. 
\end{enumerate}
\end{theorem}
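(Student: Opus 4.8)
The plan is to mirror the proof of Theorem~\ref{theorem:weak hardy}, carrying everything over to the quotient by $\R\cdot 1$. Since by assumption $\ker\av{\cdot}_{L_e}=\R\cdot 1$, the extended Luxemburg seminorm descends to a genuine norm on $M(\cE_e)/\R\cdot 1$, which I keep denoting by $\av{\cdot}_{L_e}$. As $\R\cdot 1$ is a closed subspace of $L^0(\mu)$, the quotient $L^0(\mu)/\R\cdot 1$ is again a complete metrizable topological vector space, and the descended norm is lower semicontinuous for the quotient topology (this follows from the lower semicontinuity and constant-invariance of $\av{\cdot}_{L_e}$ on $L^0(\mu)$). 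With these two ingredients in place I would obtain the equivalence of (i) and (ii) from the two-imply-third principle of Theorem~\ref{theorem:two imply the third} together with Remark~\ref{remark:two imply the third}, run verbatim on the quotient: lower semicontinuity plus a continuous embedding yields completeness, and lower semicontinuity plus completeness yields a continuous embedding. It then remains to link (ii) and (iii).

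For (ii) $\Rightarrow$ (iii) I would argue by contradiction, in the spirit of the implication (i) $\Rightarrow$ (iv) of Theorem~\ref{theorem:weak hardy}. Assuming (iii) fails for some integrable $w$ and some $p$, one produces a sequence $(f_n)$ in $M(\cE_e)\cap L^\infty(\mu)$; replacing $f_n$ by $f_n-\overline{f_n}$ (which changes neither $\av{f_n}_{L_e}$ nor $\delta(f_n)$ nor the left-hand side) and rescaling, I may assume $\overline{f_n}=0$, $\int_X|f_n|^pw\,d\mu=1$, $\av{f_n}_{L_e}\to 0$ and $\delta(f_n)\le 1/R$. The point of the centering is that $\overline{f_n}=0$ forces $\essinf f_n\le 0\le\esssup f_n$, so that $\delta(f_n)\le 1/R$ upgrades to the uniform bound $\av{f_n}_\infty\le 1/R$. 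Now (ii) gives constants $c_n$ with $f_n-c_n\tom 0$; the uniform bound makes $(c_n)$ bounded, so along a subsequence $c_n\to c$ and $f_n\tom c$. Since $(f_n)$ is uniformly bounded and $w\mu$ is finite, dominated convergence yields $\int_X|f_n|^pw\,d\mu\to|c|^p\av{w}_1$ and $\int_X f_nw\,d\mu\to c\av{w}_1$; as the latter integrals vanish we get $c=0$, contradicting $\int_X|f_n|^pw\,d\mu=1$.

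The hard direction is (iii) $\Rightarrow$ (ii), and this is where the quotient genuinely complicates the Hardy picture. Writing $C_k$ for the normal contraction $x\mapsto(x\wedge k)\vee(-k)$ and choosing the weight in (iii) to be the defining weight $h$ of the $F$-norm on $L^0(\mu)$ (legitimate by the ``one/all'' clause), applying (iii) to $C_k(f_n)$ and using $\av{C_k(f_n)}_{L_e}\le\av{f_n}_{L_e}$ and $\delta(C_k(f_n))\le 2k$ shows that, for each fixed $k$, one has $C_k(f_n)-\overline{C_k(f_n)}\tom 0$ as $n\to\infty$. Thus every truncation of $f_n$ converges to a constant in measure, and the only obstruction to $f_n+\R\cdot 1\to 0$ in $L^0(\mu)/\R\cdot 1$ is mass of $f_n$ escaping to infinity at mutually inconsistent levels on different parts of $X$. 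The key observation, and the crux of the whole argument, is that $\av{f_n}_{L_e}\to 0$ together with $\ker\av{\cdot}_{L_e}=\R\cdot 1$ rules this out: a function escaping at two different rates on two sets of positive measure would have seminorm bounded away from $0$. I would make this quantitative with a scaling argument: if $\int_{\{|f_n|>k_n\}}h\,d\mu\ge\varepsilon$ for some $k_n\to\infty$, then $g_n=k_n^{-1}C_{k_n}(f_n)$ lies in $[-1,1]$, has $\delta(g_n)\le 2$ and $\av{g_n}_{L_e}\le k_n^{-1}\av{f_n}_{L_e}\to 0$, so (iii) forces $g_n-\overline{g_n}\tom 0$; hence the escaping part is asymptotically a single constant. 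Combining this with a diagonal choice of truncation levels $k_n\to\infty$ and centering constants $c_n$ (for which a weighted median, or the means $\overline{C_{k_n}(f_n)}$, are the natural candidates) should yield $f_n-c_n\tom 0$, i.e. (ii). Together with the equivalence of (i) and (ii) this closes the cycle. The main obstacle to watch is precisely the simultaneous control of the truncation level and the centering constant, an interchange-of-limits issue, where the seminorm-smallness has to be leveraged to force tightness modulo constants.
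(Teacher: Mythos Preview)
Your treatment of (i)$\Leftrightarrow$(ii) and (ii)$\Rightarrow$(iii) matches the paper's proof essentially word for word: the two-imply-third principle on the quotient, and the contradiction argument with centering $\overline{f_n}=0$, uniform bound $\av{f_n}_\infty\le\delta(f_n)$, constants $c_n$ from (ii), and dominated convergence. That part is fine.

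The gap is in (iii)$\Rightarrow$(ii). Your scheme---truncate at fixed levels $k$, then at growing levels $k_n$, control escaping mass by rescaling, and diagonalize---is far more elaborate than necessary, and you yourself flag the interchange-of-limits problem without resolving it. The difficulty is real: controlling the truncation level and the centering constant simultaneously via a diagonal argument is delicate, because the centering constants $\overline{C_k(f_n)}$ depend on both $k$ and $n$ in a way that does not obviously stabilize.

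The paper sidesteps this entirely with a single clean trick: \emph{shift first, then truncate at a fixed level}. Set $Tf=(f\wedge 1)\vee(-1)$. For each $n$ the map $c\mapsto\int_X T(f_n-c)\,w\,d\mu$ is continuous and changes sign, so by the intermediate value theorem one can pick $C_n$ with $\int_X T(f_n-C_n)\,w\,d\mu=0$. Now apply (iii) directly to $T(f_n-C_n)$: it is bounded by $1$, so $\delta\le 2$; and since $T(\cdot - C_n)$ is a normal contraction composed with a constant shift, compatibility with contractions and $\ker\av{\cdot}_{L_e}=\R\cdot 1$ give $\av{T(f_n-C_n)}_{L_e}\le\av{f_n}_{L_e}\to 0$. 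The inequality in (iii) then forces $T(f_n-C_n)\to 0$ in $L^p(w\mu)$, hence $f_n-C_n\tom 0$. No diagonalization, no escaping mass, no interchange of limits: the centering is chosen before truncation, and the truncation level is fixed at $1$ throughout.
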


\begin{proof}
 (i) $\Leftrightarrow$ (ii): Since $\cE_e$ is lower semicontinuous on $L^0(\mu)$, this equivalence is contained in Theorem~\ref{theorem:two imply the third} and Remark~\ref{remark:two imply the third}.
 
 (ii) $\Rightarrow$ (iii): Let $w \colon X \to (0,\infty)$ be integrable and $p \geq 1$ and assume the inequality in (iii) does not hold for any $\alpha$. Using $\ker \av{\cdot}_{L_e} = \ker \delta = \R \cdot 1$,  there exists $R > 0$ and a sequence $(f_n)$ in $M(\cE_e)$ with $\int_X |f_n|^p w d\mu = 1$, $\int_X f_n w d\mu = 0$ and 
 $$1 \geq n \av{f_n}_{L_e} + R \delta(f_n).$$
 This implies $\delta(f_n) \leq 1/R$ and $\av{f_n}_{L_e} \to 0$. Moreover, $\int_X f_n w d\mu = 0$ yields $\essinf f \leq 0 \leq \esssup f$, which implies $\av{f}_\infty \leq \delta(f) \leq 1/R$.

 Using $\av{f_n}_{L_e} \to 0$ and (ii) we find $C_n \in \R$, $n \in \R$, such that $f_n - C_n \tom 0$.  Below we show that $C_n \to 0$.  This implies $f_n \tom 0$ and Lebesgue's dominated convergence yields 
 $$0 = \lim_{n \to \infty} \int_X |f_n|^p w d\mu = 1,$$
 a contradiction. 
 
 $C_n \to 0$: The inequality $|C_n| \leq |f_n - C_n| + |f_n| \leq |f_n - C_n| + 1/R  \tom 1/R$ yields that $(C_n)$ is bounded so that also $(f_n - C_n)$ is uniformly bounded. With this at hand, $\int_X f_n w d\mu = 0$, $f_n - C_n \tom 0$  and Lebesgue's dominated convergence theorem yield
 $$-C_n \int_X w d\mu = \int_X (f_n - C_n) w d\mu \to 0,\text{ as } n \to \infty. $$
 Since  $\int_X w d\mu > 0$, this shows $C_n \to 0$. 
 
 (iii) $\Rightarrow$ (ii): It suffices to show that for $(f_n)$ in $M(\cE_e)$ with $\av{f_n}_{L_e} \to 0$ there exist constants $C_n \in \R$, $n \in \N$, such that $f_n - C_n \tom 0$. 
 
 Let $w \colon X \to (0,\infty)$ be integrable such that the  inequality in (iii) holds. We define $T \colon L^0(\mu) \to L^0(\mu)$, $Tf = (f \wedge 1) \vee (-1)$. Using the intermediate value theorem, for $n \in \N$ we choose $C_n \in \R$ such that 
 $$\int_X T(f_n - C_n) w d\mu = 0.$$
 The inequality in (iii) and $|T(f_n - C_n)| \leq 1$ imply  that for $r  > 0$ we have
 \begin{align*}
  \left(\int_X |T(f_n - C_n)|^p w d\mu\right)^{1/p} &\leq \alpha(r) \av{T(f_n - C_n)}_{L_e} + 2r \\
  &\leq \alpha(r) \av{f_n}_{L_e} + 2r,  
 \end{align*}
where for the last inequality we used the compatibility of $\av{\cdot}_{L_e}$ with normal contractions, see Proposition~\ref{proposition:compatibility with contractions extended forms}, and $\ker \av{\cdot}_{L_e} = \R \cdot 1$. This implies $T(f_n - C_n) \to 0$ in $L^1(w \mu)$, which in turn yields $f_n - C_n \tom 0$.  
\end{proof}

\begin{remark}
An inequality as in Theorem~\ref{theorem:weak hardy}~(iv) is called weak Hardy inequality and an inequality as in Theorem~\ref{theorem:weak poincare}~(iii) is called weak Poincaré inequality. Inequalities of this kind were introduced in \cite{RW01} for bilinear Dirichlet forms to study non-exponential convergence rates to equilibrium of the associated semigroups. While it may not be possible to extend these results to the fully nonlinear case, for $p$-homogeneous differentiable $\cE$ the arguments given in \cite{RW01} should give convergence rates for the corresponding nonlinear semigroups.  We refrain from giving details.

That weak Hardy/Poincaré inequalities can be used to characterize the completeness of the extended Dirichlet space was observed in \cite{Sch22}. The proofs given here for the nonlinear case are quite similar to the ones in \cite{Sch22} but heavily rely on the new Theorem~\ref{theorem:two imply the third}.
\end{remark}

\begin{remark}[Order preserving forms]\label{remark:weak hardy and poincare}
Both theorems remain valid for nonlinear order preserving forms under some conditions: Theorem~\ref{theorem:weak hardy} requires the existence of a function $h \colon X \to (0,\infty)$ such that $\av{f \wedge h}_L \leq \av{f}_L$ for all $f \in M(\cE)$ and Theorem~\ref{theorem:weak poincare} requires $\ker \av{\cdot}_{L_e} = \R \cdot h$ for some $h \colon X \to (0,\infty)$, see also Subsection~\ref{subsection:excessive} for more background on the existence of such functions. For the first theorem $\av{f}_\infty$ has to be replaced by $\av{f/h}_\infty$ and for the second one $\delta(f)$ has to be replaced by $\delta(f/h)$.
\end{remark}

\subsection{Criticality theory} 

In this subsection we bring together the results of the previous two subsections. Before proving the main results we return to properties of the functional $\cE_w$ defined for $w\in L^\infty_+(\mu)$ in Subsection~\ref{subsection:green and hardy}. 

\begin{lemma}\label{lemma:perturbed form II}
 Let $\cE$ be a nonlinear Dirichlet form and let $w \in L^\infty_+(\mu)$. 
 \begin{enumerate}[(a)]
  \item $G^w_\alpha (\alpha + w) \leq 1$, $\alpha > 0$. In particular, $G^w w \leq 1$. 
  \item $\cE  (G^w_\alpha f) \leq \as{f - (w+\alpha) G^w_\alpha f, G^w_\alpha f}$, for $\alpha > 0$ and $f \in L^2(\mu)$.
  \item If $w \in L^1(\mu) \cap L^\infty_+(\mu)$, then 
  $$\cE_e(G^w w) \leq \int_X w(1-G^ww) G^w w d\mu \leq \av{w}_1.$$
 \end{enumerate}
\end{lemma}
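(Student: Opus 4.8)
The unifying idea is to reduce all three parts to statements about $\partial\cE$ via the identity $\partial\cE_w=\partial\cE+w$ from Lemma~\ref{lemma:perturbed form}. Indeed, if $u=G^w_\alpha f$ with $f\in L^2(\mu)$, then $f-\alpha u\in\partial\cE_w(u)$, and this identity rewrites as $f-(\alpha+w)u\in\partial\cE(u)$; equivalently,
$$\as{f-(\alpha+w)u,\,h-u}+\cE(u)\le\cE(h),\qquad h\in L^2(\mu).$$
Part (b) then drops out immediately by testing with $h=0$ and using $\cE(0)=0$, which gives $\cE(u)\le\as{f-(\alpha+w)u,\,u}$. I would record this first, since (a) and (c) rest on the same identity.

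For part (a) I would take $f=\alpha+w$, so that $u=G^w_\alpha(\alpha+w)$ satisfies $(\alpha+w)(1-u)\in\partial\cE(u)$. Testing the subgradient inequality with $h=u\wedge 1$ and using that $x\mapsto x\wedge 1$ is a normal contraction (hence $\cE(u\wedge 1)\le\cE(u)$) collapses the inequality to
$$\as{(\alpha+w)(1-u),\,-(u-1)_+}=\int_X(\alpha+w)(u-1)_+^2\,d\mu\le 0,$$
and since $\alpha+w\ge\alpha>0$ this forces $(u-1)_+=0$, i.e.\ $u\le 1$. The main obstacle is purely a matter of domains: when $\mu(X)=\infty$ the function $\alpha+w$ need not lie in $L^2(\mu)$, so $u$ is defined only through the extension of $G^w_\alpha$ to $L^+(\mu)$. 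I would therefore fix an approximating sequence $f_n\in L^2_+(\mu)$ with $f_n\nearrow\alpha+w$ and carry out the computation for $u_n=G^w_\alpha f_n\in L^2(\mu)$; the pointwise bound $f_n-(\alpha+w)u_n\le(\alpha+w)(1-u_n)$ is exactly what makes the integrand nonpositive on $\{u_n>1\}$, giving $u_n\le 1$, and letting $n\to\infty$ yields $u\le 1$. The ``in particular'' statement then follows from $w\le\alpha+w$, order preservation of $G^w_\alpha$, and letting $\alpha\to 0+$.

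For part (c) I would specialize (b) to $f=w$, which is legitimate since $w\in L^1(\mu)\cap L^\infty_+(\mu)\subseteq L^2(\mu)$, and write $u_\alpha=G^w_\alpha w$. Expanding the inner product gives
$$\cE(u_\alpha)\le\int_X w(1-u_\alpha)u_\alpha\,d\mu-\alpha\av{u_\alpha}^2\le\int_X w(1-u_\alpha)u_\alpha\,d\mu,$$
and by part (a) we have $0\le u_\alpha\le 1$, so the integrand lies between $0$ and $w$ and the right-hand side is bounded by $\av{w}_1$. It remains to let $\alpha\to 0+$. Since $u_\alpha\searrow G^w w$ pointwise, on the left I would invoke the lower semicontinuity of $\cE_e$ with respect to local convergence in measure (recall $\cE_e$ exists for nonlinear Dirichlet forms by Theorem~\ref{theorem:existence extended Dirichlet space} and agrees with $\cE$ on $L^2(\mu)$) to obtain $\cE_e(G^w w)\le\liminf_{\alpha\to 0+}\cE(u_\alpha)$, while on the right I would use dominated convergence with dominating function $w\in L^1(\mu)$ to obtain $\int_X w(1-u_\alpha)u_\alpha\,d\mu\to\int_X w(1-G^w w)G^w w\,d\mu$. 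Combining the two bounds yields both inequalities in (c), the uniform bound $0\le u_\alpha\le 1$ from part (a) being precisely what controls both limit passages.
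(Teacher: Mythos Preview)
Your proof is correct. Parts (b) and (c) match the paper's argument essentially word for word (the paper also tests the subgradient at $h=0$ and then passes to the limit via lower semicontinuity of $\cE_e$ and dominated convergence; one minor slip: $u_\alpha=G^w_\alpha w$ \emph{increases} to $G^w w$ as $\alpha\to 0+$, not decreases, but this does not affect the argument).

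For part (a) you take a genuinely different route. The paper introduces the nonlinear Dirichlet form $\tilde\cE(f)=\cE(f)$ on $L^2((w+\alpha)\mu)$, verifies the identity $G^w_\alpha((w+\alpha)f)=\tilde G_1 f$ for its resolvent $\tilde G_1$, and then reads off the bound from the $L^\infty$-contractivity of $\tilde G_1$. Your approach instead works directly in $L^2(\mu)$: from $f_n-(\alpha+w)u_n\in\partial\cE(u_n)$ you test with $h=u_n\wedge 1$, use $\cE(u_n\wedge 1)\le\cE(u_n)$, and conclude $(u_n-1)_+=0$ after a sign computation. Both approaches must confront the fact that $\alpha+w\notin L^2(\mu)$ in general; you handle this explicitly via an approximating sequence $f_n\nearrow\alpha+w$, whereas the paper implicitly relies on the extension of $\tilde G_1$ to $L^\infty_+$. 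Your argument is arguably more self-contained (no change of measure, no auxiliary form), while the paper's argument has the conceptual payoff of identifying $G^w_\alpha((w+\alpha)\cdot)$ as an honest resolvent, which explains \emph{why} the $L^\infty$-bound holds.
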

\begin{proof}
 (a): Let $\tilde \cE \colon L^2((w+\alpha)\mu) \to [0,\infty],\, \tilde \cE(f) =  \cE(f)$. It is a nonlinear Dirichlet form. The Hilbert spaces $L^2((w+\alpha)\mu)$ and $L^2(\mu)$ are equal as sets but carry  different inner products, which we denote by $\as{\cdot,\cdot}_{(w+\alpha)\mu}$ and $\as{\cdot,\cdot}_\mu$.  We prove (a) by showing that the resolvent $\tilde G_1$  of $\tilde \cE$  (which is defined with respect to the inner product $\as{\cdot,\cdot}_{(w+\alpha)\mu}$) satisfies the identity  $G^w_\alpha ((w+\alpha)f) = \tilde G_1 f$, $f \in L^2(\mu)$. The $L^\infty$-contractivity of $\tilde G_1$ then implies the claim. 
 
 By definition we have $g = \tilde G_1 f$ if and only if $f - g \in \partial \tilde \cE (g)$. The latter is equivalent to 
 $$\as{f-g,h - g}_{(w+\alpha)\mu} \leq \tilde \cE(h) - \tilde \cE(g)$$
 for all $h \in L^2((w+\alpha)\mu)$. Using the definition of $\tilde \cE$, this inequality is equivalent to 
 $$\as{(w + \alpha)(f - g), h - g} \leq \cE(h) - \cE(g)$$
 for all $h \in L^2(\mu)$. This statement is equivalent to $(w + \alpha)(f-g) \in \partial \cE(g)$, which, using $\partial \cE_w = \partial \cE + w$, is equivalent to 
 $(w + \alpha) f - \alpha g \in \partial \cE_w(g)$. The latter is equivalent to $g = G_\alpha^w ((w+\alpha)f)$.
 
 (b): We saw in Lemma~\ref{lemma:perturbed form} that $G_\alpha^w f = G_\alpha (f - w G_\alpha^w f)$. Using this and the equality $g - \alpha G_\alpha g \in \partial \cE(G_\alpha g)$ for $g = f - w G_\alpha^w f$, we infer 
 $$ \as{f - w G_\alpha^w f - \alpha  G_\alpha^w f, h - G_\alpha^w f} \leq  \cE(h) - \cE(G_\alpha^w f).$$
 Applied to $h = 0$, this yields 
 $$\cE(G_\alpha^w f) \leq  \as{f - (w + \alpha) G_\alpha^w f,  G_\alpha^w f}.$$

 (c): By definition we have $G_\alpha^w w \tom G^w w$, as $\alpha \to 0+$, and, since $w \in L^2(\mu)$, $G_\alpha^w w \in D(\cE)$.  The lower semicontinuity of $\cE_e$ and (b)  imply 
 $$\cE_e(G^w w) \leq \liminf_{\alpha \to 0+} \cE(G_\alpha^w w) \leq \liminf_{\alpha \to 0+} \as{w - (w + \alpha) G_\alpha^w w,  G_\alpha^w w}.  $$
 With this at hand the statement follows from Lebesgue's dominated convergence theorem and $G_\alpha^w w, G^w w \leq 1$.
%
%
%
%
%
%
\end{proof}

\begin{definition}[Criticality and subcriticality]
 Let $\cE \colon L^2(\mu) \to [0,\infty]$ be a nonlinear order preserving form. We say that $\cE$ is {\em critical} if for every $f \in L^1_+(\mu)$ we have $\mu(\{f > 0\} \cap \{G f < \infty\}) = 0$. We say that $\cE$ is {\em subcritical}  if there exists a measurable $f \colon X \to (0,\infty)$ such that $Gf < \infty$ $\mu$-a.s.\@ 
\end{definition}

\begin{remark}
 For bilinear Dirichlet forms criticality is also called {\em recurrence} and subcriticality is also called {\em transience}.
\end{remark}

\begin{example}\label{example:completeness implies subcriticality II}
 Example~\ref{example:completeness implies subcriticality I} shows that $\cE$ is subcritical if $(M(\cE),\av{\cdot}_L)$ is a Banach space. 
\end{example}

We say that $\cE$ satisfies the {\em weak $\Delta_2$-condition} if for all $(f_n)$ in $L^2(\mu)$ the convergence $\cE(f_n) \to 0$ implies $\cE(2f_n) \to 0$; see the discussion after Definition~\ref{definition:delta 2 condition} for more background. As discussed in Lemma~\ref{lemma:delta2 conditions}, the weak $\Delta_2$-condition is equivalent to  $\av{f_n}_L \to 0$ if and only if $\cE(f_n) \to 0$.

\begin{theorem}[Characterization subcriticality]\label{theorem:characterization subcriticality}
 Let $\cE$ be a nonlinear Dirichlet form satisfying the weak $\Delta_2$-condition. Then the following assertions are equivalent. 

 \begin{enumerate}[(i)]
  \item $\cE$ is subcritical. 
  \item The equivalent conditions of Theorem~\ref{theorem:existence of hardy weight} hold for $\cE$. 
  \item The equivalent conditions of Theorem~\ref{theorem:weak hardy} hold for $\cE$. 
 \end{enumerate}
\end{theorem}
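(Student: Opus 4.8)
The plan is to run the cycle (i) $\Leftrightarrow$ (ii) $\Rightarrow$ (iii) $\Rightarrow$ (i), where only the last implication carries real content and is where the weak $\Delta_2$-condition is used. First, (i) $\Leftrightarrow$ (ii) is immediate: subcriticality is by definition the statement that there is a measurable $f\colon X\to(0,\infty)$ with $Gf<\infty$ a.s., which is precisely condition~(i) of Theorem~\ref{theorem:existence of hardy weight}. Hence ``$\cE$ is subcritical'' is literally the same as ``the equivalent conditions of Theorem~\ref{theorem:existence of hardy weight} hold''.

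For (ii) $\Rightarrow$ (iii) I would use that, since $\cE$ is a nonlinear Dirichlet form, $\cE_e$ exists and the Hardy inequality of Theorem~\ref{theorem:existence of hardy weight}~(ii) extends to $M(\cE_e)$: there is a measurable $w\colon X\to(0,\infty)$ with $\int_X |f| w\,d\mu \le \av{f}_{L_e}$ for all $f\in M(\cE_e)$. If $\av{f}_{L_e}=0$, this forces $\int_X|f|w\,d\mu=0$, and since $w>0$ a.s.\ we conclude $f=0$. Thus $\ker\av{\cdot}_{L_e}=\{0\}$, which is condition~(i) of Theorem~\ref{theorem:weak hardy}. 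This direction needs neither reflexivity nor the $\Delta_2$-condition.

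The substance is (iii) $\Rightarrow$ (i). Fix $w\in L^1(\mu)\cap L^\infty_+(\mu)$ with $w>0$ (possible as $\mu$ is $\sigma$-finite) and set $u=G^w w$. By Lemma~\ref{lemma:perturbed form II} we have $0\le u\le 1$ and $\cE_e(u)\le\av{w}_1<\infty$, so $u\in M(\cE_e)$. Writing $u_\beta=G^w_\beta w$, the map $\beta\mapsto u_\beta$ increases to $u$ as $\beta\to 0+$, and the resolvent identity of Lemma~\ref{lemma:perturbed form}~(b) gives $u_\beta=G_\beta\bigl(w(1-u_\beta)\bigr)$. Put $g=w(1-u)$. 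Since $u_\beta\le u$ we have $g\le w(1-u_\beta)$, and order preservation yields $G_\beta g\le G_\beta\bigl(w(1-u_\beta)\bigr)=u_\beta\le u$ for every $\beta>0$; letting $\beta\to0+$ gives $Gg\le u\le 1<\infty$ a.s. Consequently, if $\mu(\{u=1\})=0$ then $g=w(1-u)>0$ a.s.\ and $Gg<\infty$ a.s., so $\cE$ is subcritical by definition. Hence everything reduces to showing that the set $B=\{u=1\}$ is $\mu$-null.

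\textbf{Main obstacle.} The heart of the proof is to show $\mu(B)=0$ under the assumption $\ker\av{\cdot}_{L_e}=\{0\}$. The strategy is to prove that $B$ carries a nonzero element of $\ker\av{\cdot}_{L_e}$, namely $1_B$, so that $\mu(B)>0$ would contradict~(iii). Intuitively, $u=G^w w$ is a saturated equilibrium potential and $\{u=1\}$ is the ``recurrent part'', which one expects to be $\cE$-invariant (via Lemma~\ref{lemma:invariance and extended form} and the characterization of invariant sets) and on which $\cE$ behaves critically, forcing $\av{1_B}_{L_e}=0$. The weak $\Delta_2$-condition enters precisely here: by the extended form of Lemma~\ref{lemma:delta2 conditions} one has $\av{f_n}_{L_e}\to0$ if and only if $\cE_e(f_n)\to0$, which reduces the seminorm identity $\av{1_B}_{L_e}=0$ to the single energy identity $\cE_e(1_B)=0$. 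The genuinely delicate point, which I expect to be the hardest step, is establishing $\cE_e(1_B)=0$: the natural approximants of $1_B$ by level sets $\{u>t\}$, $t\uparrow 1$, or by $(1-t)^{-1}C_t(u)$ with a ramp contraction $C_t$, are produced by maps of slope $(1-t)^{-1}\to\infty$, so the contraction estimates of Proposition~\ref{proposition:compatibility with contractions extended forms} do not bound their energy directly. Overcoming this non-contractive blow-up -- by exploiting that these approximants tend to $0$ uniformly while remaining dominated by $u$, together with the weak $\Delta_2$-stability and the lower semicontinuity of $\cE_e$ -- is the crux. Once $\cE_e(1_B)=0$ is secured, $1_B\in\ker\av{\cdot}_{L_e}$, and the analysis of invariant sets (Lemma~\ref{lemma:invariance and extended form}, Theorem~\ref{theorem:kernel irreducibility}) closes the argument.
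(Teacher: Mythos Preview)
Your treatment of (i)~$\Leftrightarrow$~(ii) and (ii)~$\Rightarrow$~(iii) matches the paper exactly. The gap is in (iii)~$\Rightarrow$~(i). You reduce to showing $\mu(\{G^w w=1\})=0$ for a \emph{fixed} $w$, and propose to do this by proving $\cE_e(1_B)=0$. But you do not prove this, and the tools you list do not obviously yield it: the approximants $(1-t)^{-1}(u-t)_+$ are produced by maps with Lipschitz constant $(1-t)^{-1}\to\infty$, so neither Proposition~\ref{proposition:compatibility with contractions extended forms} nor lower semicontinuity gives a uniform energy bound, and the weak $\Delta_2$-condition needs a \emph{sequence} with $\cE_e\to 0$ to produce anything. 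With a single $w$ you only have the fixed bound $\cE_e(u)\le\av{w}_1$, which is not small. So as written the argument stops precisely at the point you yourself flag as the crux.

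The paper avoids this obstacle altogether by varying the weight. It proves (iii)~$\Rightarrow$~(ii): pick $w\in L^1\cap L^\infty$, $w>0$, $\av{w}_1\le 1$, set $w_n=n^{-1}w$ and $u_n=G^{w_n}w_n$. From $G_\alpha\bigl(w_n(1-G^{w_n}_\alpha w_n)\bigr)=G^{w_n}_\alpha w_n\le 1$ one gets $G\bigl(w_n(1-u_n)\bigr)\le 1$, hence $K\bigl(w_n(1-u_n)\bigr)\le\av{w_n}_1\le 1/n$, which via Proposition~\ref{proposition:hardy} gives a Hardy inequality for each $w_n(1-u_n)$. Summing with coefficients $2^{-n}$ yields a Hardy weight $W=\sum_n 2^{-n}w_n(1-u_n)$. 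The positivity of $W$ is where (iii) and weak $\Delta_2$ enter: Lemma~\ref{lemma:perturbed form II} gives $\cE_e(u_n)\le\av{w_n}_1\le n^{-1}\to 0$, weak $\Delta_2$ turns this into $\av{u_n}_{L_e}\to 0$, and the continuous embedding $(M(\cE_e),\av{\cdot}_{L_e})\hookrightarrow L^0(\mu)$ from Theorem~\ref{theorem:weak hardy} yields $u_n\tom 0$. Since $n\mapsto u_n$ is decreasing (Lemma~\ref{lemma:perturbed form}~(c)), the sets $\{u_n<1\}$ increase to $X$ a.s., so $W>0$ a.s. The key idea you are missing is that scaling down the weight forces the energies $\cE_e(u_n)$ to zero, which is exactly the input weak $\Delta_2$ needs.
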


\begin{proof}
 (i) $\Leftrightarrow$ (ii): Assertion (i) of Theorem~\ref{theorem:existence of hardy weight} is the definition of subcriticality. 
 
 (ii) $\Rightarrow$ (iii): By (ii) there exists $w \colon X \to (0,\infty)$ with $\int_X |f| w d\mu \leq \av{f}_{L_e}$ for all $f \in M(\cE_e)$. This implies that the kernel of $\av{\cdot}_{L_e}$ is trivial, which is one of the equivalent conditions of Theorem~\ref{theorem:weak hardy}.
 
 (iii) $\Rightarrow$ (ii): Let $w \in L^1(\mu) \cap L^\infty(\mu)$ such that $w > 0$ a.s.\@ and $\av{w}_1 \leq 1$. We set $w_n = n^{-1}w$.  Then $w_n(1 - G^{w_n}w_n) \geq 0$  by Lemma~\ref{lemma:perturbed form II}. We show that 
 $$W = \sum_{n = 1}^\infty \frac{1}{2^n} w_n(1 - G^{w_n} w_n) $$
 satisfies $W  > 0$ a.s.\@ and 
 $$\int_X |f| W d\mu \leq 2\av{f}_L, \quad f \in M(\cE).$$

 According to Lemma~\ref{lemma:perturbed form} and Lemma~\ref{lemma:perturbed form II}, we have $ G_\alpha (w_n(1 - G_\alpha^{w_n}w_n)) = G^{w_n}_\alpha w_n \leq 1$. Using $G^{w_n}_\alpha w_n \leq G^{w_n} w_n$, this implies  $G_\alpha (w_n(1-G^{w_n}w_n)) \leq 1$ and hence $G(w_n(1 - G^{w_n}w_n)) \leq 1$. This leads to 
 $$\int_X w_n(1 - G^{w_n}w_n) G(w_n(1 - G^{w_n}w_n)) d\mu \leq \av{w_n}_1 \leq \frac{1}{n}, $$
 which by Proposition~\ref{proposition:hardy} implies 
 $$\int_X |f| w_n(1 - G^{w_n}w_n) d\mu \leq (1 + 1/n)\av{f}_L, \quad f \in M(\cE).$$
 Hence, it remains to show   $W > 0$.
 
 Lemma~\ref{lemma:perturbed form II} yields $\cE_e(G^{w_n}w_n) \leq \av{w_n}_1 \leq n^{-1}$ and hence the weak $\Delta_2$-condition, which by Lemma~\ref{lemma:extended luxemburg norm} extends to $\cE_e$,  implies $\av{G^{w_n} w_n}_{L_e} \to 0$. This convergence and  (iii)  imply $G^{w_n}w_n \tom 0$, see Theorem~\ref{theorem:weak hardy}. Moreover, by Lemma~\ref{lemma:perturbed form}~(c), $n \mapsto G^{w_n}w_n$ is decreasing.  Combined, we obtain that the sets  $X_n = \{w_n (1 - G^{w_n} w_n) > 0\}$ satisfy $X_n \nearrow X$ a.s.\@, showing $W > 0$ a.s.\@  
\end{proof}

\begin{theorem}[Characterization criticality]\label{theorem:characterization criticality}
 Let $\cE$ be a nonlinear Dirichlet form  satisfying the weak $\Delta_2$-condition. Then the following assertions are equivalent. 
 \begin{enumerate}[(i)]
  \item $\cE$ is critical. 
  \item $1 \in M(\cE_e)$ and  $\av{1}_{L_e} = 0$. 
  \item There exists a sequence $(e_n)$ in $M(\cE)$ with $0 \leq e_n \leq 1$, $e_n \tom 1$ and $\av{e_n}_L \to 0$.
 \end{enumerate}

\end{theorem}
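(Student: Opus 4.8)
The strategy is to establish the equivalence (ii) $\Leftrightarrow$ (iii) directly and to close the loop (i) $\Rightarrow$ (ii) $\Rightarrow$ (i), using the perturbed Green potentials from the proof of Theorem~\ref{theorem:characterization subcriticality} for the former implication and the Hardy inequality of Proposition~\ref{proposition:hardy} for the latter. For (ii) $\Leftrightarrow$ (iii): since $\av{\cdot}_{L_e}$ is the lower semicontinuous relaxation of $\av{\cdot}_L$ (\cref{lemma:extended luxemburg norm}) and local convergence in measure is metrizable, condition (ii) supplies a sequence $(g_n)$ in $M(\cE)$ with $g_n \tom 1$ and $\av{g_n}_L \to \av{1}_{L_e} = 0$. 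Composing with the normal contraction $C(x) = (x \wedge 1) \vee 0$ and setting $e_n = C g_n$ gives $0 \le e_n \le 1$, $e_n \tom C(1) = 1$ by continuity of $C$, and $\av{e_n}_L \le \av{g_n}_L \to 0$ by Lemma~\ref{lemma:contraction luxemburg seminorms}; this is (iii). Conversely, assuming (iii), the lower semicontinuity of $\av{\cdot}_{L_e}$ with respect to $\tom$ together with $\av{e_n}_{L_e} \le \av{e_n}_L$ gives $\av{1}_{L_e} \le \liminf_n \av{e_n}_L = 0$, and finiteness of the seminorm forces $1 \in M(\cE_e)$.

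The core is (i) $\Rightarrow$ (ii). Fixing $w \in L^1(\mu) \cap L^\infty_+(\mu)$ with $w > 0$ a.s.\ and $\av{w}_1 \le 1$, I would set $w_n = n^{-1} w$ and $v_n = G^{w_n} w_n$, precisely as in the proof of Theorem~\ref{theorem:characterization subcriticality}. There one has $0 \le v_n \le 1$ and $\cE_e(v_n) \le \av{w_n}_1 \le 1/n$ from Lemma~\ref{lemma:perturbed form II}, and, letting $\alpha \to 0+$ in the identity $G_\alpha(w_n(1 - G^{w_n}_\alpha w_n)) = G^{w_n}_\alpha w_n \le 1$, the bound $G(w_n(1 - v_n)) \le 1$. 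Thus $g_n := w_n(1 - v_n)$ is a nonnegative $L^1$-function whose Green potential $G g_n \le 1$ is finite everywhere. Criticality applied to $g_n$ then forces $\mu(\{g_n > 0\}) = \mu(\{g_n > 0\} \cap \{G g_n < \infty\}) = 0$, and since $w_n > 0$ a.s.\ this means $v_n = 1$ a.s. Hence $\cE_e(1) = \cE_e(v_n) \le 1/n$ for all $n$, so $\cE_e(1) = 0$ and in particular $1 \in M(\cE_e)$. Applying the weak $\Delta_2$-condition in the form of \cref{lemma:delta2 conditions}, extended to $\cE_e$ via \cref{lemma:extended luxemburg norm}, to the constant sequence $1$ then upgrades $\cE_e(1) = 0$ to $\av{1}_{L_e} = 0$, which is (ii).

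For the reverse (ii) $\Rightarrow$ (i), I would argue by contradiction: assuming (ii) but that some $f \in L^1_+(\mu)$ satisfies $\mu(A) > 0$ for $A = \{f > 0\} \cap \{G f < \infty\}$, put $w = f 1_A / ((G f) \vee 1)$. Then $w \le f$, so $G w \le G f$ by order preservation and $w\, G w \le f 1_A$, whence $K(w) = \int_X w\, G w\, d\mu \le \av{f}_1 < \infty$; moreover $w > 0$ exactly on $A$, so $\int_X w\, d\mu > 0$. The Hardy inequality of Proposition~\ref{proposition:hardy}(a), extended from $M(\cE)$ to $1 \in M(\cE_e)$ by Fatou's lemma along a sequence $(h_k)$ in $M(\cE)$ with $h_k \tom 1$ and $\av{h_k}_L \to 0$, would then give $\int_X w\, d\mu \le (1 + K(w))\, \av{1}_{L_e} = 0$, a contradiction; hence $\cE$ is critical.

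I expect the main obstacle to be the direction (i) $\Rightarrow$ (ii), where one must turn the qualitative statement of criticality into the quantitative conclusion $\cE_e(1) = 0$. The perturbed potentials $v_n = G^{w_n} w_n$ are the decisive choice, because their densities $g_n = w_n(1 - v_n)$ carry a uniformly bounded, hence everywhere finite, Green potential, so that criticality is compatible only with $g_n \equiv 0$, i.e.\ $v_n \equiv 1$; noticing in addition that the weak $\Delta_2$-condition is exactly what converts the modular identity $\cE_e(1) = 0$ into the seminorm identity $\av{1}_{L_e} = 0$ is the crux. The remaining implications reduce to routine applications of lower semicontinuity, normal contractions and the Hardy inequality.
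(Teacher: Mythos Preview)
Your proof is correct and follows essentially the same route as the paper. The only notable difference is in (i) $\Rightarrow$ (ii): the paper works with a single $w$ and uses the sharper bound $\cE_e(G^w w) \le \int_X w(1-G^w w)\,G^w w\, d\mu$ from Lemma~\ref{lemma:perturbed form II}(c), which vanishes outright once criticality forces $G^w w = 1$; your use of the family $w_n$ and the cruder estimate $\cE_e(v_n) \le \av{w_n}_1 \le 1/n$ is a harmless detour.
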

\begin{proof}
(i) $\Rightarrow$ (ii): Let $w \in L^1(\mu) \cap L^\infty(\mu)$ with $w > 0$. As in the proof of Theorem~\ref{theorem:characterization subcriticality} we infer $G(w(1-G^ww)) \leq 1$. Hence, criticality implies $w(1-G^w w) = 0$ and we obtain $G^w w = 1$. 
%
Using Lemma~\ref{lemma:perturbed form II}, we infer 
$$\cE_e(1) = \cE_e(G^w w) \leq \int_X w(1-G^w w) G^w w d\mu = 0.$$

The weak $\Delta_2$-condition for $\cE_e$ implies $\ker \cE_e = \ker \av{\cdot}_{L_e}$ and we arrive at $1 \in M(\cE_e)$ and $\av{1}_{L_e} = 0$. 

(ii) $\Leftrightarrow$ (iii): By our characterization of the extended Luxemburg seminorm $\av{1}_{L_e} = 0$ is equivalent to the existence of $(e_n)$ in $M(\cE)$ such that $e_n \tom 1$ and $\av{e_n}_L \to 0$. Since $\av{\cdot}_L$ is compatible with normal contractions, see Lemma~\ref{lemma:contraction luxemburg seminorms}, these $(e_n)$ can be chosen to satisfy $0 \leq e_n \leq 1$. 

(ii) $\Rightarrow$ (i): For $w \in L^1_+(\mu)$ let $K(w) = \int_X w Gw d\mu$. If $w \neq 0$, then (ii) implies that for any $n \geq 0$ we have
$$\int_X |1| w d\mu > 0 = n \av{1}_{L_e}.$$
Together with Proposition~\ref{proposition:hardy}, this implies  $K(w) = \infty$ for all $0 \neq w \in L^1_+(\mu)$. For $g \in L^1_+(\mu)$ we consider the function $w = g /(Gg \vee 1)$ (here division by $\infty$ is interpreted as $0$). As in the proof Theorem~\ref{theorem:existence of hardy weight}, it is readily verified that $K(w) \leq \av{g}_1$. Hence, $w = 0$, which implies $Gg = \infty$ on $\{g > 0\}$. This shows criticality.  
%
%
%
%
\end{proof}

\begin{corollary}\label{corollary:dichotomy}
   Let $\cE$ be a nonlinear Dirichlet form  satisfying the weak $\Delta_2$-condition. If $\cE$ is irreducible, then $\cE$ is either critical or subcritical. 
\end{corollary}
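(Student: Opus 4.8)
The plan is to let the irreducibility hypothesis control the kernel of the extended Luxemburg seminorm and then read off the two alternatives directly from the characterizations already established. The organizing observation is that $\ker \av{\cdot}_{L_e}$ is always a linear subspace of $M(\cE_e)$, and under irreducibility Theorem~\ref{theorem:kernel irreducibility}~(b) squeezes it into the one-dimensional space $\R \cdot 1$. A one-dimensional ambient space leaves exactly two possibilities for a subspace, and each one pins down one side of the dichotomy.

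First I would apply Theorem~\ref{theorem:kernel irreducibility}~(b): since $\cE$ is irreducible, $\ker \av{\cdot}_{L_e} \subseteq \R \cdot 1$. As the kernel of a seminorm is a vector subspace and it now sits inside a one-dimensional space, there are precisely two cases, namely $\ker \av{\cdot}_{L_e} = \{0\}$ and $\ker \av{\cdot}_{L_e} = \R \cdot 1$. These are exhaustive, so it suffices to treat them separately.

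In the case $\ker \av{\cdot}_{L_e} = \{0\}$, this is exactly assertion~(i) of Theorem~\ref{theorem:weak hardy}, so all of the equivalent conditions listed there hold for $\cE$. Invoking the equivalence (i)~$\Leftrightarrow$~(iii) in Theorem~\ref{theorem:characterization subcriticality} (which uses the weak $\Delta_2$-condition assumed in the corollary) then yields that $\cE$ is subcritical. In the remaining case $\ker \av{\cdot}_{L_e} = \R \cdot 1$, I would observe that $1 \in \ker \av{\cdot}_{L_e}$ gives simultaneously $1 \in M(\cE_e)$, since $\av{\cdot}_{L_e}$ takes finite values only on $M(\cE_e)$, and $\av{1}_{L_e} = 0$. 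This is precisely assertion~(ii) of Theorem~\ref{theorem:characterization criticality}, whence $\cE$ is critical.

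I do not anticipate a genuine obstacle, as all of the analytic content has already been carried out in the three cited theorems; the corollary is essentially a bookkeeping argument combining them. The only point meriting a little care is noting that the kernel of the seminorm lies automatically in $M(\cE_e)$, so that the membership hypothesis $1 \in M(\cE_e)$ in the criticality characterization is met without further verification.
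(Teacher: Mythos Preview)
Your proposal is correct and follows essentially the same approach as the paper: use irreducibility via Theorem~\ref{theorem:kernel irreducibility} to force $\ker \av{\cdot}_{L_e} \subseteq \R \cdot 1$, note that a subspace of a one-dimensional space is either trivial or the whole line, and then invoke Theorem~\ref{theorem:characterization subcriticality} and Theorem~\ref{theorem:characterization criticality} for the two cases. The paper's proof is just a terser version of what you wrote, omitting the explicit detour through Theorem~\ref{theorem:weak hardy} and the remark that $1 \in \ker \av{\cdot}_{L_e}$ entails $1 \in M(\cE_e)$.
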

\begin{proof}
 Since $\cE$ is irreducible, we have $\ker \av{\cdot}_{L_e} \subseteq \R \cdot 1$, see Theorem~\ref{theorem:kernel irreducibility}.
 Because the kernel of a seminorm is a linear space, it is either $\{ 0 \}$ or $\R \cdot 1$.
 Now, Theorem~\ref{theorem:characterization subcriticality} and Theorem~\ref{theorem:characterization criticality} yield the claim.
\end{proof}

\begin{remark}
  For energies of $p$-Laplace type operators the previous corollary is known as ground state alternative (with $1$ being interpreted as a generalized ground state - the so-called Agmon ground state - in the critical case). It  is contained in \cite{PT07} for $p$-Laplace type operators on manifolds and in \cite{Fis23} for $p$-Laplacians on weighted graphs with slight variations (e.g. due to $p$-homogeneity the weighted Hardy inequalities are formulated in terms of a weighted $L^p$-norm instead of a weighted $L^1$-norm).  

\end{remark}

\begin{remark}[Comparison to the bilinear case] \label{remark:comparison to bilinear}
In principle we adapt the proofs given in \cite{FOT} to the nonlinear setting to establish our characterizations of criticality and subcriticality. There are however two key differences: 
\begin{enumerate}[(a)]
 \item If $\cE$ is a nonlinear subcritical Dirichlet form, the finiteness of $Gf$ holds for some particular measurable function $f \colon X \to (0,\infty)$, which is more or less explicitly given. In contrast, for bilinear Dirichlet forms subcriticality implies $Gf < \infty$ a.s.\@ for all $f \in L^1_+(\mu)$. 
 \item For bilinear Dirichlet forms criticality is equivalent to $\mu(\{0 < Gf < \infty\}) = 0$ for all $f \in L^1_+(\mu)$.
%
 \end{enumerate}
In summary, for bilinear Dirichlet forms both subcriticality and criticality have slightly different conditions characterizing them. The reason for this is the validity of Hopf’s maximal ergodic inequality, see \cite[Lemma~1.5.2]{FOT}, the linearity of the resolvent and $D(\cE)$ being dense in $L^2(\mu)$ (which is always assumed in the bilinear case). As of yet we do not have a variant of Hopf's maximal ergodic inequality for the nonlinear case. However, even if it holds in some form, the nonlinearity of the resolvent prevents the proofs given in \cite{FOT} to immediately carry over. 
\end{remark}

\section{Equilibrium potentials and a glimpse at potential theory} \label{section:potential theory}

In this section we briefly discuss how subcriticality can be used to show the existence of equilibrium potentials, which are important for establishing a potential theory for nonlinear order preserving forms via capacities. While we are not going to discuss all the  details, we do provide some basic properties.

Here we assume that $X$ is a Hausdorff topological space and that $\mu$ is a $\sigma$-finite Borel measure on $X$ (i.e. a locally finite measure on the Borel-$\sigma$-algebra of $X$). Moreover, $\cE \colon L^2(\mu) \to [0,\infty]$ is assumed to be a nonlinear order preserving form for which $\cE_e$ exists and $h \colon X \to [0,\infty)$ is an $\cE$-excessive function.   

For a measurable set $A \subseteq X$ let 
$$\mathcal L_A = \{f \in L^0(\mu) \mid f \geq h \text{ a.s.\@ on some open neighborhood of }A\} $$
and 
$${\rm cap}_h(A) = {\rm cap}_{\cE,h}(A) =  \inf \{\cE_e(f) \mid f \in \mathcal L_A\}.$$
It follows directly from the first Beurling-Deny criterion for $\cE$ that for all $A,B \subseteq X$ we have 
 $${\rm cap}_h(A \cap B) + {\rm cap}_h(A \cup B) \leq {\rm cap}_h(A) + {\rm cap}_h(B).$$
 Two typical situations where this construction is used are the following: 
 
 \begin{example}[0-Capacity for extended Dirichlet forms]
  Let $\cE$ be a nonlinear Dirichlet form. In this case, $1$ is an excessive function (not the only one, see below). The induced capacity ${\rm cap}_{\cE,1}$ is an extension of the capacity ${\rm Cap}_{(0)}$ introduced at the end of \cite[Section~2.1]{FOT} to the nonlinear case. 
 \end{example}

\begin{example}[1-Capacity for nonlinear order preserving forms] Let $\cE$ be a nonlinear order preserving form but instead of $\cE$ we consider $\cE_1 = \cE + \frac{1}{2}\av{\cdot}^2$ (cf. the definition of $\cE_w$ at the beginning of Subsection~\ref{subsection:weak hardy and poincare}). It is a nonlinear order preserving form for which $(M(\cE_1),\av{\cdot}_{L,\cE_1})$ is a Banach space. The completeness can be inferred from Theorem~\ref{theorem:two imply the third} and Remark~\ref{remark:two imply the third},  see also \cite[Theorem~3.7]{Cla21}. Hence, by Theorem~\ref{theorem:existence extended Dirichlet space} the extended form $(\cE_1)_e$ exists and $D((\cE_1)_e) = D(\cE)$. We saw in Remark~\ref{remark:excessive functions 1} that $h = G_1 \psi$ with $0 \leq \psi \in L^2(\mu)$ is an $\cE_1$-excessive function.  

  Hence, $\cE_1$ with $h$ satisfies the above assumptions and  
  $${\rm cap}_{\cE_1,h}(A)  =   \inf \{\cE_1(f) \mid f \in \mathcal L_A\}.$$
  This capacity is an extension of the capacity constructed in \cite[Section~III.2]{MR} to the nonlinear case. 
\end{example}

One important feature of the previous example is that the excessive function $h$ belongs to the domain of the functional, as this implies that the whole space has finite capacity. The existence of such excessive functions under relatively mild assumptions is discussed next. For certain uniqueness aspects we need the strict convexity of $\cE_e$, where  $\cE_e$ is called {\em strictly convex} if $\cE_e((f + g)/2) < (\cE_e(f) + \cE_e(g))/2$ for all $f,g \in D(\cE_e)$.

\begin{theorem}[Existence of excessive functions]\label{theorem:existence of excesive functions}
 Let $\cE \colon L^2(\mu) \to [0,\infty]$ be a nonlinear order preserving form. Assume that either $(M(\cE),\av{\cdot}_L)$ is a Banach space or that $\cE_e$ exists and $\cE$ is reflexive and subcritical. Then the following holds: 
 \begin{enumerate}[(a)]
  \item Let $g \in L^0(\mu)$ and let $A\subseteq X$ be measurable such that there exists $f \in D(\cE_e)$ with $f \geq g$ a.s.\@ on $A$. Then there exists an $\cE$-excessive function $0 \leq h \in D(\cE_e)$ with $h \geq g$ a.s.\@ on $A$ and 
  $$ \cE_e(h) =  \min \{\cE_e(f) \mid f \in L^0(\mu) \text{ with } f \geq g \text{ on }A\}.$$
   If  $\cE_e$ is {\em strictly convex}, then $h$ is unique.  Moreover, if $(M(\cE),\av{\cdot}_L)$ is a Banach space, then $\cE = \cE_e$ and $h \in D(\cE)$.
  \item If there exist $g \in D(\cE_e)$ with $g > 0$ a.s.\@,  then there exists an $\cE$-excessive function $h \colon X \to (0,\infty)$. 
 \end{enumerate}
\end{theorem}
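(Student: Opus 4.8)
The plan is to prove (a) by the direct method of the calculus of variations and to obtain (b) by applying (a) with $A = X$. Set $\mathcal{C} = \{f \in L^0(\mu) \mid f \geq g \text{ a.s.\@ on } A\}$ and $m = \inf\{\cE_e(f) \mid f \in \mathcal{C}\}$. The set $\mathcal{C}$ is convex and closed under a.s.\@ limits, and by hypothesis it contains some $f \in D(\cE_e)$, so $m < \infty$. I would fix a minimizing sequence $(h_n)$ in $\mathcal{C} \cap D(\cE_e)$ with $\cE_e(h_n) \to m$; since $(\cE_e(h_n))$ is bounded, $(\av{h_n}_{L_e})$ is bounded by Lemma~\ref{lemma:equivalent luxemburg norms}.

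The core step is to extract an admissible limit. If $(M(\cE),\av{\cdot}_L)$ is a Banach space, then $\cE = \cE_e$ by Theorem~\ref{theorem:existence extended Dirichlet space} and the embedding $(M(\cE),\av{\cdot}_L) \to L^2(\mu)$ is continuous (as in the proof of that theorem), so $(h_n)$ is bounded in $L^2(\mu)$. In the reflexive, subcritical case, Theorem~\ref{theorem:existence of hardy weight} supplies a weight $w \colon X \to (0,\infty)$ with $\int_X |f| w \, d\mu \leq \av{f}_{L_e}$ on $M(\cE_e)$, so $(h_n)$ is bounded in $L^1(w\mu)$ and, using reflexivity of the extended modular space, is bounded in a reflexive Banach space. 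In either situation I would invoke the Banach--Saks theorem to pass to a subsequence whose Ces\`aro means $g_N = \frac{1}{N}\sum_{k=1}^N h_{n_k}$ converge strongly — in $L^2(\mu)$ in the first case, in $(M(\cE_e),\av{\cdot}_{L_e})$ (hence, via the Hardy inequality, in $L^1(w\mu)$) in the second. Strong convergence forces $g_N \tom h$ for some $h$, and passing to an a.s.\@ convergent subsequence the admissibility $g_N \in \mathcal{C}$ gives $h \in \mathcal{C}$. Convexity of $\cE_e$ yields $\cE_e(g_N) \leq \frac{1}{N}\sum_{k=1}^N \cE_e(h_{n_k})$, and lower semicontinuity of $\cE_e$ for local convergence in measure then gives $\cE_e(h) \leq \liminf_N \cE_e(g_N) \leq m$, so $h$ attains the minimum.

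It remains to arrange nonnegativity and excessivity. Replacing $h$ by $h \vee 0$ keeps it in $\mathcal{C}$ and does not increase $\cE_e$: since $\cE_e$ satisfies the first Beurling--Deny criterion (Proposition~\ref{proposition:compatibility with contractions extended forms}), it obeys the lattice inequality $\cE_e(f \wedge 0) + \cE_e(f \vee 0) \leq \cE_e(f)$, and $\cE_e \geq 0$ forces $\cE_e(h \vee 0) \leq \cE_e(h)$. Thus I may assume $0 \leq h \in D(\cE_e)$ minimizes $\cE_e$ over $\mathcal{C}$. Because $\{f \geq h\} \subseteq \mathcal{C}$ (any $f \geq h$ satisfies $f \geq h \geq g$ on $A$) and $h \in \{f \geq h\}$, the function $h$ also minimizes $\cE_e$ over $\{f \geq h\}$, so Proposition~\ref{proposition:characterization of excessive functions}~(c) shows $h$ is $\cE$-excessive. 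Strict convexity of $\cE_e$ gives uniqueness via the standard midpoint argument applied to two minimizers, and in the Banach case $\cE = \cE_e$ forces $h \in L^2(\mu)$, i.e.\@ $h \in D(\cE)$. Finally, (b) follows by taking $A = X$ and the given $g \in D(\cE_e)$ with $g > 0$ a.s.: the resulting excessive $h \geq g$ satisfies $h > 0$ a.s., and choosing a strictly positive representative yields $h \colon X \to (0,\infty)$.

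The main obstacle is the extraction step in the subcritical case: the Hardy inequality only gives an $L^1(w\mu)$-bound, which is too weak for direct weak compactness, so the argument must route through reflexivity of the extended modular space to apply Banach--Saks and must then reconcile the resulting strong limit with the pointwise-a.s.\@ limit, so as to preserve the constraint $f \geq g$ on $A$. By contrast, the nonnegativity reduction and the passage from \emph{minimizer} to \emph{$\cE$-excessive} through Proposition~\ref{proposition:characterization of excessive functions} are soft once the minimizer exists.
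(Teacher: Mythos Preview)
Your approach is essentially the paper's: direct method on $\mathcal{C}$, extract a limit via some convex-combination compactness, then invoke Proposition~\ref{proposition:characterization of excessive functions}~(c). Two small differences: the paper replaces the minimizing sequence by $|h_n|$ at the start (using $\cE_e(|h_n|)\le\cE_e(h_n)$) rather than truncating the limit by $h\vee 0$ at the end; both work.

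There is one technical gap in your subcritical case. You invoke the Banach--Saks theorem in the (completion of the) extended modular space, but reflexivity alone does \emph{not} imply the Banach--Saks property. The paper avoids this: from reflexivity it gets a weakly convergent subsequence in the completion of $(M(\cE_e)/\ker\av{\cdot}_{L_e},\av{\cdot}_{L_e})$ and then uses Mazur's lemma to produce convex combinations $f_n\in\mathrm{conv}\{h_k\mid k\ge n\}$ that are $\av{\cdot}_{L_e}$-Cauchy (not necessarily Ces\`aro means). These $f_n$ still lie in $\mathcal{C}$ and satisfy $\cE_e(f_n)\to m$ by convexity. The Hardy weight from Theorem~\ref{theorem:existence of hardy weight} then makes $(f_n)$ Cauchy in $L^1(w\mu)$, supplying an honest limit $h$ with $h\ge g$ on $A$ and $\cE_e(h)\le m$ by $L^0$-lower semicontinuity. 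Your outline becomes correct once ``Banach--Saks'' is replaced by ``Mazur'' in the second case; in the first case $L^2(\mu)$ is Hilbert, so your Banach--Saks step is fine as written.
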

\begin{proof}
(a): Let $g$ be given as in the statement. Our assumption implies 
 $$I = \inf \{\cE_e(f) \mid f \in L^0(\mu) \text{ with } f \geq g \text{ on }A\}< \infty.$$
By Proposition~\ref{proposition:characterization of excessive functions}~(c) (more precisely by the same argument as in its proof) any function attaining this infimum is $\cE$-excessive. Hence, we need to show the existence of a minimizer.

 Let $(h_n)$ be a sequence in $D(\cE_e)$ with $h_n \geq g$ a.s.\@ on $A$ such that $\cE_e(h_n) \searrow I$. Since $|h_n| \geq h_n \geq g$ on $A$  and $\cE_e(|h_n|) \leq \cE_e(h_n)$, we can assume $h_n \geq 0$.  Since $(h_n)$ is $\cE_e$-bounded, it is also $\av{\cdot}_{L_e}$-bounded.
 
 Case~1: $(M(\cE),\av{\cdot}_L)$ is a Banach space. In this case $\cE_e$ exists and $\cE = \cE_e$, see Theorem~\ref{theorem:existence extended Dirichlet space}.   Since $(M(\cE),\av{\cdot}_L)$ is a Banach space, as discussed in the proof of Example~\ref{example:completeness implies subcriticality I}, $(M(\cE),\av{\cdot}_L)$ continuously embeds into $L^2(\mu)$. Hence $(h_n)$ is bounded in $L^2(\mu)$ and by the Banach-Saks theorem we find a subsequence $(h_{n_k})$ such that the Césaro means  $f_N = (1/N) \sum_{k=1}^N h_{n_k}$ converge to some $h \in L^2(\mu)$. These means satisfy $0\leq f_N$ and $g \leq f_N$ on $A$ and hence $0\leq h$ and $g \leq h$ on $A$. The $L^2$-lower semicontinuity and the convexity of $\cE$ imply 
 $$\cE(h) \leq \liminf_{N\to \infty} \cE(f_N) \leq \liminf_{k\to \infty} \cE(h_{n_k}) = I.$$
 
 Case 2: $\cE$ is reflexive and subcritical: Using reflexivity, by Mazur's theorem we find a sequence $(f_n)$ such that $f_n \in {\rm conv}\{h_k \mid k \geq n\}$ and  $(f_n)$ is $\av{\cdot}_{L_e}$-Cauchy. These convex combinations satisfy $f_n \geq g$ on $A$  and the convexity of $\cE$ implies $\cE_e(f_n) \to I$. Using Theorem~\ref{theorem:characterization subcriticality} we choose $w \colon X \to (0,\infty)$ such that the embedding of $(M(\cE_e),\av{\cdot}_{L_e})$ into $L^1(w\mu)$ is continuous. Hence, $(f_n)$ is Cauchy in $L^1(w\mu)$ and therefore $f_n \to h$ in $L^1(w\mu)$ for some $h \in L^1(w\mu)$. The inequalities $h \geq g$ on $A$ and  $h \geq 0$ hold and the lower semicontinuity of $\cE_e$ implies 
 $$\cE_e(h) \leq \liminf_{n\to \infty} \cE_e(f_n) = I.$$
 %
 
 (b): This follows directly from (a).
\end{proof}


\begin{corollary}[Existence of equilibrium potentials] \label{corollary:existence of equilibrium potentials}
 Assume the standing assumptions of this section and that either $(M(\cE),\av{\cdot}_L)$ is a Banach space or that $\cE$ is reflexive and subcritical.  
 \begin{enumerate}[(a)]
  \item For each open $O \subseteq X$ with ${\rm cap}_h(O) < \infty$ there exists an $\cE$-excessive function $e_O \in D(\cE_e)$ with $0 \leq e_O \leq h$ and $e_O = h$ a.s.\@ on $O$ such that 
 $${\rm cap}_h(O) = \cE_e(e_O).$$
  If $\cE_e$ is strictly convex, then $e_O$ is unique, and if $(M(\cE),\av{\cdot}_L)$ is a Banach space, then $e_O \in D(\cE)$. 
  \item ${\rm cap}_h$ is a Choquet capacity. 
 \end{enumerate}
\end{corollary}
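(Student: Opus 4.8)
The plan is to reduce this to Theorem~\ref{theorem:existence of excesive functions}(a). First I would observe that for an open set $O$ the membership condition defining $\mathcal L_O$ simplifies, since $O$ is itself an open neighbourhood of $O$: one has $\mathcal L_O = \{f \in L^0(\mu) \mid f \geq h \text{ a.s.\ on } O\}$, so ${\rm cap}_h(O) = \inf\{\cE_e(f) \mid f \geq h \text{ a.s.\ on } O\}$. The hypothesis ${\rm cap}_h(O) < \infty$ says precisely that there is some $f \in D(\cE_e)$ with $f \geq h$ a.s.\ on $O$, which is exactly the standing requirement of Theorem~\ref{theorem:existence of excesive functions}(a) with $g = h$ and $A = O$. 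That theorem provides an $\cE$-excessive $\tilde e \in D(\cE_e)$, $\tilde e \geq 0$, with $\tilde e \geq h$ a.s.\ on $O$ and $\cE_e(\tilde e) = {\rm cap}_h(O)$. To enforce the upper bound I would set $e_O = \tilde e \wedge h$. Since $h$ is excessive, Proposition~\ref{proposition:characterization of excessive functions}(b) gives $\cE_e(e_O) \leq \cE_e(\tilde e) = {\rm cap}_h(O)$; on the other hand $e_O = h \geq h$ on $O$, so $e_O \in \mathcal L_O$ and hence $\cE_e(e_O) \geq {\rm cap}_h(O)$. Thus $e_O$ is itself a minimizer, hence $\cE$-excessive by the argument in the proof of Theorem~\ref{theorem:existence of excesive functions}, and it satisfies $0 \leq e_O \leq h$ with $e_O = h$ a.s.\ on $O$. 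Under strict convexity the minimizer over the convex set $\{f \geq h \text{ on } O\}$ is unique, so $e_O$ coincides with $\tilde e$ and is the unique such potential; in the Banach space case $\cE = \cE_e$ and $D(\cE_e) = D(\cE)$ by Theorem~\ref{theorem:existence extended Dirichlet space}, whence $e_O \in D(\cE)$.

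\textbf{Part (b).} To show that ${\rm cap}_h$ is a Choquet capacity I would verify monotonicity, continuity from below along arbitrary increasing sequences, and continuity from above along decreasing sequences of compacta. Monotonicity is immediate, since $A \subseteq B$ forces $\mathcal L_B \subseteq \mathcal L_A$, and the definition via open neighbourhoods directly yields the outer regularity ${\rm cap}_h(A) = \inf\{{\rm cap}_h(O) \mid O \supseteq A \text{ open}\}$ for every $A \subseteq X$. Continuity from above on compacta then follows from outer regularity together with a standard compactness argument: if $K_n \searrow K$ with $K_n$ compact and $O \supseteq K$ is open, the closed (hence compact) sets $K_n \setminus O$ decrease to $\emptyset$, so $K_N \subseteq O$ for some $N$ by the finite intersection property; hence $\lim_n {\rm cap}_h(K_n) \leq {\rm cap}_h(O)$, and taking the infimum over open $O \supseteq K$ gives $\lim_n {\rm cap}_h(K_n) \leq {\rm cap}_h(K)$, the reverse inequality being monotonicity.

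\textbf{Continuity from below.} This is the core, and I would first treat an increasing sequence of open sets $O_n \nearrow O$. Writing $\mathcal C_n = \{f \in L^0(\mu) \mid f \geq h \text{ a.s.\ on } O_n\}$, these convex sets decrease with $\bigcap_n \mathcal C_n = \{f \geq h \text{ on } O\}$, and ${\rm cap}_h(O_n) = \cE_e(e_n)$ for the equilibrium potentials $e_n$ from part (a). Assuming $L := \lim_n {\rm cap}_h(O_n) < \infty$ (otherwise both sides are infinite), the $e_n$ are $\av{\cdot}_{L_e}$-bounded, and I would extract a limit exactly as in the two cases of the proof of Theorem~\ref{theorem:existence of excesive functions}: in the Banach space case via the continuous embedding into $L^2(\mu)$ and the Banach--Saks theorem, in the reflexive subcritical case via weak compactness and Mazur's lemma. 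In both cases the approximating convex combinations can be taken from tails $\conv\{e_k \mid k \geq m\} \subseteq \mathcal C_m$, so their limit $e$ lies in every closed set $\mathcal C_m$, hence in $\bigcap_m \mathcal C_m = \{f \geq h \text{ on } O\}$; convexity bounds their energy by $\sup_{k \geq m}\cE_e(e_k) = L$, and lower semicontinuity of $\cE_e$ gives $\cE_e(e) \leq L$. Since $e$ is admissible for $O$ we obtain ${\rm cap}_h(O) \leq \cE_e(e) \leq L$, and monotonicity yields equality. I would then pass to arbitrary increasing $A_n \nearrow A$ in the usual way: choose open $O_n \supseteq A_n$ with ${\rm cap}_h(O_n) \leq {\rm cap}_h(A_n) + \varepsilon 2^{-n}$, set $U_n = \bigcup_{k \leq n} O_k$, and use the strong subadditivity recorded before the examples together with ${\rm cap}_h(O_{n+1} \cap U_n) \geq {\rm cap}_h(A_n)$ to prove by induction that ${\rm cap}_h(U_n) \leq {\rm cap}_h(A_n) + \varepsilon$; the open-set case applied to $U_n \nearrow \bigcup_n U_n \supseteq A$ then gives ${\rm cap}_h(A) \leq L + \varepsilon$, and letting $\varepsilon \to 0$ finishes the argument.

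\textbf{Main obstacle.} I expect the delicate point to be the compactness step in the open-set case. Unlike the bilinear theory, there is no parallelogram identity forcing the equilibrium potentials to be $\cE_e$-Cauchy, so one must run the direct method over the \emph{decreasing} constraint sets $\mathcal C_n$ and verify that the limit produced by Banach--Saks (respectively Mazur) both respects the constraint $f \geq h$ on $O$ and does not increase the energy. The bookkeeping ensuring that the approximating convex combinations stay in the tail sets $\mathcal C_m$, so that their limit is admissible for $O$ rather than merely for some $O_n$, is what makes this step work.
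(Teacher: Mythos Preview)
Your proof of part~(a) is correct and matches the paper's argument essentially verbatim: apply Theorem~\ref{theorem:existence of excesive functions} with $g=h$, $A=O$, then cut down by $h$ using Proposition~\ref{proposition:characterization of excessive functions}.

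For part~(b) your argument is also correct, but the paper takes a different and somewhat cleaner route for the key step of continuity from below along open sets. Instead of re-running the Banach--Saks/Mazur compactness argument on the sequence $(e_n)$ and tracking which tail constraint set each convex combination lies in, the paper exploits the \emph{excessivity} of the equilibrium potentials: setting $f_n=\inf_{k\geq n}e_k$, excessivity of each $e_k$ gives $\cE_e(\inf_{n\leq k\leq N}e_k)\leq\cE_e(e_n)$ by iterated truncation, and $L^0$-lower semicontinuity passes this to $f_n$. The $f_n$ are increasing, satisfy $f_n\geq h$ on $O_n$, and their pointwise limit $f$ is admissible for $O$ with $\cE_e(f)\leq\liminf_n{\rm cap}_h(O_n)$. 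This avoids all functional-analytic compactness and uses only the order structure already established in part~(a). Your approach, by contrast, is more generic---it would work for any lower semicontinuous convex functional with the right compactness, without using that the minimizers are excessive---but at the cost of the bookkeeping you flag in your final paragraph (and note that Banach--Saks alone does not give tail convex combinations; you implicitly need weak compactness plus the weak closedness of each $\mathcal C_m$ in $L^2$ or $L^1(w\mu)$). The paper also dispenses with your manual verification of the remaining Choquet axioms by invoking \cite[Theorem~A.1.2]{FOT}, which reduces everything to the three open-set conditions.
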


\begin{remark}
 The functions $e_O$ constructed in this corollary are called {\em equilibrium potential} for $O$. Note that the uniqueness of $e_O$ is not guaranteed without strict convexity. 
\end{remark}

 \begin{proof}
(a):  Using Theorem~\ref{theorem:existence of excesive functions} we find an $\cE$-excessive function $\tilde e_O \in D(\cE_e)$ with $\tilde e_O \geq h$ on $O$ and $\tilde e_O \geq 0$ such that 
 $$\cE_e(\tilde e_O) =  \min \{\cE_e(f) \mid f \in L^0(\mu) \text{ with } f \geq h \text{ on }O\}.$$
 Let $e_O = \tilde e_O \wedge h$ such that $e_O = h$ on $O$.  Since $h$ is $\cE$-excessive, we infer $\cE_e(e_O) \leq \cE_e(\tilde e_O)$ and by our characterization of excessive functions in Proposition~\ref{proposition:characterization of excessive functions} $e_O$ is also $\cE$-excessive.  If $(M(\cE),\av{\cdot}_L)$ is a Banach space, then $\tilde e_O \in D(\cE)$ follows from Theorem~\ref{theorem:existence of excesive functions}. Moreover, $\cE(e_O) \leq \cE(\tilde e_O)$ (and hence $e_O \in D(\cE)$) follows from Proposition~\ref{proposition:characterization of excessive functions}. 

(b): It follows directly from the definition of $\mathcal L_A$ that ${\rm cap}_h(A) = \inf \{ {\rm cap}_h(O) \mid O \text{ open with } A\subseteq O\}$. Hence, according to \cite[Theorem~A.1.2]{FOT}, it suffices to show the following for open subsets $O_n$, $n \in \N$, of $X$:

\begin{enumerate}
 \item The inclusion $O_1 \subseteq O_2$ implies ${\rm cap}_h(O_1) \leq {\rm cap}_h(O_2$). This follows directly from the definition. 
  \item ${\rm cap}_h(O_1 \cap O_2) + {\rm cap}_h(O_1 \cup O_2) \leq {\rm cap}_h(O_1) + {\rm cap}_h(O_2)$. As mentioned above, this is a direct consequence of the first Beurling-Deny criterion. 
  \item If $(O_n)$ is increasing and $O = \cup_n O_n$, then $\sup_n {\rm cap}_h(O_n) = {\rm cap}_h(O)$. This is a consequence of the existence of equilibrium potentials: Let $(e_n)$ in $D(\cE_e)$ with $\cE_e(e_n) = {\rm cap}_h(O_n)$. We let $f_n = \inf_{k \geq n} e_k$. The $L^0$-lower semicontinuity of $\cE_e$ (note that $\cE_e = \cE$  if $(M(\cE),\av{\cdot}_L)$ is a Banach space) and the $\cE$-excessivity of the $(e_n)$ yield
  $$\cE_e(f_n) \leq \liminf_{N\to \infty} \cE_e(\inf_{n \leq k \leq N} e_k) \leq \cE_e(e_n) = {\rm cap}_h(O_n).$$
  A similar computation shows that $f_n$ is $\cE$-excessive. Since $(O_n)$ is increasing, we also have $f_n \geq h$ on $O_n$ a.s.\@ Hence, the sequence of equilibrium potentials for $(O_n)$ can be chosen to be increasing. In particular, the limit $f = \lim_{n \to \infty} f_n$ exists a.s.\@ It satisfies $f \geq h$ on $O$ a.s.\@ From the lower semicontinuity of $\cE_e$ with respect to local convergence in measure we infer 
  $${\rm cap}_h(O) \leq \cE_e(f) \leq \liminf_{n \to \infty} \cE_e(f_n) \leq \liminf_{n \to \infty} {\rm cap}_h(O_n). $$
  Since ${\rm cap}_h(O_n)\leq {\rm cap}_h(O)$ by (1), we infer $\sup_n {\rm cap}_h(O_n) = {\rm cap}_h(O)$.
\end{enumerate}
\end{proof}

With the help of Proposition~\ref{proposition:characterization of excessive functions} we can characterize equilibrium potentials in terms of directional derivatives of $\cE_e$.

\begin{corollary} \label{corollary:alternative description equilibrium potential}
We use the assumptions of Corollary~\ref{corollary:existence of equilibrium potentials} and additionally assume that $\cE_e$ is strictly convex. For $O \subseteq X$ open with ${\rm cap}_h(O) < \infty$ and $f \in D(\cE_e)$ the following assertions are equivalent:  
\begin{enumerate}[(i)]
 \item $f = e_O$, where $e_O$ is the unique equilibrium potential of $O$. 
 \item $f \geq 0$,  $f = h$ a.s.\@ on $O$ and $d^+ \cE_e(f,\varphi) \geq 0$ for all $\varphi \in M(\cE_e)$ with $\varphi \geq 0$ a.s.\@ on $O$.
\end{enumerate}
%
%
\end{corollary}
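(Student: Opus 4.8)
The plan is to read condition (ii) as the first-order variational inequality characterizing the (unique) minimizer of the convex functional $\cE_e$ over the convex constraint set
$$\cF = \{g \in L^0(\mu) \mid g \geq h \text{ a.s.\@ on } O\},$$
and then to identify that minimizer with $e_O$. Since $O$ is open, $O$ is an open neighborhood of itself, so the set $\mathcal{L}_O$ from the definition of ${\rm cap}_h$ coincides with $\cF$; combined with Theorem~\ref{theorem:existence of excesive functions}(a) (applied with $g := h$, $A := O$) this gives that ${\rm cap}_h(O) = \min_{g \in \cF} \cE_e(g)$ is attained, and by the strict convexity of $\cE_e$ the minimizer is unique. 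By the truncation step in the proof of Corollary~\ref{corollary:existence of equilibrium potentials}(a), $e_O = \tilde e_O \wedge h$ is feasible with $\cE_e(e_O) \leq \cE_e(\tilde e_O) = {\rm cap}_h(O)$, hence $e_O$ is itself a minimizer over $\cF$; uniqueness then forces $e_O = \tilde e_O$, so $e_O$ is the unique minimizer of $\cE_e$ over $\cF$. This identification is the one nontrivial preliminary.

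For (i) $\Rightarrow$ (ii) I would argue directly. If $f = e_O$, then $f \geq 0$ and $f = h$ a.s.\@ on $O$ by Corollary~\ref{corollary:existence of equilibrium potentials}(a). Given $\varphi \in M(\cE_e)$ with $\varphi \geq 0$ a.s.\@ on $O$, for every $t > 0$ the function $f + t\varphi = h + t\varphi \geq h$ a.s.\@ on $O$, so $f + t\varphi \in \cF$. Minimality of $f$ yields $\cE_e(f + t\varphi) \geq \cE_e(f)$; dividing by $t$ and letting $t \to 0+$ gives $d^+\cE_e(f,\varphi) \geq 0$.

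For (ii) $\Rightarrow$ (i) I would run the converse. Assuming (ii), $f = h \geq h$ on $O$ shows $f \in \cF$. Let $g \in \cF$ be arbitrary; if $g \notin D(\cE_e)$ there is nothing to prove, so assume $g \in D(\cE_e)$ and set $\varphi = g - f$, which lies in $M(\cE_e)$ because $M(\cE_e)$ is a linear space containing $D(\cE_e)$. On $O$ we have $\varphi = g - h \geq 0$ a.s., so (ii) gives $d^+\cE_e(f,\varphi) \geq 0$. The convexity inequality of Lemma~\ref{lemma:lower derivative}, namely $\cE_e(f) + d^+\cE_e(f,\varphi) \leq \cE_e(f + \varphi) = \cE_e(g)$, then yields $\cE_e(g) \geq \cE_e(f)$. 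Hence $f$ minimizes $\cE_e$ over $\cF$, and by the uniqueness established above $f = e_O$.

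This is exactly the standard equivalence between a constrained convex minimizer and its localized first-order condition; indeed the mechanism is the same as in Proposition~\ref{proposition:characterization of excessive functions}(c), which treats the global case $O = X$. I expect the variational inequality itself to be routine, and the genuinely delicate point to be the identification of the unique minimizer over $\cF$ with $e_O$: one must verify that the minimizer $\tilde e_O$ (only constrained from below) already satisfies $\tilde e_O \leq h$, which follows because $\tilde e_O \wedge h$ is feasible with no larger energy, so strict convexity forces $\tilde e_O = \tilde e_O \wedge h = e_O$.
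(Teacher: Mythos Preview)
Your proof is correct and follows essentially the same route as the paper: both directions reduce to the standard equivalence between a constrained convex minimizer and its first-order variational inequality, with Lemma~\ref{lemma:lower derivative} supplying the key inequality $\cE_e(f) + d^+\cE_e(f,g-f) \leq \cE_e(g)$, and strict convexity furnishing uniqueness. Your preliminary identification $e_O = \tilde e_O$ is a true observation but not actually needed---Corollary~\ref{corollary:existence of equilibrium potentials} already records $\cE_e(e_O) = {\rm cap}_h(O)$ and $e_O = h$ on $O$, which is all the argument requires; the paper simply cites this and then invokes the same mechanism as in Proposition~\ref{proposition:characterization of excessive functions}(c).
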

\begin{proof}
(i) $\Rightarrow$ (ii):  Let $e_O$ be the equilibrium potential of $O$. Then $0 \leq e_O$ and $e_O = h$ on $O$.
In particular, we have
 \[ \cL_O = \{ g \in L^0(\mu) \mid g \geq e_O \text{ a.s.\@ on } O \} . \]
Since $\cE_e(e_O) = {\rm cap}_h(O)$, the definition of the directional derivative yields $d^+ \cE_e(e_O,\varphi) \geq 0$ for all $\varphi \in M(\cE_e)$ with $\varphi \geq 0$ a.s.\@ on $O$.
 
 (ii) $\Rightarrow$ (i):
 Arguing just as in the step (iii) $\Rightarrow$ (ii) in the proof of Proposition~\ref{proposition:characterization of excessive functions}, we obtain
  \[ \cE_e(f) = \min \{ \cE_e(g) \mid g \in L^0(\mu) \text{ with $g \geq f$ a.s.\@ on $O$} \} . \]
 Since $e_O = h = f$ a.s.\@ on $O$, this implies $\cE_e(f) \leq \cE_e(e_O) = {\rm cap}_h(O)$.
 Because $\cE_e$ is strictly convex and $f \in \cL_O$, this yields $f = e_O$.
 \end{proof}

 \begin{remark}
 \begin{enumerate}[(a)]
 
  \item  For general nonlinear Dirichlet forms a capacity ${\rm Cap}$ was introduced in \cite[Section~3.4]{Cla21} (see also \cite{Cla23}) by letting
 $${\rm Cap}(A) = \inf \{ \av{f}_{L,\cE_2} \mid f \geq 1 \text{ a.s.\@ on an open neighborhood of } A\},$$
where $\av{\cdot}_{L,\cE_2}$ is the Luxemburg norm of the nonlinear Dirichlet form $\cE_2 = \cE + \av{\cdot}^2$. Potential theory with respect to this capacity is developed there in quite some detail. In particular, the existence of equilibrium potentials is established. However, it is not shown that ${\rm Cap}$ is a Choquet capacity because the Luxemburg norm  $\av{\cdot}_{L,\cE_2}$ in general does not satisfy the first Beurling-Deny criterion. Hence, the inequality ${\rm Cap}(O_1 \cap O_2) + {\rm Cap}(O_1 \cup O_2) \leq {\rm Cap}(O_1) + {\rm Cap}(O_2)$, which is crucial for proving that ${\rm  Cap}$ is a Choquet capacity, need not hold.  
   \item  Capacities for $p$-homogeneous nonlinear Dirichlet forms induced by quasi-regular strongly local Dirichlet forms were recently studied in \cite{BBR24} and \cite{Kuw24}. In Subsection~\ref{subsection:quasi-regular} we discuss in more detail how our results extend the ones obtained there. For the case of variable exponent Sobolev spaces we refer to \cite[Chapter~10]{DHHR11}.
 \end{enumerate}
\end{remark}

  Of particular importance are sets of capacity zero. For certain nonlinear order preserving forms we characterize them with the help of nests. For $G \subseteq X$ and any subset $\cF \subseteq L^0(\mu)$ we let 
  $$\cF_G = \{f \in \cF \mid f  = 0 \text{ on an open neighborhood of } X \setminus G\}.$$
We call an increasing sequence of closed subsets $(F_n)$ of $X$ an {\em $\cE$-nest} if 
$$\bigcup_{n = 1}^\infty M(\cE_e)_{F_n}$$
is $\av{\cdot}_{L}$-dense in $M(\cE_e)$. 


%
%
%
%
%

A set $A \subseteq X$ is called {\em $\cE$-exceptional} if there exists an $\cE$-nest $(F_n)$ with $A \subseteq \bigcap_{n = 1}^\infty (X \setminus F_n)$.

\begin{theorem}\label{theorem:characterization exceptional sets}
Let $\cE$ be a nonlinear order preserving form for which $\cE_e$ exists.    Assume further that $\cE$ is reflexive and subcritical and that it satisfies the $\Delta_2$-condition. Further assume that there exists an $\cE$-excessive function $h \in D(\cE_e)$ with $h > 0$ a.s.\@ For $A \subseteq X$ the following assertions are equivalent: 
\begin{enumerate}[(i)]
 \item $A$ is $\cE$-exceptional. 
 \item For all $\cE$-excessive $\tilde h \in D(\cE_e)$ with $\tilde h > 0$ a.s.\@ we have ${\rm cap}_{\tilde h}(A) =  0$.
\end{enumerate}
If $Nh$ is $\cE$-excessive for each $N \in \N$, then these are equivalent to 
\begin{enumerate}[(i)] \setcounter{enumi}{2}
 \item ${\rm cap}_{h}(A) =  0$.
\end{enumerate}

\end{theorem}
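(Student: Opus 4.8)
The plan is to run the implications $(i)\Rightarrow(ii)$, $(ii)\Rightarrow(i)$, and — under the extra hypothesis that $Nh$ is excessive for all $N$ — the trivial $(ii)\Rightarrow(iii)$ together with $(iii)\Rightarrow(i)$. The standing tools are the equilibrium potentials of Corollary~\ref{corollary:existence of equilibrium potentials}, the fact that ${\rm cap}_{\tilde h}$ is a Choquet capacity (in particular inner regular by open sets and monotone), the $\Delta_2$-condition to pass freely between $\av{\cdot}_{L_e}$-convergence and convergence of $\cE_e$, the continuous embedding $M(\cE_e)\hookrightarrow L^0(\mu)$ coming from subcriticality (Theorem~\ref{theorem:characterization subcriticality}), and the dominating excessive functions furnished by Theorem~\ref{theorem:existence of excesive functions}.

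\textbf{The easy direction.} For $(i)\Rightarrow(ii)$ I fix an admissible $\tilde h$ and a nest $(F_n)$ with $A\subseteq\bigcap_n(X\setminus F_n)$. Since $\tilde h\in M(\cE_e)$, I choose $g_k\in M(\cE_e)_{F_{n_k}}$ with $\av{\tilde h-g_k}_{L_e}\to 0$; as $g_k$ vanishes on an open neighborhood of $X\setminus F_{n_k}\supseteq A$, the function $\tilde h-g_k$ equals $\tilde h$ there and hence lies in $\cL_A$. By $\Delta_2$ we get $\cE_e(\tilde h-g_k)\to 0$, so ${\rm cap}_{\tilde h}(A)\le\cE_e(\tilde h-g_k)\to 0$. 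The implication $(ii)\Rightarrow(iii)$ is the special case $\tilde h=h$ (legitimate since $h$ is admissible).

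\textbf{Constructing a nest.} The substantial direction is $(ii)\Rightarrow(i)$. Using inner regularity I fix a decreasing sequence of open $O_n\supseteq A$ and put $F_n=X\setminus O_n$, so $A\subseteq\bigcap_n(X\setminus F_n)$; everything reduces to the $\av{\cdot}_{L_e}$-density of $\bigcup_n M(\cE_e)_{F_n}$. Given $0\le f\in M(\cE_e)$, I pick an admissible $\tilde h\ge f$ (Theorem~\ref{theorem:existence of excesive functions}), arrange ${\rm cap}_{\tilde h}(O_n)\to 0$, and let $\tilde e_n$ be the equilibrium potential of $O_n$ for $\tilde h$, so $\tilde e_n=\tilde h\ge f$ on $O_n$ and $\cE_e(\tilde e_n)\to 0$. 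Then $g_n:=(f-\tilde e_n)_+$ vanishes on $O_n\supseteq A$, whence $g_n\in M(\cE_e)_{F_n}$. A direct estimate of $\av{f-g_n}_{L_e}=\av{f\wedge\tilde e_n}_{L_e}$ fails because Luxemburg seminorms of nonlinear forms are \emph{not} solid; instead I observe that $(g_n)$ is $\av{\cdot}_{L_e}$-bounded ($(\cdot)_+$ is a normal contraction) and $g_n\to f$ a.s.\@ (as $\tilde e_n\to 0$ a.s.\@ along a subsequence), so by reflexivity $g_n\rightharpoonup f$ weakly in $M(\cE_e)$, and Mazur's lemma yields convex combinations converging to $f$ in $\av{\cdot}_{L_e}$. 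Since $(O_n)$ is decreasing, each such combination still vanishes on some $O_m\supseteq A$ and thus lies in $\bigcup_m M(\cE_e)_{F_m}$.

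\textbf{Main obstacle and the final equivalence.} The real difficulty is that the dominating $\tilde h$ and the smallness ${\rm cap}_{\tilde h}(O_n)\to 0$ depend on $f$, whereas the nest must be fixed in advance; this is exactly why the full strength of $(ii)$ (all $\tilde h$) is needed. I would overcome it by choosing the $O_n$ through a diagonal argument over a countable $\av{\cdot}_{L_e}$-dense family $\{f_k\}$ (available e.g.\@ when $L^2(\mu)$ is separable): picking for each $k$ an admissible $\tilde h_k\ge f_k$ and, using ${\rm cap}_{\tilde h_k}(A)=0$ with monotonicity of capacity, neighborhoods so that ${\rm cap}_{\tilde h_k}(O_n)\to 0$ for every $k$. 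The density argument above then applies to each $f_k$, and since $\overline{\bigcup_m M(\cE_e)_{F_m}}$ is a closed subspace, density extends to all of $M(\cE_e)$. For $(iii)\Rightarrow(i)$ the diagonalization is unnecessary: under ``$Nh$ excessive'' the scaled potentials $N\tilde e_n$ (with $\tilde e_n$ the $h$-equilibrium potential of $O_n$) satisfy $\cE_e(N\tilde e_n)\le K^{\lceil\log_2 N\rceil}{\rm cap}_h(O_n)\to 0$ and $N\tilde e_n=Nh\ge f$ on $O_n$ for every $f$ with $0\le f\le Nh$, so the truncation-plus-Mazur argument handles all such $f$; an arbitrary $0\le f$ is then reached by approximating with $f\wedge Nh$, which is bounded in $M(\cE_e)$ because $\cE_e(f\wedge Nh)\le\cE_e(f)$ by excessivity of $Nh$ (Proposition~\ref{proposition:characterization of excessive functions}), via a second appeal to reflexivity and Mazur. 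Combined with $(i)\Rightarrow(ii)$ this closes the cycle.
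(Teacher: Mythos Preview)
Your argument for $(i)\Rightarrow(ii)$ is fine and essentially the paper's (the paper packages this via Lemma~\ref{lemma:alternative formula capacity} and Lemma~\ref{lemma:characterization of nests}, but the content is the same). Your $(iii)\Rightarrow(i)$ is also correct and close to what the paper does: fix the $O_n$'s using only ${\rm cap}_h$, then for arbitrary $0\le f$ approximate first by $f\wedge Nh$ (using excessivity of $Nh$) and then by truncations supported in $F_n$, with reflexivity and Mazur to upgrade weak to strong convergence.

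The genuine gap is in $(ii)\Rightarrow(i)$. You correctly identify the obstacle --- the dominating $\tilde h$ depends on $f$, while the nest must be fixed in advance --- but your proposed fix, a diagonal argument over a countable $\av{\cdot}_{L_e}$-dense family, imports a separability hypothesis that is nowhere assumed. The paper avoids this entirely, and the idea is already present in your own treatment of $(iii)$. The point is that the $\Delta_2$-condition gives $Nh\in D(\cE_e)$ for every $N\in\N$, so by Theorem~\ref{theorem:existence of excesive functions} one can choose, for each $N$, an $\cE$-excessive $h_N\in D(\cE_e)$ with $h_N\ge Nh>0$. This is a \emph{countable} family of admissible $\tilde h$'s, independent of $f$. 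Now use assumption~$(ii)$ for each $h_N$: since ${\rm cap}_{h_N}(A)=0$, a single diagonal choice produces decreasing open $O_n\supseteq A$ with $\inf_n{\rm cap}_{h_N}(O_n)=0$ simultaneously for every $N$. From here your truncation-plus-Mazur argument (or the paper's lower semicontinuous relaxation device) goes through exactly as in your $(iii)\Rightarrow(i)$: approximate $h_N$ by elements of $D(\cE_e)_{F_n}$, use the first Beurling-Deny criterion to handle $f\wedge h_N$, and finally let $N\to\infty$ using $h_N\ge Nh$, $h>0$ a.s., and excessivity of $h_N$ to control $\cE_e(f\wedge h_N)$. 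No separability is needed; the countability comes from $\N$, not from the space.
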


\begin{remark}

\begin{enumerate}[(a)]
 \item The characterization of $\cE$-exceptional sets through nests is quite important for potential theory of Dirichlet forms. In the bilinear case it was first observed in \cite{AM91a,Am91b} and has since seen various extensions to more and more general situations, see e.g. \cite{MR,MOR95}. To the best of our knowledge our theorem is the first version in the nonlinear case.
 
 \item  In view of the examples given in the next section we do not see reflexivity and the $\Delta_2$-condition as strong assumptions. However, the existence of $h \in D(\cE_e)$ with $Nh$ being $\cE$-excessive for all $N \in \N$ is somewhat restrictive. It is satisfied if $\cE$ is $p$-homogeneous for some $p \geq 1$ (use Theorem~\ref{theorem:existence of excesive functions} and Remark~\ref{remark:excessive functions 2}) or if $\cE$ is a nonlinear Dirichlet form with $1 \in D(\cE_e)$. 
\end{enumerate}
\end{remark}

We prove this theorem through two lemmas. 

\begin{lemma}\label{lemma:alternative formula capacity}
 Assume the standing assumptions of this section.
 For all $A \subseteq X$ we have 
 $${\rm cap}_h(A)  = \inf\{\cE_e (h-g) \mid g \in L^0(\mu)_{X\setminus A}\}. $$
\end{lemma}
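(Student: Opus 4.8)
The plan is to prove the two inequalities separately via the substitution $g \leftrightarrow h-g$, invoking the excessivity of $h$ only for the nontrivial direction.

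For the inequality ${\rm cap}_h(A) \leq \inf\{\cE_e(h-g) \mid g \in L^0(\mu)_{X\setminus A}\}$, I would start from an arbitrary $g \in L^0(\mu)_{X\setminus A}$, so that $g = 0$ a.s.\@ on some open neighborhood $U$ of $A$, and set $f := h - g$. Then $f = h$ a.s.\@ on $U$, so in particular $f \geq h$ a.s.\@ on the open neighborhood $U$ of $A$, whence $f \in \mathcal L_A$. Consequently ${\rm cap}_h(A) \leq \cE_e(f) = \cE_e(h-g)$, and taking the infimum over all such $g$ yields the claim. Note that this direction does not use excessivity at all, only the definition of $\mathcal L_A$.

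For the reverse inequality I would take an arbitrary $f \in \mathcal L_A$, so that $f \geq h$ a.s.\@ on some open neighborhood $U$ of $A$, and set $g := h - (f \wedge h)$, which lies in $L^0(\mu)$ since $h$ is measurable and $f \wedge h \leq h$. On $U$ we have $f \wedge h = h$, so $g = 0$ a.s.\@ on $U$, giving $g \in L^0(\mu)_{X\setminus A}$. By construction $h - g = f \wedge h$, and here the only real input enters: Proposition~\ref{proposition:characterization of excessive functions}~(b), the excessivity characterization, gives $\cE_e(f \wedge h) \leq \cE_e(f)$. Hence $\cE_e(h-g) = \cE_e(f \wedge h) \leq \cE_e(f)$, so $\inf_{g} \cE_e(h-g) \leq \cE_e(f)$, and taking the infimum over $f \in \mathcal L_A$ completes the proof.

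The argument is essentially soft, so the main point to get right is the bookkeeping of the ``open neighborhood'' quantifiers rather than any genuine analytic obstacle: one must check that truncating to $f \wedge h$ preserves equality to $h$ (hence $g = 0$) on a neighborhood of $A$, and that the conventions in $\mathcal L_A$ and in $L^0(\mu)_{X\setminus A}$ (where $X \setminus (X\setminus A) = A$) are compatible. There is no emptiness issue in either infimum, since $g = 0 \in L^0(\mu)_{X\setminus A}$ corresponds to $f = h \in \mathcal L_A$, so both sides are bounded above by $\cE_e(h)$ and the equality holds also in the degenerate case $\cE_e(h) = \infty$.
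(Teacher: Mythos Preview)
Your proof is correct and follows essentially the same approach as the paper's: the substitution $f \leftrightarrow h - g$ for one inequality, and the replacement of $f$ by $f \wedge h$ together with the excessivity of $h$ (via Proposition~\ref{proposition:characterization of excessive functions}~(b)) for the other. The only cosmetic difference is that the paper phrases the second inequality with an $\varepsilon$-argument rather than working directly with the infimum over $f \in \mathcal L_A$.
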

\begin{proof}
Let $I  = \inf\{\cE_e (h-g) \mid g \in L^0(\mu)_{X \setminus A}\}$ and note that $g \in L^0(\mu)_{X \setminus A}$ if and only if $g = 0$ on an open neighborhood of $A$.

If $g = 0$ on some open neighborhood of $A$, then $h-g \geq h$ on some open neighborhood of $A$. This implies $h - g \in \mathcal L_A$ so that ${\rm cap}_h(A) \leq I$. 

For the opposite inequality let $\varepsilon > 0$ and choose $f \in \mathcal L_A$ with $\cE_e(f) \leq {\rm cap}_h(A) + \varepsilon$. Then $h - f \wedge h = (h - f)_+ = 0$ on an open neighborhood of $A$. Since $\cE_e(f \wedge h) \leq \cE_e(f)$, we infer 
\begin{align*}
\varepsilon + {\rm cap}_h(A) &\geq \cE_e(f \wedge h) = \cE(h - (h - f\wedge h)) \geq I. \hfill \qedhere
\end{align*}
\end{proof}
\begin{remark}
 The proof of this lemma is basically the same as the proof of \cite[Lemma~2.72]{Schmi3}. There the result is stated for quadratic forms under the additional assumption $h \in D(\cE_e)$, which is equivalent to ${\rm cap}_h(X) < \infty$. But this finiteness is not necessary in the proof.  
\end{remark}


\begin{lemma}\label{lemma:characterization of nests}
Let $\cE$ and $h$ be as in Theorem~\ref{theorem:characterization exceptional sets}. Let $(F_n)$ be an increasing sequence of closed subsets of $X$. The following assertions are equivalent. 
 \begin{enumerate}[(i)]
  \item $(F_n)$ is an $\cE$-nest. 
  \item For all $\cE$-excessive $\tilde h \in D(\cE_e)$ with $\tilde h > 0$ we have $\inf_{n \geq 1} {\rm cap}_{\tilde h}(X \setminus F_n) = 0$. 
 \end{enumerate}
 If $Nh$ is $\cE$-excessive for each $N \in \N$, then these are equivalent to 
 \begin{enumerate}[(i)]\setcounter{enumi}{2}
  \item  $\inf_{n \geq 1} {\rm cap}_{h}(X \setminus F_n) = 0.$
 \end{enumerate}
 
\end{lemma}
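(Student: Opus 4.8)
The engine of the proof is the reformulation of capacity from Lemma~\ref{lemma:alternative formula capacity}: for a closed set $F_n$ it gives ${\rm cap}_{\tilde h}(X \setminus F_n) = \inf\{\cE_e(\tilde h - g) \mid g \in L^0(\mu)_{F_n}\}$, and any competitor $g$ with $\cE_e(\tilde h - g) < \infty$ automatically lies in $M(\cE_e)_{F_n}$ (since $\tilde h \in D(\cE_e)$). Via the $\Delta_2$-condition, which makes modular convergence equivalent to $\av{\cdot}_{L_e}$-convergence, this identifies $\inf_n {\rm cap}_{\tilde h}(X \setminus F_n) = 0$ with approximability of $\tilde h$ by the subspace $V := \bigcup_n M(\cE_e)_{F_n}$; being an $\cE$-nest means $\overline{V}^{\av{\cdot}_{L_e}} = M(\cE_e)$. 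So the real task is to upgrade ``a suitable excessive function is approximable'' to ``every $f$ is approximable''. The direction (i) $\Rightarrow$ (ii) is then immediate: density yields $g_n \in M(\cE_e)_{F_{m(n)}}$ with $\av{\tilde h - g_n}_{L_e} \to 0$, and inserting $g_n$ into the capacity formula gives ${\rm cap}_{\tilde h}(X \setminus F_{m(n)}) \le \cE_e(\tilde h - g_n) \to 0$. Specialising $\tilde h = h$ likewise yields (ii) $\Rightarrow$ (iii).

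For the converse directions I would first isolate a stability principle, proved exactly as the reflexivity argument in Theorem~\ref{theorem:existence of excesive functions} (Case~2): if $C \subseteq M(\cE_e)$ is convex and $\av{\cdot}_{L_e}$-closed and $(\psi_k) \subseteq C$ is $\av{\cdot}_{L_e}$-bounded with $\psi_k \tom \psi$, then $\psi \in C$. Indeed, reflexivity gives a weakly convergent subsequence, Mazur's lemma produces tail convex combinations that are $\av{\cdot}_{L_e}$-Cauchy and hence converge in norm to some $\sigma \in C$, while passing to an a.s.\@ convergent subsequence shows the same convex combinations converge to $\psi$ in $L^0(\mu)$; thus $\sigma = \psi \in C$. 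Taking $C = \overline{V}$ reduces the nest property to exhibiting, for each $0 \le f \in M(\cE_e)$, a norm-bounded sequence in $V$ converging to $f$ in $L^0(\mu)$, the general case following by splitting $f = f_+ - f_-$ (both in the lattice $M(\cE_e)$, with $\av{f_\pm}_{L_e} \le \av{f}_{L_e}$ by the first Beurling-Deny criterion for $\cE_e$, Proposition~\ref{proposition:compatibility with contractions extended forms}).

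Granting this, (ii) $\Rightarrow$ (i) proceeds as follows. Given $0 \le f \in M(\cE_e)$, Theorem~\ref{theorem:existence of excesive functions}(a) (applied to $f \vee h$ on $A = X$) provides an excessive $\tilde h \in D(\cE_e)$ with $\tilde h \ge f$ and $\tilde h \ge h > 0$; then (ii) and Lemma~\ref{lemma:alternative formula capacity} yield $\tilde g_k \in M(\cE_e)_{F_k}$ with $\av{\tilde h - \tilde g_k}_{L_e} \to 0$. The functions $\phi_k := f \wedge \tilde g_k$ lie in $M(\cE_e)_{F_k}$ (they vanish where $\tilde g_k$ does, since $f \ge 0$), are $\av{\cdot}_{L_e}$-bounded via $\cE_e(f \wedge \tilde g_k) \le \cE_e(f) + \cE_e(\tilde g_k)$ (first Beurling-Deny, with $\cE_e(\tilde g_k)$ bounded by $\Delta_2$), and satisfy $\phi_k \tom f \wedge \tilde h = f$ because subcriticality makes the embedding into $L^0(\mu)$ continuous (Theorem~\ref{theorem:characterization subcriticality}, Theorem~\ref{theorem:weak hardy}); the stability principle gives $f \in \overline{V}$. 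For (iii) $\Rightarrow$ (i) the single function $h$ must do all the work: I would first handle $0 \le f \le Nh$ by $\phi_k = f \wedge (N g_k)$, where $g_k \in M(\cE_e)_{F_k}$ approximates $h$, and then send $N \to \infty$ using $f \wedge Nh \tom f$ together with $\cE_e(f \wedge Nh) \le \cE_e(f)$, applying the stability principle twice. This last inequality is precisely where the hypothesis that each $Nh$ be $\cE$-excessive enters, through Proposition~\ref{proposition:characterization of excessive functions}(b).

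The main obstacle, around which the whole scheme is built, is that $\av{\cdot}_{L_e}$ is not monotone for the pointwise order (this already fails for quadratic forms), so the truncation errors $(f - \tilde g_k)_+$ and $(f - Nh)_+$ cannot simply be dominated in norm. The combination of reflexivity and Mazur's lemma is what replaces the missing domination, converting $L^0$-convergence of norm-bounded sequences into membership in the closed subspace $\overline{V}$, while the excessivity of the truncation levels $\tilde h$ (respectively $Nh$) is exactly what keeps the truncations norm-bounded. This also explains the asymmetry in the statement: in (ii) one may choose $\tilde h$ above the given $f$, whereas (iii) replaces that freedom by the scaling $f \wedge Nh$, which is only available when all of the $Nh$ are excessive.
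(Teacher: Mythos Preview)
Your argument is correct and rests on the same core ideas as the paper --- reflexivity and Mazur's lemma to pass from $\cE_e$-bounded $L^0$-convergent sequences to the norm closure of $V$, combined with truncation by excessive functions to manufacture such sequences --- but the packaging differs in two respects worth noting. First, the paper encodes your ``stability principle'' in an auxiliary functional $\widetilde\cE_e$, the lower semicontinuous relaxation of $\cE_e|_V$ on $L^0(\mu)$, and proves a Claim that $D(\cE_e)\subseteq D(\widetilde\cE_e)$ forces density of $V$; the weak-convergence step hidden in that Claim is exactly your stability principle, so the two formulations are equivalent. Second, for (ii)$\Rightarrow$(i) the paper runs a uniform two-level approximation: it first builds excessive $h_N\in D(\cE_e)$ with $h_N\ge Nh$ (via Theorem~\ref{theorem:existence of excesive functions}) independent of $f$, and then argues $f\wedge h_{n,N}\to f\wedge h_N\to f$ just as in your (iii)$\Rightarrow$(i). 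Your choice to pick, for each fixed $f$, a single excessive $\tilde h\ge f$ and truncate only once is a genuine simplification of this step; it trades uniformity (the paper's $h_N$ work for every $f$ simultaneously) for a shorter argument. Both routes need the same ingredients --- subcriticality for the continuous embedding (Theorem~\ref{theorem:existence of hardy weight} suffices here, since only order preservation is assumed), $\Delta_2$ to interchange modular and norm convergence, and the first Beurling--Deny criterion to bound $\cE_e(f\wedge g)$ --- so neither is more general.
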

\begin{proof}
(i) $\Rightarrow$ (ii): Since $(F_n)$ is an $\cE_e$-nest and $\tilde h \in D(\cE_e)$, there exist $h_n \in M(\cE_e)_{F_n}$ with $\inf_{n \geq 1}\lVert\tilde h - h_n\rVert_{L_e} = 0$. The (weak) $\Delta_2$-condition implies $\inf_{n \geq 1} \cE_e(\tilde h - h_n) = 0$, see Lemma~\ref{lemma:delta2 conditions}. From the alternative formula for the capacity of Lemma~\ref{lemma:alternative formula capacity} we infer
$$\inf_{n \geq 1}{\rm cap}_{\tilde h}(X \setminus F_n) \leq \inf_{n \geq 1} \cE_e(\tilde h - h_n) = 0.$$

We show (ii) $\Rightarrow$ (i) and (iii) $\Rightarrow$ (i) at the same time: The $\Delta_2$-condition implies $D(\cE_e) = M(\cE_e)$ and that $\cE_e$-convergence and $\av{\cdot}_{L_e}$-convergence agree. Let $\widetilde \cE_e$ be the lower semicontinuous relaxation of the restriction of $\cE_e$ to the set $\bigcup_{n=1}^\infty D(\cE_e)_{F_n}$ on $L^0(\mu)$ - it also satisfies the $\Delta_2$-condition.

Claim:  $D(\cE_e) \subseteq D(\widetilde \cE_e)$ implies that  $\bigcup_{n=1}^\infty D(\cE_e)_{F_n}$ is $\av{\cdot}_{L_e}$-dense in $D(\cE_e)$. 

Proof of the claim: Let $f \in D(\cE_e)$. Since $f \in D(\widetilde \cE_e)$, there exist $(f_n)$ in $\bigcup_{n=1}^\infty D(\cE_e)_{F_n}$ such that $f_n \tom f$ and $\widetilde \cE_e(f) = \lim_{n\to \infty} \cE_e(f_n)$. In particular, this implies that $(f_n)$ is $\cE_e$-bounded and the $\Delta_2$-condition implies that $(f_n)$ is even $\av{\cdot}_{L_e}$-bounded. With the help of Lemma~\ref{lemma:weak convergence} we infer $f_n \to f$ weakly in $(M(\cE_e),\av{\cdot}_{L_e})$. Since in semi-normed spaces weak and strong closures of subspaces coincide,  $f$ belongs to the closure of $\bigcup_{n=1}^\infty D(\cE_e)_{F_n}$, which proves the claim.

In order to show the inclusion $D(\cE_e) \subseteq D(\widetilde \cE_e)$ we need some preparations.

%

Since by the $\Delta_2$-condition $D(\cE_e)$ is a vector space, we have $Nh \in D(\cE_e)$ for $N \in \N$. Assuming (ii), we use Theorem~\ref{theorem:existence of excesive functions} to choose an $\cE$-excessive function $h_N \in D(\cE_e)$ such that $h_N \geq Nh$, and assuming (iii), we simply let $h_N = Nh$. Assuming (ii) we have $\inf_{n \geq 1} {\rm cap}_{h_N}(X \setminus F_n) = 0$. In case (iii) holds, we deduce $\inf_{n \geq 1} {\rm cap}_{h_N}(X \setminus F_n) = 0$ from the $\Delta_2$-condition, which implies the existence of $C > 0$ such that 
$${\rm cap}_{h_N}(X\setminus F_n) = \inf \{ \cE_e(Nf) \mid f \geq h \text{ on } X \setminus F_n \} \leq C {\rm cap}_{h}(X\setminus F_n) . $$

With  $\inf_{n \geq 1} {\rm cap}_{h_N}(X \setminus F_n) = 0$ at hand,  Lemma~\ref{lemma:alternative formula capacity} yields the existence of $(h_{n,N})$ with $h_{n,N} \in D(\cE_e)_{F_n}$  and $\cE_e(h_N - h_{n,N}) \to 0$, as $n \to \infty$. From the $\Delta_2$-condition we infer $\av{h_N - h_{n,N}}_{L_e} \to 0$.  Since $\cE$ is subcritical,  Theorem~\ref{theorem:characterization subcriticality} (ii) implies  $h_{n,N} \tom h_N$.

 As discussed above, it suffices to show $D(\cE_e) \subseteq D(\widetilde \cE_e)$. Let $f \in D(\cE_e)$ with $f \geq 0$ be given. Then $h_{n,N} \in D(\cE_e)_{F_n}$ implies $f \wedge h_{n,N} \in D(\cE_e)_{F_n}$ and we have $f \wedge h_{n,N} \tom f \wedge h_N$. The lower semicontinuity of $\widetilde \cE_e$ and $\widetilde \cE_e = \cE_e$ on $\bigcup_{n=1}^\infty D(\cE_e)_{F_n}$ yield
\begin{align*}
 \widetilde \cE_e(f \wedge h_N)  &\leq \liminf_{n \to \infty}\widetilde\cE_e(f \wedge h_{n,N}) = \liminf_{n \to \infty}\cE_e(f \wedge h_{n,N}) \\
 &\leq \cE_e(f) +  \liminf_{n \to \infty} \cE_e(h_{n,N}),
\end{align*}
where we use the first Beurling-Deny criterion for the last inequality. Since $(h_{n,N})$ is $\av{\cdot}_{L_e}$-bounded, the $\Delta_2$-condition implies that it is $\cE_e$-bounded, and we arrive at $f \wedge h_N \in D(\widetilde \cE_e)$. The inequality $h_N \geq N h$ and $h > 0$ a.s.\@ yield $f \wedge h_N \tom f$, as $N \to \infty$. Hence, using the fact that $\widetilde \cE_e = \cE_e$ on $D(\widetilde \cE_e)$ and the $\cE$-excessivity of $h_N$, we obtain 
$$\widetilde \cE_e(f) \leq \liminf_{N \to \infty}\widetilde \cE_e(f \wedge h_N) = \liminf_{N \to \infty}\cE_e(f \wedge h_N) \leq \cE_e(f), $$
showing $f \in D(\widetilde \cE_e)$. Now let $f \in D(\cE_e)$ arbitrary. Since $\cE_e$ satisfies the first Beurling-Deny criterion, we have $f_+,f_- \in D(\cE_e)$. What we already proved yields $f_+,f_- \in D(\widetilde \cE_e)$. Since the latter set is a vector space (use that $\widetilde \cE_e$ satisfies the $\Delta_2$-condition), we infer $f \in D(\widetilde \cE_e)$. 
\end{proof}

\begin{proof}[Proof of Theorem~\ref{theorem:characterization exceptional sets}]
 (i) $\Rightarrow$ (ii): The $\cE$-exceptionality of $A$ implies the existence of an $\cE$-nest $(F_n)$ such that $A \subseteq \bigcap_{n=1}^\infty (X \setminus F_n)$. Using Lemma~\ref{lemma:characterization of nests}, for each $\cE$-excessive function $\tilde h \in D(\cE_e)$ with $\tilde h > 0$ a.s.\@ we infer
 $${\rm cap}_{\tilde h}(A) \leq {\rm cap}_{\tilde h}(X \setminus F_n) \to 0, \quad n \to \infty.  $$

 (ii) $\Rightarrow$ (i): Let $\tilde h \in D(\cE_e)$ with $\tilde h > 0$ a.s.\@ It follows from the definition of ${\rm cap}_{\tilde h}$ that ${\rm cap}_{\tilde h}(A) = 0$ yields the existence of open sets $O_n \supseteq A$ with $\inf_{n\geq 1}{\rm cap}_{\tilde h} (O_n) = 0$. Without loss of generality we can assume $O_n \supseteq O_{n+1}$. With this at hand, Lemma~\ref{lemma:characterization of nests} yields that $F_n = X \setminus O_n$ is an $\cE$-nest with $A \subseteq \bigcap_{n=1}^\infty X \setminus F_n$. 
 
 With the same arguments the equivalence with (iii) can be established under the stronger condition that $Nh$ is $\cE$-excessive for all $N \in \N$. 
\end{proof}

\section{Examples}\label{section:examples}

In this section we provide two examples concerning variable exponent Dirichlet spaces that show that our results extend previous works. 

\subsection{Variable exponent Dirichlet forms on Riemannian manifolds}
 
 Let $(M,g,\mu)$ be a weighted Riemannian manifold (i.e. $\mu = e^{-\Phi} {\rm vol}$, where $\Phi \colon M \to \R$ is smooth and ${\rm vol}$ is the Riemannian volume measure). By $d$ we denote the path metric on $M$ induced by $g$ and by ${\rm Lip}_c(M)$ we denote the  Lipschitz functions of compact support with respect to $d$. In this text we will use some properties of variable exponent Lebesgue- and Sobolev spaces. We refer to \cite{DHHR11} for background on them.

 Let $p \colon M \to [1,\infty)$ be measurable with $p_- = \essinf p$ and $p_+ = \esssup p$.   We define the functional $\cE_{p} \colon L^2(\mu) \to [0,\infty]$ through
 $$\cE_{p} (f) =\begin{cases}
                                               \int_M \frac{1}{p}|\nabla f|^{p} d  \mu &\text{if } f \in W^{1,1}_{\rm loc}(M)\\
                                             \infty &\text{else}
                                             \end{cases}.$$
 Moreover, we let $\cE_p^0$ be the largest $L^2(\mu)$-lower semicontinuous minorant of the restriction of $\cE_{p}$ to $D(\cE_{p})_c$, where the latter denotes the set of all $f \in D(\cE_{p})$ vanishing outside a compact set. 
 Here, restriction is meant in the sense that $\cE_{p}$ is set to $\infty$ outside of $D(\cE_{p})_c$.
 
 \begin{proposition}\label{proposition:basic properties variable exponent sobolev}
  Assume  $1 < p_-  \leq p_+ < \infty$.
  
  \begin{enumerate}[(a)]
   \item $\cE_p^0$ and $\cE_{p}$ are reflexive nonlinear Dirichlet forms satisfying the $\Delta_2$-condition.
   \item $D(\cE_p^0) = \overline{D(\cE_{p})_c}$, where the closure is taken in $D(\cE_{p}) = M(\cE_{p})$ with respect to the norm $\av{\cdot}_{L,\cE_{p}}  + \av{\cdot}_2$.
   \item  $(\cE_p^0)_e$ is a restriction of $(\cE_{p})_e$ and $D((\cE_p^0)_e)$ is the closure of $D(\cE_p^0)$ in $D((\cE_{p})_e) = M((\cE_{p})_e)$ with respect to $\av{\cdot}_{L_e,\cE_{p}} + q$ (here $q$ is an $F$-norm inducing the topology of local convergence in measure).
   \item For all $f \in D((\cE_{p})_e)$ we have $f \in W^{1,1}_{\rm loc}(M)$ and
  $$(\cE_{p})_e(f) = \int_M \frac{1}{p}|\nabla f|^{p} d  \mu.$$
  \end{enumerate}
 \end{proposition}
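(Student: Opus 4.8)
The plan is to treat the four assertions in turn, in each case pulling the relevant abstract results (existence of $\cE_e$, the $\Delta_2$-characterizations, the description of the extended space) back to the concrete functional through the variable exponent space $L^{p(\cdot)}(\mu)$, which is reflexive and uniformly convex precisely because $1 < p_- \le p_+ < \infty$ (see \cite{DHHR11}). The single most useful observation is that the gradient map $\nabla \colon (M(\cE_p),\av{\cdot}_{L,\cE_p}) \to L^{p(\cdot)}(\mu)$, $f \mapsto \nabla f$, is an isometry onto a subspace, since $\av{f}_{L,\cE_p} = \inf\{\lambda>0 : \int_M \frac1p |\nabla f/\lambda|^p \, d\mu \le 1\}$ is exactly the Luxemburg norm of $|\nabla f|$. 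All reflexivity, completeness and strong-convergence statements will be transported through this isometry.

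For (a) I would first record convexity (the integrand $\xi \mapsto \frac1p|\xi|^p$ is convex and $f \mapsto \nabla f$ is linear) and $L^2$-lower semicontinuity: if $f_n \to f$ in $L^2(\mu)$ with $\sup_n \cE_p(f_n) < \infty$, then $(\nabla f_n)$ is bounded in the reflexive space $L^{p(\cdot)}(\mu)$, a weakly convergent subsequence has limit $\nabla f$ (identified distributionally), and weak lower semicontinuity of the convex modular gives the claim. The $\Delta_2$-condition is immediate with $K = 2^{p_+}$. Reflexivity in the sense of the paper follows because $M(\cE_p)$ is isometric to a subspace of the reflexive $L^{p(\cdot)}(\mu)$, and for any subspace $Y$ of a reflexive Banach space the canonical image of $Y$ is dense in $Y''$ (the bidual of $Y$ is isometrically its closure $\overline Y$, in which $Y$ is dense). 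The Beurling-Deny criteria reduce to a pointwise inequality: writing $\nabla(Cg) = C'(g)\nabla g$ with $c := C'(g)$, $|c| \le 1$, one needs
$$|a + cb|^p + |a - cb|^p \le |a+b|^p + |a-b|^p$$
for all vectors $a,b$ and all $|c|\le 1$; but $t \mapsto |a+tb|^p + |a-tb|^p$ is even and convex, hence nondecreasing in $|t|$, which yields the inequality after multiplying by $1/p(x)$ and integrating. For $\cE_p^0$ the lower semicontinuous convex relaxation of $\cE_p$ restricted to the contraction-stable subspace $D(\cE_p)_c$ inherits all four properties, and reflexivity again holds since $M(\cE_p^0)$ is a subspace of $L^{p(\cdot)}(\mu)$.

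For (b) and (c) I would use that $\cE_p^0$ is by construction the $L^2$-lsc relaxation of $\cE_p|_{D(\cE_p)_c}$. The inclusion $\supseteq$ in (b) is clear, since $\av{\cdot}_{L} + \av{\cdot}_2$-convergence forces $L^2$-convergence and, by $\Delta_2$ and Lemma~\ref{lemma:delta2 conditions}, also $\cE_p$-convergence. For $\subseteq$, given $f \in D(\cE_p^0)$ one takes an $L^2$-approximating sequence $(f_n)$ in $D(\cE_p)_c$ with $\cE_p(f_n) \to \cE_p^0(f)$; this sequence is $\av{\cdot}_L$-bounded, hence by reflexivity has a weakly convergent subsequence with limit $f$, and Mazur's lemma upgrades convex combinations to $\av{\cdot}_{L} + \av{\cdot}_2$-convergence, placing $f$ in the closure. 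Part (c) is the extended analogue: existence of $(\cE_p)_e$ and $(\cE_p^0)_e$ comes from Theorem~\ref{theorem:existence extended Dirichlet space}, the inequality $\cE_p^0 \le \cE_p$ gives $(\cE_p^0)_e \le (\cE_p)_e$ and agreement on the smaller domain, and the closure description follows from the description of the extended modular space (Proposition~\ref{proposition:properties of extended forms} and Theorem~\ref{theorem:description extended space}) by the same Mazur-type argument, now with respect to $\av{\cdot}_{L_e} + q$.

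For (d), which I expect to be the main obstacle, I would exploit that under $\Delta_2$ every $f \in D((\cE_p)_e) = M((\cE_p)_e)$ admits an approximating sequence $(f_n)$ in $D(\cE_p)$ that is $\av{\cdot}_{L}$-Cauchy, satisfies $f_n \tom f$, and for which $(\cE_p)_e(f) = \lim_n \cE_p(f_n)$ (Proposition~\ref{proposition:properties of extended forms}). Through the isometry $\nabla$ the Cauchy property means $(\nabla f_n)$ is Cauchy, hence strongly convergent, in $L^{p(\cdot)}(\mu)$ to some $v$. The delicate point is to identify $v = \nabla f$ and to conclude $f \in W^{1,1}_{\mathrm{loc}}(M)$: strong $L^{p(\cdot)}$-convergence gives $\nabla f_n \to v$ in $L^1_{\mathrm{loc}}$ (using $p_- \ge 1$ and local finiteness of $\mu$), local Poincaré inequalities together with $f_n \tom f$ bound $(f_n)$ in $L^{p_-}_{\mathrm{loc}}$, and the Rellich-Kondrachov theorem (here $p_- > 1$ is essential) yields $f_n \to f$ in $L^1_{\mathrm{loc}}$; testing against compactly supported vector fields then identifies $v$ as the weak gradient of $f$. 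Finally, since $p_+ < \infty$ the modular is continuous along strong $L^{p(\cdot)}$-convergence, so $\cE_p(f_n) = \int_M \frac1p |\nabla f_n|^p \, d\mu \to \int_M \frac1p |v|^p \, d\mu = \int_M \frac1p|\nabla f|^p\, d\mu$, and comparison with $(\cE_p)_e(f) = \lim_n \cE_p(f_n)$ gives the asserted identity. The principal difficulty is exactly this reconciliation of the abstractly defined extended form, whose domain contains functions outside $L^2(\mu)$, with the concrete variable-exponent Sobolev calculus, that is, proving that the limit gradient is a genuine weak gradient and that no energy is lost in the passage to the limit.
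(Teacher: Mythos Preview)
Your overall strategy matches the paper's closely, and parts (a) and (d) are essentially what the paper does (the paper uses its Lemma~\ref{lemma:basice inequality} for the pointwise Beurling--Deny inequality, which is your convexity-of-$t\mapsto|a+tb|^p+|a-tb|^p$ argument; in (d) the paper also passes through an approximating sequence and a local Poincar\'e inequality). Two points deserve correction or care.

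\textbf{Part (c): the inequality is backwards.} You write ``$\cE_p^0 \le \cE_p$ gives $(\cE_p^0)_e \le (\cE_p)_e$ and agreement on the smaller domain''. In fact $\cE_p^0 \ge \cE_p$: since $\cE_p$ is lower semicontinuous and minorizes the restriction $\cE_p|_{D(\cE_p)_c}$ (extended by $\infty$), it minorizes its relaxation $\cE_p^0$. Even with the correct inequality, an order relation alone does not yield that one extended form is a \emph{restriction} of the other. The clean route (which the paper takes) is: first observe $\cE_p^0 = \cE_p$ on $D(\cE_p^0)$ (from (b) and the $\Delta_2$-continuity of $\cE_p$ along $\av{\cdot}_L+\av{\cdot}_2$-convergent sequences), so the Luxemburg seminorms coincide there; then any approximating sequence for $\cE_p^0$ is automatically an approximating sequence for $\cE_p$, and Proposition~\ref{proposition:properties of extended forms} gives both the domain inclusion and the equality of values.

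\textbf{Part (d): the $L^{p_-}_{\mathrm{loc}}$ bound is not free.} Local convergence in measure alone does not bound $(f_n)$ in $L^{p_-}_{\mathrm{loc}}$, so invoking Rellich--Kondrachov at that point is premature. The paper avoids this by using the Poincar\'e inequality to control $f_n - \langle f_n\rangle_K$ in $L^1(K)$ (Cauchy, since $\nabla f_n$ is Cauchy in $L^{p(\cdot)}$), and then arguing that $f_n \tom f$ forces the means $\langle f_n\rangle_K$ to converge; only after this does one get $f_n \to f$ in $L^1_{\mathrm{loc}}$ and identify the distributional gradient. Your outline can be repaired along exactly these lines, but the step ``Poincar\'e $+$ $f_n\tom f$ bound $(f_n)$ in $L^{p_-}_{\mathrm{loc}}$'' needs to be replaced by this two-stage argument.

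A minor imprecision: the map $f\mapsto \nabla f$ is bi-Lipschitz rather than isometric into $L^{p(\cdot)}(\mu)$, because the standard variable-exponent modular omits the factor $1/p$; this does not affect any of your conclusions.
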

\begin{proof}
%
(a): Lower semicontinuity: For $\cE _{p}$ this can be established as in \cite[Example 3.58]{Cla21}. $\cE_p^0$ is lower semicontinuous by definition.  

$\cE _{p}$ is compatible with normal contractions: This can also be inferred as in \cite{Cla21}. We give a simplified proof based on our characterization of nonlinear Dirichlet forms.  Since $\cE _{p}$ is lower semicontinuous, it suffices to show compatibility with continuously differentiable normal contractions. For such a normal contraction $C$ and $f \in W^{1,1}_{\rm loc}(M)$ we have $\nabla (Cf)= C'(f) \nabla f$ with $|C'(f)| \leq 1$.  Lemma~\ref{lemma:basice inequality} yields 
 \begin{align*}
|\nabla (f + Cg)|^p + |\nabla(f - Cg)|^p &= |\nabla f + C'(g)\nabla g|^p + |\nabla f - C'(g)\nabla g|^p\\
&\leq |\nabla f + \nabla g|^p + |\nabla f - \nabla g|^p.
 \end{align*}
 Integrating this inequality shows that $C$ operates on $\cE_{p}$.

 $\cE_p^0$ is compatible with normal contractions: This can be proven using lower semicontinuity and the contraction property for $\cE _{p}$ on $D(\cE_{p})_c$. The arguments are the same as in the proof of Proposition~\ref{proposition:compatibility with contractions extended forms}~(c).  
  
$\Delta_2$-condition: This is a straightforward consequence of $p_+ < \infty$.  
Concerning $\cE_p^0$, see Lemma~\ref{lemma:extended luxemburg norm}~(b).

Reflexivity: The variable exponent Lebesgue spaces $(L^{p(\cdot)}(\mu),\av{\cdot}_{p(\cdot)})$ are uniformly convex under the assumption $1<p_- \leq p_+ < \infty$, see \cite[Theorem~3.4.9]{DHHR11}. Using $p_+ < \infty$, the seminorm  $\av{\cdot}_{L,\cE_{p}}$ is equivalent to the seminorm
$$f \mapsto \av{|\nabla f|}_{p(\cdot)}$$
(in the definition of the variable exponent spaces one uses the semimodular $f \mapsto \int_M |f|^p d\mu$ and does not divide by $p$ in the integral).  

This shows that the Luxemburg seminorm of $\cE_{p}$ is equivalent to a uniformly convex seminorm, which in turn yields reflexivity. Since the Luxemburg seminorm of $\cE_p^0$  is a restriction of the Luxemburg seminorm of $\cE_{p}$  to a smaller domain,  $\cE_p^0$ is also reflexive.

(b): $D(\cE_p^0) = \overline{D(\cE_{p})_c}$: The definitions and the $\Delta_2$-condition yield that the effective domain of $\cE_p^0$, the largest lower semicontinuous minorant of the restriction of $\cE_{p}$ to $D(\cE_{p})_c$, must contain the $\av{\cdot}_{L,\cE_{p}}  + \av{\cdot}_2$-closure of $D(\cE_{p})_c$ in $D(\cE_{p})$. Conversely, using reflexivity, it follows from Theorem~\ref{theorem:lower semicontinuity}~(iv) that the restriction of $\cE_{p}$ to this closure, which has this closure as its effective domain, is lower semicontinuous. In particular, it is a minorant of $\cE_p^0$.
 %
%


(c): Our description of the extended Dirichlet space with the help of approximating sequences in Proposition~\ref{proposition:properties of extended forms} implies that $(\cE_{p})_e$ is an extension of $(\cE_p^0)_e$ and that $D((\cE_p^0)_e)$ is the closure of $D(\cE_p^0)$ in $D((\cE_{p}))_e$ with respect to $\av{\cdot}_{L_e,\cE_{p}} + q$.

(d): Let $f \in D((\cE_{p})_e)$ and let $(f_n)$ in $D(\cE_{p})$ be an approximating sequence for $f$ (i.e.\@ $(f_n)$ is $\av{\cdot}_{L,\cE_{p}}$-Cauchy and $f_n \tom f$). According to Proposition~\ref{proposition:properties of extended forms} we have 
 \[ (\cE_{p})_e(f) = \lim_{n \to \infty}\cE_{p}(f_n) \]
and we only have to show that this limit equals the desired integral.

Since $\av{\cdot}_{L,\cE_{p}}$ is equivalent to the seminorm $f \mapsto \av{|\nabla f|}_{p(\cdot)}$, the properties of $(f_n)$ yield that $(\nabla f_n)$ is Cauchy in $\vec L^{p(\cdot)}(M,\mu)$ and hence converges to some limit $X \in \vec L^{p(\cdot)}(M,\mu)$. Using a local Poincaré inequality for variable exponent spaces, e.g. a slightly modified version of \cite[Lemma 8.2.14]{DHHR11}, we obtain (with usual localization techniques) for each compact $K \subseteq M$ a constant $C_K > 0$ such that 
$$\int_K |g - \as{g}_K| d\mu  \leq C_K \av{|\nabla g|}_{p(\cdot)}$$
with $\as{g}_K = \mu(K)^{-1} \int_K g d\mu$, as long as $\nabla g \in L^{p(\cdot)}(M,\mu)$. Hence, $(f_n - \as{f_n}_K)$ is Cauchy in $L^1(K,\mu)$ and converges to $\tilde f_K \in L^1(K,\mu)$. Since also $f_n \tom f$, we infer that $\as{f_n}_K$ converges to some constant $C \in \R$ and $\tilde f_K + C = f$ on $K$. Since pointwise convergence of constant functions implies convergence in $L^1(K,\mu)$ (use $\mu(K) < \infty$), we infer $f_n = f_n - \as{f_n}_K + \as{f_n}_K \to \tilde f_K + C = f$ in  $L^1(K,\mu)$. Altogether, this implies $f \in L^1_{\rm loc}(M,\mu)$ and $f_n \to f$ in $L^1_{\rm loc}(M,\mu)$. We obtain $\nabla f_n \to \nabla f$ in $\mathcal D'(M)$ (in the usual weak-*-sense) and, using Hölder's inequality for variable exponent spaces,  we also have $\nabla f_n  \to X$ in $\vec L^1_{\rm loc}(M,\mu)$. Combining both results yields $\nabla f = X \in L^{p(\cdot)}(M,\mu)$  and 
\begin{align*}
 \int_M \frac{1}{p}|\nabla f|^{p} d\mu &= \lim_{n \to \infty} \int_M \frac{1}{p}|\nabla f_n|^{p} d\mu =  \lim_{n \to \infty} \cE _{p}(f_n).
\end{align*}
For the first equality we used that by the $\Delta_2$-condition $\av{|\nabla f_n - \nabla f|}_{p(\cdot)} \to 0$ implies convergence of the integrals (see Lemma~\ref{lemma:Delta_2 and Luxemburg seminorm convergence yield convergence of gsm}). 
 \end{proof}

We provide some basic criteria for criticality, which extend well-known results from the bilinear case.

\begin{proposition} 
Let $1 < p_- \leq p_+ < \infty$. 
\begin{enumerate}[(a)]
 \item If $\mu(M) < \infty$, then $\cE_{p}$ is critical. 
 \item Assume that $M$ is complete and for $o \in M$ and $r \geq 0$ let $B_r = \{x \in M \mid d(x,o) \leq r\}$. If 
 $\liminf_{r \to \infty} \mu(B_r)r^{-p_-} < \infty,  $
 then $\cE_p^0$ is critical. 
\end{enumerate}
\end{proposition}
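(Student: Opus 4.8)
The plan is to establish criticality in both cases through condition~(iii) of Theorem~\ref{theorem:characterization criticality}. This is legitimate because, by Proposition~\ref{proposition:basic properties variable exponent sobolev}~(a), $\cE_p$ and $\cE_p^0$ are nonlinear Dirichlet forms satisfying the $\Delta_2$-condition and hence the weak $\Delta_2$-condition; by Lemma~\ref{lemma:delta2 conditions} the latter means that $\av{e_n}_L \to 0$ is equivalent to $\cE(e_n) \to 0$, so it suffices to exhibit a sequence $(e_n)$ with $0 \le e_n \le 1$, $e_n \tom 1$ and $\cE(e_n) \to 0$. For~(a) this is immediate: since $\mu(M) < \infty$ the constant function $1$ lies in $L^2(\mu) \cap W^{1,1}_{\rm loc}(M)$, and as $\nabla 1 = 0$ we get $1 \in D(\cE_p)$ with $\cE_p(1) = 0$. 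Taking $e_n \equiv 1$ then yields criticality of $\cE_{p}$ directly.

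For~(b) I would build compactly supported Lipschitz cutoffs adapted to the volume growth. Using $\liminf_{r\to\infty}\mu(B_r)r^{-p_-} < \infty$, first fix $C > 0$ and a lacunary sequence of radii $1 \le R_1 < R_2 < \cdots \to \infty$ with $R_{j+1} \ge 2R_j$ and $\mu(B_{R_j}) \le C R_j^{p_-}$. For $N \ge 2$ set $e_N = g_N(d(\cdot,o))$, where $g_N\colon[0,\infty)\to[0,1]$ equals $1$ on $[0,R_1]$, equals $0$ on $[R_N,\infty)$, and is affine on each $[R_j,R_{j+1}]$, $j = 1,\dots,N-1$, decreasing by $\tfrac{1}{N-1}$ across it. Since $x \mapsto d(x,o)$ is $1$-Lipschitz (as $d$ is the path metric), we have $|\nabla d(\cdot,o)| \le 1$ a.e., whence $|\nabla e_N| \le \tfrac{1}{(N-1)(R_{j+1}-R_j)} \le \tfrac{2}{(N-1)R_{j+1}} \le 1$ on the annulus $B_{R_{j+1}}\setminus B_{R_j}$. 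Each $e_N$ is Lipschitz and supported in the compact ball $\overline{B_{R_N}}$ (compact by Hopf--Rinow, using completeness), so $e_N \in D(\cE_p)_c$ and thus $\cE_p^0(e_N) \le \cE_p(e_N)$.

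The decisive estimate is for the energy. On the annulus $B_{R_{j+1}}\setminus B_{R_j}$ the bound $|\nabla e_N|\le 1$ gives $\tfrac{1}{p(x)}|\nabla e_N|^{p(x)} \le \tfrac{1}{p_-}\bigl(\tfrac{2}{(N-1)R_{j+1}}\bigr)^{p_-}$; integrating and using $\mu(B_{R_{j+1}}) \le C R_{j+1}^{p_-}$ gives a contribution of order $(N-1)^{-p_-}$ from each of the $N-1$ annuli, so that
$$\cE_p(e_N) \le \frac{C\,2^{p_-}}{p_-}\,(N-1)^{1-p_-} \to 0, \quad N \to \infty,$$
because $p_- > 1$. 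Moreover $e_N \to 1$ pointwise (for fixed $x$ with $d(x,o)\le R_m$ one has $e_N(x) \ge 1 - \tfrac{m-1}{N-1} \to 1$), hence $e_N \tom 1$; thus $\cE_p^0(e_N) \le \cE_p(e_N) \to 0$ and criterion~(iii) applies, proving~(b). The main obstacle is precisely that the hypothesis is a $\liminf$ rather than a limit: a single linear ``tent'' cutoff from $B_{R_1}$ to $B_{R_j}$ has only \emph{bounded} energy along the good radii, never vanishing energy. The resolution is the discretized logarithmic profile above, which distributes the unit drop over $N-1$ lacunary annuli and exploits $p_->1$ to beat the linear growth in the number of annuli; the lacunarity $R_{j+1}\ge 2R_j$ is what turns the per-annulus volume bound into the harmless factor $2^{p_-}$ while keeping every annulus ``good''.
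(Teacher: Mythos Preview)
Your proof is correct. Part~(a) matches the paper's argument. For~(b), however, you take a genuinely different route.

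The paper uses the simple linear tent cutoffs $f_k = (1 - d(\cdot,o)/R_k)_+$ along a sequence realizing the $\liminf$. As you correctly observe, these only have \emph{bounded} energy, $\cE_p^0(f_k) \le p_-^{-1}\mu(B_{R_k})R_k^{-p_-}$, not vanishing energy. The paper then argues indirectly: boundedness of $(\cE_p^0(f_k))$ together with $f_k \tom 1$ and lower semicontinuity give $1 \in D((\cE_p^0)_e)$, and the explicit description of the extended form in Proposition~\ref{proposition:basic properties variable exponent sobolev}~(d), namely $(\cE_p^0)_e(1) = \int_M \tfrac{1}{p}|\nabla 1|^p\,d\mu = 0$, yields criticality via condition~(ii) of Theorem~\ref{theorem:characterization criticality}. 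Your telescoping (discretized logarithmic) cutoffs instead produce $\cE_p^0(e_N) \to 0$ directly, so you can invoke condition~(iii) without ever needing the identification of the extended functional. The price is a slightly more intricate construction; the gain is that your argument is self-contained and would work equally well in settings where one does not have an explicit formula for the extended form.
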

\begin{proof}
 (a): If $\mu(M) < \infty$, then $1 \in L^2(\mu)$ and $\cE_{p}(1) = 0$. Theorem~\ref{theorem:characterization criticality} yields the criticality of $\cE _{p}$.
 
 (b): We choose an increasing sequence $R_k \to \infty$ with 
 $$\lim_{k \to \infty}  \mu(B_{R_k}){R_k}^{-p_-} = \liminf_{r \to \infty} \mu(B_r)r^{-p_-} < \infty.$$
 We consider the function $f_{k} = (1 - d(\cdot,o)/R_k)_+$.  It is $(R_k)^{-1}$-Lipschitz and vanishes on $M \setminus B_{R_k}$. This implies $\nabla f_k = 0$ on $M \setminus B_{R_k}$ and    $|\nabla f_k|^p \leq (R_k)^{-p_-}$, where we use Rademacher's theorem for the second inequality.  Since $M$ is complete, closed distance balls are compact and we obtain $f_k \in D(\cE _{p})_c \subseteq D(\cE_p^0)$. Moreover, these estimates show 
 $$\cE_p^0(f_k) = \int_M \frac{1}{p} |\nabla f_k|^{p} d\mu\leq \frac{1}{p_-}  \mu(B_{R_k}) (R_k)^{-p_-}.$$
 We obtain that $(f_k)$ is $\cE_{p,0}$-bounded and we also have $f_k \tom 1$. Using lower semicontinuity,  we obtain $1 \in D((\cE_p^0)_e)$.  With this at hand $(\cE_p^0)_e(1) = 0$ follows from Proposition~\ref{proposition:basic properties variable exponent sobolev}, where we determined the action of $(\cE_p^0)_e$ on $D((\cE_p^0)_e)$.
\end{proof}

\begin{remark}
If $p = 2$, then the criticality of $\cE_{2}^0$ is known as recurrence or parabolicity of the weighted manifold $(M,g,\mu)$. It is well known that this holds if $M$ is complete and $\liminf_{r \to \infty} \mu(B_r)r^{-2} < \infty$, see e.g. \cite{Gri1} (indeed in this case much more sophisticated tests are available). Hence, the previous result can be seen as a generalization to the nonlinear case. 
\end{remark}

Next, we consider the functionals
$$\cE_{p,1} \colon L^2(\mu) \to [0,\infty], \quad \cE_{p,1}(f) = \cE_{p}(f) + \int_X \frac{1}{p} |f|^{p} d\mu.  $$
and 
$$\cE^0_{p,1} \colon L^2(\mu) \to [0,\infty], \quad \cE^0_{p,1}(f) = \cE^0_{p}(f) + \int_X \frac{1}{p} |f|^{p} d\mu.$$

The assertions of Proposition~\ref{proposition:basic properties variable exponent sobolev} also hold for $\cE^{0}_{p,1}$ and $\cE_{p,1}$ (with almost the same proofs) if one simply replaces $\cE_{p}$ by $\cE_{p,1}$ and $\cE_p^0$ by $\cE_{p,1}^0$. The only difference is the action of the extended form. In this case, for $f \in D((\cE_{p,1})_e)$ we have 
$$(\cE_{p,1})_e(f) = \int_M \frac{1}{p}|\nabla f|^{p} d  \mu +  \int_M \frac{1}{p}|f|^{p} d  \mu.$$
We refrain from spelling out all details again but note the following result concerning the domain of the extended Dirichlet form of $\cE_{p,1}$.

\begin{proposition}
Assume $1 <  p_- \leq p_+ < \infty$. Then  
$$D((\cE_{p,1})_e) = \{f \in L^{p(\cdot)}(M,\mu) \mid \nabla f \in L^{p(\cdot)}(M,\mu)\} = W^{1,p(\cdot)}(M,\mu)$$
and 
$$(\cE_{p,1})_e(f) =  \int_M \frac{1}{p}|\nabla f|^{p} d  \mu +  \int_M \frac{1}{p}|f|^{p} d  \mu.$$
In particular, the Luxemburg norm of  $(\cE_{p,1})_e$ is equivalent to the $W^{1,p(\cdot)}(M,\mu)$-norm. 

Moreover, $(\cE^{0}_{p,1})_e$ is the restriction of $(\cE_{p,1})_e$ to the closure of $D(\cE_{p})_c$ with respect to the $W^{1,p(\cdot)}(M,\mu)$-norm.  
\end{proposition}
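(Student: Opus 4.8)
The plan is to prove the domain identity $D((\cE_{p,1})_e) = W^{1,p(\cdot)}(M,\mu)$ by a double inclusion and then read off the remaining assertions. Throughout I would use the facts recorded in the remark preceding the statement: $\cE_{p,1}$ is a reflexive nonlinear Dirichlet form satisfying the $\Delta_2$-condition, and its extended form acts by $(\cE_{p,1})_e(f) = \int_M \tfrac1p|\nabla f|^{p}\,d\mu + \int_M \tfrac1p|f|^{p}\,d\mu$ on its domain, with $f \in W^{1,1}_{\rm loc}(M)$ there (the analogue of Proposition~\ref{proposition:basic properties variable exponent sobolev}(d)). Since $p_+ < \infty$ the modular and the norm of the variable exponent spaces are equivalent (cf.\ Lemma~\ref{lemma:Delta_2 and Luxemburg seminorm convergence yield convergence of gsm}), so exactly as for $\cE_p$ the extended Luxemburg seminorm $\av{\cdot}_{L_e,\cE_{p,1}}$ is equivalent to $f \mapsto \av{|\nabla f|}_{p(\cdot)} + \av{f}_{p(\cdot)}$, i.e.\ to the $W^{1,p(\cdot)}$-norm. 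This last fact is precisely the norm equivalence claimed in the proposition, so it only remains to identify the two domains.

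For the inclusion $D((\cE_{p,1})_e) \subseteq W^{1,p(\cdot)}(M,\mu)$ I would simply read off the action formula: if $f \in D((\cE_{p,1})_e)$ then $f \in W^{1,1}_{\rm loc}(M)$ and both $\int_M \tfrac1p|f|^{p}\,d\mu$ and $\int_M \tfrac1p|\nabla f|^{p}\,d\mu$ are finite. By the $\Delta_2$-condition (equivalently $p_+ < \infty$) finiteness of these modulars is equivalent to $f, |\nabla f| \in L^{p(\cdot)}(M,\mu)$, which is exactly the statement $f \in W^{1,p(\cdot)}(M,\mu)$.

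The converse inclusion is the main point and requires producing an approximating sequence. Given $f \in W^{1,p(\cdot)}(M,\mu)$ I would use the truncations $f^{(M)} = C_M(D_{1/M}f)$ already employed in the proof of Theorem~\ref{theorem:existence extended Dirichlet space}, where $C_M(x) = (x\wedge M)\vee(-M)$ and $D_{1/M}(x) = x - C_{1/M}(x)$. Since $D_{1/M}f$ vanishes on $\{|f|\le 1/M\}$ and (for $M>1$) $|f|^{p} > (1/M)^{p_+}$ on $\{|f|>1/M\}$, finiteness of $\int_M |f|^{p}\,d\mu$ forces $\mu(\{|f|>1/M\}) < \infty$; hence $f^{(M)}$ is bounded with support of finite measure, so $f^{(M)} \in L^2(\mu)$, while the chain rule for the normal contractions $C_M, D_{1/M}$ gives $f^{(M)} \in W^{1,1}_{\rm loc}$ with $|f^{(M)}|\le|f|$ and $|\nabla f^{(M)}|\le|\nabla f|$. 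Thus $f^{(M)} \in D(\cE_{p,1})$. As $M\to\infty$ one has $f^{(M)}\to f$ and $\nabla f^{(M)}\to\nabla f$ a.e.\ (the a.e.\ gradient convergence being the usual Rademacher-type argument as in Proposition~\ref{proposition:basic properties variable exponent sobolev}(d), using $\nabla f = 0$ a.e.\ on $\{f=0\}$), with domination by $2|f|$ and $2|\nabla f|$; dominated convergence for the modulars together with the $\Delta_2$-condition yields $\av{f^{(M)}-f}_{p(\cdot)}\to 0$ and $\av{|\nabla f^{(M)}-\nabla f|}_{p(\cdot)}\to 0$, i.e.\ $f^{(M)}\to f$ in the $W^{1,p(\cdot)}$-norm. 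By the norm equivalence above $(f^{(M)})$ is $\av{\cdot}_{L,\cE_{p,1}}$-Cauchy, and the continuous embeddings $W^{1,p(\cdot)}\hookrightarrow L^{p(\cdot)}\hookrightarrow L^0$ give $f^{(M)}\tom f$; hence $(f^{(M)})$ is an approximating sequence for $f$ and $f \in M((\cE_{p,1})_e) = D((\cE_{p,1})_e)$ by the $\Delta_2$-condition and Proposition~\ref{proposition:properties of extended forms}, with the action formula following from $(\cE_{p,1})_e(f)=\lim_M\cE_{p,1}(f^{(M)})$.

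Finally, the statement about $\cE^0_{p,1}$ is the analogue of Proposition~\ref{proposition:basic properties variable exponent sobolev}(c): $(\cE^0_{p,1})_e$ is the restriction of $(\cE_{p,1})_e$ to the closure of $D(\cE^0_{p,1})$ in $D((\cE_{p,1})_e)$ with respect to $\av{\cdot}_{L_e,\cE_{p,1}} + q$. Since this Luxemburg norm is equivalent to the $W^{1,p(\cdot)}$-norm, which dominates convergence in $L^0$ through $W^{1,p(\cdot)}\hookrightarrow L^0$, the extra summand $q$ is superfluous, and the closure coincides with the $W^{1,p(\cdot)}$-closure of $D(\cE_{p})_c$. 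The hard part will be the construction of the approximating sequence, specifically verifying $f^{(M)}\in L^2(\mu)$ through the finite-measure-support estimate and establishing the a.e.\ convergence of the truncated gradients; once these are in place, the remaining steps are routine applications of dominated convergence and the $\Delta_2$-condition.
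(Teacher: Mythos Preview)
Your proposal is correct and uses the same truncations $f^{(M)} = C_M(D_{1/M}f)$ as the paper. The only substantive difference is in how you conclude $f \in D((\cE_{p,1})_e)$: the paper simply observes that $|f_{R,\varepsilon}| \leq |f|$ and $|\nabla f_{R,\varepsilon}| \leq |\nabla f|$ give a uniform bound $\cE_{p,1}(f_{R,\varepsilon}) \leq \int \tfrac1p|\nabla f|^p\,d\mu + \int \tfrac1p|f|^p\,d\mu < \infty$, and then invokes $f_{R,\varepsilon} \tom f$ and lower semicontinuity of $(\cE_{p,1})_e$ directly. You instead upgrade this to full modular (hence $W^{1,p(\cdot)}$-norm) convergence via dominated convergence and then appeal to Proposition~\ref{proposition:properties of extended forms}. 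Your route gives slightly more information (an explicit approximating sequence and the limit formula for $(\cE_{p,1})_e(f)$) at the cost of having to verify the a.e.\ gradient convergence; the paper's route is shorter since a uniform bound plus lower semicontinuity already suffices for membership in $D((\cE_{p,1})_e)$. Your finite-measure-support argument for $f^{(M)} \in L^2(\mu)$ is the same mechanism behind the paper's assertion $f_{R,\varepsilon} \in L^1 \cap L^\infty$, just made explicit.
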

\begin{proof}
 It suffices to show $D((\cE_{p,1})_e) \supseteq  W^{1,p(\cdot)}(M)$, the rest was sketched above. Let $f \in W^{1,p(\cdot)}(M,\mu)$.  For $r > 0$ let $C_r(x) = (x \wedge r) \vee (-r)$ and consider the functions $f_{R,\varepsilon} = C_R(f - C_\varepsilon(f))$, $0 < \varepsilon < R$. Then $f_{R,\varepsilon} \in L^1(M,\mu) \cap L^\infty(M,\mu) \subseteq L^2(M,\mu)$.

 For $g,h \in W^{1,1}_{\rm loc}(M)$ we have $f \wedge g,f \vee g \in W^{1,1}_{\rm loc}(M)$ and
 $$\nabla (g \wedge h) = 1_{\{g <h\}} \nabla g + 1_{\{h \leq g\}} \nabla h \text{ and } \nabla (g \vee h) = 1_{\{g > h\}} \nabla g + 1_{\{h \geq g\}} \nabla h.$$
 This implies $f_{R,\varepsilon} \in W^{1,1}_{\rm loc}(M)$ and $|\nabla f_{R,\varepsilon}| \leq |\nabla f|$ as well as $|f_{R,\varepsilon}| \leq |f|$. Since $f_{R,\varepsilon} \in L^2(M,\mu)$ and $f \in W^{1,p(\cdot)}(M,\mu)$, this implies $f_{R,\varepsilon} \in D(\cE_{p,1})$. The convergence $f_{R,\varepsilon} \tom f$, as $\varepsilon \to 0+, R \to \infty$, and the lower semicontinuity of $(\cE_{p,1})_e$ yield 
 \begin{align*}
  (\cE_{p,1})_e(f) &\leq \liminf_{R \to \infty,\varepsilon \to 0+}  (\cE_{p,1})_e(f_{R,\varepsilon}).
  \end{align*}
 Since $(\cE_{p,1})_e =  \cE_{p,1}$ on $D(\cE_{p,1})$ and $f_{R,\varepsilon} \in D(\cE_{p,1})$, we infer
 \begin{align*}
  (\cE_{p,1})_e(f_{R,\varepsilon}) &= \int_M \frac{1}{p}|\nabla f_{R,\varepsilon}|^{p} d  \mu +  \int_M \frac{1}{p}|f_{R,\varepsilon}|^{p} d  \mu\\
  &\leq \int_M \frac{1}{p}|\nabla f|^{p} d  \mu +  \int_M \frac{1}{p}|f|^{p} d  \mu < \infty.
 \end{align*}
Put together, this shows $f \in  D((\cE_{p,1})_e)$.  
\end{proof}

\begin{remark}
This example shows that the capacities we introduce in Section~\ref{section:potential theory} with the help of extended Dirichlet spaces include the capacities often used in the context of $p$-energies or variable exponent Sobolev spaces, cf. \cite[Chapter~10]{DHHR11}.  
\end{remark}

\subsection{$p$-energies induced by strongly local regular Dirichlet forms} \label{subsection:quasi-regular}

In this subsection we use (almost) the same setting as in \cite{BBR24}. We refer to it (in particular to its appendix) for precise definitions and proofs (or references to proofs) for the claimed properties. Let $\cE$ be a strongly local regular bilinear Dirichlet form on $L^2(\mu)$ admitting a (positive definite symmetric bilinear) carré du champ operator $\Gamma \colon D(\cE) \times D(\cE) \to L^1(\mu)$ such that 
$$\cE(f) = \frac{1}{2} \int_X \Gamma(f) d\mu,$$
where we use the notation $\Gamma(f) = \Gamma(f,f)$. 
Important for us is that if $C \colon \R \to \R$ is a Lipschitz function, then for $f \in D(\cE)$ we have $C(f) \in D(\cE)$ and 
$$\Gamma(C(f),g) = C'(f) \Gamma(f,g) \text{ for all } g \in D(\cE).$$
Moreover, the Cauchy-Schwarz inequality $|\Gamma(f,g)| \leq \Gamma(f)^{1/2}   \Gamma(g)^{1/2}$ holds for $f,g \in D(\cE)$.

For a measurable  $p \colon X \to [1,\infty)$ and $\alpha \geq 0$ we define $\cE^{(0)}_{p,\alpha} \colon L^2(\mu) \to [0,\infty]$ by 
$$\cE^{(0)}_{p,\alpha}(f) = \begin{cases}
                                                                   \int_X  \frac{1}{p} \Gamma(f)^\frac{p}{2}d\mu + \alpha \int_X \frac{1}{p}|f|^p d\mu  &\text{if } f \in D(\cE)\\
                                                                    \infty &\text{else}
                                                                  \end{cases}.
$$
\begin{lemma}
$\cE^{(0)}_{p,\alpha}$ satisfies the second Beurling-Deny criterion.  If $1 < p_- \leq p_+ < \infty$, it is reflexive and satisfies the $\Delta_2$-condition.  
\end{lemma}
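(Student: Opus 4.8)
I would treat the three assertions separately. For the second Beurling--Deny criterion, which requires no restriction on $p$ beyond $p \colon X \to [1,\infty)$, let $C$ be a normal contraction and $f,g \in L^2(\mu)$. Since $D(\cE)$ is a linear subspace there is nothing to prove unless $f,g \in D(\cE)$, in which case the chain rule gives $Cg \in D(\cE)$ and $f \pm Cg \in D(\cE)$. Using bilinearity of $\Gamma$ together with $\Gamma(Cg,h) = C'(g)\Gamma(g,h)$, I would record the pointwise identities
\[ \Gamma(f \pm Cg) = \Gamma(f) \pm 2C'(g)\Gamma(f,g) + C'(g)^2 \Gamma(g), \qquad \Gamma(f \pm g) = \Gamma(f) \pm 2\Gamma(f,g) + \Gamma(g) \]
$\mu$-a.e. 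At $\mu$-a.e.\@ point $x$ the Cauchy--Schwarz inequality $|\Gamma(f,g)| \leq \Gamma(f)^{1/2}\Gamma(g)^{1/2}$ lets me choose $a,b \in \R^2$ with $|a|^2 = \Gamma(f)(x)$, $|b|^2 = \Gamma(g)(x)$ and $\langle a,b\rangle = \Gamma(f,g)(x)$, so that the first quantities equal $|a \pm \lambda b|^2$ with $\lambda = C'(g(x))$, $|\lambda| \leq 1$, and the second equal $|a \pm b|^2$. Lemma~\ref{lemma:basice inequality} (with exponent $p(x) \geq 1$) then gives $\Gamma(f+Cg)^{p/2} + \Gamma(f-Cg)^{p/2} \leq \Gamma(f+g)^{p/2} + \Gamma(f-g)^{p/2}$ pointwise. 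For the potential term I would argue with scalars: writing $C(g(x)) = \lambda(x)g(x)$ with $|\lambda(x)| \leq 1$, the one-dimensional case of Lemma~\ref{lemma:basice inequality} yields $|f + Cg|^p + |f - Cg|^p \leq |f + g|^p + |f - g|^p$ a.e. Integrating both estimates against $\tfrac{1}{p}\,d\mu$ (respectively $\tfrac{\alpha}{p}\,d\mu$) and adding proves the criterion; all integrals are legitimate in $[0,\infty]$ as the integrands are nonnegative.

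The $\Delta_2$-condition is immediate: from $\Gamma(2f) = 4\Gamma(f)$ I get $\cE^{(0)}_{p,\alpha}(2f) = \int_X \tfrac{1}{p} 2^p \Gamma(f)^{p/2}\,d\mu + \alpha\int_X \tfrac{1}{p}2^p|f|^p\,d\mu \leq 2^{p_+}\cE^{(0)}_{p,\alpha}(f)$, so $K = 2^{p_+}$ works (only $p_+ < \infty$ is used). For reflexivity I would mimic the proof of Proposition~\ref{proposition:basic properties variable exponent sobolev}(a): using $p_+ < \infty$, the Luxemburg seminorm $\av{\cdot}_{L,\cE^{(0)}_{p,\alpha}}$ is equivalent to $f \mapsto \av{\Gamma(f)^{1/2}}_{p(\cdot)} + \av{f}_{p(\cdot)}$ (the second summand being dropped when $\alpha = 0$). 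The key point is that $f \mapsto \Gamma(f)^{1/2}$ is the pointwise norm of a \emph{linear} gradient: for a strongly local Dirichlet form with carré du champ there is an $L^0$-Hilbert module $H$ and a linear derivation $D \colon D(\cE) \to H$ with $|Df|^2 = \Gamma(f)$ $\mu$-a.e.\@ (cf.\@ \cite{BBR24}). Hence the linear map $\iota \colon f \mapsto (Df,f)$ into the $\ell^2$-direct sum $\vec L^{p(\cdot)}(H) \oplus_2 L^{p(\cdot)}(\mu)$ has $\av{\iota(f)}$ equivalent to $\av{f}_{L,\cE^{(0)}_{p,\alpha}}$. By \cite[Theorem~3.4.9]{DHHR11} both summands are uniformly convex for $1 < p_- \leq p_+ < \infty$ (for the module-valued factor by the same pointwise Clarkson inequalities), and the $\ell^2$-sum of uniformly convex spaces is uniformly convex; pulling this norm back along the linear map $\iota$ produces a uniformly convex seminorm equivalent to $\av{\cdot}_{L,\cE^{(0)}_{p,\alpha}}$, which gives reflexivity exactly as in Proposition~\ref{proposition:basic properties variable exponent sobolev}.

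I expect the reflexivity part to be the main obstacle. In contrast to the gradient on a manifold, the carré du champ is genuinely non-quadratic in the sense that $f \mapsto \Gamma(f)^{1/2}$ is not linear, so uniform convexity cannot be read off directly; the whole argument hinges on linearizing $\Gamma$ through the gradient module $D$. Equivalently, it rests on the pointwise realization of $\Gamma(f),\Gamma(g),\Gamma(f\pm g)$ by honest inner products in $\R^2$, which is available precisely because the parallelogram law $\Gamma(f+g)+\Gamma(f-g) = 2\Gamma(f)+2\Gamma(g)$ holds and Cauchy--Schwarz guarantees realizability of the required vectors. Once this linearization is in place, the transfer of uniform convexity from the (vector-valued) variable-exponent Lebesgue space to the Luxemburg seminorm, and from there to reflexivity, is routine given the quoted results.
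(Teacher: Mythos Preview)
Your proof is correct and, for the Beurling--Deny criterion and the $\Delta_2$-condition, essentially identical to the paper's: both expand $\Gamma(f\pm Cg)$ via the chain rule and bilinearity, then reduce the pointwise estimate to Lemma~\ref{lemma:basice inequality} (the paper packages the $\R^2$-realization step as Corollary~\ref{corllary:basic inequality}, which is exactly your construction of $a,b\in\R^2$ from $\Gamma(f),\Gamma(g),\Gamma(f,g)$ via Cauchy--Schwarz).

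For reflexivity the two approaches follow the same template --- show that the Luxemburg seminorm is equivalent to a uniformly convex seminorm coming from $L^{p(\cdot)}$ --- but you supply a detail the paper leaves implicit. The paper simply asserts a ``bi-Lipschitz embedding into $L^{p(\cdot)}(\mu)$'' by analogy with the manifold case, where $\nabla$ is linear and the seminorm $f\mapsto\av{|\nabla f|}_{p(\cdot)}$ is visibly the pullback of a uniformly convex norm along a linear map. In the abstract Dirichlet-form setting $f\mapsto\Gamma(f)^{1/2}$ is not linear, so one needs the derivation $D$ into the tangent $L^0$-module (with $|Df|^2=\Gamma(f)$) to play the role of $\nabla$; only then does the pullback argument go through. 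Your insistence on this linearization is the right way to make the paper's sketch rigorous, at the cost of invoking the Hilbert-module machinery. The paper's formulation is terser but, read literally, underspecified; your version is self-contained.
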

\begin{proof}
Second Beurling-Deny criterion:  Let $C \colon \R \to \R$ be a normal contraction. Using the chain rule, the bilinearity and the symmetry of $\Gamma$, for $f,g \in D(\cE)$ we obtain  
 $$\Gamma(f + C g)^\frac{p}{2}  = (\Gamma(f) + 2 C'(g) \Gamma(f,g) + C'(g)^2 \Gamma(g))^\frac{p}{2} $$
 and
 $$ \Gamma(f - C g)^\frac{p}{2} = (\Gamma(f) - 2 C'(g) \Gamma(f,g) + C'(g)^2 \Gamma(g))^\frac{p}{2}.$$
 Using these identities, $|C'(g)| \leq 1$ and $|\Gamma(f,g)| \leq \Gamma(f)^{1/2}   \Gamma(g)^{1/2}$,  Corollary~\ref{corllary:basic inequality} yields
 $$\Gamma(f + C g)^\frac{p}{2} + \Gamma(f - C g)^\frac{p}{2} \leq \Gamma(f +  g)^\frac{p}{2} + \Gamma(f -  g)^\frac{p}{2}. $$
 Moreover, Lemma~\ref{lemma:basice inequality} shows 
 $$|f + Cg|^p + |f - Cg|^p \leq |f + g|^p + |f - g|^p.$$
 Integrating these inequalities yields the claim. 
 
 Reflexivity:  As in the proof of Proposition~\ref{proposition:basic properties variable exponent sobolev} we obtain that the modular space $(M(\cE^{(0)}_{p,\alpha}),\av{\cdot}_{L})$ has a bi-Lipschitz embedding into a uniformly convex variable exponent Lebesgue space (more precisely $L^{p(\cdot)}(\mu)$ if $\alpha = 0$ and $L^{p(\cdot)}(\mu) \times L^{p(\cdot)}(\mu)$ if $\alpha > 0$). This implies the reflexivity of $\cE^{(0)}_{p,\alpha}$. 
 
  $\Delta_2$-condition: This directly follows from $p_+ < \infty$.
\end{proof}

 If $\cE^{(0)}_{p,\alpha}$ possesses a lower semicontinuous extension on $L^2(\mu)$, we denote its lower semicontinuous relaxation (the largest lower semicontinuous minorant) by $\cE_{p,\alpha}$. If $1 < p_- \leq p_+ < \infty$ (and hence $\cE^{(0)}_{p,\alpha}$ is reflexive and satisfies the $\Delta_2$-condition), then this is the case if and only if for any $\av{\cdot}_{L,\cE^{(0)}_{p,\alpha}}$-Cauchy sequence $(f_n)$ in $M(\cE^{(0)}_{p,\alpha})$ with $f_n \to  0$ in $L^2(\mu)$ we have $\av{f_n}_{L,\cE^{(0)}_{p,\alpha}} \to 0$, see Theorem~\ref{theorem:description extended space}.

 %
%
\begin{proposition}
 If $\cE_{p,\alpha}$ exists, it is a nonlinear Dirichlet form. If $1 < p_- \leq p_+ < \infty$, then $\cE_{p,\alpha}$ is reflexive and satisfies the $\Delta_2$-condition. 
\end{proposition}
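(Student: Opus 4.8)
The plan is to treat $\cE_{p,\alpha}$ as what it is by construction, namely the $L^2(\mu)$-lower semicontinuous relaxation of $\cE^{(0)}_{p,\alpha}$, and to transfer the properties of $\cE^{(0)}_{p,\alpha}$ established in the preceding lemma to this relaxation. Lower semicontinuity holds by definition, and properness is immediate since $0 \leq \cE_{p,\alpha} \leq \cE^{(0)}_{p,\alpha}$ forces $\cE_{p,\alpha}(0) = 0$. Convexity of $\cE_{p,\alpha}$ follows from the (clear) convexity of $\cE^{(0)}_{p,\alpha}$ — each integrand is the $p$-th power of the pointwise seminorm $\Gamma(\cdot)^{1/2}$ resp.\@ of $|\cdot|$, hence convex — together with the fact that the lower semicontinuous envelope of a convex functional on a metric space is convex (pass to recovery sequences at the two endpoints of a segment and use convexity of $\cE^{(0)}_{p,\alpha}$ along them). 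Thus the only structural property left to verify for the first assertion is the second Beurling-Deny criterion; the first criterion then comes for free because $x \mapsto |x|$ is a normal contraction.

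For the second Beurling-Deny criterion I would argue exactly as in the proof of Proposition~\ref{proposition:compatibility with contractions extended forms}(c), replacing the extension $\cE_e$ by the relaxation $\cE_{p,\alpha}$ and local convergence in measure by $L^2(\mu)$-convergence. Fix a normal contraction $C$ and $f,g$ with $f+g, f-g \in D(\cE_{p,\alpha})$. Choose recovery sequences $h_n \to f+g$ and $\tilde h_n \to f-g$ in $L^2(\mu)$ with $\cE^{(0)}_{p,\alpha}(h_n) \to \cE_{p,\alpha}(f+g)$ and $\cE^{(0)}_{p,\alpha}(\tilde h_n) \to \cE_{p,\alpha}(f-g)$; these lie in $D(\cE)$. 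Setting $f_n = (h_n + \tilde h_n)/2$ and $g_n = (h_n - \tilde h_n)/2$, one has $f_n \to f$, $g_n \to g$ and $Cg_n \to Cg$ in $L^2(\mu)$. Using lower semicontinuity of $\cE_{p,\alpha}$, the minorant property $\cE_{p,\alpha} \leq \cE^{(0)}_{p,\alpha}$, the second Beurling-Deny criterion for $\cE^{(0)}_{p,\alpha}$ (applicable since $f_n, g_n \in D(\cE)$), and the identities $f_n + g_n = h_n$, $f_n - g_n = \tilde h_n$, one chains the inequalities to $\cE_{p,\alpha}(f + Cg) + \cE_{p,\alpha}(f - Cg) \leq \cE_{p,\alpha}(f+g) + \cE_{p,\alpha}(f-g)$. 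Hence $\cE_{p,\alpha}$ is a nonlinear Dirichlet form. The $\Delta_2$-condition transfers by the same recovery-sequence trick: for $f$ with a recovery sequence $f_n \to f$, lower semicontinuity along $2f_n \to 2f$, the minorant property, and the constant $K = 2^{p_+}$ of $\cE^{(0)}_{p,\alpha}$ give $\cE_{p,\alpha}(2f) \leq K\,\cE_{p,\alpha}(f)$.

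The main obstacle is reflexivity, for which I would not re-derive the uniformly convex embedding from scratch but instead identify $(M(\cE_{p,\alpha}), \av{\cdot}_L)$ with the completion of $(M(\cE^{(0)}_{p,\alpha}), \av{\cdot}_L)$. The assumed existence of $\cE_{p,\alpha}$ — equivalently, that every $\av{\cdot}_{L,\cE^{(0)}_{p,\alpha}}$-Cauchy sequence in $M(\cE^{(0)}_{p,\alpha})$ which is $L^2(\mu)$-null is already $\av{\cdot}_L$-null, via Theorem~\ref{theorem:description extended space} — guarantees $\cE_{p,\alpha} = \cE^{(0)}_{p,\alpha}$ on $D(\cE)$, so the two Luxemburg seminorms agree on $M(\cE^{(0)}_{p,\alpha})$; and, invoking the $\Delta_2$-description of the modular space via approximating sequences, $M(\cE^{(0)}_{p,\alpha})$ is $\av{\cdot}_L$-dense in $M(\cE_{p,\alpha})$. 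Consequently the completions of the two modular spaces coincide. Since the paper's notion of reflexivity of a generalized semimodular is, as is readily checked from the definition, equivalent to classical reflexivity of the completion of its modular space, and $\cE^{(0)}_{p,\alpha}$ is reflexive by the preceding lemma, reflexivity of $\cE_{p,\alpha}$ follows.

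The real work lies precisely in justifying the density statement and the identification with the completion — that is, in pinning down how the relaxation enlarges the domain and confirming that it does not enlarge it beyond the $\av{\cdot}_L$-completion. As an alternative route to the same conclusion, the bi-Lipschitz embedding of $M(\cE^{(0)}_{p,\alpha})$ into the uniformly convex variable exponent Lebesgue space (from the preceding lemma, via $f \mapsto \Gamma(f)^{1/2}$, resp.\@ the pair with $f$ when $\alpha > 0$) is uniformly continuous and hence extends to a bi-Lipschitz embedding of the completion, whose image is a closed subspace of a uniformly convex — and therefore reflexive — space; this again yields reflexivity of $\cE_{p,\alpha}$.
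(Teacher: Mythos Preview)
Your proposal is correct and follows essentially the same approach as the paper. The paper's proof is extremely terse: for the Beurling-Deny criterion it points to Proposition~\ref{proposition:compatibility with contractions extended forms} (exactly the recovery-sequence argument you wrote out), and for reflexivity and the $\Delta_2$-condition it simply cites the appendix machinery (Lemma~\ref{lemma:extended luxemburg norm} for $\Delta_2$, implicitly Lemma~\ref{lemma:reflexivity and relaxation} for reflexivity), whose content is precisely your density-of-$M(\cE^{(0)}_{p,\alpha})$-in-$M(\cE_{p,\alpha})$ argument via Theorem~\ref{theorem:description extended space}. In other words, you have unpacked the lemmas the paper invokes; there is no substantive difference in strategy.
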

\begin{proof}
 Reflexivity and $\Delta_2$-condition: This follows from the corresponding results for $\cE^{(0)}_{p,\alpha}$, see Lemma~\ref{lemma:extended luxemburg norm}.
 
 Beurling-Deny criterion: This can be proven as in Proposition~\ref{proposition:compatibility with contractions extended forms}, which shows that the second Beurling-Deny criterion also holds for extended forms. 
\end{proof}
\begin{proposition}
 If $2 \leq p_- \leq p_+ < \infty$ and $\mu(X) < \infty$, then $\cE^{(0)}_{p,\alpha}$ is lower semicontinuous  (in other words $\cE^{(0)}_{p,\alpha} = \cE_{p,\alpha}$). Moreover, the extended Dirichlet space satisfies $D((\cE_{p,\alpha})_e) \cap L^\infty(\mu) = D(\cE_{p,\alpha}) \cap L^\infty(\mu)$ and if $\alpha > 0$, then  $D((\cE_{p,\alpha})_e) = D(\cE_{p,\alpha})$.
\end{proposition}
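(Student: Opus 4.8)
The plan is to prove first that $\cE^{(0)}_{p,\alpha}$ is already $L^2(\mu)$-lower semicontinuous (so that it coincides with its relaxation $\cE_{p,\alpha}$), and then to read off the two statements about the extended space from Proposition~\ref{proposition:compatibility with contractions extended forms} together with the finiteness of $\mu$.

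\emph{Lower semicontinuity.} I would regard $J := \cE^{(0)}_{p,\alpha}$ as a $[0,\infty]$-valued functional on the Hilbert space $V$ consisting of $D(\cE)$ with inner product $\cE(\cdot,\cdot) + \as{\cdot,\cdot}$. The pointwise Cauchy--Schwarz inequality for $\Gamma$ shows that $q(f) := \Gamma(f)^{1/2}$ is an (a.e.\@ defined) seminorm in $f$, so $\Gamma(f)^{p/2} = q(f)^p$ and $|f|^p$ are convex in $f$ (the increasing convex map $t \mapsto t^p$, $p \ge 1$, composed with a seminorm); hence $J$ is convex. For strong lower semicontinuity on $V$, note that $\cE(f_n - f) \to 0$ yields $\Gamma(f_n - f)^{1/2} \to 0$ in $L^2(\mu)$, and the reverse triangle inequality $|\Gamma(f_n)^{1/2} - \Gamma(f)^{1/2}| \le \Gamma(f_n - f)^{1/2}$ gives $\Gamma(f_n)^{1/2} \to \Gamma(f)^{1/2}$ in $L^2(\mu)$; passing to an a.e.\@ convergent subsequence and applying Fatou to $\int_X \Gamma(\cdot)^{p/2} d\mu$, and likewise (using $f_n \to f$ in $L^2$) to $\int_X |\cdot|^p d\mu$, shows $J(f) \le \liminf_n J(f_n)$. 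Thus $J$ is a convex, strongly lower semicontinuous functional on $V$, and by Mazur's lemma it is sequentially weakly lower semicontinuous there.

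\emph{Using $p_- \ge 2$ and $\mu(X) < \infty$.} To pass from this to $L^2$-lower semicontinuity, let $f_n \to f$ in $L^2(\mu)$ with $L := \liminf_n J(f_n) < \infty$, and pass to a subsequence realizing $L$. Here the hypotheses enter decisively: by Jensen's inequality for $t \mapsto t^{p/2}$ one has $\int_X \Gamma(f_n)\, d\mu \le \mu(X)^{1 - 2/p}\big(\int_X \Gamma(f_n)^{p/2} d\mu\big)^{2/p}$, so a bound on $J(f_n)$ forces a bound on $\cE(f_n)$, and the subsequence is bounded in $V$. Extracting a weakly $V$-convergent sub-subsequence and using that the bounded embedding $V \hookrightarrow L^2(\mu)$ is weak-to-weak continuous, the weak limit must equal the $L^2$-limit $f$; in particular $f \in D(\cE)$. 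Forming convex combinations $g_N$ of the tail (Mazur) that converge strongly in $V$ to $f$, convexity gives $J(g_N) \le \sup_{m \ge N} J(f_m) \to L$, and strong lower semicontinuity yields $J(f) \le \liminf_N J(g_N) \le L$. Hence $\cE^{(0)}_{p,\alpha}$ is lower semicontinuous and equals $\cE_{p,\alpha}$, which by the preceding results is a reflexive nonlinear Dirichlet form satisfying the $\Delta_2$-condition; so $(\cE_{p,\alpha})_e$ exists by Theorem~\ref{theorem:existence extended Dirichlet space}.

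\emph{The extended space.} Since $\mu(X) < \infty$ we have $L^\infty(\mu) \subseteq L^2(\mu)$, and Proposition~\ref{proposition:compatibility with contractions extended forms}~(a) gives $D((\cE_{p,\alpha})_e) \cap L^2(\mu) = D(\cE_{p,\alpha})$; intersecting with $L^\infty(\mu)$ immediately yields $D((\cE_{p,\alpha})_e) \cap L^\infty(\mu) = D(\cE_{p,\alpha}) \cap L^\infty(\mu)$. For $\alpha > 0$, observe that the minorant $g \mapsto \tfrac{\alpha}{p}\int_X |g|^p d\mu$ of $\cE_{p,\alpha}$ is lower semicontinuous on $L^0(\mu)$ (Fatou along an a.e.\@ convergent subsequence of any locally-in-measure convergent sequence), hence is dominated by the relaxation, so $(\cE_{p,\alpha})_e(f) \ge \tfrac{\alpha}{p}\int_X |f|^p d\mu$ for all $f \in L^0(\mu)$. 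For $f \in D((\cE_{p,\alpha})_e)$ this forces $\int_X |f|^p d\mu < \infty$, and since $p_- \ge 2$ with $\mu(X) < \infty$ we get $f \in L^p(\mu) \subseteq L^2(\mu)$; thus $f \in D((\cE_{p,\alpha})_e) \cap L^2(\mu) = D(\cE_{p,\alpha})$, giving $D((\cE_{p,\alpha})_e) = D(\cE_{p,\alpha})$. The main obstacle is the lower semicontinuity: the $p$-energy is neither a norm nor $L^2$-continuous, so the argument must route convexity through the $V$-topology to upgrade strong to weak lower semicontinuity, and it is precisely the Jensen estimate (needing $p_- \ge 2$ and $\mu(X) < \infty$) that keeps an $L^2$-bounded minimizing sequence $\cE$-bounded and its weak limit inside $D(\cE)$.
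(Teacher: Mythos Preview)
Your argument is correct in substance. One small slip: the Jensen/H\"older estimate you write as $\int_X \Gamma(f_n)\,d\mu \le \mu(X)^{1-2/p}\bigl(\int_X \Gamma(f_n)^{p/2}\,d\mu\bigr)^{2/p}$ is stated for a constant exponent; for variable $p$ the right-hand side does not parse. The conclusion you need, however, follows from the pointwise bound $t \le t^{p(x)/2} + 1$ (valid since $p(x)/2 \ge 1$), which gives $\int_X \Gamma(f_n)\,d\mu \le \int_X \Gamma(f_n)^{p/2}\,d\mu + \mu(X)$ and hence the desired $\cE$-boundedness. The same remark applies to your line ``$f \in L^p(\mu) \subseteq L^2(\mu)$'' in the $\alpha > 0$ step: what you actually use is $|f|^2 \le |f|^{p(\cdot)} + 1$ pointwise.

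Your route differs from the paper's. For lower semicontinuity the paper invokes the abstract criterion Theorem~\ref{theorem:lower semicontinuity}~(iv), reducing to completeness of $(M(\cE^{(0)}_{p,\alpha}),\av{\cdot}_L + \av{\cdot}_2)$ and deferring the analytic work to \cite[Proposition~3.2]{BBR24}; you instead give a self-contained weak-compactness argument directly in the Dirichlet Hilbert space $V = (D(\cE),\cE + \as{\cdot,\cdot})$, which is more elementary and makes transparent exactly where $p_- \ge 2$ and $\mu(X) < \infty$ enter. For the $\alpha > 0$ case, the paper extends the continuous embedding $(M(\cE_{p,\alpha}),\av{\cdot}_L) \hookrightarrow L^{p(\cdot)}(\mu) \hookrightarrow L^2(\mu)$ to the extended space via $L^0$-lower semicontinuity of the $L^2$-norm; your Fatou-minorant argument is more direct and bypasses the Luxemburg-norm machinery. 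Both approaches ultimately rest on the same embedding $L^{p(\cdot)}(\mu) \hookrightarrow L^2(\mu)$, so the difference is one of packaging rather than of underlying idea.
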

\begin{proof}
 
 Lower semicontinuity: According to Theorem~\ref{theorem:lower semicontinuity}~(iv), it suffices to show that the modular space $(M(\cE^{(0)}_{p,\alpha}),\av{\cdot}_{L} + \av{\cdot}_2)$ is complete. This however can be proven with the same arguments as in the proof of \cite[Proposition~3.2]{BBR24} noting that $p_- \geq 2$ and $\mu(X) < \infty$ imply the continuity of the embedding $L^{p(\cdot)}(\mu) \to L^2(\mu)$.

 Extended Dirichlet spaces: By Proposition~\ref{proposition:properties of extended forms} we have $D((\cE_{p,\alpha})_e) \cap L^2(\mu) = D(\cE_{p,\alpha})$. Since $L^\infty(\mu) \subseteq L^2(\mu)$, this implies the first statement. For the second statement we note that if $\alpha > 0$, then $(M(\cE_{p,\alpha}),\av{\cdot}_{L})$ continuously embeds into $L^{p(\cdot)}(\mu)$, which in turn continuously embeds into $L^2(\mu)$ (use $p_- \geq 2$ and $\mu(X) < \infty$). Using $L^0$-lower semicontinuity of the $L^2$-norm, this embedding extends to $(M((\cE_{p,\alpha})_e),\av{\cdot}_{L_e})$ and we arrive at $D((\cE_{p,\alpha})_e) \subseteq L^2(\mu)$. 
\end{proof}

\begin{remark}
 \begin{enumerate}[(a)]
  \item If $\mu(X) = \infty$, the functional $\cE^{(0)}_{p,\alpha}$ need not be lower semicontinuous, its domain is too small.  By extending $\Gamma$ to the local form domain $D(\cE)_{\rm loc}$ (or rather a slightly larger space) and by using appropriate localizations (e.g. by employing the forms $\cE_\varphi$ constructed in \cite{Schmi2}) one can show the existence of $\cE_{p,\alpha}$ as long as  $2 \leq p_- \leq p_+ < \infty$. The regularity of $\cE$ and the existence of the carré du champ operator can also be omitted - quasiregularity and strict locality are sufficient (in this case $\cE_{p,\alpha}^{(0)}$ has to be defined with respect to an energy dominant measure). For the case of constant $p$ the details are discussed in \cite[Proposition~1.1]{Kuw24}, whose proof uses some probabilistic techniques. Since such results are somewhat technical, we do not discuss details here. In contrast, replacing the assumption $p_- \geq 2$ by $p_- > 1$ seems to require new ideas. 
  
  \item In this remark we always assume that $p \geq 2$ is constant and $\mu(X) < \infty$. Under these assumptions the previous proposition shows that $\cE_{p,0}$  (up to constants) equals the functional $\cE^p$ considered in \cite{BBR24,Kuw24} (even though the underlying space is $L^2(\mu)$ not $L^p(\mu)$, which by $\mu(X)< \infty$ does not make a big difference). 
  
  In \cite{BBR24} the authors construct a Choquet capacity in terms of $\cE_{p,0}$ with respect to the excessive function $h = 1$ under a coercivity assumption, see \cite[Proposition~3.10 and Theorem~3.11]{BBR24}. In our terminology this coercivity implies subcriticality (indeed subcriticality of $\cE_{p,0}$ seems to be weaker than coercivity). Since by the previous proposition  $\cE_{p,0} = (\cE_{p,0})_e$ on $L^\infty(\mu)$ and all equilibrium potentials are bounded by $1$, the capacity constructed there and our capacity coincide.
  
  In \cite{Kuw24} a capacity is constructed in terms of $\cE_{p,1}$ with respect to the excessive function $h = 1$. In this case $(M(\cE_{p,1}),\av{\cdot}_L)$ is a Banach space and $\cE_{p,1} = (\cE_{p,1})_e$, see the previous proposition. Hence, our capacity for $\cE_{p,1}$ and $h = 1$ coincides with the one constructed in \cite{Kuw24} (under the  assumptions $p \geq 2$ and $\mu(X) < \infty$).

  In both cases, with Corollary~\ref{corollary:existence of equilibrium potentials} and Theorem~\ref{theorem:characterization exceptional sets} we recover and extend results on potential theory  obtained in  \cite{BBR24,Kuw24}.  Note that  for $f,g \in D(\cE_{p,\alpha})$ we have
  $$d^+ \cE_{p,\alpha}(f,g) = \int_X \Gamma(f)^{\frac{p-2}{2}} \Gamma(f,g) + \alpha\int_X |f|^{\frac{p-2}{2}}  fg d\mu.$$
  Therefore, our characterization of the equilibrium potential in Corollary~\ref{corollary:alternative description equilibrium potential} yields the same one as given in \cite[Theorem~1.12~$(3)$]{Kuw24}.
  
  We also note that the analysis of \cite{BBR24} is based on the study of $d^+ \cE_{p,0}$ rather than $\cE_{p,0}$. Restricted to $D(\cE_{p,0}) \times D(\cE_{p,0})$ the lower derivative $d^+ \cE_{p,0}$ is a nonlinear Dirichlet form in the sense of van Beusekom \cite{Beu94}. This notion of nonlinear Dirichlet form does not coincide with the one of Cipriani and Grillo \cite{CG03}. In spirit, as in the given example, the nonlinear Dirichlet forms of van Beusekom seem to correspond to lower derivatives of extended nonlinear Dirichlet forms of Cipriani and Grillo under some additional conditions. It might be worthwhile to make this relation precise. 
  
  \item If $p \geq 2$ and $\mu(X) = \infty$, it can be shown that $(\cE_{p})_e$ equals the functional $\cE^p$ considered in \cite{Kuw24}. Hence, also in this case we can recover and extend results from \cite{Kuw24}.  Again,  this is somewhat technical and we do not provide details here.
 \end{enumerate}

\end{remark}

%

\appendix

\section{Generalized semimodulars} \label{appendix:generalized semimodulars}

\subsection{Definitions and basic properties} In this subsection we discuss basic properties of generalized semimodulars and their induced Luxemburg seminorms on vector spaces. Most of the results are well-known to experts. Since they are somewhat scattered in the literature, we include them for the convenience of the reader.

Let $E$ be a $\mathbb K$-vectorspace with $\mathbb K \in \{\R,\IC\}$. A function $\vr \colon E \to[0,\infty]$ is called {\em proper} if its {\em effective domain} $D(\vr) = \{x \in E \mid \vr(x) < \infty\}$ is nonempty. It is called {\em left-continuous} if 
$$\lim_{\lambda \to 1-} \vr(\lambda x) = \vr(x)$$
for all $x \in E$. We say that $\vr$ is {\em absolutely convex}, if for all $\lambda,\mu \in \mathbb K$ with $|\lambda| + |\mu| \leq 1$ and all $x,y \in E$ it satisfies 
$$\vr(\lambda x  + \mu y) \leq |\lambda| \vr(x) + |\mu|\vr(y).$$
It is readily verified that a convex functional $\vr$ is absolutely convex if and only if $\vr(0) = 0$ and $\vr(\lambda x)  =\vr(x)$ for all $x \in E$ and all $\lambda \in \IK$ with $|\lambda| = 1$.

\begin{definition}[Generalized semimodular] \label{definition:generalized semimodular}
A function $\varrho \colon E \to [0,\infty]$ is called {\em generalized semimodular} (gsm for short) if it is proper, absolutely convex and left-continuous.  

\begin{remark}
 In the literature, see e.g. \cite{DHHR11}, a gsm $\vr \colon E \to [0,\infty]$ is called {\em semimodular} if for $x \in E$ the equality $\vr(\lambda x) =  0$ for all $\lambda > 0$ implies $x = 0$. Moreover, it is called {\em modular} if $\vr(x) = 0$ implies $x = 0$. For the examples that we want to treat it is quite important  to not assume these stronger properties.  
\end{remark}

\end{definition}

Given a gsm $\vr$ we introduce the corresponding {\em modular space} 
$$M(\vr) = \{x \in E \mid \lim_{\lambda \to 0+} \vr(\lambda x) = 0\}.  $$
By absolute convexity of $\vr$ it coincides with the linear hull of $D(\vr)$. For $r > 0$ the set  
$$B_r^\vr = \{x \in E \mid \vr(x) \leq r\}$$
is absolutely convex and absorbing in the vector space $M(\vr)$. The  Minkowski functional induced by $B_r^\vr$ on $M(\vr)$ is denoted by $\av{\cdot}_{L,r}$. By standard theory it is a seminorm on $M(\vr)$ and is given by 
$$\av{x}_{L,r} = \inf \{\lambda > 0 \mid \lambda^{-1} x \in B_r^\vr\} = \inf \{\lambda > 0 \mid \vr(\lambda^{-1}x) \leq r\}.$$
We extend it to $E\setminus M(\vr)$ by letting $\av{x}_{L,r} = \infty$ for $x \in E\setminus M(\vr)$. 

The family $\av{\cdot}_{L,r}$, $r>0$, is called family of {\em Luxemburg seminorms} of $\vr$. Sometimes we simply call $\av{\cdot}_L = \av{\cdot}_{L,1}$ the {\em Luxemburg seminorm} of $\vr$.

\begin{lemma}[Basic properties of the Luxemburg seminorms]\label{lemma:equivalent luxemburg norms} Let $\vr$ be a gsm on $E$. For $x \in E$ and $r,s > 0$  the following hold. 
\begin{enumerate}[(a)]
 \item $\vr(x) \leq r$ if and only if $\av{x}_{L,r} \leq 1$. 
 \item $\vr(x) \geq r$ implies $\av{x}_{L,r} \leq \vr(x)/r$. 
 \item $\vr(x) = \inf \{r > 0 \mid \av{x}_{L,r} \leq 1\}$, with the convention $\inf \emptyset = \infty$. 
 \item The seminorms $\av{\cdot}_{L,r}$ and $\av{\cdot}_{L,s}$ are equivalent. More precisely, if $s \leq r$, then 
 $$\av{\cdot}_{L,r}\leq \av{\cdot}_{L,s} \leq \frac{r}{s} \av{\cdot}_{L,r}.$$
 \item $\av{x}_L = 0$ if and only if $\vr(\lambda x) =  0$ for all $\lambda > 0$.
 \end{enumerate}

\end{lemma}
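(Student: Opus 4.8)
The plan is to extract a single scaling inequality from absolute convexity and derive (a)--(e) from it, with (a) acting as the hinge. Choosing $\mu = 0$ in the defining inequality of absolute convexity gives $\vr(tx) \le t\vr(x)$ for all $x \in E$ and $t \in [0,1]$; writing $s = (s/t)t$ for $0 \le s \le t$ then shows that $t \mapsto \vr(tx)$ is nondecreasing on $[0,\infty)$. These two facts, together with left-continuity, are all the input the argument needs.

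Part (a) is where left-continuity genuinely enters and is the one delicate point. The forward implication $\vr(x) \le r \Rightarrow \av{x}_{L,r} \le 1$ is immediate, since $\lambda = 1$ is then admissible in the infimum defining $\av{x}_{L,r}$. Conversely, if $\av{x}_{L,r} \le 1$, then for each $\lambda > 1$ there is an admissible $\lambda' \le \lambda$, and the monotonicity of $t \mapsto \vr(tx)$ upgrades $\vr(\lambda'^{-1}x) \le r$ to $\vr(\lambda^{-1}x) \le r$; letting $\lambda \downarrow 1$, so that $\lambda^{-1} \uparrow 1$, and invoking left-continuity yields $\vr(x) \le r$.

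Part (c) is then immediate from (a): the set $\{r > 0 \mid \av{x}_{L,r} \le 1\}$ equals $\{r > 0 \mid \vr(x) \le r\}$, whose infimum is $\vr(x)$ whether $\vr(x) \in (0,\infty)$, $\vr(x) = 0$, or $\vr(x) = \infty$ (the last case giving $\inf\emptyset = \infty$). For (b), if $r \le \vr(x) < \infty$ I set $t = r/\vr(x) \in (0,1]$; the scaling inequality gives $\vr(tx) \le t\vr(x) = r$, so $\lambda = \vr(x)/r$ is admissible at level $r$ and $\av{x}_{L,r} \le \vr(x)/r$, the case $\vr(x) = \infty$ being trivial.

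For (d) with $s \le r$, the inclusion $B_s^\vr \subseteq B_r^\vr$ enlarges the admissible set at level $r$ and gives $\av{\cdot}_{L,r} \le \av{\cdot}_{L,s}$; and if $\lambda$ is admissible at level $r$, then scaling by $s/r \le 1$ gives $\vr(\tfrac{s}{r}\lambda^{-1}x) \le \tfrac{s}{r}\vr(\lambda^{-1}x) \le s$, so $\tfrac{r}{s}\lambda$ is admissible at level $s$ and, taking infima, $\av{\cdot}_{L,s} \le \tfrac{r}{s}\av{\cdot}_{L,r}$. For (e) the backward direction is immediate, since $\vr(\lambda x) = 0$ for all $\lambda > 0$ makes every $\lambda > 0$ admissible in $\av{x}_L$; for the forward direction $\av{x}_L = 0$ produces $\mu_n \to \infty$ with $\vr(\mu_n x) \le 1$, and for fixed $\lambda > 0$ and $\mu_n \ge \lambda$ the scaling inequality gives $\vr(\lambda x) \le (\lambda/\mu_n)\vr(\mu_n x) \le \lambda/\mu_n \to 0$, whence $\vr(\lambda x) = 0$. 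The only real obstacle beyond bookkeeping is the converse of (a), precisely because it is the unique place where left-continuity cannot be dispensed with; once it is in hand, the scaling inequality and monotonicity propagate through every remaining step.
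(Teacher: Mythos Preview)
Your proof is correct and follows essentially the same approach as the paper. The paper delegates (a) to \cite[Lemma~2.1.14]{DHHR11}, whereas you spell it out and correctly pinpoint left-continuity as the crucial ingredient in the converse direction; the remaining parts (b)--(e) match the paper's arguments almost verbatim, up to phrasing.
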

\begin{proof}
 (a): This can be proven as in \cite[Lemma~2.1.14]{DHHR11} (which only treats semimodulars).  
 
 (b): If $\vr(x)  \geq r$, we obtain using absolute convexity
 $$\vr(\frac{r}{\vr(x)} x) \leq \frac{r}{\vr(x)} \vr(x) = r.$$
 This shows $\av{x}_{L,r} \leq \vr(x)/r$.
 
 (c): Clearly, $x \in M(\vr)$ if and only if there exists $r > 0$ such that $\av{x}_{L,r} \leq 1$. With this observation the result follows directly from (a).
 
 (d): Without loss of generality we assume $s \leq r$. Clearly, this implies $\av{\cdot}_{L,r} \leq \av{\cdot}_{L,s}$. For the converse inequality we let $\lambda = s/r \leq 1$. If $\vr(x) \leq r$, then by convexity we have
 $$\vr(\lambda x) \leq \lambda \vr(x) \leq \lambda r  = s. $$
 This shows $\av{x}_{L,s} \leq 1/\lambda = r/s$ as long as $\vr(x) \leq r$. In other words, 
 \begin{align*}
\av{\cdot}_{L,s} &\leq r/s \av{\cdot}_{L,r}.
 \end{align*}
 (e): This follows  directly from the definition of $\av{\cdot}_L$ and the absolute convexity of $\vr$. 
%
\end{proof}

\begin{corollary} \label{corollary:lower semicontinuity with respect to Luxemburg seminorm}
Every gsm $\vr$ is lower semicontinuous on $M(\vr)$ with respect to $\av{\cdot}_L$-convergence. 
\end{corollary}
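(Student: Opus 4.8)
The plan is to reduce the claim to the continuity of the Luxemburg seminorms $\av{\cdot}_{L,r}$ with respect to $\av{\cdot}_L$-convergence, and then to identify the sublevel sets of $\vr$ with unit sublevel sets of these seminorms. Recall that a functional is lower semicontinuous precisely when all of its sublevel sets are closed, so it suffices to show that for every $r > 0$ the set $\{x \in M(\vr) \mid \vr(x) \leq r\}$ is closed in $(M(\vr),\av{\cdot}_L)$.

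First I would observe that each $\av{\cdot}_{L,r}$ is continuous on $(M(\vr),\av{\cdot}_L)$. By the reverse triangle inequality for seminorms we have $|\av{x_n}_{L,r} - \av{x}_{L,r}| \leq \av{x_n - x}_{L,r}$, and Lemma~\ref{lemma:equivalent luxemburg norms}~(d) shows that $\av{\cdot}_{L,r}$ and $\av{\cdot}_{L,1} = \av{\cdot}_L$ are equivalent seminorms. Hence $\av{x_n - x}_L \to 0$ forces $\av{x_n}_{L,r} \to \av{x}_{L,r}$, so in particular each $\av{\cdot}_{L,r}$ is lower semicontinuous with respect to $\av{\cdot}_L$-convergence.

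Next I would invoke Lemma~\ref{lemma:equivalent luxemburg norms}~(a), which yields the identity of sets $\{x \in M(\vr) \mid \vr(x) \leq r\} = \{x \in M(\vr) \mid \av{x}_{L,r} \leq 1\}$. Since the right-hand side is the unit sublevel set of a continuous seminorm, it is closed; thus every sublevel set of $\vr$ is closed, which is exactly the asserted lower semicontinuity. Alternatively one can argue sequentially: if $x_n \to x$ in $\av{\cdot}_L$ and $L := \liminf_n \vr(x_n) < \infty$, choose a subsequence with $\vr(x_{n_k}) \to L$; then for each $r > L$ we eventually have $\av{x_{n_k}}_{L,r} \leq 1$ by (a), so $\av{x}_{L,r} \leq 1$ by the continuity just established, whence $\vr(x) \leq r$ again by (a). Letting $r \downarrow L$ gives $\vr(x) \leq \liminf_n \vr(x_n)$.

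There is no serious obstacle here; the only point requiring a little care is the bookkeeping in the equivalence from part~(d), where the direction of the inequality (and hence the relevant constant) depends on whether $r \geq 1$ or $r < 1$. Everything else is a direct assembly of the parts of Lemma~\ref{lemma:equivalent luxemburg norms}.
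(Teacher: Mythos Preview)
Your proof is correct and follows essentially the same approach as the paper: both identify the sublevel sets $\{\vr\le r\}$ with the unit balls $\{\av{\cdot}_{L,r}\le 1\}$ via Lemma~\ref{lemma:equivalent luxemburg norms}~(a) and then invoke the equivalence of the Luxemburg seminorms from part~(d) to conclude closedness. You simply spell out the continuity of $\av{\cdot}_{L,r}$ via the reverse triangle inequality, whereas the paper compresses this into one sentence.
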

\begin{proof}
 The functional $\vr$ is lower semicontinuous on the space $(M(\vr),\av{\cdot}_L)$ if and only if for each $r > 0$ the set  
 $$ \{x \in M(\vr) \mid \vr(x) \leq r\} = \{x \in M(\vr) \mid \av{x}_{L,r} \leq 1\}  $$
 is $\av{\cdot}_L$-closed, where we used the previous lemma for the equality. The set on the right side is closed because all Luxemburg seminorms are equivalent. 
\end{proof}

\begin{remark}
 This is an extension of \cite[Theorem~2.1.17]{DHHR11} with a simplified proof. 
\end{remark}

\begin{definition}[$\Delta_2$-conditions]\label{definition:delta 2 condition}
 A gsm $\vr$ on $E$ is said to satisfy the {\em $\Delta_2$-condition} if there exists $K > 0$ such that 
 $$\vr(2x) \leq K \vr (x) $$
 for all $x \in E$. The functional $\vr$ is said to satisfy the {\em weak $\Delta_2$-condition} if for all sequences $\vr(x_n) \to 0$ implies $\vr(2x_n) \to 0$. Moreover, $\vr$ is said to satisfy the {\em very weak $\Delta_2$-condition} if $\vr(x) = 0$ implies $\vr(2x) = 0$. 
\end{definition}

\begin{remark}
 Clearly, the $\Delta_2$-condition implies the weak $\Delta_2$-condition, which in turn implies the very weak $\Delta_2$-condition. The converse implications do not hold in general. 
\end{remark}

\begin{lemma}[Characterizing the $\Delta_2$-conditions] \label{lemma:delta2 conditions}
Let $\vr$ be a gsm on $E$. 
 \begin{enumerate}[(a)]
  \item The following assertions are equivalent. 
  \begin{enumerate}[(i)]
    \item $\vr$ satisfies $\Delta_2$.
  \item For all $C > 1$ there exists $\varepsilon > 0$ such that 
  $$\vr((1 + \varepsilon) x) \leq C \vr(x)$$
  for all $x \in E$. 
  \end{enumerate}
  In this case, $D(\vr) = M(\vr)$ and for $s < r$ there exists $\delta > 0$ such that $\av{\cdot}_{L,r} \leq (1-\delta) \av{\cdot}_{L,s}$.
  
  \item $\vr$ satisfies the weak $\Delta_2$-condition if and only if for all sequences $(x_n)$ in $E$ the following holds:   $\av{x_n}_L \to 0$ if and only if $\vr(x_n) \to 0$. 
  
  \item $\vr$ satisfies the very weak $\Delta_2$-condition if and only if for all $x \in E$ the following holds: $\av{x}_L = 0$ if and only if $\vr(x) = 0$. 
 \end{enumerate}

\end{lemma}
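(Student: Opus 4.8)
The plan is to treat the three parts in order, relying throughout on two elementary consequences of absolute convexity that I would record first. The map $\lambda \mapsto \vr(\lambda x)$ is nondecreasing on $[0,\infty)$ for each fixed $x$ (since $\vr(0)=0$ and $\lambda x = (\lambda/\mu)(\mu x)$ for $0\le\lambda\le\mu$), and, by left-continuity, the infimum defining the Luxemburg seminorm is attained in the sense that $\vr(x/\av{x}_{L,r})\le r$ whenever $\av{x}_{L,r}>0$ (let $\lambda\to\av{x}_{L,r}+$ in $\vr(x/\lambda)\le r$). Combining these with Lemma~\ref{lemma:equivalent luxemburg norms} yields the baseline comparison $\av{x}_L\le 1 \Rightarrow \vr(x)\le\av{x}_L$, obtained by applying convexity to $x=\av{x}_L\cdot(x/\av{x}_L)$; this already settles one implication in each of (b) and (c) with no $\Delta_2$-hypothesis.

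For (a), the implication (i) $\Rightarrow$ (ii) I would get by writing $(1+\varepsilon)x=(1-\varepsilon)x+\varepsilon(2x)$ and applying convexity together with the bound $\vr(2x)\le K\vr(x)$, giving $\vr((1+\varepsilon)x)\le(1+\varepsilon(K-1))\vr(x)$; choosing $\varepsilon\le(C-1)/(K-1)$ produces (ii). For (ii) $\Rightarrow$ (i) I would iterate the hypothesis: with $\varepsilon$ associated to $C=2$, pick $n$ with $(1+\varepsilon)^n\ge 2$ and use monotonicity to get $\vr(2x)\le\vr((1+\varepsilon)^n x)\le 2^n\vr(x)$, i.e.\@ $\Delta_2$ with $K=2^n$. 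The equality $D(\vr)=M(\vr)$ then follows because any $x\in M(\vr)$ satisfies $\vr(\lambda_0 x)<\infty$ for some $\lambda_0>0$, and finitely many doublings together with monotonicity push this to $\vr(x)<\infty$. For the strict seminorm estimate with $s<r$, I would apply (ii) with $C=r/s$ to the point $y=x/\av{x}_{L,s}$, for which $\vr(y)\le s$, to deduce $\vr((1+\varepsilon)y)\le r$; this reads as $\av{x}_{L,r}\le(1+\varepsilon)^{-1}\av{x}_{L,s}$, so $\delta=\varepsilon/(1+\varepsilon)$ works (the degenerate case $\av{x}_{L,s}=0$ being covered by Lemma~\ref{lemma:equivalent luxemburg norms}(d)).

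For (b), the direction $\av{x_n}_L\to 0 \Rightarrow \vr(x_n)\to 0$ is the baseline comparison above. For the converse under weak $\Delta_2$, I would iterate to obtain $\vr(2^k x_n)\to 0$ for each fixed $k$, and then, given $\eta>0$, choose $k$ with $2^k\ge 1/\eta$ so that monotonicity gives $\vr(x_n/\eta)\le\vr(2^k x_n)\le 1$ eventually, i.e.\@ $\av{x_n}_L\le\eta$. Conversely, if the two convergences are equivalent, then from $\vr(x_n)\to 0$ I get $\av{x_n}_L\to 0$, hence $\av{2x_n}_L=2\av{x_n}_L\to 0$ by homogeneity, hence $\vr(2x_n)\to 0$, which is weak $\Delta_2$. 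Part (c) is the same argument at the level of a single point: very weak $\Delta_2$ iterated gives $\vr(2^k x)=0$ for all $k$ when $\vr(x)=0$, so $\vr(\lambda x)=0$ for all $\lambda>0$ by monotonicity and thus $\av{x}_L=0$ via Lemma~\ref{lemma:equivalent luxemburg norms}(e); the reverse direction is (e) applied directly.

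I expect the only genuinely delicate point to be the attainment statement $\vr(x/\av{x}_{L,r})\le r$, where left-continuity is essential and should not be conflated with ordinary lower semicontinuity. Every scaling step in (a) and (b) quietly relies on this together with monotonicity, so I would isolate it as a preliminary remark; after that the remaining work is routine bookkeeping with powers of $2$ and factors $(1+\varepsilon)$.
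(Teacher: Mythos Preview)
Your proof is correct and follows essentially the same route as the paper: the convex decomposition $(1+\varepsilon)x=(1-\varepsilon)x+\varepsilon(2x)$ for (i)$\Rightarrow$(ii), iteration of $(1+\varepsilon)$-factors for (ii)$\Rightarrow$(i), and the $\delta=\varepsilon/(1+\varepsilon)$ computation for the strict seminorm inequality are all identical to the paper's argument. The only notable difference is that for (b) the paper defers entirely to \cite{DHHR11}, whereas you spell out the iteration $\vr(2^k x_n)\to 0$ and the homogeneity trick $\av{2x_n}_L=2\av{x_n}_L$ explicitly; your treatment of (c) via Lemma~\ref{lemma:equivalent luxemburg norms}(e) is also a slight repackaging of the paper's direct computation $\av{x}_L=0\Leftrightarrow\vr(2^n x)=0$ for all $n$, but the content is the same.
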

\begin{proof}
(a): (i) $\Rightarrow$ (ii):   Let $C > 1$ and let $K > 0$ such that $\vr(2x) \leq K \vr(x)$ for all $x\in E$. Without loss of generality we can assume $K > C$. Let $\varepsilon = (C-1)/(K-1) < 1$. Using the convexity of $\vr$ we infer
\begin{align*}
 \vr((1 + \varepsilon) x) &= \vr((1-\varepsilon) x + 2 \varepsilon x) \\
 &\leq (1-\varepsilon) \vr(x) + \varepsilon \vr(2x) \\
 &\leq (1-\varepsilon) \vr(x) + K\varepsilon \vr(x)\\
 &= C \vr(x).
\end{align*} 

(ii) $\Rightarrow$ (i): Let $C > 0$ and choose an appropriate $\varepsilon > 0$ such that  $\vr((1 + \varepsilon) x) \leq C \vr(x)$ for all $x \in E$. Choose $n \in \N$ such that $2 \leq (1+\varepsilon)^n$. The absolute convexity of $\vr$ implies 
$$\vr(2 x) \leq \vr((1+\varepsilon)^n x) \leq C^n \vr(x).$$

Now assume that $\Delta_2$ holds. The convexity of $\vr$ and $\Delta_2$ imply the linearity of $D(\vr)$. Since $M(\vr)$ is the linear hull of $D(\vr)$, we obtain $D(\vr) = M(\vr)$.  Let $s < r$.   Using (ii) for $C = r/s > 1$ we find $\varepsilon > 0$ such that $\vr((1+\varepsilon)x) \leq C \vr(x)$ for $x \in E$. For $x \in E$ with $\vr(x) \leq s$, this implies 
$$\vr((1+\varepsilon)x) \leq C s = r,$$
or in other words $\av{x}_{L,r} \leq 1/(1+\varepsilon)$. We infer $\av{\cdot}_{L,r} \leq (1-\delta) \av{\cdot}_{L,s}$ with $\delta = \varepsilon/(1+\varepsilon)$.

(b):  This can be proven as in \cite[Lemma~2.1.11]{DHHR11} (which only treats semimodulars). 

(c): It is straightforward from the definition of $\av{\cdot}_L$ and the absolute convexity of $\vr$ that $\av{x}_L =  0$ if and only if $\vr(2^n x) \leq 1$ for all $n \in \N$.  By absolute convexity we also have 
$$\vr(x) \leq 2^{-n} \vr(2^n x)$$
for all $x \in E$. Together, these observations imply $\av{x}_L = 0$ if and only if $\vr(2^n x) = 0$ for all $n \in \mathbb N$. This description of the kernel of $\av{\cdot}_L$ implies the claimed equivalence. 
%
%
%
\end{proof}

\begin{remark}
 The proof of (a) is taken from the proof of \cite[Lemma~2]{CS94}.
\end{remark}

\begin{lemma} \label{lemma:Delta_2 and Luxemburg seminorm convergence yield convergence of gsm}
 Let $\vr$ be a gsm on $E$ satisfying the $\Delta_2$-condition. Then $\av{x_n - x}_L \to 0$ implies $\vr(x_n) \to \vr(x)$. 
\end{lemma}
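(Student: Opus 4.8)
The plan is to obtain $\vr(x_n) \to \vr(x)$ from the two one-sided inequalities $\liminf_n \vr(x_n) \geq \vr(x)$ and $\limsup_n \vr(x_n) \leq \vr(x)$, proved separately. Throughout I may assume $x,x_n \in M(\vr)$, since $\av{\cdot}_L$ is $+\infty$ off $M(\vr)$ and the hypothesis $\av{x_n - x}_L \to 0$ forces $x_n - x \in M(\vr)$ eventually; as $\Delta_2$ gives $D(\vr) = M(\vr)$ by Lemma~\ref{lemma:delta2 conditions}(a), all the values $\vr(x_n),\vr(x)$ are finite.

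The lower bound is immediate. Corollary~\ref{corollary:lower semicontinuity with respect to Luxemburg seminorm} asserts that every gsm is lower semicontinuous on $M(\vr)$ with respect to $\av{\cdot}_L$-convergence, so $\av{x_n - x}_L \to 0$ yields $\vr(x) \leq \liminf_n \vr(x_n)$.

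The upper bound is the heart of the matter. Writing $y_n = x_n - x$, so that $\av{y_n}_L \to 0$, I would split convexly for each $\theta \in (0,1)$ as $x_n = \theta(\theta^{-1}x) + (1-\theta)\bigl((1-\theta)^{-1}y_n\bigr)$ and apply absolute convexity to get $\vr(x_n) \leq \theta\,\vr(\theta^{-1}x) + (1-\theta)\,\vr\bigl((1-\theta)^{-1}y_n\bigr)$. Since $\av{(1-\theta)^{-1}y_n}_L = (1-\theta)^{-1}\av{y_n}_L \to 0$, the weak $\Delta_2$-condition (which follows from $\Delta_2$) combined with Lemma~\ref{lemma:delta2 conditions}(b) forces $\vr\bigl((1-\theta)^{-1}y_n\bigr) \to 0$, so that $\limsup_n \vr(x_n) \leq \theta\,\vr(\theta^{-1}x)$ for every $\theta \in (0,1)$. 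Setting $\theta^{-1} = 1+\varepsilon$ and invoking the quantitative form of $\Delta_2$ from Lemma~\ref{lemma:delta2 conditions}(a)(ii)—for each $C>1$ there is $\varepsilon>0$ with $\vr((1+\varepsilon)x) \leq C\vr(x)$—gives $\limsup_n \vr(x_n) \leq \frac{C}{1+\varepsilon}\,\vr(x)$. With the explicit choice $\varepsilon = (C-1)/(K-1)$ appearing in the proof of that lemma, the prefactor $\frac{C}{1+\varepsilon} = \frac{C(K-1)}{K+C-2}$ tends to $1$ as $C \to 1+$, and letting $C \to 1+$ yields $\limsup_n \vr(x_n) \leq \vr(x)$.

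I expect the scaling step to be the main obstacle. The naive route would let $\theta \to 1-$ and appeal to left-continuity of $\vr$, but $\theta^{-1} \to 1+$ approaches the argument $x$ \emph{from above}, where a gsm need not be continuous; the $\Delta_2$-condition is exactly what repairs this, by dominating $\vr(\theta^{-1}x)$ by $\vr(x)$ with a constant that can be driven to $1$. Combining this upper bound with the lower semicontinuity from the second paragraph then gives $\vr(x_n) \to \vr(x)$.
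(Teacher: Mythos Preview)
Your proof is correct. The lower bound via Corollary~\ref{corollary:lower semicontinuity with respect to Luxemburg seminorm} matches the paper exactly, but your upper bound takes a genuinely different route. The paper works with the \emph{family} of Luxemburg seminorms: for $r>s>\vr(x)$ it uses the strict contraction $\av{\cdot}_{L,r}\le(1-\delta)\av{\cdot}_{L,s}$ from Lemma~\ref{lemma:delta2 conditions}(a), together with $\av{x_n}_{L,s}\to\av{x}_{L,s}$, to conclude $\av{x_n}_{L,r}\le\av{x}_{L,s}\le1$ and hence $\vr(x_n)\le r$ eventually. You instead stay at the level of $\vr$ itself, using the convex split $x_n=\theta(\theta^{-1}x)+(1-\theta)((1-\theta)^{-1}y_n)$, the weak $\Delta_2$-condition to kill the second term, and the quantitative form Lemma~\ref{lemma:delta2 conditions}(a)(ii) to control $\vr(\theta^{-1}x)$ from above as $\theta\to1-$. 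Your approach is arguably more elementary in that it avoids the parametrized Luxemburg seminorms entirely; the paper's argument fits more cleanly into its running theme of encoding $\vr$ through that family. One small simplification: since $\theta<1$ you could have bounded $\theta\,\vr(\theta^{-1}x)\le\vr((1+\varepsilon)x)\le C\vr(x)$ directly and let $C\to1+$, sparing the explicit computation with $\varepsilon=(C-1)/(K-1)$.
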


\begin{proof}
Corollary~\ref{corollary:lower semicontinuity with respect to Luxemburg seminorm} shows
 \[ \rho(x) \leq \liminf_{n \to \infty} \rho(x_n) . \]
To show the converse inequality, let $r > s > \rho(x)$.
Using the $\Delta_2$-condition and Lemma~\ref{lemma:delta2 conditions}, we obtain a $\delta \in (0,1)$ with $\|\cdot\|_{L,r} \leq (1-\delta) \|\cdot\|_{L,s}$.
Because the Luxemburg seminorms are equivalent, $\|x_n\|_{L,s} \to \|x\|_{L,s}$ and there exists $N \in \N$ such that
 \[ \|x_n\|_{L,r} \leq (1-\delta) \|x_n\|_{L,s} \leq \|x\|_{L,s} \]
for all $n \geq N$.
Since $\rho(x) < s$, we have $\|x\|_{L,s} \leq 1$ so that $\rho(x_n) \leq r$ for all $n \geq N$.
Because this $r > \rho(x)$ was arbitrary, we obtain
 \[ \limsup_{n \to \infty} \rho(x_n) \leq \rho(x) . \qedhere \]
\end{proof}

\subsection{Lower semicontinuity}

Recall that an {\em $F$-norm} on $E$ is a functional $p \colon E \to [0,\infty)$ satisfying the following properties: $p(x)  = 0$ if and only if $x = 0$ (non-degeneracy), $p(x + y) \leq p(x) + p(y)$ for all $x,y \in E$ (triangle inequality), $p(|\lambda| x) \leq p(x)$ for all $|\lambda| \leq 1$, $x \in E$ (balls around $0$ are circled) and $p(n^{-1}x) \to 0$ for all $x \in E$ (continuity of scalar multiplication at $0$). 

We now assume that $E$ is endowed with a  vector space topology induced by an $F$-norm $p$. More precisely, we consider the vector space topology induced by the metric $d \colon E \times E \to [0,\infty)$, $d(x,y)  = p(x-y)$.  Indeed,  any metrizable vector space topology is induced by an $F$-norm and  completeness of $(E,p)$ does not depend on the particular choice of $p$. In this subsection (semi)continuity is always considered with respect to the topology induced by $p$ via $d$.   

For a gsm $\vr$ the seminormed space $(M(\vr),\av{\cdot}_L)$ isometrically embeds into its bidual $(M(\vr))''$ via the map 
$$\iota \colon M(\vr) \to (M(\vr))'', \quad x \mapsto \as{\cdot,x},$$
where $\as{\cdot,\cdot}´$ denotes the dual pairing between $M(\vr)$ and $(M(\vr))'$.

\begin{definition}[Reflexive gsm]
 We call $\vr$ {\em reflexive} if $\iota(M(\vr))$ is norm dense in $(M(\vr))''$.
\end{definition}

\begin{remark}
 Reflexivity of $\vr$ is equivalent to reflexivity of the completion of the quotient $(M(\vr) / \ker \av{\cdot}_L, \av{\cdot}_L)$. Since all Luxemburg seminorms are equivalent, reflexivity of $\vr$ is independent of the choice of the parameter in the Luxemburg seminorm. This is why we attribute reflexivity to $\vr$ instead of $\av{\cdot}_L$.
\end{remark}

The next theorem is one of the main results in this subsection. It characterizes the lower semicontinuity of $\vr$ in terms of the lower semicontinuity of the Luxemburg seminorms.

\begin{theorem}\label{theorem:lower semicontinuity}
 Let $\vr$ be a gsm on $(E,p)$.
 \begin{enumerate}[(a)]
  \item The following assertions are equivalent. 
 \begin{enumerate}[(i)]
  \item $\vr$ is lower semicontinuous.
  \item For all $r > 0$ the functional $\av{\cdot}_{L,r}$ is lower semicontinuous (as a function $E \to [0,\infty]$).
 \end{enumerate}

 \item If $\vr$ is reflexive, the assertions in (a) are equivalent to: 
 \begin{enumerate}[(i)] \setcounter{enumii}{2} 
  \item $\av{\cdot}_L$ is lower semicontinuous (as a function $E \to [0,\infty]$).
 \end{enumerate}
 \item If $\vr$ is reflexive and $(E,p)$ is complete, the assertions in (a) and (b) are equivalent to: 
 \begin{enumerate}[(i)] \setcounter{enumii}{3} 
  \item  $(M(\vr),p + \av{\cdot}_L)$ is complete.
 \end{enumerate}
 \end{enumerate}
\end{theorem}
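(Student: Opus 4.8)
The plan is to reduce everything to the identity $\{x : \vr(x) \le r\} = \{x : \av{x}_{L,r} \le 1\}$ from Lemma~\ref{lemma:equivalent luxemburg norms}~(a) and the equivalence of the seminorms $\av{\cdot}_{L,r}$ from Lemma~\ref{lemma:equivalent luxemburg norms}~(d), letting reflexivity enter only through a weak-compactness argument. For part (a), lower semicontinuity of $\vr$ means that every sublevel set $\{\vr \le r\}$ is $p$-closed, while lower semicontinuity of $\av{\cdot}_{L,r}$ means that every $\{\av{\cdot}_{L,r} \le c\}$ is $p$-closed. For (i)$\Rightarrow$(ii) I would write, for $c>0$, the equality $\{x : \av{x}_{L,r}\le c\} = c\,\{x : \vr(x)\le r\}$ using homogeneity of the seminorm together with Lemma~\ref{lemma:equivalent luxemburg norms}~(a); since scalar multiplication is a $p$-homeomorphism this set is closed, and $\{\av{\cdot}_{L,r}=0\}$ is the intersection over $c>0$ and hence closed as well. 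The converse (ii)$\Rightarrow$(i) is immediate from $\{\vr \le r\}=\{\av{\cdot}_{L,r}\le 1\}$.

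For part (b) the implication (ii)$\Rightarrow$(iii) is trivial (take $r=1$), so the content is (iii)$\Rightarrow$(ii), and by part (a) it suffices to upgrade lower semicontinuity of $\av{\cdot}_L$ to that of every $\av{\cdot}_{L,r}$. I would proceed in three steps. Given $x_n \to x$ in $(E,p)$ with $\av{x_n}_{L,r}\le c$, the sequence is $\av{\cdot}_L$-bounded by Lemma~\ref{lemma:equivalent luxemburg norms}~(d); lower semicontinuity of $\av{\cdot}_L$ then yields $\av{x}_L<\infty$, so that $x\in M(\vr)$ — this is the only place assumption (iii) is used. Reflexivity together with Lemma~\ref{lemma:weak convergence} gives that $x_n \to x$ weakly in $(M(\vr),\av{\cdot}_L)$. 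Finally $\av{\cdot}_{L,r}$ is convex and $\av{\cdot}_L$-continuous, hence weakly lower semicontinuous, so $\av{x}_{L,r}\le \liminf_n \av{x_n}_{L,r}\le c$; thus $\{\av{\cdot}_{L,r}\le c\}$ is closed.

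For part (c) the direction (i)$\Rightarrow$(iv) needs no reflexivity: from (i) and part (a) the seminorm $\av{\cdot}_L$ is lower semicontinuous, and given a $(p+\av{\cdot}_L)$-Cauchy sequence $(x_n)$, completeness of $(E,p)$ provides a $p$-limit $x$; fixing $n$, letting $m\to\infty$ in $\av{x_n-x_m}_L$, and invoking lower semicontinuity of $\av{\cdot}_L$ gives $\av{x_n-x}_L\to 0$ and $x\in M(\vr)$. The hard direction is (iv)$\Rightarrow$(iii). I would take $x_n\to x$ in $(E,p)$ with $(\av{x_n}_L)$ bounded and $\av{x_n}_L\to\liminf_n\av{x_n}_L$, extract via reflexivity a subsequence converging weakly in $(M(\vr),\av{\cdot}_L)$, and then, through Mazur's lemma combined with the Banach--Saks-type passage underlying Lemma~\ref{lemma:weak convergence}, produce convex combinations $y_m$ that are simultaneously $\av{\cdot}_L$-Cauchy and $p$-convergent to $x$. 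Such $(y_m)$ are $(p+\av{\cdot}_L)$-Cauchy, so by (iv) they converge in $M(\vr)$; their $p$-limit is $x$, forcing $x\in M(\vr)$, and weak lower semicontinuity of $\av{\cdot}_L$ gives $\av{x}_L\le\liminf_n\av{x_n}_L$. With (iii) in hand, reflexivity and part (b) close the cycle.

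The main obstacle is exactly the step hidden inside Lemma~\ref{lemma:weak convergence}: the topology of $(E,p)$ need not be locally convex (as for $L^0(\mu)$), so a convex combination of a $p$-convergent sequence need not $p$-converge, and weak $\av{\cdot}_L$-limits are a priori unrelated to $p$-limits. Reconciling the two — arranging convex combinations that converge in both topologies to compatible limits, and then using completeness in (c) to guarantee that the limit genuinely lies in $M(\vr)$ rather than only in an abstract completion — is where reflexivity and the weak-compactness machinery are indispensable, and it is the only genuinely nontrivial point in the proof.
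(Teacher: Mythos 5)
Your proposal is correct and follows essentially the same route as the paper's proof: part (a) via the identity between sublevel sets of $\vr$ and Luxemburg unit balls together with a scaling argument, part (b) via boundedness, the weak-convergence Lemma~\ref{lemma:weak convergence} and weak lower semicontinuity of seminorms, and part (c) via a subsequence with $\sum_n p(x-x_n)<\infty$, Mazur-type convex combinations as in Lemma~\ref{lemma:converging convex combinations}, and completeness to identify the limit. The only (welcome) difference is that you make explicit the step showing $x \in M(\vr)$ in the proof of (b), which the paper leaves implicit by taking $x \in M(\vr)$ from the start.
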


Before proving this result, we discuss two technical lemmas.   

\begin{lemma}\label{lemma:converging convex combinations}
 Let $(x_n)$, $x$ in $(E,p)$ with 
 $$\sum_{n=1}^\infty p(x-x_n) < \infty.$$
 Then for every sequence $(y_n)$ in $E$ with $y_n \in {\rm conv}\, \{x_{k} \mid k \geq n\}$, $n \in \mathbb N$, we have $y_n \to x$ with respect to $p$. 
\end{lemma}
\begin{proof}
For $y = \sum_{k=n}^\infty \lambda_k x_k \in  {\rm conv}\, \{x_{k} \mid k \geq n\}$ we obtain using the properties of $F$-norms
$$p(x - y) \leq \sum_{k = n}^\infty p(\lambda_k(x-x_k)) \leq \sum_{k = n}^\infty p(x-x_k). $$
This inequality and the assumption $\sum_{n=1}^\infty p(x-x_n) < \infty$ imply the statement. 
\end{proof}

\begin{lemma}\label{lemma:weak convergence} 
 Let $\vr$ be a reflexive lower semicontinuous gsm on $(E,p)$. If $(x_n)$ is bounded in $(M(\vr),\av{\cdot}_L)$ and converges to $x \in E$ with respect to $p$, then $x_n \to x$ weakly in $(M(\vr),\av{\cdot}_L)$. 
\end{lemma}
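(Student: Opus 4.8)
The plan is to transfer the whole problem to a genuine Banach space by quotienting out the kernel of $\av{\cdot}_L$ and completing, where reflexivity is available in its classical form. First I would check that $x$ really lies in $M(\vr)$: since $\vr$ is reflexive and lower semicontinuous, Theorem~\ref{theorem:lower semicontinuity} guarantees that $\av{\cdot}_L$ is lower semicontinuous on $(E,p)$, so that
\[
\av{x}_L \leq \liminf_{n \to \infty} \av{x_n}_L \leq \sup_n \av{x_n}_L < \infty,
\]
and hence $x \in M(\vr)$. Writing $V = M(\vr)/\ker\av{\cdot}_L$ with its induced norm and $\ov V$ for the completion of $V$, the definition of reflexivity of $\vr$ (together with the remark following it) says precisely that $\ov V$ is a reflexive Banach space. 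Let $q \colon M(\vr) \to \ov V$ be the canonical map. Since every $\varphi$ in the dual of $(M(\vr),\av{\cdot}_L)$ vanishes on $\ker\av{\cdot}_L$ and thus factors through $q$, the desired weak convergence $x_n \to x$ in $(M(\vr),\av{\cdot}_L)$ is equivalent to $q(x_n) \rightharpoonup q(x)$ in $\ov V$, and it suffices to prove the latter.

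To establish $q(x_n) \rightharpoonup q(x)$ I would use the subsequence principle. Fix an arbitrary subsequence of $(x_n)$. Its images form a bounded sequence in the reflexive Banach space $\ov V$, so a bounded-sequence-has-a-weakly-convergent-subsequence argument produces a further subsequence $(x_{n_k})$ with $q(x_{n_k}) \rightharpoonup \xi$ for some $\xi \in \ov V$. Using $x_{n_k} \to x$ in $p$, I would pass to yet another subsequence (keeping the same notation) so that in addition $\sum_{k} p(x - x_{n_k}) < \infty$; weak convergence of the images is preserved under this further thinning.

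The heart of the matter, and the step I expect to be the main obstacle, is identifying $\xi = q(x)$, because the two modes of convergence ($p$ and $\av{\cdot}_L$) are a priori unrelated and must be bridged carefully. By Mazur's lemma I can pick, for each $j$, a convex combination $y_j \in \conv\{q(x_{n_k}) \mid k \geq j\}$ with $\av{y_j - \xi}_{\ov V} \to 0$. Lifting the same coefficients yields $z_j \in \conv\{x_{n_k} \mid k \geq j\}$ with $q(z_j) = y_j$. On the one hand $(z_j)$ is $\av{\cdot}_L$-Cauchy because $(y_j)$ converges in $\ov V$; on the other hand Lemma~\ref{lemma:converging convex combinations}, applied with the summability arranged above, gives $z_j \to x$ in $p$. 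Applying the lower semicontinuity of $\av{\cdot}_L$ with respect to $p$ to the differences $z_j - z_i \to x - z_i$ and combining it with the Cauchy property, I would deduce $\av{x - z_i}_L \to 0$, i.e. $q(z_i) \to q(x)$ strongly in $\ov V$. Since also $q(z_i) = y_i \to \xi$, uniqueness of limits forces $\xi = q(x)$.

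Having shown that every subsequence of $(q(x_n))$ admits a further subsequence converging weakly to $q(x)$, the subsequence principle (valid in any topological space, here the weak topology of $\ov V$) yields $q(x_n) \rightharpoonup q(x)$, which by the dual correspondence above is exactly weak convergence $x_n \to x$ in $(M(\vr),\av{\cdot}_L)$. The only genuinely delicate point is the identification $\xi = q(x)$: the interplay of Mazur's lemma, the convex-combination lemma, and the lower semicontinuity of the Luxemburg seminorm has to be orchestrated precisely as above, since none of these ingredients alone connects $p$-convergence with $\av{\cdot}_L$-convergence.
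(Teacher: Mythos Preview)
Your proof is correct and follows essentially the same route as the paper's: a subsequence argument, weak sequential compactness in the (quotient) completion, Mazur's lemma to upgrade to norm convergence, Lemma~\ref{lemma:converging convex combinations} to get $p$-convergence of the convex combinations, and then lower semicontinuity of $\av{\cdot}_L$ to identify the weak limit with $x$. The only differences are cosmetic---you make the quotient/completion structure and the subsequence principle more explicit, and you swap the order in which summability and the weakly convergent subsequence are extracted---but the underlying argument is the same.
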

\begin{proof}
Using a standard subsequence argument, it suffices to show that $(x_n)$ has a subsequence converging weakly to $x$. Hence, without loss of generality, we can assume  $\sum_{n=1}^\infty p(x-x_{n}) < \infty$. By the weak sequential compactness of balls in the completion of $(M(\vr),\av{\cdot}_L)$, we obtain a subsequence $(x_{n_k})$ that converges weakly to some $\overline x$ in the completion. Mazur's lemma yields a sequence $(y_n)$ with the following properties:
 \begin{itemize}
  \item $(y_n)$  converges to $\overline{x}$ in the completion of $(M(\vr),\av{\cdot}_L)$. 
  \item $(y_n)$ is a finite convex combination of the elements of $\{x_{n_k} \mid k \geq n\}$.
  \end{itemize}
  Lemma~\ref{lemma:converging convex combinations} shows  $y_n \to x$ with respect to $p$. With this at hand, the lower semicontinuity of $\av{\cdot}_L$ implies 
$$\av{x - \overline{x}}_L = \lim_{n\to \infty} \av{x - y_n}_L \leq \liminf_{n,m \to \infty} \av{y_m  - y_n}_L = 0.$$
This implies $\varphi(x) = \varphi(\overline x)$ for each $\varphi \in M(\vr)'$ and so we obtain the weak convergence of $(x_{n_k})$ to $x$.  
\end{proof}

\begin{proof}[Proof of Theorem~\ref{theorem:lower semicontinuity}]

(a): (i) $\Leftrightarrow$ (ii): This follows from two observations: 1. The functional $\vr$ is lower semicontinuous with respect to $p$ if and only if for all $r > 0$ the set 
$$\{x \in E \mid \vr(x) \leq r\} = \{x \in E \mid \av{x}_{L,r} \leq 1\} $$
is closed with respect to $p$. 2. By a scaling argument the functional $\av{\cdot}_{L,r}$ is lower semicontinuous if and only if its unit ball $\{x \in E \mid \av{x}_{L,r} \leq 1\}$ is closed with respect to $p$. 

(b): Now we assume that $\vr$ is reflexive. (ii) $\Rightarrow$ (iii) is trivial.

(iii) $\Rightarrow$ (ii): Let $(x_n), x$ in $M(\vr)$ with $x_n \to x$ with respect to $p$. We show $\av{x}_{L,r} \leq \liminf_{n\to \infty} \av{x_n}_{L,r}$. Without loss of generality we assume that $(x_n)$ is $\av{\cdot}_{L,r}$-bounded (else pass to a subsequence, without bounded subsequence there is nothing to show). Since all Luxemburg seminorms are equivalent, $(x_n)$ is also $\av{\cdot}_L$-bounded. Lemma~\ref{lemma:weak convergence} yields $x_n \to x$ weakly in $M(\vr)$, which does not depend on the choice of the Luxemburg seminorm. Since every seminorm is lower semicontinuous with respect to weak convergence, we infer 
%
%
%
%
$\av{x}_{L,r} \leq \lim_{n\to \infty}\av{x_n}_{L,r}.$
%
 
(c): Now we assume that $\vr$ is reflexive and $(E,p)$ is complete.

(iii) $\Rightarrow$ (iv): Let $(x_n)$ be $p  + \av{\cdot}_L$-Cauchy. The completeness of $(E,p)$ yields $x \in E$ such that $x_n \to x$ with respect to $p$. The lower semicontinuity of $\av{\cdot}_L$ implies
$$\av{x - x_n}_L \leq \liminf_{m \to \infty} \av{x_m - x_n}_L. $$
Since $(x_n)$ is also $\av{\cdot}_L$-Cauchy, we infer $x_n \to x$ with respect to $p + \av{\cdot}_L$. 

(iv) $\Rightarrow $(iii): We use a similar argument as in the proof of (iii) $\Rightarrow$ (ii) and we refer there for more details. Let $x_n \to x$ with respect to $p$. Without loss of generality we assume 
$$\liminf_{n\to \infty} \av{x_n}_L = \lim_{n\to \infty}\av{x_n}_L  < \infty$$
and
$$\sum_{n = 1}^\infty p(x-x_n) < \infty. $$
Using the $\av{\cdot}_L$-boundedness of $(x_n)$ and the reflexivity of $\vr$ we find $(y_n)$ such that $(y_n)$ is $\av{\cdot}_L$-Cauchy and $y_n \in {\rm conv}\, \{x_k \mid k \geq n\}$. Lemma~\ref{lemma:converging convex combinations} yields $y_n \to x$ with respect to $p$. Overall, $(y_n)$ is $p + \av{\cdot}_L$-Cauchy. By completeness, it $p + \av{\cdot}_L$-converges to some $y \in E$. Since $p$-limits are unique, we infer $x =  y$ and 
$$\av{x}_L = \lim_{n \to \infty}\av{y_n}_L \leq \lim_{n \to \infty}\av{x_n}_L.$$
For the last inequality we used convexity of the seminorm. 
\end{proof}

\begin{remark}
 The equivalence of (i) and (ii) is more or less trivial. The equivalence with (iii) under the reflexivity condition seems to be a new observation. The equivalence with (iv) was first observed in \cite{Schmi3} for quadratic forms, see also \cite[Lemma~A.4]{Schmi2}.  
\end{remark}

Next we discuss when $\vr$ can be extended to a lower semicontinuous functional. We say that $\vr'\colon E \to [0,\infty]$ is an {\em extension} of $\vr$ (or that $\vr$ is a {\em restriction} of $\vr'$) if $D(\vr) \subseteq D(\vr')$ and $\vr(x) = \vr'(x)$ for $x \in D(\vr)$. 
Moreover, we say that $\vr$ is {\em lower semicontinuous on $C \subseteq E$} if the restriction $\vr|_C \colon C \to [0,\infty]$ is lower semicontinuous with respect to the metric induced by $p$ on $C$.

The {\em lower semicontinuous relaxation} of $\vr$ is defined by 
$${\rm sc}\, \vr \colon E \to [0,\infty], \quad {\rm sc}\, \vr (x) = \sup\{f(x) \mid f \in C(E,[0,\infty)) \text{ with } f\leq \vr\}.$$
The following proposition provides a standard description of the lower semicontinuous relaxation. We mention it for completeness but omit a proof. 

\begin{proposition}\label{proposition:lsc relaxation}
 The lower semicontinuous relaxation ${\rm sc}\, \vr$ is the largest lower semicontinuous minorant of $\vr$.
 If $\vr$ is lower semicontinuous on $D(\vr)$, then ${\rm sc}\, \vr$ is a lower semicontinuous extension of $\vr$. Among all lower semicontinuous extensions of $\vr$ it has the smallest effective domain, which is  given by
 $$D({\rm sc}\, \vr) = \{x \in E \mid \text{ex. $(x_n)$ s.t. } p(x_n - x) \to 0 \text{ and } \sup_n \vr(x_n) < \infty\}. $$
 Furthermore,  for $x \in D({\rm sc}\, \vr)$ there exists $(x_n)$ in $D(\vr)$ such that $x_n \to x$ with respect to $p$ and
 $${\rm sc}\, \vr(x) = \lim_{n\to \infty} \vr(x_n).$$
\end{proposition}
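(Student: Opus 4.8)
The plan is to identify ${\rm sc}\,\vr$ with the sequential lower envelope and then to read off all four assertions from that description. Throughout I regard $E$ as the metric space $(E,d)$ with $d(x,y)=p(x-y)$, so that lower semicontinuity coincides with sequential lower semicontinuity. The central object is
$$\Phi(x) = \inf\Big\{\liminf_{n\to\infty}\vr(x_n) \ \Big|\ (x_n)\subseteq E,\ d(x_n,x)\to 0\Big\}.$$
First I would show that $\Phi$ is lower semicontinuous, that $\Phi\le\vr$ (test with the constant sequence), and that every lower semicontinuous minorant $g$ of $\vr$ satisfies $g\le\Phi$ (since $g(x)\le\liminf_n g(x_n)\le\liminf_n\vr(x_n)$ for any $x_n\to x$). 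Hence $\Phi$ is the largest lower semicontinuous minorant of $\vr$. Lower semicontinuity of $\Phi$ is a routine diagonal argument: given $y_k\to x$ with $\liminf_k\Phi(y_k)<\infty$, pass to a subsequence realizing this liminf and pick $z_k$ with $d(z_k,y_k)<1/k$ and $\vr(z_k)<\Phi(y_k)+1/k$, so that $z_k\to x$ and $\Phi(x)\le\liminf_k\vr(z_k)\le\liminf_k\Phi(y_k)$.

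The next step is to match $\Phi$ with the definition ${\rm sc}\,\vr=\sup\{f\in C(E,[0,\infty)) : f\le\vr\}$. The inequality ${\rm sc}\,\vr\le\Phi$ is immediate, because for continuous $f\le\vr$ and any $x_n\to x$ one has $f(x)=\lim_n f(x_n)\le\liminf_n\vr(x_n)$, whence $f(x)\le\Phi(x)$. For the reverse inequality I would use the Moreau--Yosida inf-convolution: for a nonnegative proper lower semicontinuous $g$ set $g_\lambda(x)=\inf_{y\in E}\big(g(y)+\lambda\,d(x,y)\big)$. These functions are nonnegative, finite (use a point of $D(g)$), $\lambda$-Lipschitz hence continuous, increasing in $\lambda$, and satisfy $g_\lambda\le g$; the essential point is $g_\lambda\uparrow g$ pointwise, which follows from lower semicontinuity of $g$ by choosing near-optimal $y_\lambda$ and noting that $d(x,y_\lambda)\to 0$. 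Applied to $g=\Phi$ (which is proper since $D(\vr)\subseteq D(\Phi)$), this produces continuous minorants $\Phi_\lambda\le\Phi\le\vr$ with $\sup_\lambda\Phi_\lambda=\Phi$, so that $\Phi\le{\rm sc}\,\vr$. Therefore $\Phi={\rm sc}\,\vr$, which is the first assertion. I expect this reconciliation of the supremum-of-continuous-minorants definition with the sequential envelope to be the main obstacle, as it is the only place where the metric structure and the lower semicontinuity of $g$ are genuinely used.

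With the identity ${\rm sc}\,\vr=\Phi$ available I would extract a recovery sequence. If ${\rm sc}\,\vr(x)=c<\infty$, then for each $k$ a near-optimal sequence for $\Phi(x)$ yields $x_k$ with $d(x_k,x)<1/k$ and $\vr(x_k)<c+1/k$; since $x_k\to x$ forces $c=\Phi(x)\le\liminf_k\vr(x_k)$, we obtain $\vr(x_k)\to c$, and $\vr(x_k)<\infty$ shows $(x_k)\subseteq D(\vr)$. This is precisely the final (Furthermore) statement. The effective-domain formula then follows at once: for $x\in D({\rm sc}\,\vr)$ the recovery sequence satisfies $\sup_k\vr(x_k)<\infty$, while conversely any $x_n\to x$ with $\sup_n\vr(x_n)<\infty$ gives ${\rm sc}\,\vr(x)\le\liminf_n\vr(x_n)<\infty$.

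Finally, for the extension statements I would proceed as follows. Assuming $\vr$ is lower semicontinuous on $D(\vr)$, to obtain ${\rm sc}\,\vr=\vr$ on $D(\vr)$ it suffices to verify $\Phi(x)\ge\vr(x)$ for $x\in D(\vr)$: given any $x_n\to x$ with $\liminf_n\vr(x_n)<\infty$, a subsequence with $\vr(x_{n_k})\to\liminf_n\vr(x_n)$ eventually lies in $D(\vr)$, and lower semicontinuity of $\vr$ on $D(\vr)$ yields $\vr(x)\le\liminf_n\vr(x_n)$; taking the infimum over sequences gives $\vr(x)\le\Phi(x)$. Consequently ${\rm sc}\,\vr$ is a lower semicontinuous extension of $\vr$ with $D(\vr)\subseteq D({\rm sc}\,\vr)$. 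For minimality of the effective domain, any lower semicontinuous extension $\vr'$ of $\vr$ is in particular a lower semicontinuous minorant (it equals $\vr$ on $D(\vr)$ and is $\le\infty=\vr$ off $D(\vr)$), hence $\vr'\le{\rm sc}\,\vr$ by the first assertion, and therefore $D({\rm sc}\,\vr)\subseteq D(\vr')$.
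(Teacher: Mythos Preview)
Your argument is correct. The paper itself omits the proof of this proposition, stating only that it is a standard description of the lower semicontinuous relaxation; your route via the sequential lower envelope $\Phi$ and the Moreau--Yosida regularization is exactly such a standard argument, and every step (lower semicontinuity of $\Phi$ by diagonalization, the identification $\Phi={\rm sc}\,\vr$ via Lipschitz minorants, the recovery sequence, the effective-domain formula, and minimality among lower semicontinuous extensions) goes through as you describe.
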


\begin{remark}
 The previous proposition will be applied below to $\vr$ and to the Luxemburg seminorms $\av{\cdot}_{L,r} \colon E \to [0,\infty]$, which are  convex functionals with effective domain  $M(\vr)$. 
\end{remark}

\begin{definition}[Extended functional and Luxemburg seminorm]
 Let $\vr$ be a gsm on $(E,p)$ and assume that $\vr$ is lower semicontinuous on $D(\vr)$. Then we write $\vr_e = {\rm sc}\, \vr$ and call $\vr_e$ the  the {\em extended functional} of $\vr$. Similarly, if $\av{\cdot}_{L,r}$ is lower semicontinuous on $M(\vr)$,  we  write $\av{\cdot}_{L_e,r} = {\rm sc}\, \av{\cdot}_{L,r}$ and call it the {\em extended Luxemburg seminorm}. 
\end{definition}


The next lemma shows that $\av{\cdot}_{L_e,r}$ is a Luxemburg seminorm of $\vr_e$.

\begin{lemma}\label{lemma:extended luxemburg norm}
Assume that $\vr$ is lower semicontinuous on $M(\vr)$.
Then for every $x \in E$ we have 
$$\av{x}_{L_e,r} = \inf \{\lambda > 0 \mid \vr_e(\lambda^{-1} x) \leq r\},$$
(with the convention $\inf \emptyset = \infty$) and  
$$M(\vr_e) = \{x \in E \mid \av{x}_{L_e,r} < \infty\}.$$
Moreover, $\vr$ satisfies the (weak) $\Delta_2$-condition if and only if $\vr_e$ does.
\end{lemma}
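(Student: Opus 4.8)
The plan is to identify $\vr_e$ as a gsm whose Luxemburg seminorms are exactly the extended seminorms $\av{\cdot}_{L_e,r}$, and then read off all three assertions from Lemma~\ref{lemma:equivalent luxemburg norms} applied to $\vr_e$. First I would record the structural fact that drives everything: \emph{$\vr_e$ agrees with $\vr$ on $M(\vr)$}. Indeed $\vr_e\leq\vr$ always, and for $x\in M(\vr)$ a sequence $(x_n)$ in $D(\vr)$ with $x_n\to x$ in $p$ and $\vr(x_n)\to\vr_e(x)$ from Proposition~\ref{proposition:lsc relaxation} lies in $M(\vr)$, so lower semicontinuity of $\vr$ on $M(\vr)$ forces $\vr(x)\leq\liminf_n\vr(x_n)=\vr_e(x)$; the same reasoning shows points of $M(\vr)\setminus D(\vr)$ keep the value $\infty$. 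Along the way I would check $\vr_e$ is genuinely a gsm: properness and absolute convexity are inherited (the relaxation of a convex functional is convex, $\vr_e(0)=0$, and invariance under unit scalars survives since those scalings are $p$-isometries), while left-continuity follows because $t\mapsto\vr_e(tx)$ is convex, lower semicontinuous and vanishes at $0$, so $\vr_e(tx)\leq t\,\vr_e(x)$ for $t\in[0,1]$ yields $\lim_{t\to1-}\vr_e(tx)=\vr_e(x)$. Finally, the hypothesis that $\vr$ is lower semicontinuous on $M(\vr)$ guarantees, by the sublevel-set/scaling argument of Theorem~\ref{theorem:lower semicontinuity}(a) run inside the subspace $M(\vr)$, that each $\av{\cdot}_{L,r}$ is lower semicontinuous on $M(\vr)$, so $\av{\cdot}_{L_e,r}={\rm sc}\,\av{\cdot}_{L,r}$ is well defined.

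For the seminorm formula, write $N_r(x)=\inf\{\lambda>0\mid\vr_e(\lambda^{-1}x)\leq r\}$ for the genuine Luxemburg seminorm of the gsm $\vr_e$; the target of the first two displayed claims is precisely $\av{\cdot}_{L_e,r}=N_r$, from which $M(\vr_e)=\{N_r<\infty\}$ is immediate since $M(\vr_e)$ is the linear hull of $D(\vr_e)$. One inequality is soft: since $\vr_e\leq\vr$ we have $N_r\leq\av{\cdot}_{L,r}$ pointwise on $E$ (an infimum over a larger set, and $\infty$ off $M(\vr)$), and $N_r$ is lower semicontinuous by Theorem~\ref{theorem:lower semicontinuity}(a) applied to the lower semicontinuous $\vr_e$; as $\av{\cdot}_{L_e,r}$ is the \emph{largest} lower semicontinuous minorant of $\av{\cdot}_{L,r}$, this gives $N_r\leq\av{\cdot}_{L_e,r}$.

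The reverse inequality $\av{\cdot}_{L_e,r}\leq N_r$ is the technical heart. Fix $x$ and any $\lambda\geq N_r(x)$; Lemma~\ref{lemma:equivalent luxemburg norms}(a) for $\vr_e$ gives $\vr_e(\lambda^{-1}x)\leq r$, so Proposition~\ref{proposition:lsc relaxation} furnishes $y_n\to\lambda^{-1}x$ in $p$ with $\vr(y_n)\to\vr_e(\lambda^{-1}x)\leq r$. Putting $x_n=\lambda y_n\to x$ and using the elementary bound $\av{y_n}_{L,r}\leq\max\{1,\vr(y_n)/r\}$ from Lemma~\ref{lemma:equivalent luxemburg norms}(a),(b), I obtain $\limsup_n\av{x_n}_{L,r}=\lambda\limsup_n\av{y_n}_{L,r}\leq\lambda$, hence $\av{x}_{L_e,r}={\rm sc}\,\av{\cdot}_{L,r}(x)\leq\liminf_n\av{x_n}_{L,r}\leq\lambda$. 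Letting $\lambda\downarrow N_r(x)$ closes the estimate; the degenerate cases $N_r(x)=0$ and $x\notin M(\vr_e)$ are covered by reading the same display with $\lambda$ arbitrary, respectively by the already-proved inequality $N_r\leq\av{\cdot}_{L_e,r}$.

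For the $\Delta_2$-equivalences I would use the uniform reformulation: weak $\Delta_2$ for a gsm $\sigma$ means that for every $\delta>0$ there is $\eta>0$ with $\sigma(u)\leq\eta\Rightarrow\sigma(2u)\leq\delta$ (equivalent to the sequential definition by a routine contradiction), and $\Delta_2$ is the homogeneous version $\sigma(2\cdot)\leq K\sigma(\cdot)$. For the implication $\vr\Rightarrow\vr_e$, given $y$ with $\vr_e(y)\leq\eta'$ I approximate $y_n\to y$ in $p$ with $\vr(y_n)\to\vr_e(y)$, apply the condition for $\vr$ to $2y_n$, note $2y_n\to 2y$ (using $p(2\cdot)\leq2p(\cdot)$), and pass to the limit via lower semicontinuity of $\vr_e$ to get $\vr_e(2y)\leq\liminf_n\vr(2y_n)\leq\delta$; fixing $y$ first is exactly what removes the moving-target issue. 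The converse $\vr_e\Rightarrow\vr$ is where the structural fact pays off: if $\vr(u)<\infty$ then $u\in M(\vr)$, so $2u\in M(\vr)$ and $\vr(u)=\vr_e(u)$, $\vr(2u)=\vr_e(2u)$, whence the condition for $\vr_e$ transfers verbatim; points with $\vr(u)=\infty$ are trivial. The full $\Delta_2$ case is identical with the $\delta$-bound replaced by the homogeneous one. I expect the reverse inequality in the seminorm identity to be the main obstacle, as it is the only step requiring a genuine construction rather than monotonicity or the agreement of $\vr$ and $\vr_e$ on $M(\vr)$.
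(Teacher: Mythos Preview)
Your argument is correct and follows the same overall architecture as the paper: establish the seminorm identity by proving two inequalities between $N_r$ and $\av{\cdot}_{L_e,r}$, then transfer the $\Delta_2$-conditions using $\vr=\vr_e$ on $M(\vr)$. The differences lie in how the two inequalities are obtained. For $N_r\leq\av{\cdot}_{L_e,r}$ you argue abstractly: $N_r$ is a lower semicontinuous minorant of $\av{\cdot}_{L,r}$ (via Theorem~\ref{theorem:lower semicontinuity}(a) applied to $\vr_e$), hence lies below the largest such; the paper instead picks an approximating sequence for $\av{\cdot}_{L_e,r}$ and pushes it through the lower semicontinuity of $\vr_e$ by hand. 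For $\av{\cdot}_{L_e,r}\leq N_r$ you control the approximating sequence using the elementary bound $\av{y_n}_{L,r}\leq\max\{1,\vr(y_n)/r\}$ from Lemma~\ref{lemma:equivalent luxemburg norms}; the paper instead introduces an auxiliary parameter $r+\eps$, bounds $\av{x_n}_{L,r+\eps}\leq\lambda$, and then uses the comparison $\tfrac{r}{r+\eps}\av{\cdot}_{L_e,r}\leq\av{\cdot}_{L_e,r+\eps}$ between extended seminorms with different parameters before letting $\eps\to0$. Your route is a bit cleaner since it avoids the extra parameter and the comparison step; the paper's is more explicit but slightly longer. The $\Delta_2$ transfers are essentially identical in both versions.
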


\begin{proof}
Applying Theorem~\ref{theorem:lower semicontinuity} to the gsm $\vr|_{M(\vr)}$ on $(M(\vr),p)$ yields that $\vr$ is lower semicontinuous on $M(\vr)$ if and only if the Luxemburg seminorms $\|\cdot\|_{L,r}$, $r > 0$, are lower semicontinuous on $M(\rho)$.

For $x \in E$ let 
$$N(x) =  \inf \{\lambda > 0 \mid \vr_e(\lambda^{-1} x) \leq r\}.$$

Assume $\av{x}_{L_e,r} < \infty$ and choose a sequence $(x_n)$ in $M(\vr)$ such that $x_n \to x$ with respect to $p$ and $\av{x}_{L_e,r} = \lim_{n \to \infty} \av{x_n}_{L,r}$. For $\varepsilon > 0$ let $\lambda_n = \av{x_n}_{L,r} + \varepsilon$ and $\lambda = \av{x}_{L_e,r} + \varepsilon$.  Then $\lambda_n^{-1} x_n \to \lambda^{-1} x$ with respect to $p$ and $\vr(\lambda_n^{-1} x_n) \leq r$. The lower semicontinuity of $\vr_e$ and $\vr_e = \vr$ on $D(\vr)$ yield 
$$\vr_e(\lambda^{-1} x) \leq \liminf_{n \to \infty} \vr(\lambda_n^{-1} x_n) \leq r, $$
showing 
$$N(x) \leq \lambda  = \av{x}_{L_e,r} + \varepsilon.$$
This implies $N \leq \av{\cdot}_{L_e,r}$.

To prove the opposite inequality, assume $N(x) < \infty$ and let $\lambda > 0$ such that $\vr_e(\lambda^{-1} x) \leq r$. We choose a sequence $(x_n)$ in $D(\vr)$ with $x_n \to x$ with respect to $p$ and $\vr(\lambda^{-1} x_n) \to \vr_e(\lambda^{-1} x)$. Let $\varepsilon > 0$ arbitrary. Without loss of generality we can assume $\vr(\lambda^{-1} x_n) \leq r + \varepsilon$ for all $n \in \N$, which implies 
$$\av{x_n}_{L,r+\varepsilon} \leq \lambda.$$
The lower semicontinuity of $\av{\cdot}_{L_e,r+\varepsilon}$ and the inequality for the Luxemburg seminorms with different parameters yield 
$$\frac{r}{r+\varepsilon} \av{x}_{L_e,r} \leq  \av{x}_{L_e,r+\varepsilon}\leq \liminf_{n \to \infty} \av{x_n}_{L,r+\varepsilon} \leq \lambda.$$
Here, the first inequality follows from the description of the lower semicontinuous relaxation $\|\cdot\|_{L_e,r+\eps}$ and the lower semicontinuity of $\|\cdot\|_{L_e,r}$.
Since $\varepsilon > 0$ was arbitrary, we obtain $\av{\cdot}_{L_e,r} \leq N$.

With the explicit description of $\vr_e  = {\rm sc}\, \vr$ it is readily verified that if $\vr$ satisfies the $\Delta_2$-condition, then $\vr_e$ satisfies the $\Delta_2$-condition.

The lower semicontinuity of $\rho$ on $M(\rho)$ implies $\rho = \rho_e$ on $M(\rho)$.
Therefore, if $\vr_e$ satisfies the $\Delta_2$-condition with constant $K$, then we have
 \[ \rho(2x) = \rho_e(2x) \leq K \rho_e(x) = K \rho(x) \]
for all $x \in M(\rho)$.
For $x \in E \setminus M(\rho)$, there is nothing to show.

The weak $\Delta_2$-condition is equivalent to the following: For each $\varepsilon >0$ there exists $\delta > 0$ such that $\vr(f) \leq \delta$ implies $\vr(2f) \leq \varepsilon$. Let now $\varepsilon > 0$ arbitrary and choose $\delta > 0$ as in the last sentence. Let $(f_n)$ such that $\vr_e(f_n) \to 0$. Fix $N \in \N$ such that $\vr_e(f_n) \leq \delta/2$ for $n \geq N$ and for $n \in \N$ choose sequences $(g_{m,n})$ with $g_{m,{n}} \to f_n$ with respect to $p$, as $m \to \infty$, and   $|\vr(g_{m,n}) - \vr_e(f_n)| \to 0$, as $m \to \infty$. For $n \geq N$ and $m$ large enough we have $|\vr(g_{m,n}) - \vr_e(f_n)| \leq \delta/2$, which implies  $\vr(g_{m,n}) \leq \delta$ and leads to $\vr(2g_{m,n}) \leq \varepsilon$ for $n \geq N$ and $m$ large enough. Using the lower semicontinuity of $\vr_e$, we arrive at 
$$\vr_e(2 f_n) \leq \liminf_{m \to \infty} \vr(2g_{m,n}) \leq \varepsilon,$$
for $n \geq N$. This implies $\vr_e(2f_n) \to 0$, as $n \to \infty$. 
%
%
%

If $\rho_e$ satisfies the weak $\Delta_2$-condition and $(x_n)$ is a sequence in $E$ with $\rho(x_n) \to 0$, then $x_n \in M(\rho)$ eventually so that $\rho_e(x_n) = \rho(x_n) \to 0$ and, hence, $\rho(2 x_n) = \rho_e(2 x_n) \to 0$.
\end{proof}

\begin{remark}
 The very weak $\Delta_2$-condition need not extend to $\vr_e$. 
\end{remark}

The next theorem gives equivalent conditions for the lower semicontinuity of $\vr$ on $M(\vr)$ and of $\av{\cdot}_L$ on $M(\vr)$. Moreover, it provides explicit formulas for $\vr_e$ and $\av{\cdot}_{L_e}$. For a better readability of its statements and for later purposes we make one more definition. 

\begin{definition}[Approximating sequence]\label{definition:approximating sequence}
Let $\vr$ be a gsm on $(E,p)$ and let $x \in E$. A sequence $(x_n)$ in $M(\vr)$ is called {\em approximating sequence for} $x$ if $(x_n)$ is $\av{\cdot}_L$-Cauchy and $x_n \to x$ with respect to $p$. 
 
\end{definition}

\begin{theorem}\label{theorem:description extended space}
  Let $\vr$ be a gsm on $(E,p)$. 
  \begin{enumerate}[(a)]
   \item  The following assertions are equivalent. 
 \begin{enumerate}[(i)]
  \item $\vr$ is lower semicontinuous on $M(\vr)$.
  \item For all $r > 0$ the functional $\av{\cdot}_{L,r}$ is lower semicontinuous on $M(\vr)$.
 \end{enumerate}
 
 \item  If $\vr$ is reflexive, the assertions in (a) are equivalent to:
 \begin{enumerate}[(i)] 
  \item[(iii)] $\av{\cdot}_L$ is lower semicontinuous on $M(\vr)$.
 \end{enumerate}
 \item If $\vr$ is reflexive and $(E,p)$ is complete, the assertions in (a) and (b) are equivalent to: 
 \begin{enumerate}[(i)] 
  \item[(iv)]  For all $\av{\cdot}_L$-Cauchy sequences $(x_n)$ in $M(\vr)$ with $p(x_n) \to 0$ we have $\av{x_n}_L \to 0$.  
 \end{enumerate}
 In the latter case, $x \in M(\vr_e)$ if and only if there exists an approximating sequence for $x$ and for every approximating sequence $(x_n)$ for  $x$ we have
 $$\av{x}_{L_e} = \lim_{n \to \infty} \av{x_n}_L.$$
 \item Assume that $\vr$ is reflexive, $(E,p)$ is complete and that the equivalent assertions (i) - (iv) hold. If $\vr$ satisfies $\Delta_2$, then $\vr_e$ satisfies $\Delta_2$,  $D(\vr_e) = M(\vr_e)$ and for every approximating sequence $(x_n)$ for $x$ we have 
 $$\vr_e(x)  = \lim_{n \to \infty} \vr(x_n).$$

  \end{enumerate}
\end{theorem}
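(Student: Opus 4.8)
The plan is to reduce parts (a) and (b) to Theorem~\ref{theorem:lower semicontinuity} and then to treat the genuinely new content---the characterization (iv), the approximating-sequence description, and the formula in (d)---by hand, the point being that the natural ambient space $(M(\vr),p)$ need not be complete, so Theorem~\ref{theorem:lower semicontinuity}(c) is not directly available. For (a) and (b) I would pass to the restricted gsm $\vr_0 = \vr|_{M(\vr)}$ on the topological vector space $(M(\vr),p)$. Its modular space is again $M(\vr)$, its Luxemburg seminorms coincide with those of $\vr$, and $\vr_0$ is reflexive exactly when $\vr$ is. Since lower semicontinuity ``on $M(\vr)$'' means lower semicontinuity on $(M(\vr),p)$, parts (a) and (b) are direct translations of Theorem~\ref{theorem:lower semicontinuity}(a) and (b) for $\vr_0$. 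The impossibility of invoking Theorem~\ref{theorem:lower semicontinuity}(c) for $\vr_0$, due to the missing completeness of $(M(\vr),p)$, is the main obstacle and forces the reflexivity arguments below.

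For the equivalence of (iii) and (iv) in part (c), assume first (i)--(iii). For (iii) $\Rightarrow$ (iv), let $(x_n)$ be $\av{\cdot}_L$-Cauchy with $p(x_n)\to 0$. Fixing $n$ and letting $m\to\infty$ gives $x_n-x_m\to x_n$ in $p$, so lower semicontinuity of $\av{\cdot}_L$ on $M(\vr)$ yields $\av{x_n}_L\le\liminf_m\av{x_n-x_m}_L$, and the Cauchy property forces $\av{x_n}_L\to 0$. For (iv) $\Rightarrow$ (iii) I would establish lower semicontinuity of $\av{\cdot}_L$ on $M(\vr)$ directly: given $y_k\to y$ in $p$ with $y_k,y\in M(\vr)$ and $\av{y_k}_L\to L<\infty$, pass to a subsequence with summable $p$-distances to $y$; by reflexivity a ball in the completion of $(M(\vr),\av{\cdot}_L)$ is weakly sequentially compact, and Mazur's lemma produces convex combinations $z_n\in{\rm conv}\{y_k : k\ge n\}$ that are $\av{\cdot}_L$-Cauchy, with $z_n\to y$ in $p$ by Lemma~\ref{lemma:converging convex combinations}. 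Then $z_n-y$ is $\av{\cdot}_L$-Cauchy with $p(z_n-y)\to 0$, so (iv) gives $\av{z_n-y}_L\to 0$, whence $\av{y}_L=\lim_n\av{z_n}_L\le L$ by convexity. This construction of a Cauchy approximating sequence in the absence of ambient completeness is the technically delicate step.

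Once (i)--(iv) hold, $\vr_e$ is lower semicontinuous on all of $E$, so by Lemma~\ref{lemma:extended luxemburg norm} the extended seminorm $\av{\cdot}_{L_e}$ is a Luxemburg seminorm of $\vr_e$; since $\av{\cdot}_L$ is lower semicontinuous on $M(\vr)$, the relaxation $\av{\cdot}_{L_e}={\rm sc}\,\av{\cdot}_L$ restricts to $\av{\cdot}_L$ on $M(\vr)$ (Proposition~\ref{proposition:lsc relaxation}), and by Theorem~\ref{theorem:lower semicontinuity}(a) applied to $\vr_e$ on the complete space $(E,p)$ it is lower semicontinuous on $E$. Given an approximating sequence $(x_n)$ for $x$, fixing $m$ and letting $n\to\infty$ gives $\av{x-x_m}_{L_e}\le\liminf_n\av{x_n-x_m}_L$, so the Cauchy property yields $\av{x-x_m}_{L_e}\to 0$; as $M(\vr_e)$ is a vector space this proves $x\in M(\vr_e)$ and, by continuity of the seminorm, $\av{x}_{L_e}=\lim_n\av{x_n}_L$. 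Conversely, for $x\in M(\vr_e)$ I would use Proposition~\ref{proposition:lsc relaxation} to pick $(x_n)$ in $M(\vr)$ with $x_n\to x$ in $p$ and $\av{x_n}_L\to\av{x}_{L_e}$ bounded, and then repeat the reflexivity/Mazur construction above to replace it by an $\av{\cdot}_L$-Cauchy sequence of convex combinations still converging to $x$ in $p$, i.e. an approximating sequence.

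Finally, for (d) assume $\Delta_2$. That $\vr_e$ again satisfies $\Delta_2$, and hence that $D(\vr_e)=M(\vr_e)$, follows from Lemma~\ref{lemma:extended luxemburg norm} together with Lemma~\ref{lemma:delta2 conditions}(a). For the formula, let $(x_n)$ be an approximating sequence for $x$. By the previous paragraph $x\in M(\vr_e)$ and $\av{x_n-x}_{L_e}\to 0$. Applying Lemma~\ref{lemma:Delta_2 and Luxemburg seminorm convergence yield convergence of gsm} to the $\Delta_2$-gsm $\vr_e$ gives $\vr_e(x_n)\to\vr_e(x)$; since $x_n\in M(\vr)=D(\vr)$ and $\vr_e=\vr$ there, this reads $\vr(x_n)\to\vr_e(x)$, which is the claimed identity.
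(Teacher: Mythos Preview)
Your proposal is correct, and for parts (a) and (b) it coincides with the paper's proof. For (c) and (d) you take a somewhat different but equally valid route.

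In (c), your argument for (iii)~$\Rightarrow$~(iv) is more direct than the paper's: you use lower semicontinuity of $\av{\cdot}_L$ on $M(\vr)$ immediately via $\av{x_n}_L\le\liminf_m\av{x_n-x_m}_L$, whereas the paper first passes to $\vr_e$, invokes Theorem~\ref{theorem:lower semicontinuity} to deduce completeness of $(M(\vr_e),p+\av{\cdot}_{L_e})$, and then reads off the conclusion from uniqueness of $p$-limits. For (iv)~$\Rightarrow$~(iii) both arguments hinge on reflexivity and Mazur's lemma, but the packaging differs: the paper defines the candidate relaxation $N(x)=\lim_n\av{x_n}_L$ (for approximating sequences) upfront and verifies that $N$ is well-defined, extends $\av{\cdot}_L$, has $M(\vr)$ dense in its domain, is complete, and is the largest lower semicontinuous minorant, thereby obtaining (iii) and the approximating-sequence description simultaneously; you instead first prove lower semicontinuity on $M(\vr)$ directly, then separately establish the approximating-sequence characterization by a second reflexivity/Mazur pass. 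The paper's route is a bit more economical in that the single construction serves two purposes, but your decomposition is perhaps conceptually clearer.

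Your treatment of (d) is cleaner than the paper's: once you have $\av{x_n-x}_{L_e}\to 0$ from (c), applying Lemma~\ref{lemma:Delta_2 and Luxemburg seminorm convergence yield convergence of gsm} to the $\Delta_2$-gsm $\vr_e$ immediately yields $\vr_e(x_n)\to\vr_e(x)$. The paper instead redoes this argument by hand, using the relation $\vr_e(x)=\inf\{r>0\mid\av{x}_{L_e,r}\le 1\}$ together with Lemma~\ref{lemma:delta2 conditions}(a) at different parameters $r>s>\vr_e(x)$; this is effectively a reproof of Lemma~\ref{lemma:Delta_2 and Luxemburg seminorm convergence yield convergence of gsm} in situ.
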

\begin{proof}
 (a) and (b): This follows directly by applying Theorem~\ref{theorem:lower semicontinuity} to the gsm $\vr|_{M(\vr)}$ on $(M(\vr),p)$ and observing that $\rho|_{M(\rho)}$ is lower semicontinuous if and only if $\rho$ is lower semicontinuous on $M(\rho)$.
 
 (c): (iii) $\Rightarrow$ (iv): Since (iii) implies (i), $\vr_e$ is a lower semicontinuous extension of $\vr$. By (iii) and Lemma~\ref{lemma:extended luxemburg norm} the extended Luxemburg seminorm  $\av{\cdot}_{L_e}$ exists and equals the Luxemburg seminorm of $\vr_e$. Hence, Theorem~\ref{theorem:lower semicontinuity} applied to $\vr_e$ implies that $M(\vr_e)$ is complete with respect to $p + \av{\cdot}_{L_e}$. Now let $(x_n)$ be $\av{\cdot}_L$-Cauchy with $p(x_n) \to 0$. Since $\av{\cdot}_{L_e}$ is an extension of $\av{\cdot}_L$, we infer that $(x_n)$ is $p + \av{\cdot}_{L_e}$-Cauchy. By completeness it converges to $x \in M(\vr_e)$ with respect to $p + \av{\cdot}_{L_e}$. Since $p(x_n) \to 0$ and $p$-limits are unique, we infer $x =  0$, showing $\av{x_n}_L = \av{x_n}_{L_e} \to 0$. 
 
 (iv) $\Rightarrow$ (iii): We define $N \colon E \to [0,\infty]$ by 
 $$ N(x) = \begin{cases}
                                      \lim\limits_{n \to \infty} \av{x_n}_L &\text{if $(x_n)$ is an approximating sequence for $x$}\\
                                      \infty &\text{if $x$ has no approximating sequence} 
                                     \end{cases}
$$
 and show that $N$ is the lower semicontinuous extension of $\av{\cdot}_L$ with minimal effective domain. This implies (iii) and the formula for $M(\vr_e)$ and $\av{\cdot}_{L_e}$. 
 
 $N$ is well-defined and an extension of $\av{\cdot}_L$: This is a direct consequence of (iv). 
 
 $M(\vr)$ is dense in $D(N)$ with respect to $p + N$: For $x \in D(N)$ we choose an approximating sequence $(x_n)$ for $x$. Then for each $m \in \mathbb N$ the element  $x - x_m$ has the approximating sequence $(x_n - x_m)_n$. This implies 
 $$\lim_{m \to \infty} N(x-x_m) = \lim_{m \to \infty} \lim_{n\to \infty} \av{x_n - x_m}_L = 0.$$

 $D(N)$ is complete with respect to $p + N$: Let $(x_n)$ be $p + N$-Cauchy. By completeness of $(E,p)$ there exists $x \in E$ with $x_n \to x$ with respect to $p$. Moreover, by density of $M(\vr)$ in $D(N)$ with respect to $p + N$ we can choose $y_n \in M(\vr)$ with $p(x_n -y_n) + N(x_n - y_n)  \to 0$. Then $p(x - y_n) \to 0$ and since $N$ is an extension of $\av{\cdot}_L$, the sequence $(y_n)$ is $\av{\cdot}_L$-Cauchy. In other words, $(y_n)$ is an approximating sequence for $x$, which implies $x \in D(N)$ and $N(x-x_n) \to 0$. 

 $N$ is lower semicontinuous: $N$ is a seminorm on $D(N)$ and hence coincides with its own Luxemburg seminorm. With this at hand lower semicontinuity of $N$ follows from the completeness of $(D(N),p + N)$ and Theorem~\ref{theorem:lower semicontinuity}. Note that the required reflexivity of $N$ follows from the density of $M(\vr)$ in $D(N)$ and the reflexivity of $\vr$.

 $N$ is the largest lower semicontinuous minorant: By definition $x \in D(N)$ if and only if it has an approximating sequence $(x_n)$. Now, if $M$ is a lower semicontinuous minorant of $\|\cdot\|_L$, we obtain
  \[ M(x) \leq \liminf_{n \to \infty} M(x_n) \leq \lim_{n \to \infty} \|x_n\|_L = N(x) . \]
 
 (d): We have already observed in Lemma~\ref{lemma:extended luxemburg norm} that $\vr_e$ inherits the $\Delta_2$ property from $\vr$.
 Since $\av{\cdot}_{L_e,r}$ is a Luxemburg seminorm of $\vr_e$, we can apply Lemma~\ref{lemma:equivalent luxemburg norms} and Lemma~\ref{lemma:delta2 conditions} to $\vr_e$ and $\av{\cdot}_{L_e,r}$. In particular, we obtain $D(\vr_e) = M(\vr_e)$. From (iv) it then follows that $D(\vr_e)$ consists of those elements of $E$ that possess an approximating sequence.

 Let $(x_n)$ be an approximating sequence for $x$. Then $\vr_e(x) < \infty$ and from the lower semicontinuity of $\vr_e$ and $\vr_e = \vr$ on $D(\vr)$ we infer 
 $$\vr_e(x) \leq \liminf_{n \to \infty} \vr(x_n).$$
 %
%
 %
 It remains to prove $\vr_e(x) \geq \limsup_{n \to \infty} \vr(x_n)$. Lemma~\ref{lemma:equivalent luxemburg norms} shows
 $$\vr_e(x)  = \inf \{r > 0 \mid \av{x}_{L_e,r} \leq 1\}.$$
 Let  $r > s > \vr_e(x)$. We use Lemma~\ref{lemma:delta2 conditions} and $\Delta_2$ to choose $0 < \delta < 1$ with $\av{\cdot}_{L_e,r} \leq (1-\delta)\av{\cdot}_{L_e,s}$.  The lower semicontinuity of $\av{\cdot}_{L_e,s}$ yields 
 $$\av{x - x_n}_{L_e,s} \leq \liminf_{m \to \infty}\av{x_m - x_n}_{L_e,s}= \liminf_{m \to \infty}\av{x_m - x_n}_{L,s}, $$
 showing $x_n \to x$ with respect to $\av{\cdot}_{L_e,s}$ (here we use that $(x_n)$ is also $\av{\cdot}_{L,s}$-Cauchy, which follows from the equivalence of the Luxemburg seminorms of $\vr$). In particular, we find $N \geq 1$ such that 
 $$\av{x_n}_{L_e,r} \leq (1-\delta)\av{x_n}_{L_e,s} \leq \av{x}_{L_e,s}$$
 for all $n \geq N$. Since $s > \vr_e(x)$, we have $\av{x}_{L_e,s} \leq 1$. This implies $\av{x_n}_{L_e,r} \leq 1$ for each $n \geq N$, which in turn yields $\vr_e(x_n) \leq r$ for all $n \geq N$, showing 
 $$\limsup_{n \to \infty} \vr_e(x_n) \leq r.$$
 Since $r > \vr_e(x)$ was arbitrary and since $\vr = \vr_e$ on $D(\vr)$, we obtain the claim.
 \end{proof}

 \begin{remark}
  For quadratic forms this theorem has been obtained in \cite{Schmi3}. In this case, it is a rather simple consequence of Theorem~\ref{theorem:lower semicontinuity}. In the general case the statement of (c) and the precise description of $\vr_e$ in (d) under $\Delta_2$ are more difficult to obtain.  
 \end{remark}

 \begin{lemma}\label{lemma:reflexivity and relaxation}
  Assume that $(E,p)$ complete.
  Then $\vr$ is reflexive if and only if $\vr_e$ is reflexive.
 \end{lemma}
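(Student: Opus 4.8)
The plan is to reduce the statement to a claim about Banach space completions and then to exploit that the inclusion $M(\vr)\hookrightarrow M(\vr_e)$ is an isometry for the respective Luxemburg seminorms. Throughout I work in the regime where $\vr$ is lower semicontinuous on $M(\vr)$, so that $\vr_e$ and its Luxemburg seminorm $\av{\cdot}_{L_e}$ are defined; by Lemma~\ref{lemma:extended luxemburg norm}, $\av{\cdot}_{L_e}$ is then the Luxemburg seminorm of the gsm $\vr_e$. Recall (see the remark following the definition of reflexivity) that $\vr$ is reflexive if and only if the completion $\widehat{V}$ of $(M(\vr)/\ker\av{\cdot}_L,\av{\cdot}_L)$ is a reflexive Banach space, and likewise $\vr_e$ is reflexive if and only if the completion $\widehat{W}$ of $(M(\vr_e)/\ker\av{\cdot}_{L_e},\av{\cdot}_{L_e})$ is reflexive. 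First I would record that the inclusion $\iota\colon (M(\vr),\av{\cdot}_L)\to (M(\vr_e),\av{\cdot}_{L_e})$ is a well-defined linear isometry: since $\vr$ is lower semicontinuous on $M(\vr)$ we have $\vr_e=\vr$ on $M(\vr)$ by Lemma~\ref{lemma:extended luxemburg norm}, and as $M(\vr)$ is a subspace this gives $\vr_e(\lambda^{-1}x)=\vr(\lambda^{-1}x)$ for every $x\in M(\vr)$ and $\lambda>0$, whence $\av{x}_{L_e}=\av{x}_L$; in particular $M(\vr)\subseteq M(\vr_e)$.

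For the implication ``$\vr_e$ reflexive $\Rightarrow$ $\vr$ reflexive'' I would argue by passing to a closed subspace. The isometry $\iota$ descends to a linear isometry $\bar\iota\colon M(\vr)/\ker\av{\cdot}_L\to M(\vr_e)/\ker\av{\cdot}_{L_e}\hookrightarrow \widehat{W}$. Let $Z$ be the $\av{\cdot}_{L_e}$-closure of its image in $\widehat{W}$. Then $Z$ is a closed subspace of the reflexive Banach space $\widehat{W}$, hence itself reflexive; and since $\bar\iota$ is an isometric embedding with dense image into the Banach space $Z$, the latter is a completion of $M(\vr)/\ker\av{\cdot}_L$. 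By uniqueness of completions $Z\cong\widehat{V}$ isometrically, so $\widehat{V}$ is reflexive and $\vr$ is reflexive. I emphasize that this direction uses neither reflexivity of $\vr$ nor the approximating-sequence description, only that $\iota$ is an isometry.

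The converse ``$\vr$ reflexive $\Rightarrow$ $\vr_e$ reflexive'' is the step that really needs the extended-space machinery, and I expect it to be the main obstacle: the isometric image of $M(\vr)$ need not be $\av{\cdot}_{L_e}$-dense in $M(\vr_e)$ for a general (non-reflexive) gsm, which is exactly why the two directions cannot be treated symmetrically via a single isometric isomorphism. Here, however, $\vr$ is reflexive, $(E,p)$ is complete, and $\vr$ is lower semicontinuous on $M(\vr)$, so Theorem~\ref{theorem:description extended space}(c) applies and yields that every $x\in M(\vr_e)$ admits an approximating sequence in $M(\vr)$; tracing through the construction in its proof, $M(\vr)$ is in fact $p+\av{\cdot}_{L_e}$-dense, and in particular $\av{\cdot}_{L_e}$-dense, in $M(\vr_e)$. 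Consequently $\bar\iota$ has dense image and the closed subspace $Z$ from the previous paragraph equals all of $\widehat{W}$, giving an isometric isomorphism $\widehat{V}\cong\widehat{W}$. As $\vr$ is reflexive, $\widehat{V}$ and hence $\widehat{W}$ is reflexive, so $\vr_e$ is reflexive. The only facts invoked beyond the cited results are standard Banach space theory, namely that closed subspaces of reflexive spaces are reflexive and that completions are unique up to isometric isomorphism.
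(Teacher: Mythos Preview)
Your proof is correct and follows essentially the same route as the paper: both directions rest on the isometric inclusion $(M(\vr),\av{\cdot}_L)\hookrightarrow (M(\vr_e),\av{\cdot}_{L_e})$, with ``$\vr_e$ reflexive $\Rightarrow$ $\vr$ reflexive'' obtained as the closed-subspace-of-a-reflexive-space argument, and ``$\vr$ reflexive $\Rightarrow$ $\vr_e$ reflexive'' via the $\av{\cdot}_{L_e}$-density of $M(\vr)$ in $M(\vr_e)$ furnished by Theorem~\ref{theorem:description extended space}(c). Your write-up is more explicit about why the inclusion is an isometry and why the completions coincide, but the underlying argument is the same.
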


 \begin{proof}
  It follows from Theorem~\ref{theorem:description extended space}~(c) that $M(\vr)$ is dense in $M(\vr_e)$ with respect to $\av{\cdot}_{L_e,r}$ if $\vr$ is reflexive. Hence, the bidual spaces $(M(\vr))''$ and $(M(\vr_e))''$ agree and $\vr_e$ is also reflexive.
  
  Conversely, let $\vr_e$ be reflexive.
  This means that the Banach space given by the completion of $(M(\vr_e)/\ker \|\cdot\|_{L_e},\|\cdot\|_{L_e})$ is reflexive.
  Then the completion of $(M(\vr)/\ker \|\cdot\|_L, \|\cdot\|_L)$ is also reflexive - it can be interpreted as a closed subspace of that space.
 \end{proof}

 At the end of this section we discuss completeness of $(M(\vr),\av{\cdot}_L)$ itself without adding $p$ to the Luxemburg seminorm. For a subspace $S \subseteq E$ the quotient $F$-seminorm is given by 
 $$p_{S} \colon \faktor{E}{S} \to [0,\infty),\quad  p_{S}(x + S) = \inf\{p(x+y) \mid y \in S\}.$$
 It is an $F$-norm on $\faktor E S$ if and only if $S$ is closed. Moreover, if $(E,p)$ is complete and $S$ is closed, then $(\faktor {E}{S},p_{S})$ is complete.

 \begin{theorem}\label{theorem:two imply the third}
  Let $(E,p)$ be complete, let $\vr$ be a reflexive gsm on $(E,p)$ and let $N = \{x \in M(\vr) \mid \av{x}_L = 0\}$. Of the following assertions each two imply the third.  
  \begin{enumerate}[(i)]
   \item $\vr$ is lower semicontinuous.
   \item $N$ is closed in $(E,p)$ and the embedding 
   $$(\faktor{M(\vr)}{N}, \av{\cdot}_L) \to (\faktor E N,p_{N}),\quad  x + N \mapsto x + N $$
   is continuous. 
   \item $(\faktor{M(\vr)}{N},\av{\cdot}_L)$ is a Banach space.
  \end{enumerate}
  
 \end{theorem}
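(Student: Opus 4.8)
The plan is to prove this as a "two out of three" statement, handling each of the three implications $(i)\wedge(ii)\Rightarrow(iii)$, $(i)\wedge(iii)\Rightarrow(ii)$, and $(ii)\wedge(iii)\Rightarrow(i)$ separately. The guiding idea throughout is that the quotient $\faktor{M(\vr)}{N}$ is a genuine normed space (since $N$ is exactly the kernel of the seminorm), so I can freely pass to quotients and apply standard Banach-space machinery together with the already-established Theorem~\ref{theorem:lower semicontinuity} and Theorem~\ref{theorem:description extended space}. The completeness of $(E,p)$ and reflexivity of $\vr$ are the two standing hypotheses I will lean on.

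Let me sketch the individual implications.

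\textbf{$(i)\wedge(ii)\Rightarrow(iii)$.} Given lower semicontinuity of $\vr$, Theorem~\ref{theorem:lower semicontinuity}(c) tells me that $(M(\vr),p+\av{\cdot}_L)$ is complete. I want to upgrade this to completeness of $(\faktor{M(\vr)}{N},\av{\cdot}_L)$ alone. Take an $\av{\cdot}_L$-Cauchy sequence $(x_n)$ in $M(\vr)$; by the continuity of the embedding in (ii), its image is $p_N$-Cauchy in $\faktor E N$, which is complete since $N$ is $p$-closed. Choosing representatives, I can arrange (after adjusting by elements of $N$) that $(x_n)$ is $p$-Cauchy as well, hence $p$-convergent to some $x\in E$ by completeness of $(E,p)$. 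Then $(x_n)$ is $p+\av{\cdot}_L$-Cauchy, so it converges in $(M(\vr),p+\av{\cdot}_L)$, giving $\av{\cdot}_L$-convergence of the classes. This yields the Banach space property.

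\textbf{$(i)\wedge(iii)\Rightarrow(ii)$.} Here lower semicontinuity again gives completeness of $(M(\vr),p+\av{\cdot}_L)$ via Theorem~\ref{theorem:lower semicontinuity}(c), and (iii) gives completeness of the quotient in $\av{\cdot}_L$. The embedding $(\faktor{M(\vr)}{N},\av{\cdot}_L)\to(\faktor E N,p_N)$ is a continuous-graph map between complete metrizable topological vector spaces: it is linear, and its graph is closed because $\av{\cdot}_L$-convergence of a sequence together with $p_N$-convergence forces the two limits to agree (using that the identity on $M(\vr)$ is continuous from $p+\av{\cdot}_L$ into $p$). By the closed graph theorem for $F$-spaces the embedding is continuous. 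I should first check that $N$ is $p$-closed, which follows from lower semicontinuity of $\av{\cdot}_L$ (a consequence of (i) via Theorem~\ref{theorem:lower semicontinuity}(b)).

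\textbf{$(ii)\wedge(iii)\Rightarrow(i)$.} This is where I expect the \emph{main obstacle}. I want to deduce lower semicontinuity of $\vr$; by Theorem~\ref{theorem:lower semicontinuity}(b) it suffices (using reflexivity) to show $\av{\cdot}_L$ is $p$-lower semicontinuous on $E$. So let $x_n\to x$ in $p$ with $(\av{x_n}_L)$ bounded; I must show $\av{x}_L\leq\liminf\av{x_n}_L$. By (iii) the quotient is a Banach space, so the bounded sequence of classes $(x_n+N)$ has, by the Banach–Saks / weak-compactness route available under reflexivity, convex combinations that are $\av{\cdot}_L$-Cauchy; the continuity of the embedding (ii) then transfers this to $p_N$-convergence, and combining with $x_n\to x$ in $p$ (which descends to $p_N$) I can identify the $\av{\cdot}_L$-limit of these convex combinations with the class of $x$ via uniqueness of $p_N$-limits and completeness. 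Lower semicontinuity of the norm under its own weak/strong convergence, plus convexity of $\av{\cdot}_L$ applied to the convex combinations, then yields the desired inequality. The delicate points will be arranging that the convex combinations converge simultaneously in both topologies and that the two limits coincide modulo $N$ — essentially a reprise of the argument in Lemma~\ref{lemma:weak convergence} and Lemma~\ref{lemma:converging convex combinations}, but now passing through the quotient rather than through $p+\av{\cdot}_L$ directly, so I must be careful that reflexivity of $\vr$ is correctly invoked for the quotient space $\faktor{M(\vr)}{N}$ whose completion is precisely the reflexive Banach space in the definition of reflexivity.
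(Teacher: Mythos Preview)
Your proposal is correct, and for the implications $(i)\wedge(ii)\Rightarrow(iii)$ and $(i)\wedge(iii)\Rightarrow(ii)$ you follow essentially the paper's argument (the paper uses lower semicontinuity of $\av{\cdot}_L$ directly rather than passing through completeness of $(M(\vr),p+\av{\cdot}_L)$, but this is cosmetic).

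For $(ii)\wedge(iii)\Rightarrow(i)$ you take a genuinely different and more elaborate route. You aim at condition~(iii) of Theorem~\ref{theorem:lower semicontinuity} (lower semicontinuity of $\av{\cdot}_L$) and establish it by a Mazur/Banach--Saks argument: pass to convex combinations that are $\av{\cdot}_L$-Cauchy, use (iii) to get an $\av{\cdot}_L$-limit in the quotient, use (ii) to push this to a $p_N$-limit, and match it against the given $p$-limit via Lemma~\ref{lemma:converging convex combinations}. This works, but the paper instead targets condition~(iv) of Theorem~\ref{theorem:lower semicontinuity} (completeness of $(M(\vr),p+\av{\cdot}_L)$), which is almost immediate here: a $p+\av{\cdot}_L$-Cauchy sequence has a $p$-limit $x$ (completeness of $E$) and an $\av{\cdot}_L$-limit $y$ modulo $N$ (by (iii)); the continuity in (ii) forces $x+N=y+N$, so the two limits agree and the sequence converges in $p+\av{\cdot}_L$. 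The paper's approach is three lines and avoids the convex-combination machinery entirely; yours re-proves part of the content of Theorem~\ref{theorem:lower semicontinuity} inside the argument.
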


 \begin{proof}
  (i) \& (ii) $\Rightarrow$ (iii): Let $(x_n + N)$ be Cauchy in $(\faktor{M(\vr)}{N},\av{\cdot}_L)$. By (ii) it is Cauchy in $(\faktor E N,p_{N})$. Since $N$ is closed and $(E,p)$ is complete, the latter space is complete and we obtain $x \in E$ such that $x_n + N \to x + N$ with respect to $p_N$. We infer the existence of  a sequence $(k_n)$ in $N$ with $x_n + k_n \to x$ with respect to $p$. Since the lower semicontinuity of $\vr$ implies the lower semicontinuity of $\av{\cdot}_L$, we infer 
  $$\av{x  - x_n}_L  \leq \liminf_{m \to \infty} \av{x_m + k_m - x_n}_L = \liminf_{m \to \infty} \av{x_m  - x_n}_L.$$
  This shows $x \in M(\vr)$ and $x_n + N \to x + N$ in $(\faktor{M(\vr)}{N},\av{\cdot}_L)$.
  
   (i) \& (iii) $\Rightarrow$ (ii): The lower semicontinuity of $\av{\cdot}_L$, which follows from the lower semicontinuity of $\vr$, implies that $N$ is closed. In particular, $(\faktor E N,p_{N})$ is complete. 
   
   The closed graph theorem holds in complete metrizable topological vector spaces (so-called $F$-spaces), see e.g. \cite[Theorem~3.8]{Hus65}.  Since $(\faktor{M(\vr)}{N}, \av{\cdot}_L)$ is also complete by (iii), it suffices to show that the embedding is closed. Assume $x_n + N \to x + N$ in $(\faktor{M(\vr)}{N}, \av{\cdot}_L)$  and $x_n + N \to y + N$ in $(\faktor E N,p_{N})$. We choose a sequence $(k_n)$ in $N$ with $p(x_n + k_n - y) \to 0$. The lower semicontinuity of $\av{\cdot}_L$ yields 
   $$\av{x  - y }_L \leq \liminf_{n \to \infty} \av{x - x_n  - k_n}_L = \av{x - x_n}_L \to 0, \quad  \text{ as } n\to \infty.  $$
   This shows  $x + N =  y + N$ and the closedness is proven. 
  
  (ii) \& (iii) $\Rightarrow$ (i): Since $\vr$ is reflexive, by Theorem~\ref{theorem:lower semicontinuity} it suffices to show that $(M(\vr),p + \av{\cdot}_L)$ is complete. Let $(x_n)$ be Cauchy in this space. The completeness of $(\faktor{M(\vr)}{N},\av{\cdot}_L)$ and $(E,p)$ yield $x, y \in E$ such that $x_n \to y$ with respect to $\av{\cdot}_L$ and $x_n \to  x$ with respect to $p$. The continuity assumption in (ii) yields $x + N = y + N$ so that also $x_n \to x$ with respect to $\av{\cdot}_L$. This shows the desired completeness.  
 \end{proof}

\begin{remark}\label{remark:two imply the third}
\begin{enumerate}[(a)]
 \item The implications (i) \& (ii) $\Rightarrow$ (iii) and (i) \& (iii) $\Rightarrow$ (ii) hold without assuming the reflexivity of $\vr$.
 
 \item For quadratic forms this theorem was first established in \cite{Schmi3}.

\end{enumerate}

 \end{remark}

\subsection{Dual spaces and resolvents}\label{subsection:dual spaces and resolvents}

In this subsection we study the dual space of a given modular space. A particular focus is being laid on an alternative description of $\vr$ using duality. 

For a vector space $E$ we let $E^*$ denote its algebraic dual space. If $E$ carries a vector space topology (e.g. induced by an $F$-norm), we write $E'$ for its continuous dual space. 

Throughout this subsection we assume that $E$ is an $\R$-vector space and $\vr$ is a gsm on $E$. Its {\em algebraic subgradient} is defined by 
$$\partial^a \vr = \{(x,\varphi) \in E \times E^* \mid \varphi(y-x)  + \vr(x) \leq \vr(y) \text{ for all } y \in E\} $$
and we let $\partial^a \vr(x) = \{\varphi \in E^* \mid (x,\varphi) \in  \partial^a \vr\}$. Since $\vr$ is proper, $(x,\varphi) \in \partial^a \vr$ implies $x \in D(\vr)$.

\begin{proposition}\label{prop:estimate operator norm}
For every $(x,\varphi) \in \partial^a \vr$ we have $\varphi \in (M(\vr),\av{\cdot}_L)'$ with $\av{\varphi}_{M(\vr)'} \leq 1 + \varphi(x) - \vr(x)$. 
\end{proposition}
\begin{proof}
 By definition $(x,\varphi) \in \partial^a \vr$ implies 
 $$\varphi(y)   \leq \vr(y) + \varphi(x) - \vr(x) $$
 for all $y \in E$. By the symmetry of $\vr$ this inequality also holds for $|\varphi(y)|$ on the left side of the inequality.  Since  $\vr(y) \leq 1$ if and only if $\av{y}_L \leq 1$, taking the sup over such $y$ yields the claim. 
\end{proof}

For $x \in D(\vr)$ we define the {\em directional derivative of $\vr$ at $x$} by
$$d^+ \vr(x,\cdot) \colon E \to (-\infty,\infty], \quad d^+ \vr(x,y) = \liminf_{\varepsilon\to 0+} \frac{\vr(x + \varepsilon y) - \vr(x)}{\varepsilon}.$$
\begin{lemma}\label{lemma:lower derivative}
For all $x \in D(\vr)$ and $y \in E$ we have 
$$ d^+ \vr(x,y-x) + \vr(x) \leq \vr(y).$$
In particular, if $d^+ \vr(x,\cdot)$ is a linear functional on $M(\vr)$, then $d^+ \vr(x,\cdot) \in \partial^a \vr(x)$ (more precisely any linear extension of $d^+ \vr(x,\cdot)$ to $E$ belongs to $\partial^a \vr(x)$).
\end{lemma}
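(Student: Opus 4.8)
The plan is to derive the inequality directly from the convexity of $\vr$, which is built into the definition of a generalized semimodular, and then to read off the subgradient statement as a formal consequence. First I would fix $x \in D(\vr)$ and $y \in E$ and dispose of the trivial case $\vr(y) = \infty$: since $x \in D(\vr)$ we have $\vr(x) < \infty$, while $d^+\vr(x,y-x)$ by definition takes values in $(-\infty,\infty]$, so the asserted inequality $d^+\vr(x,y-x) + \vr(x) \leq \vr(y)$ holds automatically whenever the right-hand side is infinite.

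For the main case $\vr(y) < \infty$, the key observation is that for every $\varepsilon \in (0,1]$ the point $x + \varepsilon(y-x) = (1-\varepsilon)x + \varepsilon y$ is a convex combination of $x,y \in D(\vr)$, so convexity of $\vr$ gives $\vr(x+\varepsilon(y-x)) \leq (1-\varepsilon)\vr(x) + \varepsilon\vr(y)$. Subtracting $\vr(x)$ and dividing by $\varepsilon > 0$ yields the uniform bound $\varepsilon^{-1}\bigl(\vr(x+\varepsilon(y-x)) - \vr(x)\bigr) \leq \vr(y) - \vr(x)$ for all such $\varepsilon$. Taking $\liminf_{\varepsilon \to 0+}$ on the left and rearranging then produces exactly $d^+\vr(x,y-x) + \vr(x) \leq \vr(y)$. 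I would emphasize that monotonicity of the difference quotient is not needed here, since the pointwise estimate already controls the $\liminf$.

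For the \emph{in particular} clause, I would take any linear functional $\varphi \in E^*$ extending the restriction $d^+\vr(x,\cdot)|_{M(\vr)}$ and verify the defining inequality of $\partial^a\vr(x)$, namely $\varphi(y-x) + \vr(x) \leq \vr(y)$ for all $y \in E$. If $y \notin D(\vr)$ this is trivial exactly as above. If $y \in D(\vr)$, then both $x,y$ lie in $D(\vr) \subseteq M(\vr)$, and since $M(\vr)$ is a linear space (the linear hull of $D(\vr)$) we get $y-x \in M(\vr)$; hence $\varphi(y-x) = d^+\vr(x,y-x)$, and the inequality just established gives the claim. The main, and essentially only, subtlety is this last point: one must check that the direction $y-x$ lands in $M(\vr)$ before invoking linearity of $\varphi$, which is precisely where the inclusion $D(\vr) \subseteq M(\vr)$ and the vector-space structure of $M(\vr)$ are used. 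Everything else is a routine consequence of convexity.
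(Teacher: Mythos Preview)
Your proof is correct and follows essentially the same route as the paper: both use convexity to bound the difference quotient by $\vr(y)-\vr(x)$ and then pass to the $\liminf$. Your treatment is slightly more explicit about the trivial case $\vr(y)=\infty$ and about why $y-x\in M(\vr)$ in the subgradient step, but these are just expository refinements of the same argument.
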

\begin{proof}
Using the convexity of $\vr$ we obtain 
 \[ \vr(x + \varepsilon (y-x)) = \vr(\varepsilon y + (1-\varepsilon)x) \leq \varepsilon \vr(y) + (1-\varepsilon) \vr(x) \]
for all $\eps \in (0,1)$.
Rearranging this inequality yields
 $$  \frac{\vr(x + \varepsilon (y - x)) - \vr(x)}{\varepsilon} + \vr(x)\leq \vr(y)$$
 and we arrive at the statement after letting $\varepsilon \to 0+$.
 
 If $d^+ \vr(x,\cdot)$ is linear on $M(\vr)$, it can be extended to a linear functional on $E$. By the previously shown inequality any such extension belongs to $\partial^a \vr(x)$. 
\end{proof}

Next we assume that $(E,\av{\cdot})$ is a normed space and that $\vr$ is a gsm on $E$. In this case, we write $\partial \vr = \partial^a \vr \cap (E \times E')$ for the {\em subgradient} of $\vr$. 

The {\em convex conjugate} of $\vr$ is defined by 
$$\vr^* \colon E' \to [-\infty,\infty], \quad \vr^*(\varphi) = \sup\{ \varphi(x) - \vr(x) \mid x \in E\}.$$
Since $\vr(0) = 0$, we obtain that $\vr^* \colon E' \to [0,\infty]$ is proper, convex and lower semicontinuous and that the supremum in its definition can be taken over $x \in D(\vr)$. Moreover, due to the symmetry of $\vr$ it is also symmetric.  The {\em Fenchel-Moreau} theorem (see e.g. \cite[Proposition 4.1]{ET}) states that 
$$\vr(x) = \sup \{\varphi(x) - \vr^*(\varphi) \mid \varphi \in E'\}, \quad x \in E, $$
if and only if $\rho$ is lower semicontinuous.
Furthermore, it follows from the definition of $\rho^*$ that $(x,\varphi) \in \partial \vr$ if and only if 
$$\varphi(x) = \vr(x) + \vr^*(\varphi) . $$

For $x \in E$ we let
$$D(x) = \{\varphi \in  E' \mid \varphi(x) = \av{x}^2 = \av{\varphi}^2\}.$$
The operator $D = \{(x,\varphi) \mid x \in E, \varphi \in D(x)\}$ is called {\em normalized duality mapping}. It is well known (and easily verified) that $D = \partial (\frac{1}{2}\av{\cdot}^2)$, see e.g. \cite{Cio90} for this fact and further properties of the duality mapping. In particular, if $E$ is a Hilbert space and identified with $E'$ via the usual map, then $D$ is simply the identity operator. Moreover, if $(E,\av{\cdot})$ is strictly convex, $D$ is single-valued. 

For $\lambda > 0$ we define 
$$\vr^{(\lambda)} \colon E \to [0,\infty), \quad \vr^{(\lambda)}(x) = \inf \{\vr(y) + \frac{1}{2\lambda} \av{x-y}^2 \mid y \in E\}.$$
%
%
Assume that $(E,\av{\cdot})$ is strictly convex. For $x \in E$ and $\lambda > 0$,  we let $J_\lambda x \in E$ be the unique element such that  
$$\vr^{(\lambda)}(x)  =  \vr(J_\lambda x) + \frac{1}{2\lambda} \av{x-J_\lambda x}^2$$
(if it exists). By standard weak compactness arguments, this minimizer always exists if $(E,\av{\cdot})$  is a reflexive Banach space and $\vr$ is lower semicontinous. 
In this case, the family of operators $(J_\lambda)_{\lambda > 0}$ is called {\em resolvent family} of $\vr$. With the help of the resolvent we obtain the following alternative description of $\vr$ on the closure of $D(\vr)$.  It is certainly well-known to experts but we include a proof for the convenience of the reader. 

\begin{proposition}[Alternative formula for $\vr$]\label{lemma:alternative formula for vr}
 Assume that $(E,\av{\cdot})$ is a strictly convex reflexive Banach space and that $\vr$ is a lower semicontinous gsm. For $x \in E$ and $\varphi_\lambda = \lambda^{-1}D(x - J_\lambda x) \in E'$, $\lambda > 0$,  the following hold: 
 \begin{enumerate}[(a)]
  \item $(J_\lambda x,\varphi_\lambda) \in \partial \vr$. 
  \item If $x \in D(\vr)$, then 
  $$\vr(x) = \lim_{\lambda \to 0+} \vr(J_\lambda x)\text{ and } \lim_{\lambda \to 0+} \frac{1}{\lambda} \av{x - J_\lambda x}^2 =  \lim_{\lambda \to 0+} \varphi_\lambda(x - J_\lambda x) = 0.$$
  \item If $x \in D(\vr)$, then 
  $$\vr(x) = \lim_{\lambda \to 0+} \left(\varphi_\lambda(x) - \vr^*(\varphi_\lambda) \right).$$
 \end{enumerate}
\end{proposition}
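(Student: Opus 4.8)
The three parts build on one another, so the plan is to prove (a) first and then read off (b) and (c) as consequences. For (a), recall that $J_\lambda x$ is by construction the unique minimizer of $F(y) = \vr(y) + \frac{1}{2\lambda}\av{x-y}^2$. Since the second summand $g(y) = \frac{1}{2\lambda}\av{x-y}^2$ is finite and continuous on all of $E$, the Moreau--Rockafellar sum rule applies and yields $\partial F = \partial\vr + \partial g$; minimality of $J_\lambda x$ means $0 \in \partial F(J_\lambda x)$. Writing $g = \lambda^{-1} h(\cdot - x)$ with $h = \frac{1}{2}\av{\cdot}^2$ and $\partial h = D$, and using that the duality map is odd (because $h$ is even), I obtain $\partial g(J_\lambda x) = \lambda^{-1} D(J_\lambda x - x) = -\lambda^{-1} D(x - J_\lambda x)$. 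Thus $0 \in \partial\vr(J_\lambda x) - \lambda^{-1} D(x - J_\lambda x)$, which is precisely $\varphi_\lambda = \lambda^{-1} D(x - J_\lambda x) \in \partial\vr(J_\lambda x)$, i.e. $(J_\lambda x, \varphi_\lambda) \in \partial\vr$. (Alternatively, one avoids the sum rule by testing the minimality inequality against $J_\lambda x + t(y - J_\lambda x)$, using convexity of $\vr$, dividing by $t$ and letting $t \to 0+$; here the directional derivative of $\av{\cdot}^2$ equals the pairing against $D$ by single-valuedness.)

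For (b), the decisive estimate comes from inserting $y = x$ into the infimum defining $\vr^{(\lambda)}(x)$, which for $x \in D(\vr)$ gives
$$\vr(J_\lambda x) + \frac{1}{2\lambda}\av{x - J_\lambda x}^2 = \vr^{(\lambda)}(x) \leq \vr(x).$$
This at once yields $\vr(J_\lambda x) \leq \vr(x)$ and $\av{x - J_\lambda x}^2 \leq 2\lambda\vr(x) \to 0$, so $J_\lambda x \to x$ in norm. Lower semicontinuity of $\vr$ then gives $\vr(x) \leq \liminf_{\lambda\to 0+}\vr(J_\lambda x)$, and combined with the upper bound this forces $\vr(J_\lambda x) \to \vr(x)$. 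Feeding this back into the displayed inequality produces $\frac{1}{2\lambda}\av{x - J_\lambda x}^2 \leq \vr(x) - \vr(J_\lambda x) \to 0$, hence $\lambda^{-1}\av{x - J_\lambda x}^2 \to 0$; and since the defining property of the duality map gives $\varphi_\lambda(x - J_\lambda x) = \lambda^{-1}\av{x - J_\lambda x}^2$, the remaining two limits follow immediately.

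For (c), by (a) and the Fenchel equality recorded before the proposition, $(J_\lambda x, \varphi_\lambda) \in \partial\vr$ is equivalent to $\varphi_\lambda(J_\lambda x) = \vr(J_\lambda x) + \vr^*(\varphi_\lambda)$, so $\vr^*(\varphi_\lambda) = \varphi_\lambda(J_\lambda x) - \vr(J_\lambda x)$ is finite. Consequently
$$\varphi_\lambda(x) - \vr^*(\varphi_\lambda) = \varphi_\lambda(x - J_\lambda x) + \vr(J_\lambda x),$$
and letting $\lambda \to 0+$ the two limits from (b) give $\varphi_\lambda(x - J_\lambda x) \to 0$ and $\vr(J_\lambda x) \to \vr(x)$, which is exactly the claim.

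The only genuinely delicate step is (a); parts (b) and (c) are bookkeeping with the fundamental inequality, lower semicontinuity, and the Fenchel identity. The care needed in (a) is the justification of the subdifferential sum rule and the correct identification of $\partial\bigl(\frac{1}{2\lambda}\av{x - \cdot}^2\bigr)$ with a signed value of the duality map. I expect the reflexivity and strict convexity hypotheses to enter exactly here and in the background existence statement: they guarantee that $J_\lambda x$ exists and is unique and that $\varphi_\lambda$ is well defined.
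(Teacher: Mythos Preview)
Your proof is correct and follows essentially the same approach as the paper: the subgradient computation for (a) via the sum rule and the identification $\partial(\tfrac{1}{2}\av{\cdot}^2)=D$, the comparison $\vr^{(\lambda)}(x)\leq \vr(x)$ combined with lower semicontinuity for (b), and the Fenchel identity for (c) are exactly the steps the paper uses. Your presentation of (b) is in fact slightly cleaner, first isolating $\vr(J_\lambda x)\to \vr(x)$ and then feeding it back into the inequality to control $\lambda^{-1}\av{x-J_\lambda x}^2$, whereas the paper compresses this into a single limsup/liminf display.
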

\begin{proof}
 (a): By definition the element $J_\lambda x$ is a minimizer of the functional $y \mapsto \vr(y) + \frac{1}{2\lambda} \av{x-y}^2$. Hence, 
 $$0 \in \partial \left(\vr (\cdot) + \frac{1}{2\lambda} \av{x - \cdot}^2\right)(J_\lambda x).$$
 Using this and $D = \partial (\frac{1}{2} \av{\cdot}^2)$, we infer 
 $$0 \in \partial \vr (J_\lambda x) - \frac{1}{\lambda} D(x - J_\lambda x).$$
 
 (b): For $x \in D(\vr)$ the inequality $\vr^{(\lambda)} \leq \vr$ implies  $\av{x - J_\lambda x}^2 \leq 2 \lambda  \vr(x)$, showing $J_\lambda x \to x$ in $E$, as $\lambda \to 0+$. 
With the help of the lower semicontinuity of $\vr$, we infer 
$$ \limsup_{\lambda \to 0+} \left(\vr(J_\lambda x) + \frac{1}{2\lambda} \av{x - J_\lambda x}^2 \right) \leq  \vr(x) \leq \liminf_{\lambda \to 0+} \vr(J_\lambda x).$$
 But this implies  $\lim_{\lambda \to 0+} \vr(J_\lambda x) = \vr(x)$ and $ 0 = \lim_{\lambda \to 0+} \lambda^{-1} \av{x - J_\lambda x}^2 = \lim_{\lambda \to 0+} \varphi_\lambda(x - J_\lambda x)$.

 (c): Since  $(J_\lambda x,\varphi_\lambda) \in \partial \vr$, we have 
 $$\varphi_\lambda(J_\lambda x) = \vr(J_\lambda x) + \vr^*(\varphi_\lambda)$$
 and all of these quantities are finite. Rearranging them yields 
 \[ \varphi_\lambda(x) - \vr^*(\varphi_\lambda) = \rho(J_\lambda x) + \varphi_\lambda(x - J_\lambda x) . \] 
 Hence, the statement follows from (b).
\end{proof}

\section{A simple inequality}

\begin{lemma}\label{lemma:basice inequality}
 Let $x,y \in \R^n$, let $\lambda \in \R$ with $|\lambda| \leq 1$ and let $p \geq 1$. Then 
$$|x + \lambda y|^p + |x-\lambda y|^p \leq |x + y|^p + |x-y|^p,$$
where $|\cdot|$ denotes the Euclidean norm (or any other norm induced by a scalar product). 
\end{lemma}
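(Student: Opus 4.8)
The plan is to fix $x,y$ and $p$ and reduce the statement to a one-dimensional convexity argument by studying the single-variable function
$$f(\lambda) = |x + \lambda y|^p + |x - \lambda y|^p, \quad \lambda \in \R.$$
The target inequality is then exactly $f(\lambda) \le f(1)$ for $|\lambda| \le 1$, so everything comes down to controlling $f$ on $[-1,1]$ in terms of its value at the endpoints.

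First I would record two structural features of $f$. It is even, since replacing $\lambda$ by $-\lambda$ merely interchanges the two summands; hence $f(-\lambda)=f(\lambda)$, and in particular $f(-1)=f(1)$. It is convex: the map $\lambda \mapsto |x+\lambda y|$ is convex, being the composition of the affine map $\lambda \mapsto x+\lambda y$ with the norm, and $t \mapsto t^p$ is convex and non-decreasing on $[0,\infty)$ for $p \ge 1$, so the composition $\lambda \mapsto |x+\lambda y|^p$ is convex; the same holds for $\lambda \mapsto |x-\lambda y|^p$, and a sum of convex functions is convex.

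With convexity and evenness in hand, the inequality follows from a single convex-combination estimate. For $|\lambda| \le 1$ the coefficients $\tfrac{1+\lambda}{2}$ and $\tfrac{1-\lambda}{2}$ are nonnegative and sum to $1$, and one has the identity $\lambda = \tfrac{1+\lambda}{2}\cdot 1 + \tfrac{1-\lambda}{2}\cdot(-1)$. Convexity of $f$ then yields
$$f(\lambda) \le \frac{1+\lambda}{2}\, f(1) + \frac{1-\lambda}{2}\, f(-1) = f(1),$$
where the last equality uses $f(-1)=f(1)$. This is precisely the claimed bound, so no further computation is needed.

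There is no genuine obstacle in this argument; the only point requiring a little care is the justification that $\lambda \mapsto |x+\lambda y|^p$ is convex, which needs both the convexity of the norm and the monotonicity of $t \mapsto t^p$ on $[0,\infty)$ — the latter is exactly what permits composing the inner convex-but-nonlinear norm with the outer power. I would also note in passing that the proof uses nothing about the scalar product beyond the triangle inequality, so the statement in fact holds for an arbitrary norm on $\R^n$, not only one induced by an inner product; the inner-product hypothesis is not actually exploited.
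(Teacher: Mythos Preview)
Your proof is correct and actually streamlines the paper's argument. Both proofs study the same function $f(\lambda)=|x+\lambda y|^p+|x-\lambda y|^p$ and both implicitly use its convexity, but with different endpoint pairs. You interpolate between $-1$ and $1$: evenness gives $f(-1)=f(1)$, so the convex combination $\lambda=\tfrac{1+\lambda}{2}\cdot 1+\tfrac{1-\lambda}{2}\cdot(-1)$ yields $f(\lambda)\le f(1)$ in one line. The paper instead interpolates between $0$ and $1$, writing $x+\lambda y=\lambda(x+y)+(1-\lambda)x$ to obtain $f(\lambda)\le \lambda f(1)+(1-\lambda)f(0)$; this forces a separate verification that $f(0)\le f(1)$, i.e.\ $|x+y|^p+|x-y|^p\ge 2|x|^p$, which the paper carries out by expanding $|x\pm y|^2$ via the inner product and analyzing $\varphi(t)=|R+t|^{p/2}+|R-t|^{p/2}$. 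Your choice of endpoints makes that detour unnecessary and, as you observe, removes any reliance on the scalar product---the lemma indeed holds for an arbitrary norm. The inner-product structure is still genuinely needed for the subsequent corollary in the paper, but not for the lemma itself.
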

\begin{proof}
Without loss of generality we can assume $0 \leq \lambda \leq 1$. The triangle inequality and the convexity of the function $t \mapsto t^p$ imply 
\begin{align*}
|x + \lambda y|^p + |x-\lambda y|^p &\leq \lambda \left(|x + y|^p + |x-y|^p \right) + 2(1-\lambda)|x|^p \\
&= \lambda (|x + y|^p + |x-y|^p - 2|x|^p) + 2 |x|^p.
\end{align*}
Hence, it suffices to show $|x + y|^p + |x-y|^p - 2|x|^p \geq 0$, as in this case the right side of the previous inequality attains its maximum at $\lambda = 1$. We rewrite $|x + y|^p + |x-y|^p = |R  + t_0|^q + |R - t_0|^q$, with $R = |x|^2 + |y|^2$, $t_0 = 2\as{x,y}$ and $q = p/2 > 0$. Then $|t_0| \leq R$. Consider the function   
$$\varphi \colon [-R,R]\to \R, \quad \varphi(t) = |R  + t|^q + |R - t|^q.$$
Computing its derivative shows directly that $\varphi$ is decreasing on $[-R,0]$ and increasing on $[0,R]$. Hence, its minimum is attained at $t  = 0$, where it attains the value $\varphi(0) = 2 |R|^q = 2 ||x|^2 + |y|^2|^q \geq 2|x|^p$.
\end{proof}

\begin{corollary}\label{corllary:basic inequality}
 Let $a,b \geq 0$ and $c \in \R$ with $|c| \leq ab$. Then for all $\lambda \in \R$ with $|\lambda| \leq 1$ and $1 \leq p < \infty$ the following inequality holds:  
 $$(a^2 + 2 \lambda c + \lambda^2 b^2)^{\frac p 2} + (a^2 - 2 \lambda c + \lambda^2 b^2)^{\frac p 2} \leq (a^2 + 2  c +  b^2)^{\frac p 2} + (a^2 - 2  c +  b^2)^{\frac p 2}. $$
\end{corollary}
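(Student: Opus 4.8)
The plan is to realize the quantities $a^2 \pm 2\lambda c + \lambda^2 b^2$ and $a^2 \pm 2c + b^2$ as squared Euclidean norms and then to quote Lemma~\ref{lemma:basice inequality}. Suppose we can find $x,y \in \R^2$ with $|x| = a$, $|y| = b$ and $\as{x,y} = c$. Then
\begin{align*}
 |x \pm \lambda y|^2 &= a^2 \pm 2\lambda c + \lambda^2 b^2, & |x \pm y|^2 &= a^2 \pm 2 c + b^2,
\end{align*}
so that, raising each squared norm to the power $p/2$, the asserted inequality becomes precisely
$$|x + \lambda y|^p + |x - \lambda y|^p \leq |x + y|^p + |x - y|^p,$$
which is the content of Lemma~\ref{lemma:basice inequality}. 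Thus the entire statement reduces to the existence of such a pair of vectors.

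The existence of $x,y$ is exactly where the hypothesis $|c| \leq ab$ enters: it is the condition that the symmetric $2\times 2$ matrix with diagonal entries $a^2,b^2$ and off-diagonal entry $c$ be positive semidefinite (its trace $a^2 + b^2$ and its determinant $a^2 b^2 - c^2$ are both nonnegative), equivalently that $c$ obey the Cauchy--Schwarz bound $|c| \le |x|\,|y|$. I would simply exhibit the vectors: for $a > 0$ take $x = (a,0)$ and $y = (c/a, s)$ with $s = \sqrt{b^2 - c^2/a^2}$. Here $s$ is a well-defined real number because $b^2 - c^2/a^2 = (a^2 b^2 - c^2)/a^2 \geq 0$, and a direct computation gives $|x| = a$, $\as{x,y} = c$ and $|y|^2 = c^2/a^2 + s^2 = b^2$.

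It remains to treat the degenerate case $a = 0$, for which the formula above divides by zero. But $a = 0$ together with $|c| \leq ab = 0$ forces $c = 0$, and then one takes $x = 0$ and $y = (b,0)$; these again satisfy $|x| = 0 = a$, $|y| = b$ and $\as{x,y} = 0 = c$, so the reduction to Lemma~\ref{lemma:basice inequality} applies verbatim. There is no genuine obstacle in this argument; the only thing worth noticing is the equivalence of $|c| \leq ab$ with the realizability of $(a^2, b^2, c)$ as the squared norms and inner product of a pair of vectors, after which the inequality is immediate from the lemma.
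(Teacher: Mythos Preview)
Your proof is correct and follows essentially the same route as the paper: construct vectors $x=(a,0)$ and $y=(c/a,\sqrt{b^2-c^2/a^2})$ in $\R^2$ realizing the given data, then invoke Lemma~\ref{lemma:basice inequality}. In fact you are slightly more careful than the paper, which divides by $a$ without commenting on the degenerate case $a=0$; your explicit treatment of that case is a small improvement.
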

 \begin{proof}
 Using $|c| \leq ab$ we obtain $b^2 - c^2/a^2 \geq 0$. In $\R^2$ the vectors $x = (a,0)$ and $y = (c/a, \sqrt{b^2 - c^2/a^2})$ satisfy $a = |x|, b = |y|$ and $c = \as{x,y}$. Hence, the statement follows directly from the previous lemma. 
 \end{proof}

\bibliographystyle{plain}
 
\bibliography{literatur}

\def\cprime{$'$}
\begin{thebibliography}{10}

\bibitem{Am91b}
Sergio Albeverio and Zhi~Ming Ma.
\newblock Necessary and sufficient conditions for the existence of
  {$m$}-perfect processes associated with {D}irichlet forms.
\newblock In {\em S\'{e}minaire de {P}robabilit\'{e}s, {XXV}}, volume 1485 of
  {\em Lecture Notes in Math.}, pages 374--406. Springer, Berlin, 1991.

\bibitem{AM91a}
Sergio Albeverio and Zhi~Ming Ma.
\newblock A general correspondence between {D}irichlet forms and right
  processes.
\newblock {\em Bull. Amer. Math. Soc. (N.S.)}, 26(2):245--252, 1992.

\bibitem{AGS14}
Luigi Ambrosio, Nicola Gigli, and Giuseppe Savar\'e.
\newblock Calculus and heat flow in metric measure spaces and applications to
  spaces with {R}icci bounds from below.
\newblock {\em Invent. Math.}, 195(2):289--391, 2014.

\bibitem{BRR24}
Viorel Barbu, Marco Rehmeier, and Michael Röckner.
\newblock $p$-brownian motion and the $p$-laplacian.
\newblock {\em Preprint. arXiv:2409.18744}.

\bibitem{BP79}
Philippe B\'enilan and Colette Picard.
\newblock Quelques aspects non lin\'eaires du principe du maximum.
\newblock In {\em S\'eminaire de {T}h\'eorie du {P}otentiel, {N}o. 4 ({P}aris,
  1977/1978)}, volume 713 of {\em Lecture Notes in Math.}, pages 1--37.
  Springer, Berlin, 1979.

\bibitem{BD58}
A.~Beurling and J.~Deny.
\newblock Espaces de {D}irichlet. {I}. {L}e cas \'{e}l\'{e}mentaire.
\newblock {\em Acta Math.}, 99:203--224, 1958.

\bibitem{BD59}
A.~Beurling and J.~Deny.
\newblock Dirichlet spaces.
\newblock {\em Proc. Nat. Acad. Sci. U.S.A.}, 45:208--215, 1959.

\bibitem{BBR24}
Camelia Beznea, Lucian Beznea, and Michael Röckner.
\newblock Nonlinear {D}irichlet forms associated with quasiregular mappings.
\newblock {\em Potential Anal.}, to appear.

\bibitem{BH24}
Giovanni Brigati and Ivailo Hartarsky.
\newblock The normal contraction property for non-bilinear {D}irichlet forms.
\newblock {\em Potential Anal.}, 60(1):473--488, 2024.

\bibitem{CS94}
Shu~Tao Chen and Hui~Ying Sun.
\newblock Reflexive {O}rlicz spaces have uniformly normal structure.
\newblock {\em Studia Math.}, 109(2):197--208, 1994.

\bibitem{CF12}
Zhen-Qing Chen and Masatoshi Fukushima.
\newblock {\em Symmetric {M}arkov processes, time change, and boundary theory},
  volume~35 of {\em London Mathematical Society Monographs Series}.
\newblock Princeton University Press, Princeton, NJ, 2012.

\bibitem{Cio90}
Ioana Cioranescu.
\newblock {\em Geometry of {B}anach spaces, duality mappings and nonlinear
  problems}, volume~62 of {\em Mathematics and its Applications}.
\newblock Kluwer Academic Publishers Group, Dordrecht, 1990.

\bibitem{CG03}
Fabio Cipriani and Gabriele Grillo.
\newblock Nonlinear {M}arkov semigroups, nonlinear {D}irichlet forms and
  applications to minimal surfaces.
\newblock {\em J. Reine Angew. Math.}, 562:201--235, 2003.

\bibitem{Cla23}
Burkhard Claus.
\newblock Energy spaces, {D}irichlet forms and capacities in a nonlinear
  setting.
\newblock {\em Potential Anal.}, 58(1):159--179, 2023.

\bibitem{Cla21}
Burkhard Claus.
\newblock Nonlinear dirichlet forms.
\newblock Dissertation, 2021.

\bibitem{DKN20}
Robert Denk, Michael Kupper, and Max Nendel.
\newblock A semigroup approach to nonlinear {L}\'evy processes.
\newblock {\em Stochastic Process. Appl.}, 130(3):1616--1642, 2020.

\bibitem{DP16}
Baptiste Devyver and Yehuda Pinchover.
\newblock Optimal {$L^p$} {H}ardy-type inequalities.
\newblock {\em Ann. Inst. H. Poincar\'e{} C Anal. Non Lin\'eaire},
  33(1):93--118, 2016.

\bibitem{DHHR11}
Lars Diening, Petteri Harjulehto, Peter Hästö, and Michael
  R\r{u}\v{z}i\v{c}ka.
\newblock {\em Lebesgue and {S}obolev spaces with variable exponents}, volume
  2017 of {\em Lecture Notes in Mathematics}.
\newblock Springer, Heidelberg, 2011.

\bibitem{ET}
Ivar Ekeland and Roger T\'emam.
\newblock {\em Convex analysis and variational problems}, volume~28 of {\em
  Classics in Applied Mathematics}.
\newblock Society for Industrial and Applied Mathematics (SIAM), Philadelphia,
  PA, english edition, 1999.
\newblock Translated from the French.

\bibitem{Fis23}
Florian Fischer.
\newblock A non-local quasi-linear ground state representation and criticality
  theory.
\newblock {\em Calc. Var. Partial Differential Equations}, 62(5):Paper No. 163,
  33, 2023.

\bibitem{FOT}
Masatoshi Fukushima, Yoichi Oshima, and Masayoshi Takeda.
\newblock {\em Dirichlet forms and symmetric {M}arkov processes}, volume~19 of
  {\em de Gruyter Studies in Mathematics}.
\newblock Walter de Gruyter \& Co., Berlin, extended edition, 2011.

\bibitem{Gri1}
Alexander Grigor{\cprime}yan.
\newblock Analytic and geometric background of recurrence and non-explosion of
  the {B}rownian motion on {R}iemannian manifolds.
\newblock {\em Bull. Amer. Math. Soc. (N.S.)}, 36(2):135--249, 1999.

\bibitem{HJ04}
Walter Hoh and Niels Jacob.
\newblock Towards an {$L^p$}-potential theory for sub-{M}arkovian semigroups:
  variational inequalities and balayage theory.
\newblock {\em J. Evol. Equ.}, 4(2):297--312, 2004.

\bibitem{Hus65}
Taqdir Husain.
\newblock {\em The open mapping and closed graph theorems in topological vector
  spaces}.
\newblock Clarendon Press, Oxford, 1965.

\bibitem{JS10}
Niels Jacob and Ren\'e{}~L. Schilling.
\newblock Extended {$L^p$} {D}irichlet spaces.
\newblock In {\em Around the research of {V}ladimir {M}az'ya. {I}}, volume~11
  of {\em Int. Math. Ser. (N. Y.)}, pages 221--238. Springer, New York, 2010.

\bibitem{Kuw24}
Kazuhiro Kuwae.
\newblock {$(1, p)$}-{S}obolev spaces based on strongly local {D}irichlet
  forms.
\newblock {\em Math. Nachr.}, 297(10):3723--3740, 2024.

\bibitem{MOR95}
Zhi~Ming Ma, Ludger Overbeck, and Michael R\"{o}ckner.
\newblock Markov processes associated with semi-{D}irichlet forms.
\newblock {\em Osaka J. Math.}, 32(1):97--119, 1995.

\bibitem{MR}
Zhi~Ming Ma and Michael R{\"o}ckner.
\newblock {\em Introduction to the theory of (nonsymmetric) {D}irichlet forms}.
\newblock Universitext. Springer-Verlag, Berlin, 1992.

\bibitem{Osh13}
Yoichi Oshima.
\newblock {\em Semi-{D}irichlet forms and {M}arkov processes}, volume~48 of
  {\em De Gruyter Studies in Mathematics}.
\newblock Walter de Gruyter \& Co., Berlin, 2013.

\bibitem{Pin}
Yehuda Pinchover.
\newblock Topics in the theory of positive solutions of second-order elliptic
  and parabolic partial differential equations.
\newblock In {\em Spectral theory and mathematical physics: a {F}estschrift in
  honor of {B}arry {S}imon's 60th birthday}, volume~76 of {\em Proc. Sympos.
  Pure Math.}, pages 329--355. Amer. Math. Soc., Providence, RI, 2007.

\bibitem{PT07}
Yehuda Pinchover and Kyril Tintarev.
\newblock Ground state alternative for {$p$}-{L}aplacian with potential term.
\newblock {\em Calc. Var. Partial Differential Equations}, 28(2):179--201,
  2007.

\bibitem{Puc25}
Simon Puchert.
\newblock Characterization of nonlinear {D}irichlet forms via contractions.
\newblock {\em Preprint. arXiv:2502.03691}.

\bibitem{RW01}
Michael R\"{o}ckner and Feng-Yu Wang.
\newblock Weak {P}oincar\'{e} inequalities and {$L^2$}-convergence rates of
  {M}arkov semigroups.
\newblock {\em J. Funct. Anal.}, 185(2):564--603, 2001.

\bibitem{Schmi3}
Marcel Schmidt.
\newblock Energy forms.
\newblock {\em Dissertation, arXiv:1703.04883}, 2017.

\bibitem{Schmi2}
Marcel Schmidt.
\newblock A note on reflected {D}irichlet forms.
\newblock {\em Potential Anal.}, 52(2):245--279, 2020.

\bibitem{Sch22}
Marcel Schmidt.
\newblock ({W}eak) {H}ardy and {P}oincar\'e{} inequalities and criticality
  theory.
\newblock In {\em Dirichlet forms and related topics}, volume 394 of {\em
  Springer Proc. Math. Stat.}, pages 421--459. Springer, Singapore, [2022]
  \copyright 2022.

\bibitem{Schmu2}
Byron Schmuland.
\newblock Positivity preserving forms have the {F}atou property.
\newblock {\em Potential Anal.}, 10(4):373--378, 1999.

\bibitem{Sho97}
R.~E. Showalter.
\newblock {\em Monotone operators in {B}anach space and nonlinear partial
  differential equations}, volume~49 of {\em Mathematical Surveys and
  Monographs}.
\newblock American Mathematical Society, Providence, RI, 1997.

\bibitem{Sil}
Martin~L. Silverstein.
\newblock {\em Symmetric {M}arkov processes}.
\newblock Lecture Notes in Mathematics, Vol. 426. Springer-Verlag, Berlin-New
  York, 1974.

\bibitem{Beu94}
Petra van Beusekom.
\newblock On nonlinear {D}irichlet forms.
\newblock {\em PhD Thesis}, 1994.

\end{thebibliography}

\end{document}